\def\sergio #1{{\color{black}#1}}
\def\sergiotwo #1{{\color{black}#1}}
\newtheorem{lem}{Lemma}[section]
\newtheorem{thm}{Theorem}[section]
\newtheorem{defn}{Definition}[section]
\newtheorem{prop}{Proposition}[section]
\newtheorem{oss}{Remark}[section]
\newcommand{\uvec}{\mathbf{u}}
\newcommand{\vvec}{\mathbf{v}}
\newcommand{\hvec}{\mathbf{h}}
\newcommand{\nvec}{\mathbf{n}}
\newcommand{\wvec}{\mathbf{w}}
\newcommand{\e}{\epsilon}
\newcommand{\phie}{\varphi_{\epsilon}}
\newcommand{\mue}{\mu_{\epsilon}}
\newcommand{\Fe}{F_{\epsilon}}
\newcommand{\me}{m_{\epsilon}}
\newcommand{\etae}{\eta_\epsilon}
\newcommand{\wph}{\widetilde{\varphi}}
\begin{document}

\title{Nonlocal Cahn-Hilliard-Hele-Shaw systems with \\ singular potential and degenerate mobility}

\author{
{Cecilia Cavaterra\thanks{
Dipartimento di Matematica ``F. Enriques'',
Universit\`{a} degli Studi di Milano, Via C. Saldini 50, 20133 Milano, Italy.
Istituto di Matematica Applicata e Tecnologie Informatiche ``Enrico Magenes'', CNR,
Via Ferrata 1, 27100 Pavia, Italy.
E-mail: \textit{cecilia.cavaterra@unimi.it}, ORCID ID 0000-0002-2754-7714}}
\and
{Sergio Frigeri \thanks{
Dipartimento di Matematica ``F. Enriques'',
Universit\`{a} degli Studi di Milano, Via C. Saldini 50, 20133 Milano, Italy.
E-mail: \textit{sergio.frigeri@unimi.it}, ORCID ID 0000-0002-7582-5205}}
\and
{Maurizio Grasselli\thanks{Dipartimento di Matematica,
Politecnico di Milano, Via E. Bonardi 9,
20133 Milano, Italy. E-mail: \textit{mau\-ri\-zio.grasselli@polimi.it}, ORCID ID 0000-0003-2521-2926}}}

\maketitle

\begin{abstract}\noindent
We study a Cahn-Hilliard-Hele-Shaw (or Cahn-Hilliard-Darcy) system for an incompressible mixture of two fluids. The relative concentration difference $\varphi$ is governed by
a convective nonlocal Cahn-Hilliard equation with degenerate mobility and logarithmic potential. The volume averaged fluid velocity $\mathbf{u}$ obeys a Darcy's law
depending on the so-called Korteweg force $\mu\nabla \varphi$, where $\mu$ is the nonlocal chemical potential. In addition, the kinematic viscosity $\eta$ may depend on $\varphi$. We establish first the existence of a global weak solution which satisfies
the energy identity. Then we prove the existence of a strong solution. Further regularity results on the pressure and on $\mathbf{u}$ are also obtained. Weak-strong uniqueness is demonstrated in the two dimensional case. In the three-dimensional case, uniqueness of weak solutions holds if $\eta$ is constant. Otherwise, weak-strong uniqueness is shown by assuming that the pressure of the strong solution is $\alpha$-H\"{o}lder continuous in space for $\alpha\in (1/5,1)$.
\bigskip

\noindent
\textbf{Keywords}: Cahn-Hilliard equation, Darcy's law, nonlocal free energy, logarithmic potential, degenerate mobility, non-constant viscosity, weak solutions, strong solutions, regularity, uniqueness.

\bigskip
\noindent \textbf{MSC 2020}: 35Q35, 76D27, 76T06.

\end{abstract}

\section{Introduction}
\setcounter{equation}{0}

The behavior of an incompressible binary fluid flow in a Hele-Shaw cell occupying a bounded domain $\Omega \subset \mathbb{R}^d$, $d=2,3$,  can be described through a diffuse interface model which reduces to the following system in the Boussinesq approximation  (see \cite{LLG1,LLG2})
\begin{align}
& \eta(\varphi)\mathbf{u} + \nabla\pi=\mu\nabla\varphi  \label{Sy001}\\
& \mbox{div}(\mathbf{u})=0 \label{Sy002}\\
& \varphi_t+ \mathbf{u} \cdot\nabla\varphi=\mbox{div}(m(\varphi)\nabla\mu) \label{Sy003} \\
& \mu=-\Delta \varphi + F^\prime(\varphi) - \vartheta_0 \varphi \label{Sy004}
\end{align}
in $Q_T := \Omega \times (0,T)$.  Here $\varphi: \Omega \times [0,T] \to [-1,1]$ is the relative concentration difference
$\mathbf{u}: \Omega \times [0,T] \to \mathbb{R}^d$ the volume averaged fluid velocity, and $\eta(\cdot)$ is the kinematic viscosity given by
\begin{equation}
\eta(s)= \nu_1 \frac{1+s}{2} + \nu_2\frac{1-s}{2}, \quad s \in [-1,1], \label{visc}
\end{equation}
where $\nu_1>0$ and $\nu_2>0$ are the (constant) viscosities of the two fluids. The function $F$ is the mixing entropy density, namely
\begin{equation}
F(s)=\frac{\vartheta}{2}\big((1+s)\log(1+s)+(1-s)\log(1-s)\big), \quad s \in (-1,1), \label{mixentr}
\end{equation}
where $\vartheta>0$ is the absolute temperature and $\vartheta_0>\vartheta$ is the critical temperature.
Moreover, $\pi$ is the pressure and $m(\cdot) \geq 0$ is the mobility. Some other constants have been set equal to unity and gravity has been neglected for the sake of simplicity (see \cite[Sec.7]{Gio}). It is worth recalling that the original model is typically two dimensional. Nonetheless, in three dimensions, the system can model fluid flow in a porous medium and, in particular, it is used in solid tumor growth modeling (see \cite{DGL,GaLa,Gio} and references therein).

In this setting, i.e., without approximating \eqref{mixentr} with a regular double well potential and taking no-flux boundary conditions, the only theoretical results available so far were proven in \cite{Gio} for constant mobility. More precisely, in two spatial dimensions, the author
established the existence of a weak solution, its conditional uniqueness as well as the existence and uniqueness of strong solutions. Instead, in three spatial
dimensions, existence and uniqueness of a strong solution were proven locally in time or for small initial data. The case $\eta$ constant was formerly analyzed in \cite{GGW}.
Previous results for $\eta$ non-constant were only known for regular potentials, that is, smooth approximations defined on $\mathbb{R}$ of the singular potential $W(s) = F(s) - \frac{\theta_0}{2}s^2$ (see \cite{WW,WZ} and also \cite{DGL}). For a detailed analysis of contributions in the case $\eta$ constant with regular potential, we refer the reader to \cite{Gio}. However, as is well known, in such cases it is not possible to ensure the physical requirement $\varphi \in [-1,1]$.
Open issues for system \eqref{Sy001}-\eqref{Sy004} are the uniqueness of weak solutions and the existence of global strong solutions in dimension three (even in the case of constant $\eta$).

An alternative system is based on the nonlocal Cahn-Hilliard equation. In this case, the standard local free energy
$$
E(\varphi) = \int_\Omega \left(\frac{\vert \nabla \varphi\vert^2}{2} + W(\varphi)\right) dx,
$$
whose functional derivative is the chemical potential $\mu$, is replaced by the nonlocal free energy
$$
\mathcal{E}(\varphi) = -\frac{1}{2}\int\int_{\Omega\times\Omega} J(x-y)\varphi(x)\varphi(y)dxdy + \int_\Omega F(\varphi)dx,
$$
where $J:\mathbb{R}^d \to \mathbb{R}$ is a suitable interaction kernel such that
\begin{equation}
J(x)=J(-x). \label{kernel}
\end{equation}
Note that the nonlocal term represents the demixing effects which compete with the entropy mixing
(see \cite{GL1,GL2,GL3} for a macroscopic derivation from a microscopic model in a periodic context, see also the discussion in \cite{Fr}).
Then, taking $\mu$ as the functional derivative of $\mathcal{E}$ we obtain the nonlocal version of the
Cahn-Hilliard-Hele-Shaw system
\begin{align}
& \eta(\varphi)\mathbf{u} + \nabla\pi=\mu\nabla\varphi  \label{Sy01}\\
& \mbox{div}(\mathbf{u})=0 \label{Sy02}\\
& \varphi_t+\mathbf{u}\cdot\nabla\varphi=\mbox{div}(m(\varphi)\nabla\mu) \label{Sy03} \\
& \mu=-J\ast\varphi + F^\prime(\varphi) \label{Sy04}
\end{align}
in $Q_T$.
This system was analyzed in \cite{DPGG} in the case of constant viscosity and mobility. In particular, the global
well-posedness of weak solutions and the existence of global strong solutions were established also in dimension three.
An improvement with respect to what is known for the corresponding system \eqref{Sy001}-\eqref{Sy004}.

In the present contribution we take a step further by considering non-constant viscosity and degenerate mobility,
that is,
\begin{equation}
m(s)=1-s^2, \qquad s\in [-1,1]. \label{degmob}\\
\end{equation}
More precisely, our goal is to analyze \eqref{Sy01}-\eqref{Sy04} equipped with following boundary and initial conditions
\begin{align}
&\mathbf{u}\cdot\mathbf{n} = 0, \quad m(\varphi)\frac{\partial\mu}{\partial \mathbf{n}}=0, \qquad\mbox{on }
\Gamma \times (0,T),\label{sy5}\\
& \varphi(0)=\varphi_0, \qquad\mbox{in }\Omega, \label{sy6}
\end{align}
where $\mathbf{n}$ is the outward normal to $\Gamma:=\partial\Omega$ and $\varphi_0$ is a given initial condition.

We first prove the existence of a global weak solution which satisfies an energy identity (see Section \ref{sec:weakex}).
The existence of a global strong solution is then analyzed in Section \ref{sec:strongex}. As we shall see, the combination of degenerate mobility and singular potential
will play a basic role (cf. \cite{FGG2, FGGS} and references therein). In order to carry out our existence argument, we need an unexpected ingredient, that is, the spatial H\"{o}lder regularity of the pressure. This is obtained by means of a celebrated De Giorgi's result. Moreover, we need an existence result on the convective nonlocal Cahn-Hilliard equation which is a refinement of a previous one contained in \cite{FGGS,FGR} (see Section \ref{cnCH}).
Section \ref{sec:regpiu} is devoted to establish further regularity properties for $\pi$ and $\mathbf{u}$. These technical results are helpful, in particular,
to prove a conditional weak-strong uniqueness in Section \ref{sec:unique} for the three-dimensional case. The uniqueness issue is open for weak solutions even in the two-dimensional case (cf. \cite{FGG1} for the nonlocal Cahn-Hilliard-Navier-Stokes system). We can prove weak-strong uniqueness in dimension two. In three dimensions the result is conditional. More precisely, we need to require that the pressure of the strong solution is $\alpha$-H\"{o}lder continuous in space with $\alpha\in (1/5,1)$. On the other hand, if $\eta$ is constant then uniqueness of weak solutions holds. Section \ref{sec:conrem} is devoted to some comments on possible further investigations.
Section \ref{sec:GNineq} is an appendix containing some Gagliardo-Nirenberg type estimates which are mostly used in Section \ref{sec:regpiu}.

\section{Notation and useful results}
\setcounter{equation}{0}
\label{sec:notation}

Here we introduce some notation and we report some results which will be used in the sequel.
From now on $\Gamma$ will be smooth enough.

We set
\begin{align}
&\mathcal{V} := \{\mathbf{v} \in C^\infty_c(\overline{\Omega})^d:\mbox{ div}(\mathbf{v})=0\}, \\
&G_{div}:=\overline{\mathcal{V}}^{L^2(\Omega)^d}, \quad L^r_{div}(\Omega)^d= G_{div} \cap L^r(\Omega)^d, \; r>2,\\
&V_{div}:= G_{div} \cap H^1(\Omega)^d,\\
&V_{0,div}:= G_{div} \cap H^1_0(\Omega)^d, \quad V^r_{0,div}(\Omega)^d= L^r_{div}(\Omega)^d \cap W^{1,r}_0(\Omega)^d, \; r> 2.
\end{align}
For the sake of brevity, we also set $H:=L^2(\Omega)$, $H^d:=L^2(\Omega)^d$, $H^{d\times d}:=L^2(\Omega)^{d\times d}$,
denoting by $\|\cdot\|$ and $(\cdot,\cdot)$ the norm and the
scalar product, respectively, on $H$, $H^d$ or $H^{d\times d}$.
Moreover, we define  $V:=H^1(\Omega)$ and $V^d:=H^1(\Omega)^d$.

If $X$ is a (real) Banach space, $X'$ indicates its dual
and $\langle\cdot,\cdot\rangle$ stands
for the duality pairing between $X$ and $X'$.
For every $f\in V'$ we denote by $\overline{f}$ the average of $f$
over $\Omega$, i.e., $\overline{f}:=|\Omega|^{-1}\langle
f,1\rangle$. Here $|\Omega|$ is the Lebesgue measure of $\Omega$.
Let us introduce also the spaces $H_0:=\{v\in
H: (v,1)=0\}$, $V_0:= V \cap H_0$ and $ V_0':=\{f\in V':\langle f,1\rangle=0\}$. We note that
the dual space $(V_0)^\prime$ can be proven to be linearly isomorphic to $V_0'$.
The linear operator $A:V\to V'$, $A\in\mathcal{L}(V,V')$, is defined by
$$\langle Au,v\rangle:=\int_{\Omega}\nabla u\cdot\nabla v,\qquad\forall u,v\in V.$$
We recall that $A$ maps $V$ onto $V_0'$ and the restriction of $A$
to $V_0$ maps $V_0$ onto $V_0'$ isomorphically. Let us denote by
$\mathcal{N}:V_0'\to V_0$ the inverse map defined by
$$A\mathcal{N}f=f,\quad\forall f\in V_0'\qquad\mbox{and}\qquad\mathcal{N}Au=u,\quad\forall u\in V_0.$$
As is well known, for every $f\in V_0'$, $\mathcal{N}f$ is the
unique solution with zero mean value of the Neumann problem
\begin{equation*}
\left\{\begin{array}{ll}
-\Delta u=f,\qquad\mbox{in }\Omega\\
\frac{\partial u}{\partial \mathbf{n}}=0,\qquad\mbox{on }\Gamma.
\end{array}\right.
\end{equation*}
Furthermore, the following relations hold
\begin{align}
&\langle Au,\mathcal{N}f\rangle=\langle f,u\rangle,\qquad\forall u\in V,\quad\forall f\in V_0',\nonumber\\
&\langle f,\mathcal{N}g\rangle=\langle
g,\mathcal{N}f\rangle=\int_{\Omega}\nabla(\mathcal{N}f)
\cdot\nabla(\mathcal{N}g),\qquad\forall f,g\in V_0'.\nonumber
\end{align}
Recall that $A$ can be also viewed as an unbounded operator
$A:D(A)\subset H\to H$ where $D(A)=\{\phi\in
H^2(\Omega):\frac{\partial\phi}{\partial \mathbf{n}}=0\mbox{ on }\Gamma\}$.
The operator $A$ has a non-decreasing sequence of eigenvalues $\{\lambda_j\}_{j\in \mathbb{N}_0}$ such that $\lambda_1=0$ and $\lambda_j>0$
for all $j\geq 2$. The corresponding eigenfunctions $\{w_j\}_{j\in \mathbb{N}_0}$ form an orthonormal basis of $H$ and they are orthogonal
in $V$. Moreover, any non-zero constant is an eigenfunction associated with $\lambda_1$. We can take $w_1= \vert \Omega \vert^{-1/2}$ so that $\Vert w_1\Vert=1$.

Here below we report a crucial result for our analysis, namely a general result on the H\"{o}lder regularity of solutions to the Neumann problem:
\begin{align}
&-\big(a_{ij}u_{x_i}\big)_{x_j}=\mbox{div}\,\mathbf{f}-f\,,\qquad\mbox{in }\Omega\,,\label{Neu1}\\[1mm]
&(a_{ij}u_{x_i}+f_j)n_j=\psi\,,\qquad\mbox{on }\Gamma\,,\label{Neu2}
\end{align}
where $\Omega\subset\mathbb{R}^d$ is a bounded smooth domain, $(a_{ij})$ is a $d\times d$ symmetric matrix
with entries $a_{ij}\in L^\infty(\Omega)$ satisfying the ellipticity condition
\begin{align}
&\Lambda_\ast|\xi|^2\leq a_{ij}(x)\xi_i\xi_j\leq \Lambda^\ast|\xi|^2\,,\qquad\forall\xi\in\mathbb{R}^d
\quad\mbox{and for a.e. }x\in\Omega\,,\label{ell-cond}
\end{align}
for some $0<\Lambda_\ast\leq\Lambda^\ast$.
Referring to the general theory of linear
elliptic equations with measurable coefficients we first recall that  (see, e.g., \cite[Chapter 9, Theorem 10.1]{DiBen})
\begin{prop}
\label{DiBenedetto_0}
If $\mathbf{f}\in L^2(\Omega)^d$, $f\in L^q(\Omega)$, $\psi\in L^{\hat{q}}(\Gamma)$,
where $q,\hat{q}>1$ if $d=2$ and $q=6/5$, $\hat{q}=4/3$ if $d=3$,
and the compatibility condition $\int_\Omega f=\int_{\Gamma}\psi$ is satisfied, then problem
\eqref{Neu1}-\eqref{Neu2} admits a weak solution $u\in H^1(\Omega)$, which is unique up to a constant.
\end{prop}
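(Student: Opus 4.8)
The plan is to read \eqref{Neu1}--\eqref{Neu2} as a coercive variational problem on the space $V_0$ of zero-mean $H^1$ functions and to invoke the Lax--Milgram lemma. First I would derive the weak formulation: testing \eqref{Neu1} with $v\in H^1(\Omega)$, integrating both the principal part and the term $\mathrm{div}\,\mathbf{f}$ by parts, and using \eqref{Neu2} to cancel the two boundary contributions $\int_\Gamma(a_{ij}u_{x_i}+f_j)n_j v$ against $\int_\Gamma\psi v$, one arrives at the problem of finding $u\in H^1(\Omega)$ such that
\begin{equation}
\int_\Omega a_{ij}u_{x_i}v_{x_j}\,dx=-\int_\Omega\mathbf{f}\cdot\nabla v\,dx-\int_\Omega fv\,dx+\int_\Gamma\psi v\,dS,\qquad\forall\,v\in H^1(\Omega).\label{Neu-weak}
\end{equation}
Taking $v\equiv 1$ in \eqref{Neu-weak} shows that the compatibility condition $\int_\Omega f=\int_\Gamma\psi$ is necessary; conversely, when it holds, the right-hand side of \eqref{Neu-weak} annihilates constants, so it suffices to solve \eqref{Neu-weak} for test functions $v\in V_0$ and then recover the full identity by decomposing any $v\in H^1(\Omega)$ as $v=\overline v+(v-\overline v)$ and observing that both sides vanish on the constant part.

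Next I would verify the hypotheses of Lax--Milgram for the bilinear form $B(u,v):=\int_\Omega a_{ij}u_{x_i}v_{x_j}$ and the functional $L(v):=-\int_\Omega\mathbf{f}\cdot\nabla v-\int_\Omega fv+\int_\Gamma\psi v$ on $V_0$. Boundedness of $B$ and its coercivity on $V_0$ follow at once from \eqref{ell-cond} and the Poincar\'e--Wirtinger inequality, which yields $B(v,v)\geq\Lambda_\ast\|\nabla v\|^2\geq c\|v\|_{V}^2$ for $v\in V_0$. Continuity of $L$ on $H^1(\Omega)$ is precisely where the exponent restrictions enter: the first term is bounded by $\|\mathbf{f}\|_{L^2}\|\nabla v\|$; for the second one uses the Sobolev embedding $H^1(\Omega)\hookrightarrow L^{q'}(\Omega)$, which holds for every $q>1$ when $d=2$ and requires $q'\leq 6$, i.e. $q\geq 6/5$, when $d=3$; for the third one uses the trace embedding $H^1(\Omega)\hookrightarrow L^{\hat q'}(\Gamma)$, valid for every $\hat q>1$ when $d=2$ and, since $\Gamma$ is a smooth $(d-1)$-manifold with $H^{1/2}(\Gamma)\hookrightarrow L^4(\Gamma)$ for $d=3$, requiring $\hat q'\leq 4$, i.e. $\hat q\geq 4/3$, when $d=3$. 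Lax--Milgram then produces a unique $u\in V_0$ satisfying \eqref{Neu-weak} for all $v\in V_0$, hence, by the reduction above, for all $v\in H^1(\Omega)$; adding an arbitrary constant gives the general solution, whence uniqueness up to a constant.

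The only delicate point is the bookkeeping of these embedding exponents — in particular matching $q=6/5$ and $\hat q=4/3$ to the three-dimensional Sobolev and trace inequalities so that $L$ is a bounded linear functional — but no genuine obstacle arises: this is the classical $L^2$-solvability theory for divergence-form elliptic operators with bounded measurable coefficients, and one may equally well quote it directly from \cite[Chapter 9, Theorem 10.1]{DiBen}.
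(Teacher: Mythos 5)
Your argument is correct and complete: the weak formulation, the necessity and use of the compatibility condition to reduce to the zero-mean space $V_0$, the coercivity via \eqref{ell-cond} and Poincar\'{e}--Wirtinger, and the exponent bookkeeping ($H^1(\Omega)\hookrightarrow L^6(\Omega)$ giving $q\geq 6/5$ and $H^{1/2}(\Gamma)\hookrightarrow L^4(\Gamma)$ giving $\hat q\geq 4/3$ when $d=3$) are all as they should be. The paper does not prove this proposition at all --- it simply quotes it from DiBenedetto --- and your Lax--Milgram argument is precisely the standard proof that the cited theorem encapsulates, so there is nothing to reconcile.
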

Moreover, we have the following H\"{o}lder continuity result, which also gives an estimate
of the H\"{o}lder norm of the solution in terms of the data of the problem (see \cite[Chapter 9, Theorem 18.3]{DiBen})
\begin{prop}\label{DiBenedetto}
Let $u\in H^1(\Omega)$ be a solution to the Neumann problem \eqref{Neu1}-\eqref{Neu2}, with
$\mathbf{f}$ and $f$ satisfying
\begin{align}\label{DiBen1}
&\mathbf{f}\in L^{d+\varepsilon}(\Omega)^d\,,\quad f\in L^{\frac{d+\varepsilon}{2}}(\Omega)\,,\quad\mbox{for some }\varepsilon>0\,,
\end{align}
and $\psi\in L^{d-1+\sigma}(\Gamma)$, for some $\sigma\in(0,1)$.
Then $u$ is H\"{o}lder continuous in $\overline{\Omega}$, and there exist  constants $\Theta>0$ and
$\alpha\in(0,1)$, depending on
$\Vert\mathbf{f}\Vert_{L^{d+\varepsilon}(\Omega)^d}, \Vert f\Vert_{L^{\frac{d+\varepsilon}{2}(\Omega)}},
\Vert\psi\Vert_{L^{d-1+\sigma}(\Gamma)}, \Lambda_\ast$, $\Lambda^\ast,\varepsilon,d$ and on the
$C^1$-smoothness of $\Gamma$, such that
\begin{align}\label{DiBen2}
&\Vert u\Vert_{C^\alpha(\overline{\Omega})}\leq
\Theta \big(\Vert\mathbf{f}\Vert_{L^{d+\varepsilon}(\Omega)^d}, \Vert f\Vert_{L^{\frac{d+\varepsilon}{2}}(\Omega)},
\Vert\psi\Vert_{L^{d-1+\sigma}(\Gamma)}, \Lambda_\ast,\Lambda^\ast,\varepsilon,\Gamma,d\big)\,.
\end{align}
\end{prop}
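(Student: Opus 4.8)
The statement is the classical De Giorgi--Nash--Moser regularity theorem for linear elliptic equations in divergence form with bounded measurable coefficients and (sub)critically integrable forcing, here under a conormal (Neumann-type) boundary condition; the plan is to sketch the proof along the lines of \cite[Chapter 9]{DiBen}. First I would localize and flatten the boundary: since $\Gamma$ is smooth, near any boundary point a $C^1$ change of variables transforms \eqref{Neu1}--\eqref{Neu2} into a problem of the same structure on a half-ball $B_{2r}^+$, with the conormal condition prescribed on the flat portion $T_{2r}\subset\{x_d=0\}$, and with the ellipticity constants in \eqref{ell-cond} and the integrability exponents of $\mathbf{f},f,\psi$ preserved up to harmless constants. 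On such a half-ball one extends $u$, the coefficients $(a_{ij})$, and the data by reflection in $x_d$: the weak form of the conormal condition \eqref{Neu2} is precisely what makes the reflected function a weak solution, on the full ball, of an equation of the same type with still uniformly elliptic, bounded coefficients. Thus the boundary estimates reduce to the interior ones, and it suffices to treat balls $B_{2r}\subset\Omega$. Since the Neumann problem determines $u$ only up to an additive constant, $C^\alpha(\overline{\Omega})$ is read here as the corresponding seminorm (equivalently, for the zero-mean representative, whose $L^2$ norm is in turn estimated by the data via Proposition \ref{DiBenedetto_0}).

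The heart of the matter is the De Giorgi iteration. Testing the equation with the truncation $(u-k)^+\zeta^2$, where $\zeta$ is a standard cutoff supported in $B_{2r}$, and using \eqref{ell-cond}, one obtains a Caccioppoli-type inequality
\begin{equation*}
\int_{B_{2r}} |\nabla(u-k)^+|^2\zeta^2
\;\leq\; C \int_{B_{2r}} \Big( |\nabla\zeta|^2\,\big((u-k)^+\big)^2
+ \big(|\mathbf{f}|^2 + |f|\,(u-k)^+\big)\,\chi_{\{u>k\}}\,\zeta^2 \Big),
\end{equation*}
together with the symmetric one for $(u-k)^-$. Here the hypotheses \eqref{DiBen1}, namely $\mathbf{f}\in L^{d+\varepsilon}$ and $f\in L^{(d+\varepsilon)/2}$, are exactly what is needed so that, combining Sobolev's inequality with H\"older's inequality, the forcing terms are controlled by the measure $|\{u>k\}\cap B_r|$ with a strictly positive power to spare. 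A geometric iteration over increasing levels $k_n$ and shrinking radii $r_n\downarrow r$ then yields a local bound of the form $\|u\|_{L^\infty(B_r)}\leq C\big(\|u\|_{L^2(B_{2r})} + \|\mathbf{f}\|_{L^{d+\varepsilon}} + \|f\|_{L^{(d+\varepsilon)/2}}\big)$, with $C$ depending only on the quantities listed in the statement.

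With $u$ locally bounded, the second stage proves oscillation decay: for dyadic radii one shows
\begin{equation*}
\operatorname{osc}_{B_r} u \;\leq\; \gamma\,\operatorname{osc}_{B_{2r}} u \;+\; C\,r^{\delta}
\end{equation*}
for some $\gamma\in(0,1)$ and $\delta>0$. The contractive factor $\gamma$ comes from a De Giorgi isoperimetric (or weak Harnack) lemma applied to the super- and sub-level sets of $u$ on $B_{2r}$, while the additive term $Cr^\delta$ is produced by the forcing and the boundary datum: here $f\in L^{(d+\varepsilon)/2}$ and $\psi\in L^{d-1+\sigma}(\Gamma)$ yield Morrey-type decays $\int_{B_r}|f|\leq C r^{d-2+\delta}$ and $\int_{\Gamma\cap B_r}|\psi|\leq C r^{d-1-\delta'}$, which keep the perturbation subcritical at every scale. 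Iterating the displayed inequality over dyadic scales gives $\operatorname{osc}_{B_r} u \leq C r^\alpha$ for some $\alpha\in(0,1)$, that is $u\in C^\alpha(\overline{\Omega})$ together with the quantitative estimate \eqref{DiBen2}; the explicit dependence of $\Theta$ on the data is recovered by bookkeeping through the finitely many iteration steps.

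I expect the main obstacle to be the boundary analysis rather than the (standard) interior iteration: one must verify carefully that, after flattening, the weak conormal condition \eqref{Neu2} does turn the even extension of $u$ into an honest weak solution of a uniformly elliptic equation across $\{x_d=0\}$, and that the transported boundary datum $\psi$ contributes only the decaying term $Cr^\delta$ above and nothing critical. The remaining ingredients --- the Caccioppoli inequalities, the level-set iterations, and the tracking of constants --- are routine though lengthy, and constitute the content of \cite[Chapter 9, Theorems 10.1 and 18.3]{DiBen}.
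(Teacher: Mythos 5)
The paper does not prove this proposition at all: it is quoted verbatim from the literature (DiBenedetto, \emph{Partial Differential Equations}, Chapter 9, Theorem 18.3), and your sketch is precisely an outline of the De Giorgi argument carried out in that reference — boundary flattening and reflection, Caccioppoli inequalities for truncations, the level-set iteration for local boundedness, and the dyadic oscillation decay driven by the subcritical integrability of $\mathbf{f}$, $f$ and $\psi$. Your proposal is therefore correct and follows essentially the same (indeed, the cited) approach.
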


We will also make use of a general $W^{s,p}(\Omega)-$regularity result for the elliptic problem
\begin{align}
 \Delta u=&f\,,\quad\mbox{in }\Omega\,,\label{ell-reg1}\\
  \frac{\partial u}{\partial \mathbf{n}}=&g\,,\quad\mbox{on }\Gamma\,.\label{ell-reg2}
\end{align}

\begin{prop}
\label{preg}
 Assume that $t,r,s\in\mathbb{R}$, $1<p<\infty$, and either $p=2$ or $s-1/p$ is not an integer, and let $f\in W^{r,p}(\Omega)$.
 If $g\in W^{t,p}(\Gamma)$, $r+1\geq 1/p$, and $s=\min\{r+2,t+1+1/p\}$, then any solution $u$ to \eqref{ell-reg1}-\eqref{ell-reg2}
 belongs to $W^{s,p}(\Omega)$. Moreover, if the set of solutions
 is not empty, the following estimate holds
 $$\inf \Vert u\Vert_{W^{s,p}(\Omega)}\leq C(\Vert f\Vert_{W^{r,p}(\Omega)}+\Vert g\Vert_{W^{t,p}(\Gamma)})\,,$$
 where the infimum is taken over all solutions $u$ and $C>0$ is independent of $f$ and $g$.
\end{prop}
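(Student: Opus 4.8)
The plan is to derive the statement from the classical $L^p$-theory of second order elliptic boundary value problems, lifted to the full Sobolev scale $W^{s,p}$ by interpolation and duality, and then to close the argument by a finite bootstrap on the regularity index. Throughout, $p'$ denotes the conjugate exponent of $p$.

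First I would reduce to a homogeneous boundary condition. Since $\Gamma$ is smooth, the Neumann trace $w\mapsto\partial w/\partial\mathbf{n}$ admits a bounded right inverse from $W^{t,p}(\Gamma)$ into $W^{t+1+1/p,p}(\Omega)$; let $u_g$ be the image of $g$, so that $\Vert u_g\Vert_{W^{t+1+1/p,p}(\Omega)}\leq C\Vert g\Vert_{W^{t,p}(\Gamma)}$. Then $v:=u-u_g$ solves $\Delta v=\widetilde f$ in $\Omega$ with $\partial v/\partial\mathbf{n}=0$ on $\Gamma$, where $\widetilde f:=f-\Delta u_g\in W^{\widetilde r,p}(\Omega)$, $\widetilde r:=\min\{r,\,t-1+1/p\}$, and $\Vert\widetilde f\Vert_{W^{\widetilde r,p}(\Omega)}\leq C(\Vert f\Vert_{W^{r,p}(\Omega)}+\Vert g\Vert_{W^{t,p}(\Gamma)})$. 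Since $s=\widetilde r+2\leq t+1+1/p$ and $W^{t+1+1/p,p}(\Omega)\hookrightarrow W^{s,p}(\Omega)$, it suffices to prove that $v\in W^{s,p}(\Omega)$ with $\inf_{c\in\mathbb{R}}\Vert v+c\Vert_{W^{s,p}(\Omega)}\leq C\Vert\widetilde f\Vert_{W^{\widetilde r,p}(\Omega)}$; the additive constant is harmless for the final statement, which already involves the infimum over solutions.

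The core step is the following a priori estimate for the homogeneous Neumann Laplacian: for every admissible index $\sigma$ (that is, $p=2$, or $\sigma+2-1/p\notin\mathbb{Z}$), if $\Delta v\in W^{\sigma,p}(\Omega)$ and $\partial v/\partial\mathbf{n}=0$ on $\Gamma$, then $v\in W^{\sigma+2,p}(\Omega)$ and $\inf_{c}\Vert v+c\Vert_{W^{\sigma+2,p}(\Omega)}\leq C\Vert\Delta v\Vert_{W^{\sigma,p}(\Omega)}$. For a nonnegative integer $\sigma$ this is the Agmon--Douglis--Nirenberg estimate: the pair $(\Delta,\ \partial/\partial\mathbf{n})$ is properly elliptic and satisfies the complementing (Lopatinski--Shapiro) condition, and its kernel reduces to the constants, so the bound holds modulo constants; higher integer orders follow by differentiation and iteration. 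For a real $\sigma\geq0$ one interpolates, by the complex method, between the two nearest integer levels. For $\sigma<0$ one argues by transposition: the homogeneous Neumann Laplacian is formally self-adjoint and, on the mean-zero subspaces, the pairing identifies the dual of $W^{\tau,p}(\Omega)$ with $W^{-\tau,p'}(\Omega)$, so the bound for negative $\sigma$ follows by duality from the one already established with the positive index $-\sigma$ and exponent $p'$. It is exactly here that the compatibility pairing $\langle\Delta v,1\rangle$ has to make sense, which is precisely the role of the hypothesis $r+1\geq1/p$.

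Finally, I would run a bootstrap. Any solution $u$ carries a minimal base regularity, namely it lies in some $W^{\sigma_0,p}(\Omega)$ for which the equation and the boundary condition are meaningful (in the weak formulation, $u\in H^1(\Omega)$, cf. Proposition~\ref{DiBenedetto_0}). Applying the core estimate finitely many times --- raising $\sigma$ by one at each stage, with a final fractional step, as long as $\sigma\leq\widetilde r$ --- one obtains $v\in W^{\widetilde r+2,p}(\Omega)=W^{s,p}(\Omega)$ with the desired bound, and combining this with the first step proves the claim. I expect the main obstacle to be the core step, that is, the passage from the integer-order ADN estimate to fractional and, especially, negative Sobolev indices, where the functional setting (the mean-zero subspaces, their duals, and their interplay with the compatibility condition encoded in $r+1\geq1/p$) must be set up carefully; the remainder is bookkeeping on the indices.
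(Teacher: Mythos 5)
The paper itself offers no proof of this proposition: it is quoted as a known result from the literature (it is the Neumann counterpart of the $W^{s,p}$ elliptic regularity theorem of Amann, \cite{Am} in the bibliography), so there is no in-paper argument to compare yours against. Your sketch follows the standard route to such a statement, and its architecture is sound: lift the boundary datum, prove the a priori estimate for the homogeneous Neumann Laplacian on the whole scale by combining the Agmon--Douglis--Nirenberg estimates at nonnegative integer levels with interpolation and transposition, and then bootstrap from the $H^1$ base regularity of a weak solution. Your reading of the hypothesis $r+1\geq 1/p$ is also essentially correct: it guarantees that elements of $W^{r,p}(\Omega)$ pair with functions that do not vanish on $\Gamma$, so that the very weak (transposed) formulation and the compatibility functional $\langle f,1\rangle$ are meaningful.

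Two points in the core step need more care than the sketch gives them. First, complex interpolation between the integer-order spaces $W^{k,p}$ and $W^{k+1,p}$ produces the Bessel potential spaces $H^{\sigma,p}$, which for $p\neq 2$ and non-integer $\sigma$ do \emph{not} coincide with the Sobolev--Slobodeckij spaces $W^{\sigma,p}=B^{\sigma}_{p,p}$ appearing in the statement; to stay in the $W^{\sigma,p}$ scale you must use the real method $(\cdot,\cdot)_{\theta,p}$ (or work throughout in the Besov scale), and this is exactly where the exclusion of integer values of $s-1/p$ (or the alternative $p=2$) enters in a non-cosmetic way. Second, the reduction to a homogeneous boundary condition via a right inverse of the Neumann trace presupposes that the normal derivative is a well-defined trace on $W^{t+1+1/p,p}(\Omega)$, which requires $t>0$; for $t\leq 0$ (allowed by the statement) the lifting itself must be replaced by a transposition argument. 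Neither issue is fatal --- both are repaired by standard techniques --- but as written the proposal would not survive refereeing at these two junctures.
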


Finally, the following notation will turn out to be convenient. If $A$ is a real number, we denote by $A^-$
any constant $B$ arbitrarily close to $A$, such that $B<A$, and by $A^+$ any
constant $B$ arbitrarily close to $A$, such that $B>A$.

\section{Existence of weak solutions}\setcounter{equation}{0}
\label{sec:weakex}
In this section we first introduce the basic assumptions which are needed to prove the existence of a global weak solution.
Then we define the weak formulation and we state and prove the first existence result.

Our assumptions on $\eta$, $J$, $m$, and $F$ read as follows. Note that the ones on $\eta$, $m$, and $F$ are slightly more general than the ones
specified in the Introduction.

\begin{description}
\item[(H1)] $\eta \in C^{0,1}([-1,1])$ and there exists $\eta_1>0$ such that
\begin{equation*}
\eta (s) \geq \eta _{1}\,,\qquad {\color{black}\forall s\in [-1,1]\,.}
\end{equation*}

\item[(H2)] \sergio{$J\in W^{1,1}_{loc}(\mathbb{R}^d)$}, $J(x)=J(-x)$ for almost any $x\in\Omega$, \sergio{and $J$ satisfies
\begin{equation*}
a:=\sup_{x\in \Omega }\int_{\Omega }|J(x-y)|\,dy<\infty \,,\qquad
b:=\sup_{x\in \Omega }\int_{\Omega }|\nabla J(x-y)|\,dy<\infty \,.
\end{equation*}
}
\item[(H3)] $m\in C^{0,1}([-1,1])$ is nonnegative and
$m(s)=0$ if and only if $s=\pm 1$. Moreover, there exists $\sigma_0>0$ such that $m$ is
nonincreasing in $%
[1-\sigma_0,1]$ and nondecreasing in $[-1,-1+\sigma_0]$.

\item[(H4)] $F\in C^{2}((-1,1))$ and $\lambda :=mF^{\prime \,\prime }\in C([-1,1])$.

\item[(H5)] There exists some $\sigma _{0}>0$ such that $%
F^{\,\prime \prime }$ is nondecreasing in $[1-\sigma _{0},1)$ and %
nonincreasing in $(-1,-1+\sigma _{0}]$.

\item[(H6)] There exists some $c_{0}>0$ such that
\begin{equation*}
F^{\,\prime \prime }(s)\geq c_{0}\,,\qquad \forall s\in (-1,1)\,\,.
\end{equation*}

\item[(H7)] There exists some $\alpha _{0}>0$ such that
\begin{equation*}
m(s)F^{\,\prime \prime }(s)\geq \alpha _{0}\,,\qquad \forall s\in \lbrack
-1,1]\,.
\end{equation*}
\end{description}

\begin{oss}
\label{lambdaconst}
An interaction kernel which satisfies (\textbf{H2}) is the Newtonian
kernel, namely $J(x)=j_3|x|^{-1}$ if $d=3$, and
$J(x)=-j_2\ln|x|$, if $d=2$, where $j_2$ and $j_3$ are positive constants.
Moreover, in (\textbf{H4}), $\lambda$
must be understood as continuously extended at the endpoints.
It is worth observing that the assumptions (\textbf{H3})-(\textbf{H7}) are satisfied, for instance,
by \eqref{mixentr} and \eqref{degmob}. Note that, in this case, $\lambda$ is constant.
\end{oss}

The notion of weak solution is defined by

\begin{defn}
\label{wfdef} Let $\varphi _{0}\in L^\infty(\Omega)$ with $F(\varphi _{0})\in L^{1}(\Omega )$ and $0<T<\infty $ be given.
A triplet $[\boldsymbol{u},\pi, \varphi]$ is called weak solution to \eqref{Sy01}-\eqref{Sy04} and \eqref{sy5}-\eqref{sy6} on $[0,T]$ if
\begin{align*}
& \mathbf{u}\in L^2(0,T;G_{div})\,, \\
& \pi\in L^2(0,T;V_0)\,,\\
& \varphi \in L^{\infty }(0,T;L^p(\Omega))\cap L^{2}(0,T;V)\,, \quad \forall\,p\in [2,\infty)\,, \\
& \varphi _{t}\in L^{2}(0,T;V^{\,\prime })\,,\\
& \varphi \in L^{\infty }(Q_{T})\,,\quad |\varphi (x,t)|\leq 1\quad \mbox{ for a.e. }(x,t)\in Q_{T}\,,
\end{align*}
and
\begin{align}
& \eta(\varphi)\mathbf{u}= -\nabla\pi -(J\ast \varphi )\nabla \varphi,\quad\hbox{ a.e. in }\,Q_T \,, \label{E1}\\
& \langle \varphi _{t},\psi \rangle _{V}+ (m(\varphi )F^{\prime\prime }(\varphi )\nabla \varphi,\nabla \psi)
-(m(\varphi )\nabla J\ast \varphi,\nabla \psi)=(\mathbf{u}\varphi
,\nabla \psi ), \nonumber \\
&\qquad\qquad\qquad\qquad\qquad\qquad\qquad\forall\, \psi \in V \hbox{ and a.e. in }\,(0,T)\,, \label{E2}\\
&\varphi (0)=\varphi _{0}, \quad\hbox{ a.e. in }\,\Omega. \label{I1}
\end{align}
\end{defn}

\begin{oss}
Observe that the regularity properties of the weak solution entail that
$\varphi\in C([0,T];H)$. Therefore the initial condition $\varphi(0)=\varphi_0$ makes sense.
Moreover, note that any weak solution is such that the total mass is conserved, namely
$$
\overline{\varphi}(t)=\overline{\varphi }_{0},
$$
for any $t\in [0,T]$.
\end{oss}

\begin{oss}
Looking at \eqref{E2} and recalling (\textbf{H7}), it is clear that the combination of degenerate mobility and singular potential helps since one deals
with a non-local but non-degenerate parabolic equation for $\varphi$ (see also \cite{FGGS}). On the contrary, if the mobility is constant the analysis requires more care (cf. \cite{DPGG}).
\end{oss}

Let $M\in C^{2}((-1,1))$ be the solution to $m(s)M^{\,\prime \prime }(s)=1$ for all $s\in (-1,1)$ with $M(0)=M^{\,\prime }(0)=0$.
Then the existence of a weak solution is given by

\begin{thm}
\label{existweak} Assume that \textbf{(H1)}-\textbf{(H7)} hold.
Let $\varphi _{0}\in L^{\infty }(\Omega )$ be such that $F(\varphi _{0})\in
L^{1}(\Omega )$ and $M(\varphi _{0})\in L^{1}(\Omega )$, where $M$ is defined as above.
Then, for every $T>0$, there exists a weak solution $[\mathbf{u},\pi,\varphi]$
to  \eqref{Sy01}-\eqref{Sy04} and \eqref{sy5}-\eqref{sy6} on $[0,T]$. This
weak solution satisfies the following energy identity
\begin{align}
& \frac{1}{2}\frac{d}{dt}\Vert \varphi
\Vert ^{2} +2\,\Vert \sqrt{\eta (\varphi )}\mathbf{u}\Vert
^{2}+\int_{\Omega }m(\varphi )F^{\,\prime \prime }(\varphi )|\nabla \varphi
|^{2}dx  \notag \\
& =\int_{\Omega }m(\varphi )\nabla J\ast \varphi \cdot \nabla \varphi
dx+\int_{\Omega }(-J\ast \varphi )\,\mathbf{u}\cdot \nabla \varphi
dx\,,  \label{energeq}
\end{align}%
for almost any $t>0$.

\end{thm}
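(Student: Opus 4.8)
The plan is to prove existence by decoupling the system: solve the Darcy part for $(\mathbf{u},\pi)$ in terms of $\varphi$ by elliptic theory, solve the convective nonlocal Cahn--Hilliard equation for $\varphi$ in terms of $\mathbf{u}$ by the refined result of Section \ref{cnCH}, close the loop by a fixed-point argument, derive uniform a priori estimates via an energy inequality, and pass to the limit; the energy identity \eqref{energeq} is then obtained directly for the limit solution.

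\textbf{Step 1 (decoupling and approximation).} Given $\varphi$ in a bounded convex subset of $L^2(0,T;V)\cap L^\infty(0,T;H)$ with $\varphi_t$ bounded in $L^2(0,T;V')$ and $|\varphi|\le1$, eliminate $\mathbf{u}=-\eta(\varphi)^{-1}(\nabla\pi+(J\ast\varphi)\nabla\varphi)$ from \eqref{Sy01}--\eqref{Sy02} and \eqref{sy5}: the (modified) pressure solves the mixed Neumann problem
\begin{equation*}
-\mathrm{div}\big(\eta(\varphi)^{-1}\nabla\pi\big)=\mathrm{div}\big(\eta(\varphi)^{-1}(J\ast\varphi)\nabla\varphi\big)\ \text{ in }\Omega,\qquad \eta(\varphi)^{-1}\big(\nabla\pi+(J\ast\varphi)\nabla\varphi\big)\cdot\mathbf{n}=0\ \text{ on }\Gamma,
\end{equation*}
which has the form \eqref{Neu1}--\eqref{Neu2} with $a_{ij}=\eta(\varphi)^{-1}\delta_{ij}$ uniformly elliptic --- its ellipticity constants in \eqref{ell-cond} depending only on $\eta_1$ and $\|\eta\|_\infty$ by \textbf{(H1)} --- with $\mathbf{f}=\eta(\varphi)^{-1}(J\ast\varphi)\nabla\varphi\in L^2(\Omega)^d$ (using $\|J\ast\varphi\|_\infty\le a$ from \textbf{(H2)} and $|\varphi|\le1$, together with $\varphi\in V$) and $f=\psi=0$; Proposition \ref{DiBenedetto_0} then gives a unique $\pi\in V_0$, hence $\mathbf{u}\in G_{div}$, with $\|\pi\|_{V_0}+\|\mathbf{u}\|\le C\|\nabla\varphi\|$ a.e.\ in $(0,T)$. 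With this $\mathbf{u}\in L^2(0,T;G_{div})$, solve \eqref{E2}--\eqref{I1} by the refined existence result for the convective nonlocal Cahn--Hilliard equation of Section \ref{cnCH}, which, given $\mathbf{u}$, provides $\varphi$ with the full regularity of Definition \ref{wfdef}, the constraint $|\varphi|\le1$, and a priori bounds depending only on the data and $\|\mathbf{u}\|_{L^2(0,T;G_{div})}$; if that result requires a velocity field smoother than $L^2(0,T;G_{div})$, I would first mollify $\mathbf{u}$ in space, obtaining a family of approximating problems. Closing this loop by a Schauder-type fixed-point theorem --- the compactness coming from the bound on $\varphi_t$ in $L^2(0,T;V')$ via Aubin--Lions --- produces an (approximate) solution.

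\textbf{Step 2 (uniform estimates).} Testing \eqref{E2} with $\psi=\varphi$ kills the convective term, since $(\mathbf{u}\varphi,\nabla\varphi)=\tfrac12\int_\Omega\mathbf{u}\cdot\nabla(\varphi^2)=0$ by $\mathrm{div}\,\mathbf{u}=0$ and $\mathbf{u}\cdot\mathbf{n}=0$; using \textbf{(H7)} (so $mF''\ge\alpha_0$), $\|m\|_\infty\le1$, the bound $\|\nabla J\ast\varphi\|\le b\,\|\varphi\|$ from \textbf{(H2)} and Young's inequality gives $\tfrac{d}{dt}\|\varphi\|^2+c_1\|\nabla\varphi\|^2\le C\|\varphi\|^2$ for some $c_1,C>0$, whence $\varphi$ is bounded in $L^\infty(0,T;H)\cap L^2(0,T;V)$; testing \eqref{E1} with $\mathbf{u}$ and using $\int_\Omega\nabla\pi\cdot\mathbf{u}=0$ for $\mathbf{u}\in G_{div}$, $\pi\in V_0$, gives $\|\sqrt{\eta(\varphi)}\mathbf{u}\|^2=\int_\Omega(-J\ast\varphi)\mathbf{u}\cdot\nabla\varphi$, hence (by \textbf{(H1)}) $\mathbf{u}$ bounded in $L^2(0,T;G_{div})$ and then $\pi$ bounded in $L^2(0,T;V_0)$. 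The $L^\infty(0,T;L^p)$-bounds, the constraint $|\varphi|\le1$ --- recovered through the entropy function $M$, which is precisely why the hypothesis $M(\varphi_0)\in L^1(\Omega)$ is assumed --- and the bound on $\varphi_t$ in $L^2(0,T;V')$ come from the Cahn--Hilliard result of Section \ref{cnCH}; all bounds are uniform with respect to the approximation.

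\textbf{Step 3 (passage to the limit and energy identity).} Along a subsequence, $\varphi_\epsilon\rightharpoonup\varphi$ in $L^2(0,T;V)$ and weakly-$\ast$ in $L^\infty(0,T;L^p)$, $\varphi_{\epsilon,t}\rightharpoonup\varphi_t$ in $L^2(0,T;V')$, $\varphi_\epsilon\to\varphi$ strongly in $L^2(0,T;H)$ and a.e.\ in $Q_T$ (Aubin--Lions), $\mathbf{u}_\epsilon\rightharpoonup\mathbf{u}$ in $L^2(0,T;G_{div})$ and $\pi_\epsilon\rightharpoonup\pi$ in $L^2(0,T;V_0)$. The a.e.\ convergence of $\varphi_\epsilon$ with the continuity of $\eta$, $m$, $F''$ and the boundedness of $mF''$ (on $[-1,1]$, by \textbf{(H4)}, \textbf{(H7)}), together with $J\ast\varphi_\epsilon\to J\ast\varphi$ and $\nabla J\ast\varphi_\epsilon\to\nabla J\ast\varphi$ strongly (by \textbf{(H2)}) and the weak convergences of $\nabla\varphi_\epsilon$ and $\mathbf{u}_\epsilon$, allow one to pass to the limit in every nonlinear term of \eqref{E1}--\eqref{E2}, each being a product of a bounded, a.e.\ convergent factor with a weakly convergent one; \eqref{I1} passes by continuity in time. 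The limit triplet $[\mathbf{u},\pi,\varphi]$ has the regularity of Definition \ref{wfdef} --- in particular $\varphi\in C([0,T];H)$, so that $\varphi(0)=\varphi_0$ and mass conservation are meaningful --- and is the sought weak solution. Finally, the energy identity \eqref{energeq} is established \emph{directly} for this limit solution (a mere limiting argument would only yield an inequality): since $\varphi\in L^2(0,T;V)$ with $\varphi_t\in L^2(0,T;V')$ one has $\tfrac{d}{dt}\|\varphi\|^2=2\langle\varphi_t,\varphi\rangle$, so testing \eqref{E2} with $\psi=\varphi$ and \eqref{E1} with $\mathbf{u}$ (legitimate since $J\ast\varphi\in L^\infty(Q_T)$, $\nabla\varphi,\mathbf{u}\in L^2$, $\pi\in L^2(0,T;V_0)$), using $\int_\Omega\nabla\pi\cdot\mathbf{u}=0$ and combining the two identities, gives \eqref{energeq}.

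\textbf{Main obstacle.} The genuine difficulty is concentrated in the convective nonlocal Cahn--Hilliard equation with degenerate mobility and singular potential: one must exploit \textbf{(H3)}--\textbf{(H7)} so that, although $\mu$ is singular, the equation \eqref{E2} only involves the bounded coefficient $\lambda=mF''$ and behaves like a non-degenerate parabolic problem, while the singularity of $F$ (via the entropy $M$) still enforces $|\varphi|\le1$ --- this is the content of the refined result of Section \ref{cnCH}, built on \cite{FGGS,FGR}. At the level of the present theorem, the remaining points are: ensuring the uniform estimates survive the $\varphi$-dependence of the Darcy coefficient $\eta(\varphi)^{-1}$ --- which is automatic, since the ellipticity constants depend only on $\eta_1$ and $\|\eta\|_{C^{0,1}}$ by \textbf{(H1)} --- and verifying the compactness and continuity of the solution map $\mathbf{u}\mapsto\varphi$ needed for the fixed-point step, which again rests on the estimates of Section \ref{cnCH}.
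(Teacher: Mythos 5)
Your architecture is genuinely different from the paper's. The paper never runs a fixed-point argument at the weak level: it regularizes $\eta$, $m$, $F$ (assumptions \textbf{(A1)}--\textbf{(A3)}), adds a Brinkman term $-\nu\Delta\mathbf{u}$ to the Darcy law, solves the resulting \emph{coupled} approximate system by a Galerkin scheme (the velocity sub-problem is solved by Lax--Milgram for each Galerkin coefficient vector, so the concentration reduces to an ODE system and no infinite-dimensional fixed point is needed), and then passes to the limit first in $\epsilon$ and then in $\nu$, using the entropy functional $M_\epsilon$ to obtain the $L^2(0,T;V)$ bound and the constraint $|\varphi|\le 1$. Your Steps 2 and 3 are essentially sound: the a priori estimates obtained by testing \eqref{E2} with $\varphi$ and \eqref{E1} with $\mathbf{u}$, the Aubin--Lions compactness, the weak--strong product argument for the nonlinear terms, and the observation that the energy identity must be derived \emph{directly} for the limit solution rather than by passing to the limit are all consistent with the paper (the various forms of \eqref{energeq} differ only by how many times the tested Darcy relation $\Vert\sqrt{\eta(\varphi)}\mathbf{u}\Vert^2=\int_\Omega(-J\ast\varphi)\,\mathbf{u}\cdot\nabla\varphi\,dx$ is added to the tested Cahn--Hilliard equation).

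The gap is in Step 1. Schauder's theorem requires the map $\varphi\mapsto\widetilde\varphi$ to be single-valued and continuous, hence you need \emph{uniqueness} of the weak solution to the convective nonlocal Cahn--Hilliard equation with the velocity produced by the Darcy law. Theorem \ref{reg-thm-bis} guarantees this only when $\lambda=mF''$ is constant or when $\mathbf{u}$ satisfies \eqref{regvel-tris}, i.e., $\mathbf{u}\in L^{2r/(r-d)}(0,T;L^r_{div}(\Omega)^d)$ for some $r>d$. For an input $\varphi$ that is merely in $L^2(0,T;V)\cap L^\infty(Q_T)$, the Darcy velocity is only in $L^\infty(0,T;G_{div})$: the extra spatial integrability $L^4(\Omega)^d$ exploited in the paper is obtained by interpolating the H\"older bound on $\pi$ against $\Vert\pi\Vert_{H^2(\Omega)}$, which requires $\varphi\in H^2(\Omega)$ --- strong-solution regularity (cf.\ \eqref{e11}, \eqref{e27}). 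So for general $\lambda$ your solution map is a priori multivalued and the fixed-point step collapses; note that the paper runs into this very obstruction even for \emph{strong} solutions when $d=3$ and $\lambda$ is non-constant, and has to insert a Leray-$\alpha$ regularization of the velocity to restore single-valuedness. Mollifying $\mathbf{u}$ in space, as you suggest in passing, could repair the approximate map, but you would then have to establish uniform bounds in the mollification parameter and pass to the limit --- at which point the argument is no longer a clean fixed point and must be carried out in detail; as written, the proposal does not close this loop.
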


The strategy to prove Theorem \ref{existweak} is the following. We consider suitable approximations of $\eta$, $m$, and $F$. Then
we formulate an approximating problem by adding a viscous term $-\nu\Delta \mathbf{u}$ to the Darcy's law (i.e. we consider its
Brinkman approximation, see \cite{CG} and references therein) for a given $\nu>0$ (see Subsec. \ref{nondeg}).
We solve this problem by means of a Galerkin scheme (see Subsec. \ref{proofexweakapp}). Then, in Subsection \ref{proofexwea}, we get first a weak solution
to the Brinkman-Cahn-Hilliard problem with the original $\eta$, $m$, and $F$. Finally, we pass to the limit as $\nu$ goes to $0$.

\subsection{Approximating problem}
\label{nondeg}

Let $\e \in (0,1]$ be fixed. Consider the following approximations of $\eta$, $m$, and $F$.
\begin{description}
\item[(A1)] Approximating viscosity
\begin{equation*}
\etae(s)=\left\{\begin{array}{lll}
\eta(1-\e),\qquad s\geq 1-\e\\
\eta(s),\qquad|s|\leq 1-\e\\
\eta(-1+\e),\qquad s\leq -1+\e.
\end{array}\right.
\end{equation*}

\item[(A2)] Approximating mobility
\begin{equation*}
\me(s)=\left\{\begin{array}{lll}
m(1-\e),\qquad s\geq 1-\e\\
m(s),\qquad|s|\leq 1-\e\\
m(-1+\e),\qquad s\leq -1+\e.
\end{array}\right.
 \end{equation*}

\item[(A3)] Approximating potential

\begin{equation}
F_\e(s)=\left\{\begin{array}{lll}
F(1-\e)+F^{\prime}(1-\e)\big(s-(1-\e)\big)+\frac{1}{2}F^{\,\prime\prime}(1-\e)\big(s-(1-\e)\big)^2\\[1mm]
+\big(s-(1-\e)\big)^3,\qquad s\geq 1-\e\,,\\[1mm]
F(s)\,,\qquad|s|\leq 1-\e\,,\\[1mm]
F(-1+\e)+F^{\prime}(-1+\e)\big(s-(-1+\e)\big)+\frac{1}{2}F^{\prime\prime}(-1+\e)\big(s-(-1+\e)\big)^2\\[1mm]
+\big|s-(-1+\e)\big|^3,\qquad s\leq -1+\e\,.
\end{array}\right.\nonumber
\end{equation}
\end{description}

It is easy to check that $F_\e\in C^{2,1}_{loc}(\mathbb{R})$ and that,
thanks also to \textbf{(H6)},
there exist two constants $k_1>0$ and $k_2\geq 0$, which do not depend on $\e$, such that
\begin{align}
&F_\e(s)\geq k_1|s|^3-k_2\,,\qquad\forall s\in\mathbb{R}\,.\label{auxlem1}
\end{align}
Moreover, as a consequence of \textbf{(H6)}, we still have that
\begin{align}
&F_\e''(s)\geq c_0\,,\qquad\forall s\in\mathbb{R}\,,
\label{auxlem2}
\end{align}
and \textbf{(H5)} implies that there exists $\e_0>0$ such that
\begin{align}
&F_{\e}(s)\leq F(s)+\,\e^3\,,\qquad\forall s\in (-1,1)\,,\quad\forall\e\in(0,\e_0]\,.\label{boundF}
\end{align}
Also, note that
\begin{align}
&\vert F^\prime_{\e}(s)\vert \leq k_3 s^2 + k_4\,,\qquad\forall s\in \mathbb{R}\,,\label{boundFprime}
\end{align}
for some positive constants $k_3,k_4$.

Fix $\nu>0$ and consider the Brinkman approximation
\begin{align}
& - \nu\Delta \mathbf{u} + \etae(\varphi)\mathbf{u} + \nabla\pi=\mu\nabla\varphi, \qquad \hbox{in }Q_T,\label{Sy01a}\\
& \mbox{div}(\mathbf{u})=0,\qquad \hbox{in }Q_T,\label{Sy02a}\\
& \varphi_t+u\cdot\nabla\varphi=\mbox{div}(\me(\varphi)\nabla\mu),\qquad \hbox{in }Q_T,\label{Sy03a} \\
& \mu=-J\ast\varphi + F_{\e}^\prime(\varphi),\qquad \hbox{in }Q_T, \label{Sy04a}\\
&\mathbf{u}= \mathbf{0}, \quad \me(\varphi)\frac{\partial\mu}{\partial \mathbf{n}}=0, \qquad\mbox{on }
\Gamma\times (0,T),\label{sy5a}\\
& \varphi(0)=\varphi_0, \qquad\mbox{in }\Omega. \label{sy6a}
\end{align}

Then we introduce the notion of weak solution which reads

\begin{defn}
\label{wfdfa}
Let $\varphi_0\in H$ be such that $F_\e(\varphi_0)\in L^1(\Omega)$ and $0<T<\infty$ be given. Then
$[u,\pi, \varphi]$ is a weak solution to \eqref{Sy01a}-\eqref{sy6a} on $[0,T]$ if
\begin{align*}
& \mathbf{u}\in L^2(0,T;V_{0,div})\,, \\
& \pi\in L^2(0,T;H_0)\,,\\
& \varphi \in L^{\infty }(0,T;L^3(\Omega))\cap L^{2}(0,T;V)\,, \\
& \varphi_{t}\in L^{2}(0,T;V^\prime)\,,\\
& \mu \in L^2(0,T;V)\,,
\end{align*}
and
\begin{align*}
& (\nu \nabla\mathbf{u},\nabla\mathbf{v})+(\etae(\varphi)\mathbf{u},\mathbf{v})= (\pi, \mbox{{\rm div}}(\mathbf{v})) - (\varphi\nabla \mu,\mathbf{v}),
\quad\forall\,\mathbf{v}\in C^\infty_c(\Omega)^d\,, \\
& \langle \varphi _{t},\psi \rangle _{V}+ (m_\e(\varphi )\nabla\mu, \nabla \psi)
=(\mathbf{u}\varphi,\nabla \psi ), \quad\forall\, \psi \in V \hbox{ and a.a. }\, t\in(0,T)\,,\\
&\mu = - J*\varphi + F^\prime_\e(\varphi)\,,
\end{align*}
almost everywhere in $(0,T)$ with
\begin{equation*}
\varphi (0)=\varphi _{0}, \quad\hbox{ a.e. in }\,\Omega.
\end{equation*}
\end{defn}

We now prove the following

\begin{thm}
\label{existweakapp} Assume that \textbf{(A1)}-\textbf{(A3)} hold.
Let $\varphi _{0}\in L^{\infty }(\Omega )$ with $F_\e(\varphi _{0})\in L^{1}(\Omega )$.
Then, for every $T>0$, there exists a weak solution $[\mathbf{u},\pi,\varphi]$
to  \eqref{Sy01a}-\eqref{sy6a} on $[0,T]$. Moreover, this solution satisfies the energy inequality
\begin{equation}
\mathcal{E}_\e(\varphi(t)) + \int_0^t \left(\|\sqrt{m_\e(\varphi(\tau))}\nabla\mu(\tau)\|^2 + \nu\|\nabla \mathbf{u}(\tau) \|^2
+ \Vert \sqrt{\etae(\psi(\tau))} \mathbf{u}(\tau)\Vert^2\right) d\tau \leq \mathcal{E}_\e(\varphi_0)\,, \label{eappr2}
\end{equation}
for all $t\in [0,T]$, where
\begin{align}
\mathcal{E}_\e(\varphi(t))=-\frac{1}{2} \int_{\Omega}\int_{\Omega}J(x-y)\varphi(x,t)\varphi(y,t) dxdy+\int_{\Omega}F_\e(\varphi(t))dx.
\label{endef}
\end{align}
\end{thm}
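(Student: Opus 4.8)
The plan is to prove Theorem \ref{existweakapp} via a Faedo--Galerkin approximation in the $\varphi$--$\mu$ variables combined with a direct (elliptic) resolution of the Brinkman--Darcy equation \eqref{Sy01a}--\eqref{Sy02a} at each fixed time. First I would fix the orthonormal basis $\{w_j\}$ of $H$ made of eigenfunctions of the Neumann Laplacian $A$ introduced in Section \ref{sec:notation}, seek $\varphi_n(t)=\sum_{j=1}^n a_j^n(t) w_j$, and define $\mu_n = -J*\varphi_n + F_\e'(\varphi_n)$. Given $\varphi_n$ at time $t$, the velocity $\mathbf{u}_n$ and pressure $\pi_n$ are obtained by solving the stationary Stokes--Brinkman-type problem $-\nu\Delta\mathbf{u}+\etae(\varphi_n)\mathbf{u}+\nabla\pi = \mu_n\nabla\varphi_n$, $\mathrm{div}\,\mathbf{u}=0$, $\mathbf{u}|_\Gamma=0$; since $\etae$ is bounded from below and above by \textbf{(A1)} and the right-hand side lies in $L^2$ (using $\varphi_n\in L^3$, $F_\e'(\varphi_n)$ controlled by \eqref{boundFprime}, and $\nabla\varphi_n$ finite-dimensional), Lax--Milgram on $V_{0,div}$ yields a unique $\mathbf{u}_n$, and the pressure $\pi_n\in H_0$ is recovered by De Rham's theorem with the usual estimate $\|\mathbf{u}_n\|_{V_{0,div}} + \|\pi_n\|_{H_0} \le C(\nu)\|\mu_n\nabla\varphi_n\|$. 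Projecting the Cahn--Hilliard part onto $\mathrm{span}\{w_1,\dots,w_n\}$ and substituting this $\mathbf{u}_n$ gives a system of ODEs for $(a_j^n)$ with continuous (indeed locally Lipschitz) right-hand side, so Cauchy--Peano/Picard yields a local solution on $[0,T_n)$.

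The second step is the a priori estimate that both closes the ODE system globally and passes to the limit. Testing the discrete $\varphi_n$--equation with $\mu_n$ and the discrete velocity equation with $\mathbf{u}_n$, and adding, the convective terms $(\mathbf{u}_n\varphi_n,\nabla\mu_n)$ and $(\varphi_n\nabla\mu_n,\mathbf{u}_n)$ cancel and the pressure term vanishes by incompressibility, producing exactly the differential form of \eqref{eappr2}:
\begin{equation*}
\frac{d}{dt}\mathcal{E}_\e(\varphi_n) + \|\sqrt{m_\e(\varphi_n)}\nabla\mu_n\|^2 + \nu\|\nabla\mathbf{u}_n\|^2 + \|\sqrt{\etae(\varphi_n)}\mathbf{u}_n\|^2 = 0 .
\end{equation*}
Here one uses that $\frac{d}{dt}\int_\Omega F_\e(\varphi_n) = (F_\e'(\varphi_n),\partial_t\varphi_n)$ and $\frac{d}{dt}(-\frac12\int\int J\varphi_n\varphi_n) = -(J*\varphi_n,\partial_t\varphi_n)$, which combine to $(\mu_n,\partial_t\varphi_n) = \langle\partial_t\varphi_n,\mu_n\rangle$ since $\mu_n$ lies in the Galerkin space up to the $J*\varphi_n$ term — a point to be handled carefully, but the standard fix is to note $\mu_n$ need not be in the finite-dimensional space and instead to test with the projection, or more simply to observe that $\partial_t\varphi_n$ is in the Galerkin space so only the projection of $\mu_n$ matters and it reconstructs the energy derivative exactly. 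Integrating in time and using \eqref{auxlem1} to bound $\mathcal{E}_\e$ from below (hence $\varphi_n$ bounded in $L^\infty(0,T;L^3)$), together with $\mathcal{E}_\e(\varphi_{0,n})\to\mathcal{E}_\e(\varphi_0)$, gives uniform bounds: $\varphi_n$ in $L^\infty(0,T;L^3)$, $\sqrt{m_\e(\varphi_n)}\nabla\mu_n$ in $L^2(Q_T)$, $\mathbf{u}_n$ in $L^2(0,T;V_{0,div})$. Since $m_\e$ is bounded below by a positive constant (being continuous, positive on the relevant range by construction from \textbf{(H3)}), this in fact bounds $\nabla\mu_n$ in $L^2(Q_T)$; combined with control of $\overline{\mu_n}$ via $F_\e'$ and \eqref{boundFprime}, Poincaré--Wirtinger gives $\mu_n$ bounded in $L^2(0,T;V)$, whence (using $F_\e'$ monotone with $F_\e''\ge c_0$, i.e. $\mu_n + J*\varphi_n = F_\e'(\varphi_n)$ with $F_\e'$ having a Lipschitz inverse on bounded sets) $\varphi_n$ is bounded in $L^2(0,T;V)$. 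Finally the $\varphi_n$--equation yields $\partial_t\varphi_n$ bounded in $L^2(0,T;V')$.

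The third step is the passage to the limit. By Aubin--Lions, $\varphi_n\to\varphi$ strongly in $L^2(0,T;H)$ and a.e. in $Q_T$, so $m_\e(\varphi_n)\to m_\e(\varphi)$, $\etae(\varphi_n)\to\etae(\varphi)$ strongly in every $L^q(Q_T)$, $q<\infty$, and $F_\e'(\varphi_n)\to F_\e'(\varphi)$ (via \eqref{boundFprime} and a.e. convergence with $L^3$ bound, giving strong $L^{3/2}$ convergence or so), while $\nabla\varphi_n\rightharpoonup\nabla\varphi$, $\nabla\mu_n\rightharpoonup\nabla\mu$ weakly in $L^2(Q_T)$, $\mathbf{u}_n\rightharpoonup\mathbf{u}$ weakly in $L^2(0,T;V_{0,div})$, $\pi_n\rightharpoonup\pi$ weakly in $L^2(0,T;H_0)$; passing to the limit in the weak formulation (the only genuinely nonlinear products being $m_\e(\varphi_n)\nabla\mu_n$, which is a strong-times-weak pairing, $\etae(\varphi_n)\mathbf{u}_n$, likewise, and $\mathbf{u}_n\varphi_n$, also strong-times-weak) gives a weak solution in the sense of Definition \ref{wfdfa}, and the regularity $\varphi\in L^\infty(0,T;L^3)\cap L^2(0,T;V)$, $\varphi_t\in L^2(0,T;V')$, $\mu\in L^2(0,T;V)$ follows from the uniform bounds by weak lower semicontinuity. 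The energy inequality \eqref{eappr2} is obtained by integrating the Galerkin energy identity over $[0,t]$ and using weak lower semicontinuity of all the $L^2$-norm terms on the left (and of $\mathcal{E}_\e(\varphi(t))$, which requires a separate convexity/Fatou argument for the $F_\e$ part using a.e. convergence and \eqref{auxlem1}) together with the convergence of the initial energy. The main obstacle I anticipate is the careful justification of the energy identity at the Galerkin level — specifically that testing with $\mu_n$ is legitimate even though $\mu_n$ is not in the finite-dimensional subspace — and the recovery and time-regularity of the pressure $\pi_n$ uniformly enough to get $\pi\in L^2(0,T;H_0)$; everything else is by now a fairly standard compactness scheme for nonlocal Cahn--Hilliard systems.
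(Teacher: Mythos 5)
Your overall scheme coincides with the paper's: a Galerkin approximation of $\varphi$ in the Neumann--Laplacian eigenbasis, a pointwise-in-time elliptic resolution of the Brinkman equation for the velocity, the formal energy identity at the discrete level, and then compactness plus lower semicontinuity. However, there are two points where the argument as written does not close.

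First, the projection issue you flag is more serious than your proposed fixes suggest. The paper's device is to \emph{define} $\mu_n:=\Pi_n\bigl(-J\ast\varphi_n+F_\e'(\varphi_n)\bigr)$ and to use this projected quantity everywhere: in the flux $m_\e(\varphi_n)\nabla\mu_n$, in the Korteweg force $\varphi_n\nabla\mu_n$ driving the velocity, and as the test function $\psi=\mu_n\in W_n$. With your unprojected $\mu_n$, testing the $\varphi_n$-equation with $\Pi_n\mu_n$ does recover $(\partial_t\varphi_n,\mu_n)=\frac{d}{dt}\mathcal{E}_\e(\varphi_n)$ (since $\partial_t\varphi_n\in W_n$), but the dissipation term becomes the cross term $(m_\e(\varphi_n)\nabla\mu_n,\nabla\Pi_n\mu_n)$, which for non-constant $m_\e$ has no sign and is not $\|\sqrt{m_\e(\varphi_n)}\nabla\mu_n\|^2$; moreover the exact cancellation between $(\mathbf{u}_n\varphi_n,\nabla\psi)$ and $(\varphi_n\nabla\mu_n,\mathbf{u}_n)$ requires the same $\mu$-object in both equations. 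So the projection must be built into the scheme from the start, not inserted only at the testing stage. Second, your step ``$\mathcal{E}_\e(\varphi_{0n})\to\mathcal{E}_\e(\varphi_0)$'' for $\varphi_{0n}=\Pi_n\varphi_0$ is not justified when $\varphi_0$ is merely in $L^\infty(\Omega)$: $\Pi_n\varphi_0\to\varphi_0$ only in $H$, while $F_\e$ grows cubically, so $\int_\Omega F_\e(\Pi_n\varphi_0)$ need not be controlled by $\int_\Omega F_\e(\varphi_0)$. The paper handles this in two stages: it first assumes $\varphi_0\in D(A)$ (so that $\Pi_n\varphi_0\to\varphi_0$ in $H^2(\Omega)\hookrightarrow L^\infty(\Omega)$ and the energy converges), and then treats general data by the resolvent regularization $\varphi_{0m}=(I+\frac{A}{m})^{-1}\varphi_0$ together with a convexity argument (based on \eqref{auxlem2}) yielding $\mathcal{E}_\e(\varphi_{0m})\le\mathcal{E}_\e(\varphi_0)$, which is what makes the right-hand side of \eqref{eappr2} come out as $\mathcal{E}_\e(\varphi_0)$ and not merely a limsup. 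The remaining ingredients of your proposal (lower bound on $m_\e$, control of $\overline{\mu_n}$, the $L^2(0,T;V)$ bound on $\varphi_n$ from $F_\e''\ge c_0$, Aubin--Lions, strong-times-weak passage to the limit, and the recovery of the pressure by De Rham) match the paper's argument.
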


\begin{oss}
Actually, it can be proven that \eqref{eappr2} is an identity (cf. \cite[Proof of Cor.2]{FGR}).
\end{oss}

\subsection{Proof of Theorem \ref{existweakapp}}
\label{proofexweakapp}
We shall use a Galerkin approximation scheme. Let $\{\lambda_j\}_{j\in \mathbb{N}}$ and $\{w_j\}_{j\in\mathbb{N}}$ be the eigenvalues and the eigenvectors of the Laplace operator with homogeneous Neumann boundary conditions (cf. Section \ref{sec:notation}). Then, set
\begin{equation*}
W_n := \langle w_1,\dots,w_n\rangle
\end{equation*}
and denote by $\Pi_n: H \to W_n$ the usual linear bounded orthogonal projector.

Fix $n\in \mathbb{N}$ and introduce
\begin{equation*}
\varphi_n(t) = \sum_{j=1}^n g_{nj}(t)w_j,
\end{equation*}
where $\mathbf{g}_n=(g_{n1}, \dots, g_{nn})$ has to be determined. For any $\mathbf{g}_n \in C([0,T])^n$ there exists a unique $\mathbf{w}\in C([0,T];V_{0,div})$
(depending on $\mathbf{g}_n$) which solves
\begin{equation}
(\nu \nabla\mathbf{w},\nabla\mathbf{v})+(\etae(\varphi_n)\mathbf{w},\mathbf{v})= -(\varphi_n\nabla \mu_n,\mathbf{v}),
\quad\forall\, \mathbf{v}\in V_{0,div}, \,  \hbox{ in }\, (0,T), \label{Brinkappr}
\end{equation}
where
\begin{equation}
\mu_n = \Pi_n(-J*\varphi_n + F^\prime_\e(\varphi_n))\,. \label{chemappr}
\end{equation}
Moreover, it is easy to prove that the mapping $\mathbf{F}: C([0,T])^n \to C([0,T];V_{0,div})$, defined by setting $\mathbf{F}(\mathbf{g}_n) = \mathbf{w}$,
is continuous with respect to the standard Lagrangian norm in $C([0,T])^n$. Let us suppose
\begin{equation}
\varphi_0 \in D(A) \label{regic}
\end{equation}
and consider the following problem: find $\mathbf{g}_n$ solution to
the Cauchy problem
\begin{align}
&\int_\Omega\varphi_n^\prime \psi dx+\int_{\Omega }m_\e(\varphi_n)\nabla\mu_n\cdot \nabla \psi dx
=\int_\Omega \mathbf{F}(\mathbf{g}_n)\varphi_n \cdot\nabla \psi dx, \nonumber \\
&\qquad\qquad\qquad\qquad\qquad\qquad\qquad\forall\, \psi \in W_n, \hbox{ in }\, (0,T)\,, \label{CHappr}\\
&\varphi (0)=\varphi _{0n}:=\Pi_n\varphi_{0} \label{icappr}.
\end{align}
Taking $\psi = w_j$ for each $j\in\{1,\dots,n\}$ and using the orthogonality properties of the eigenfunctions, we can write down a first-order system
of ODEs in normal form for the unknown vector-valued function $\mathbf{g}$ with a locally Lipschitz continuous right-hand side. Therefore, the Cauchy-Lipschitz
theorem entails the existence of a unique solution $\mathbf{g}\in C^1([0,T_n])^n$ for some $T_n\in (0,T]$. We thus have found a unique approximating pair $(\mathbf{w}_n,\varphi_n) \in C([0,T_n];V_{0,div}) \times C^1([0,T_n];W_n)$ for each $n\in \mathbb{N}$.

Take now $\mathbf{v}=\mathbf{w}$ in \eqref{Brinkappr} and $\psi=\mu_n$ in \eqref{CHappr}. Adding together the identities and taking \eqref{chemappr} into account,
it is not difficult to obtain the energy identity
\begin{equation}
\frac{d}{dt} \mathcal{E}_\e(\varphi_n(t)) + \|\sqrt{m_\e(\varphi_n(t))}\nabla\mu_n(t)\|^2 + \nu\|\nabla \mathbf{w}_n(t) \|^2
+ \Vert \sqrt{\etae(\psi_n(t))} \mathbf{w}_n(t)\Vert^2 = 0,\label{eappr}
\end{equation}
for every $t\in [0,T_n]$.

Integrating \eqref{eappr} with respect to time in $(0,t)$ and recalling  \textbf{(H1)} and \textbf{(A1)}-\textbf{(A3)}, we get
\begin{equation}
\mathcal{E}_e(\varphi_n(t)) + \int_0^t \left(C\|\nabla\mu_n(\tau) \|^2 + \nu\|\nabla \mathbf{w}_n(\tau) \|^2
+ \eta_1\Vert \mathbf{w}_n(\tau)\Vert^2 \right)d\tau \leq \mathcal{E}_\e(\varphi_{0n}).\label{diseappr}
\end{equation}
Here and in the sequel of this proof $C>0$ stands for a generic constant which possibly depends on $\nu$ and $\e$ but is independent of $n$.
This constant may vary also within the same line.

Due to the convergence $\varphi_{0n} \to \varphi_0$ in $H^2(\Omega)$ (cf. \eqref{regic}), on account of \textbf{(A3)} and \eqref{endef}, we find
\begin{equation}
\mathcal{E}_e(\varphi_n(t)) + \int_0^t \left(C\|\nabla\mu_n(\tau) \|^2 + \nu\|\nabla \mathbf{w}_n(\tau) \|^2
+ \eta_1\Vert \mathbf{w}_n(\tau)\Vert^2 \right)d\tau \leq C.\label{diseapprunif}
\end{equation}
From the above estimate we deduce first that we can extend our approximating solution up to $T$ for each $n\in \mathbb{N}_0$.
Indeed we have $\vert \mathbf{g}_n(t)\vert_2 = \Vert \varphi_n(t)\Vert$, $\vert \cdot \vert_2$ being the Euclidean norm. Moreover, using \eqref{auxlem1} and arguing as in \cite{FGR}, we obtain the following uniform estimates
\begin{align}
&\|\mathbf{w}_n\|_{L^{2}(0,T;V_{0,div})} \leq C,\label{est1}\\
&\|\varphi_n\|_{L^{\infty}(0,T;L^3(\Omega))}\leq C,\label{est2}\\
&\| F_\e(\varphi_n)\|_{L^{\infty}(0,T;L^1(\Omega))}\leq C,\label{est3}\\
&\|\nabla\mu_n\|_{L^2(0,T;H^d)}\leq C. \label{est4}
\end{align}
Observe now that
\begin{align*}
\overline{\mu} &= \vert\Omega\vert^{-1}(\Pi_n (-J*\varphi_n + F_\e^\prime(\varphi_n)),1) \\
&= (\Pi_n (-J*\varphi_n + F_\e^\prime(\varphi_n)), w_1) =  (-J*\varphi_n + F_\e^\prime(\varphi_n),w_1).
\end{align*}
This gives a uniform control of $\Vert \overline{\mu}\Vert_{L^\infty(0,T)}$. Hence, using Poincar\'{e}-Wirtinger inequality and recalling \eqref{boundFprime} and \eqref{est4}, we deduce
\begin{equation}
\|\mu_n\|_{L^2(0,T;V)}\leq C.\label{est5}
\end{equation}
Using the above estimates, by comparison in equation \eqref{CHappr} we deduce
\begin{equation}
\|\varphi^\prime_n\|_{L^{2}(0,T;V^\prime)}\leq C.\label{est6}
\end{equation}
On the other hand, multiplying \eqref{chemappr} by $-\Delta\varphi_n$, recalling (\textbf{H2}) and \eqref{auxlem2}, using \eqref{est4}, and arguing as in \cite{CFG} we find
\begin{equation}
\|\varphi_n\|_{L^2(0,T;V)}\leq C.\label{est7}
\end{equation}
In addition, \eqref{boundFprime} and \eqref{est2} yield
\begin{equation}
\|F^\prime(\varphi_n)\|_{L^\infty(0,T;L^{3/2}(\Omega))}\leq C.\label{est8}
\end{equation}

The above uniform bounds and a standard compactness result in vector valued Banach spaces imply the existence of a triplet $(\mathbf{u},\varphi,\mu)$ and a subsequence
$(\mathbf{w}_n,\varphi_n,\mu_n)$ (not relabeled) such that
\begin{align}
& \mathbf{w}_n\rightharpoonup \mathbf{u} \quad\mbox{weakly in }L^2(0,T;V_{0,div}),\label{con1}\\
& \varphi_n\rightharpoonup\varphi\quad\mbox{weakly}^{\ast}\mbox{ in }L^{\infty}(0,T;L^3(\Omega)),
\; \mbox{weakly in }L^2(0,T;V),\label{con2}\\
& \varphi_n\to\varphi\quad\mbox{strongly in }L^2(0,T;H),\quad\mbox{a.e. in }Q_T,\label{con3}\\
& \varphi_n'\rightharpoonup\varphi_t\quad\mbox{weakly in }L^{2}(0,T;V'), \label{con4}\\
& F'_\e(\varphi_n)\rightharpoonup \Phi_\e\quad\mbox{weakly}^{\ast}\mbox{ in } L^{\infty}(0,T;L^{3/2}(\Omega)),
\;\mbox{weakly in }L^2(0,T;L^3(\Omega)),\label{con5}\\
& \mu_n\rightharpoonup\mu\quad\mbox{weakly in }L^2(0,T;V).\label{con6}
\end{align}
Moreover, using a well-known result, from \eqref{con3} and \eqref{con5} we infer that $\Phi_\e=F^\prime_e(\varphi)$.

Recalling \eqref{Brinkappr}, observe that $\mathbf{w}_n$ satisfies, in particular, the following variational identity
\begin{equation}
(\nu \nabla\mathbf{w}_n,\nabla\mathbf{v})+(\etae(\varphi_n)\mathbf{w}_n,\mathbf{v})= -(\varphi_n\nabla \mu_n,\mathbf{v}),
\qquad\forall\, \mathbf{v}\in\mathcal{V},   \hbox{ in }\, (0,T). \label{Brinkappr2}
\end{equation}
The above convergences and \textbf{(A1)}-\textbf{(A2)} allow us to pass to the limit in equations
\eqref{chemappr}, \eqref{CHappr}-\eqref{icappr}, and \eqref{Brinkappr2}. Using a density argument, this gives
\begin{align}
&(\nu \nabla\mathbf{u},\nabla\mathbf{v})+(\etae(\varphi)\mathbf{u},\mathbf{v})= -(\varphi\nabla \mu,\mathbf{v}),
\qquad\forall\, \mathbf{v}\in\mathcal{V},   \hbox{ a.e. in }\, (0,T), \label{Brinkappr3}\\
&\langle \varphi^\prime, \psi \rangle + (m_\e(\varphi)\nabla\mu,\nabla \psi)
=(\mathbf{u}\varphi, \nabla \psi), \,\,\,\qquad\forall\, \psi \in V, \hbox{ a.e. in }\, (0,T)\,, \label{CHappr2}\\
&\mu = -J*\varphi + F^\prime_\e(\varphi), \qquad\hbox{ a.e. in }\, Q_T\,, \label{chemappr2}\\
&\varphi (0)=\varphi _{0}, \qquad\hbox{ a.e. in}\, \Omega. \label{icappr2}
\end{align}
Also, using a semicontinuity argument (see \cite{FGR}), we can prove that $(\mathbf{u},\varphi)$ satisfies the energy estimate \eqref{eappr2}.

If $\varphi_0\in H$ with $F(\varphi_0)\in L^1(\Omega)$ then we can argue as in \cite{CFG} (see also \cite{FGR}).
We first approximate $\varphi_0$ with $\varphi_{0m}\in D(A)$ given by
$\varphi_{0m}:=(I+\frac{A}{m})^{-1}\varphi_0$. This sequence satisfies $\varphi_{0m}\to\varphi_0$ in $H$. The
corresponding approximating solutions $(\mathbf{u}_m,\varphi_m)$ satisfy the energy estimate \eqref{eappr2}
with $\varphi_0=\varphi_{0m}$. On the other hand, on account of \eqref{auxlem2}, we can use a convexity argument (cf. \cite{CFG}) to deduce
\begin{equation*}
\mathcal{E}_\e(\varphi_{0m}) \leq \mathcal{E}_\e(\varphi_0).
\end{equation*}
Hence the sequence $\{(\mathbf{u}_m,\varphi_m)\}$ satisfies \eqref{eappr2} with $\varphi_0$ in place of $\varphi_{0m}$.
Then, arguing as above, we can find that it converges, up to a subsequence, to a pair $\{(\mathbf{u},\varphi)\}$
satisfying \eqref{Brinkappr3}-\eqref{icappr2}.
Finally, owing to De Rham's theorem (see, e.g., \cite[Chap.IV, Sec.2]{BF}), we can find a unique $\pi\in L^2(0,T;H_0)$ such that
\begin{equation*}
(\nu \nabla\mathbf{u},\nabla\mathbf{v})+(\etae(\varphi)\mathbf{u},\mathbf{v})= -(\pi_0,\mbox{div}(\mathbf{v}))
-(\varphi\nabla \mu,\mathbf{v}), \quad\forall\, \mathbf{v}\in C^\infty_c(\Omega)^d,   \hbox{ a.e. in }\, (0,T).
\end{equation*}
This concludes the proof.

\subsection{Proof of Theorem \ref{existweak}}
\label{proofexwea}
Following a strategy devised in \cite{DPGG}, we will pass to the limit first as $\epsilon$ goes to $0$. Then we will let $\nu\to 0$.
Thus, let us consider first a weak solution $[\mathbf{u}_\epsilon, \varphi_\epsilon]$ to \eqref{Sy01a}-\eqref{sy6a}, keeping $\nu>0$ fixed and find
suitable uniform estimates. We omit the dependence on $\nu$ for the sake of simplicity.

From \eqref{eappr2}, recalling \textbf{(A3)} and \eqref{auxlem1}, we deduce the uniform bounds
\begin{align}
&\sqrt{\nu}\Vert \mathbf{u}_\e \Vert_{L^2(0,T;V_{0,div})}  +  \|\sqrt{\etae(\varphi_\e)}\mathbf{u}_\e\|_{L^{2}(0,T;G_{div})}\leq C\,,\label{bdd1}\\
&\|\phie\|_{L^{\infty}(0,T;L^3(\Omega))}\leq C\,,\label{bdd2}\\
&\| \sqrt{\me(\phie)}\nabla\mue\|_{L^2(0,T;H^d)}\leq C\,.\label{bdd3}
\end{align}
Here and in the sequel of this proof, $C>0$ indicates a generic constant which is independent of $\e$ and $\nu$.

Arguing as in \cite{FGR}, we now test equation \eqref{CHappr2} by $\psi=M_\e'(\phie)$, where $M_\e$ is a $C^2$
function such that $\me(s)M_\e''(s)=1$ and $M_\e(0)=M_\e'(0)=0$. This gives
\begin{equation}
\frac{d}{dt}\int_\Omega
M_\e(\phie) dx +\int_\Omega\me(\phie)\nabla\mue\cdot
M_\e''(\phie)\nabla\phie dx =\int_\Omega\mathbf{u}_\e\phie\cdot
\nabla M_\e'(\phie)dx. \label{entrop}
\end{equation}
Being $\mathbf{u}_\e$ divergence free, we have
\begin{equation*}
\int_\Omega\mathbf{u}_\e\phie\cdot \nabla M_\e'(\phie)dx = - \int_\Omega\mathbf{u}_\e\nabla \phie\cdot M_\e'(\phie)dx =
 - \int_\Omega\mathbf{u}_\e\nabla M_\e(\phie)dx =0\,.
\end{equation*}
Therefore \eqref{entrop} yields
\begin{equation*}
\frac{d}{dt}\int_\Omega M_\e(\phie)dx +\int_\Omega\nabla\mue\cdot \nabla\phie dx=0\,,
\end{equation*}
that is
\begin{equation}
\frac{d}{dt}\int_\Omega M_\e(\phie)dx +
\int_\Omega\left(\Fe''(\phie)|\nabla\phie|^2-(\nabla J\ast\phie)\cdot\nabla\phie\right)dx=0\,. \nonumber
\end{equation}
Thus, on account of \eqref{auxlem2}, we get
\begin{equation}
\frac{d}{dt}\int_\Omega M_\e(\phie) dx +\frac{c_0}{2}\Vert\nabla\phie\Vert^2\leq C\,.\nonumber
\end{equation}
Arguing as in \cite{FGR}, for $\e$ small enough we have a uniform control of $\int_\Omega M_\e(\phie)dx$ with $\int_\Omega M(\varphi_0)dx$.
Hence, recalling \eqref{bdd2}, we get the uniform bounds
\begin{align}
&\Vert\phie\Vert_{L^2(0,T;V)}\leq C\,,\label{bdd4}\\
&\Vert M_\e(\phie)\Vert_{L^\infty(0,T;L^1(\Omega))}\leq
C\,.\label{bdd5}
\end{align}
Then, on account of \eqref{bdd1}-\eqref{bdd3}, by comparison in \eqref{CHappr2} we also obtain
\begin{equation}
\Vert(\phie)_t\Vert_{L^2(0,T;V^\prime)}\leq C\,. \label{bdd6}
\end{equation}

Bounds \eqref{bdd1}-\eqref{bdd6} and standard compactness results
entail the existence of $\mathbf{u}\in  L^2(0,T;V_{div})$ and
$\varphi\in L^{\infty}(0,T;H)\cap L^2(0,T;V)$ such that, for some sequence $\e_n \to 0$, we have
\begin{align}
& \mathbf{u}_{\epsilon_n}\rightharpoonup \mathbf{u} \quad \mbox{weakly in }L^2(0,T;V_{0,div}),\label{conve1}\\
& \varphi_{\epsilon_n}\rightharpoonup\varphi\quad\mbox{weakly}^{\ast}\mbox{ in }L^{\infty}(0,T;L^3(\Omega))
\;\mbox{weakly in }L^2(0,T;V),\label{conve2}\\
& \varphi_{\epsilon_n}\to\varphi \quad\mbox{strongly in }L^2(0,T;L^s(\Omega)),\; s\in [2,6),\;\mbox{and a.e. in }Q_T,\label{conve3}\\
& (\varphi_t)_{\epsilon_n} \rightharpoonup\varphi_t\quad \mbox{weakly in }L^{2}(0,T;V'). \label{conve4}
\end{align}
In order to show that $|\varphi|\leq 1$ almost everywhere in $Q_T$ we can argue as in \cite[Sec.7]{FGR}.

Observe now that the weak formulation \eqref{Brinkappr3}-\eqref{chemappr2} can be rewritten as follows
\begin{align}
&(\nu \nabla\mathbf{u}_\e,\nabla\mathbf{v})+(\etae(\varphi_\e)\mathbf{u}_\e,\mathbf{v})= (\varphi(\nabla J \ast \varphi),\mathbf{v}), \nonumber\\
&\qquad\qquad\qquad\qquad\qquad\qquad\qquad\forall\, \mathbf{v}\in\mathcal{V},   \hbox{ a.e. in }\, (0,T), \label{Brinkappr4}\\
&\langle(\phi_t)_\e,\psi\rangle+( \me(\phie)\Fe''(\phie)\nabla\phie,\nabla\psi)
-(\me(\phie)(\nabla J\ast\phie),\nabla\psi)=(\mathbf{u}_\e\phie,\nabla\psi), \nonumber\\
&\qquad\qquad\qquad\qquad\qquad\qquad\qquad\forall\, \psi \in V, \hbox{ a.e. in }\, (0,T)\,. \label{CHappr3}
 \end{align}
Recalling (\textbf{H2}), (\textbf{A3}), and \eqref{conve3}, up to a subsequence, we obtain (cf. also the essential boundedness of $\varphi$)
\begin{equation}
m_{\e_n}(\varphi_{\e_n})F_{\e_n}''(\varphi_{\e_n})\to m(\varphi)F''(\varphi),\qquad\mbox{ strongly in }\:\: L^s(Q_T),\quad\forall\,s\in[2,\infty)\,.
\label{conve5}
\end{equation}
Using now \eqref{conve2}, \eqref{conve3}, and the embedding
$L^\infty(0,T;L^3(\Omega))\cap L^2(0,T;L^6(\Omega))\hookrightarrow L^{4}(Q_T)$, we deduce
\begin{equation}
\varphi_{\e_n}\to\varphi\qquad\mbox{strongly in }L^r(Q_T),\quad\forall\,r\in [2,4\,).
\label{conve6}
\end{equation}
In addition, thanks to Lebesgue's dominated convergence theorem, we find
\begin{equation}
\eta_{\e_n}(\varphi_{\e_n})\to \eta(\varphi),\quad m_{\e_n}(\varphi_{\e_n})\to m(\varphi) \qquad\mbox{ strongly in }\:\: L^s(Q_T),\quad\forall\,s\in[2,\infty)\,.
\label{conve7}
\end{equation}

Convergences \eqref{conve1}-\eqref{conve4} and \eqref{conve5}-\eqref{conve7} allow us to pass to the limit
in \eqref{Brinkappr4}-\eqref{CHappr3} for $\e=\e_n$ as $n$ goes to $\infty$
(cf. \cite{FGR}). Also, by integrating \eqref{CHappr3} in time over $(0,t)$ and then taking the limit as before,
we recover the initial condition $\varphi(0)=\varphi_0$ almost everywhere in $\Omega$.

Summing up, for any $\nu>0$, there is a pair $[\mathbf{u}_\nu,\varphi_\nu]$ such that
\begin{align}
& \mathbf{u}_\nu\in L^2(0,T;V_{0,div})\,, \nonumber\\
& \varphi_\nu \in L^{\infty }(0,T;L^p(\Omega))\cap L^{2}(0,T;V) \qquad \forall\,p\in[2,\infty)\,, \nonumber \\
& \varphi_\nu \in L^{\infty }(Q_{T})\,,\quad |\varphi_\nu (x,t)|\leq 1\quad \mbox{ for a.e. }(x,t)\in Q_{T}\,,\nonumber \\
& (\varphi_\nu)_{t}\in L^{2}(0,T;V^\prime)\,, \nonumber\\
&(\nu \nabla\mathbf{u}_\nu,\nabla\mathbf{v})+(\eta(\varphi)\mathbf{u}_\nu,\mathbf{v})= (\varphi_\nu(\nabla J \ast \varphi_\nu),\mathbf{v}), \nonumber\\
&\qquad\qquad\qquad\qquad\qquad\qquad\qquad\forall\, \mathbf{v}\in V_{0,div},   \hbox{ a.e. in }\, (0,T), \label{Brinkappr5} \\
&\langle(\varphi_\nu)_t,\psi\rangle+( m(\varphi_\nu)F''(\varphi_\nu)\nabla\varphi_\nu,\nabla\psi)
-(m(\varphi_\nu)(\nabla J\ast\varphi_\nu),\nabla\psi)=(\mathbf{u}_\nu\varphi_\nu,\nabla\psi), \nonumber\\
&\qquad\qquad\qquad\qquad\qquad\qquad\qquad\forall\, \psi \in V, \hbox{ a.e. in }\, (0,T)\,, \label{CHappr4}\\
&\varphi_\nu (0)=\varphi _{0}, \quad\hbox{ a.e. in }\,\Omega. \nonumber
\end{align}
From \eqref{bdd1}, using a semicontinuity argument, we get the uniform (with respect to $\nu$) bound
\begin{equation}
\sqrt{\nu}\Vert \mathbf{u}_\nu \Vert_{L^2(0,T;V_{0,div})}  +  \|\mathbf{u}_\nu\|_{L^{2}(0,T;G_{div})}\leq C\,.\label{bdd7}
\end{equation}

Recalling \cite[Rem.3.3]{GGG}, we have that $t\mapsto \Vert \varphi_\nu(t)\Vert_{L^\infty(\Omega)}$ is measurable, essentially bounded, and such that
\begin{equation*}
\vert(\varphi_\nu(t),f)\vert \leq \Vert f(t) \Vert_{L^1(\Omega)}, \qquad\mbox{ for a.a. } t\in (0,T),
\end{equation*}
for any $f\in L^1(0,T;L^1(\Omega))$. Therefore, we have
\begin{equation*}
\vert (\mathbf{u}_\nu(t)\varphi_\nu(t),\nabla\psi)\vert = \vert (\varphi_\nu(t),\mathbf{u}_\nu(t)\cdot\nabla\psi)\vert \leq \Vert \mathbf{u}_\nu(t)\cdot\nabla\psi\Vert_{L^1(\Omega)}
\leq \Vert \mathbf{u}_\nu(t)\Vert \Vert \nabla \psi\Vert,
\end{equation*}
for almost any $t\in(0,T)$. This entails that $\mathbf{u}_\nu\varphi_\nu \in L^2(0,T;(V^d)^\prime)$ and
\begin{equation}
\Vert\mathbf{u}_\nu\varphi_\nu\Vert_{L^2(0,T;(V^d)^\prime)} \leq C\,. \label{bdd8}
\end{equation}
We can now take $\psi=\varphi(t)$ in \eqref{CHappr4}. Thanks to \eqref{bdd8} and (\textbf{H7}), we can easily find a bound
\begin{equation}
\Vert\varphi_\nu\Vert_{L^2(0,T;V)} \leq C\,. \label{bdd9}
\end{equation}
Then, by comparison, we also get
\begin{equation}
\Vert(\varphi_\nu)_t\Vert_{L^2(0,T;V^\prime)} \leq C\,. \label{bdd10}
\end{equation}
In addition, we have
\begin{equation}
\Vert\varphi_\nu\Vert_{L^\infty(0,T;L^p(\Omega))} \leq \vert \Omega \vert^{1/p}, \quad \forall\, p\in [2,\infty)\,. \label{bdd11}
\end{equation}
On account of\eqref{bdd7}-\eqref{bdd11} and well-known compactness results, we can find a pair $[\mathbf{u},\varphi]$
and a sequence $\nu_n \to 0$ as $n$ goes to $\infty$ such that
\begin{align}
&\nu_n \mathbf{u}_{\nu_n} \to 0  \quad \mbox{strongly in }L^2(0,T;V_{0,div}),\label{conv1}\\
& \mathbf{u}_{\nu_n}\rightharpoonup \mathbf{u}\quad \mbox{weakly in }L^2(0,T;G_{div}),\label{conv2}\\
& \phi_{\nu_n}\rightharpoonup\varphi\quad\mbox{weakly}^{\ast}\mbox{ in }L^{\infty}(0,T;L^p(\Omega))\cap L^\infty(Q_T),
\;\mbox{weakly in }L^2(0,T;V),\label{conv3}\\
& \varphi_{\nu_n}\to\varphi\quad\mbox{strongly in }L^3(Q_T),\;\mbox{and a.e. in }Q_T,\label{conv4}\\
& (\varphi_t)_{\nu_n} \rightharpoonup \varphi_t\quad \mbox{weakly in }L^{2}(0,T;V'). \label{conv5}
\end{align}
Arguing as above (cf. \eqref{conve7}) and using \eqref{conv1}-\eqref{conv2}, by means of standard techniques, we can pass to the limit in \eqref{Brinkappr5} and find
\begin{equation}
(\eta(\varphi)\mathbf{u},\mathbf{v})= (\varphi(\nabla J \ast \varphi),\mathbf{v}), \quad \forall\, \mathbf{v}\in V_{0,div},
\hbox{ a.e. in }\, (0,T)\,,
\end{equation}
which can be rewritten as
\begin{equation}
(\eta(\varphi)\mathbf{u},\mathbf{v})= -((J \ast \varphi) \nabla\varphi,\mathbf{v}), \quad \forall\, \mathbf{v}\in V_{0,div},
\hbox{ a.e. in }\, (0,T)\,.
\end{equation}
Then, using density and De Rham's theorem, we find a unique $\pi\in L^2(0,T;V_0)$ such that \eqref{E1} holds.

In order to pass to the limit in equation  \eqref{CHappr4}, observe first that (cf. \eqref{conv2} and \eqref{conv4})
\begin{equation}
\int_0^T (\mathbf{u}_{\nu_n}(\tau), \varphi_{\nu_n}(\tau)\nabla\psi)d\tau \to \int_0^T (\mathbf{u}(\tau), \varphi(\tau)\nabla\psi)d\tau, \quad \forall \psi\in D(A).
\label{conv6}
\end{equation}
On account of \eqref{conv3}-\eqref{conv6} and recalling \eqref{conve5}, \eqref{conve7} (which now hold with respect to $\nu_n$), standard techniques give
\begin{equation}
\langle\varphi_t,\psi\rangle+( m(\varphi)F''(\varphi)\nabla\varphi,\nabla\psi)
-(m(\varphi)(\nabla J\ast\varphi),\nabla\psi)=(\mathbf{u}\varphi,\nabla\psi),
\end{equation}
for all $\psi \in D(A)$ and almost everywhere in $(0,T)$. Thus equation \eqref{E2} holds thanks to the density of $D(A)$ in $V$. Initial
condition \eqref{I1} can be recovered as usual. Summing up, we have proven that problem \eqref{Sy01}-\eqref{Sy04} and \eqref{sy5}-\eqref{sy6} has a weak solution
$[\mathbf{u},\pi,\varphi]$ in the sense of Definition \ref{wfdef}.

\section{Existence of strong solutions}
\setcounter{equation}{0}
\label{sec:strongex}

In this section we state and prove the existence of strong solutions to \eqref{Sy01}-\eqref{Sy04}, \eqref{sy5}-\eqref{sy6}. However
equations \eqref{Sy03}-\eqref{Sy04} need to be suitably rewritten in the form
\begin{equation}
\varphi _{t}+\uvec\cdot \nabla \varphi =\Delta B(\varphi )-%
\mbox{div}\big(m(\varphi )(\nabla J\ast \varphi )\big)\,,\label{e25}
\end{equation}%
where we have set
\begin{equation}
\label{primitive}
B(s)=\int_{0}^{s}\lambda (\sigma )d\sigma \,,\qquad \forall s\in \lbrack -1,1]\,.
\end{equation}%
Notice that we have $\nabla B(\varphi )=\lambda (\varphi )\nabla \varphi \,$.
Hence, the boundary condition $m(\varphi )\nabla \mu \cdot \boldsymbol{n}=0$
becomes
\begin{equation}
\big[\nabla B(\varphi)-m(\varphi )(\nabla J\ast \varphi )%
\big]\cdot \nvec=0\,.  \label{BCbis}
\end{equation}%
Thus, the equivalent weak formulation \eqref{E2} of equations \eqref{Sy03}-\eqref{Sy04}
is
\begin{equation}
\label{e174}
\langle \varphi _{t},\psi \rangle _{V}+\big(\nabla
B(\varphi), \nabla \psi\big)-\big(m(\varphi )(\nabla J\ast
\varphi ),\nabla \psi\big)=(\uvec\,\varphi,\nabla \psi )\,,
\end{equation}
for every $\psi \in V$ and for almost any $t\in (0,T)$. Moreover, we rewrite the Darcy's law
\eqref{E1} in the form
\begin{equation}
\eta(\varphi)\mathbf{u} + \nabla\pi=(\nabla J\ast\varphi)\varphi\,.\label{e26}
\end{equation}
Therefore, we can give our definition of strong solution
\begin{defn}
Let $\varphi _{0}\in V\cap L^{\infty }(\Omega )$ and $0<T<\infty$ be given. A weak solution
$[\uvec,\pi,\varphi]$ to \eqref{Sy01}-\eqref{Sy04}, \eqref{sy5}, \eqref{sy6} on $[0,T]$ corresponding to $\varphi_0$
is called strong solution if
\begin{align}
& \uvec\in L^2(0,T;V_{div})\,, \\
& \pi\in L^2(0,T;H^2(\Omega)\cap V_0)\,,\\
&\varphi \in L^{\infty }(0,T;V)\cap L^{2}(0,T;H^{2}(\Omega ))\cap H^{1}(0,T;H)\,,
\end{align}
and if \eqref{Sy02}, \eqref{e25},  \eqref{e26} hold almost everywhere in $Q_T$, and \eqref{sy5}$_1$, \eqref{BCbis}
hold almost everywhere on $\Gamma\times(0,T)$.
\end{defn}

In order to establish regularity results, we shall need the kernel $J$
to be more regular. For instance, we could suppose $J\in
W_{loc}^{2,1}(\mathbb{R}^{d})$. However, this assumption excludes, for instance,
Newtonian and Bessel potential kernels which are physically relevant.
Thus, in order to include them, we recall the definition of admissibile kernel
(see \cite[Definition 1]{BRB}).


\begin{defn}
A kernel $J\in W_{loc}^{1,1}(\mathbb{R}^{d})$
is admissible if the following conditions are satisfied:

\begin{description}
\item[(J1)] $J\in C^{3}(\mathbb{R}^{d}\backslash \{0\})$;

\item[(J2)] $J$ is radially symmetric, $J(x)=\tilde{J}(|x|)$ and $\tilde{J}$
is non-increasing;

\item[(J3)] $\tilde{J}^{\prime \prime }(r)$ and $\tilde{J}^{\prime }(r)/r$
are monotone on $(0,r_{0})$ for some $r_{0}>0$;

\item[(J4)] $|D^{3}J(x)|\leq C_{d}|x|^{-d-1}$ for some $C_d>0$.
\end{description}
\end{defn}

The advantage of this assumption is related to the following lemma which allows, in particular,
to control the $W^{2,p}(\Omega)-$norm of the convolution operator term without assuming $J\in W_{loc}^{2,1}(\mathbb{R}^{d})$.


\begin{lem}
\label{admiss} \cite[Lemma 2]{BRB}  Let $J$ be admissible. Then, for
every $p\in (1,\infty )$, there exists $C_{p}>0$ such that
\begin{equation*}
\Vert \nabla v\Vert _{L^{p}(\Omega )^{d\times d}}\leq C_{p}\Vert \psi \Vert
_{L^{p}(\Omega )}\,,\qquad \forall \psi \in L^{p}(\Omega )\,,
\end{equation*}%
where $v=\nabla J\ast \psi $. Here, $C_{p}=C^{\ast }p$ for $p\in \lbrack
2,\infty )$ and $C_{p}=C^{\ast }p/\left( p-1\right) $ for $p\in \left(
1,2\right) $, for some constant $C^{\ast }>0$ independent of $p.$
\end{lem}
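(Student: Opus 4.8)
This lemma is quoted from \cite{BRB}; let me nonetheless outline the argument, which is a quantitative form of Calder\'on--Zygmund theory. Writing $v_k=\partial_k J\ast\psi$ we have $(\nabla v)_{kl}=\partial_l\partial_k J\ast\psi$, so it suffices to show that, for each pair $(k,l)$, the convolution operator $T_{kl}\colon\psi\mapsto\partial_k\partial_lJ\ast\psi$ (with $\psi$ extended by zero outside $\Omega$) is bounded on $L^p(\mathbb{R}^d)$ with the claimed dependence of the norm on $p$. Since $\psi$ is supported in $\Omega$ and we only evaluate $\nabla v$ on $\Omega$, replacing $J$ by $\chi J$ with a cut-off $\chi\in C_c^\infty(\mathbb{R}^d)$ equal to $1$ on a ball containing $\{x-y:x,y\in\Omega\}$ changes neither $v$ nor $\nabla v$ on $\Omega$; the derivatives of $\chi$ generate convolutions against bounded, compactly supported kernels, whose $L^p(\Omega)\to L^p(\Omega)$ norm is bounded by $C\max(1,|\Omega|)$ uniformly in $p$ by H\"older's inequality. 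Hence I may assume $J$ is compactly supported, $C^3$ away from the origin, radially symmetric and non-increasing, with $|D^3J(x)|\le C|x|^{-d-1}$.

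Next, the distributional Hessian $D^2J$ decomposes as a principal-value singular kernel plus a constant multiple of $\delta_0$; the latter simply reproduces $\psi$ and contributes an operator of norm independent of $p$ (this $\delta_0$-component is genuinely present, e.g., for Newtonian and Bessel kernels). For the principal-value part I would verify the standard Calder\'on--Zygmund conditions: the size bound $|\partial_k\partial_lJ(x)|\le A|x|^{-d}$ for $x\neq 0$ (obtained by integrating \textbf{(J4)} and using \textbf{(J1)}); the smoothness bound $|\nabla\partial_k\partial_lJ(x)|\le A|x|^{-d-1}$, which is precisely \textbf{(J4)}; and $L^2$-boundedness. For the last point, writing $D^2J$ as its trace-free part plus $\frac1d(\Delta J)I$, the trace-free part has vanishing mean over every sphere $\{|x|=r\}$ by the radial symmetry \textbf{(J2)}, while \textbf{(J2)}--\textbf{(J3)} guarantee $|\xi|^2|\widehat J(\xi)|\le C$, i.e.\ the $L^2$-continuity of $T_{kl}$ through its Fourier multiplier $-\xi_k\xi_l\widehat J(\xi)$ (alternatively one invokes the $T(1)$ theorem). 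This already yields boundedness of $T_{kl}$ on $L^p(\mathbb{R}^d)$ for every $p\in(1,\infty)$.

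The quantitative point is then to track the $p$-dependence. The Calder\'on--Zygmund decomposition provides a weak-type $(1,1)$ estimate with a constant depending only on $A$ and $d$; Marcinkiewicz interpolation between this and the $L^2$-bound gives $\|T_{kl}\|_{L^p\to L^p}\le C^\ast p/(p-1)$ for $1<p\le2$. Since $J$ is even by \textbf{(H2)}, the kernel $\partial_k\partial_lJ$ is even and $T_{kl}$ is self-adjoint, so by duality $\|T_{kl}\|_{L^p\to L^p}=\|T_{kl}\|_{L^{p'}\to L^{p'}}\le C^\ast p'/(p'-1)=C^\ast p$ for $2\le p<\infty$. Combining the two regimes with the uniformly bounded remainder terms yields $C_p=C^\ast p$ on $[2,\infty)$ and $C_p=C^\ast p/(p-1)$ on $(1,2)$, as claimed. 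The main obstacle is precisely this final bookkeeping: one must run the weak-$(1,1)$/Marcinkiewicz/duality chain with enough care that no further $p$-dependence slips in, and one must correctly isolate the $\delta_0$-part of $D^2J$ so that the singular-integral estimates are applied to a bona fide Calder\'on--Zygmund kernel rather than to the raw distribution.
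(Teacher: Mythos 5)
The paper does not prove this lemma; it is quoted verbatim from \cite[Lemma 2]{BRB}, whose proof is exactly the quantitative Calder\'on--Zygmund argument you outline (reduction to a compactly supported kernel, splitting $D^2J$ into a principal-value singular kernel plus a multiple of $\delta_0$, verifying the size, H\"ormander and $L^2$ conditions from \textbf{(J1)}--\textbf{(J4)}, and then weak-$(1,1)$ plus Marcinkiewicz interpolation and duality to get $C_p=C^\ast p$ for $p\geq 2$ and $C_p=C^\ast p/(p-1)$ for $1<p<2$). Your sketch is correct and consistent with the cited source; the only cosmetic point is that the commutator terms produced by the cut-off involve $\nabla J\,\nabla\chi$, which is merely $L^1$ with compact support rather than bounded, but Young's inequality still gives an $L^p\to L^p$ bound uniform in $p$.
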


Moreover, we also need the following lemma to handle the boundary condition
\eqref{BCbis} . Its proof immediately follows from the definition
of the seminorm in the space $W^{s,p}(\Gamma)$, with $s\in(0,1)$, and $1<p<\infty$ (cf. \cite[Chapter IX, Section 18]{DiB}),
namely,
\begin{align*}
&[u]_{W^{s,p}(\Gamma)}^p=\int_{\Gamma}\int_{\Gamma}
\frac{|u(x)-u(y)|^p}{|x-y|^{d-1+sp}}d\Gamma(x)\,d\Gamma(y)\,,
\end{align*}
where $d\Gamma$ is the surface measure on $\Gamma$.
\begin{lem}
\label{trace-product} Let $\varphi ,\psi \in W^{s,p}(\Gamma )\cap
L^{\infty }(\Gamma )$, where $s\in(0,1)$, $1<p<\infty$, and $d=2,3$.
Then $\varphi\, \psi \in W^{s,p}(\Gamma )\cap L^{\infty }(\Gamma )$ and
\begin{equation*}
\Vert \varphi\, \psi \Vert _{W^{s,p}(\Gamma )}\leq \Vert \varphi
\Vert _{L^{\infty }(\Gamma )}\Vert \psi \Vert _{W^{s,p}(\Gamma )}+\Vert \psi \Vert _{L^{\infty }(\Gamma )}\Vert \varphi
\Vert _{W^{s,p}(\Gamma )}\,.
\end{equation*}
\end{lem}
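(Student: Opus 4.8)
The plan is to argue directly from the Gagliardo seminorm recalled just above the statement, using the elementary Leibniz-type splitting together with Minkowski's inequality on the weighted product space $\Gamma\times\Gamma$.

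First I would observe that for $x,y\in\Gamma$,
$$\varphi(x)\psi(x)-\varphi(y)\psi(y)=\varphi(x)\big(\psi(x)-\psi(y)\big)+\psi(y)\big(\varphi(x)-\varphi(y)\big)\,,$$
so that, since $\varphi,\psi\in L^{\infty}(\Gamma)$,
$$|\varphi(x)\psi(x)-\varphi(y)\psi(y)|\leq \|\varphi\|_{L^{\infty}(\Gamma)}\,|\psi(x)-\psi(y)|+\|\psi\|_{L^{\infty}(\Gamma)}\,|\varphi(x)-\varphi(y)|$$
for a.e. $(x,y)\in\Gamma\times\Gamma$. Dividing by $|x-y|^{(d-1+sp)/p}$ and taking the norm in $L^{p}\big(\Gamma\times\Gamma;\,|x-y|^{-(d-1+sp)}\,d\Gamma(x)\,d\Gamma(y)\big)$ of both sides, the triangle inequality in this $L^{p}$ space gives at once
$$[\varphi\,\psi]_{W^{s,p}(\Gamma)}\leq \|\varphi\|_{L^{\infty}(\Gamma)}\,[\psi]_{W^{s,p}(\Gamma)}+\|\psi\|_{L^{\infty}(\Gamma)}\,[\varphi]_{W^{s,p}(\Gamma)}\,.$$

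Then, for the $L^{p}$-part of the norm I would simply use $\|\varphi\,\psi\|_{L^{p}(\Gamma)}\leq\|\varphi\|_{L^{\infty}(\Gamma)}\|\psi\|_{L^{p}(\Gamma)}$; adding this to the seminorm bound and recalling $[\varphi]_{W^{s,p}(\Gamma)}\leq\|\varphi\|_{W^{s,p}(\Gamma)}$, together with the normalisation $\|u\|_{W^{s,p}(\Gamma)}=\|u\|_{L^{p}(\Gamma)}+[u]_{W^{s,p}(\Gamma)}$, yields
$$\|\varphi\,\psi\|_{W^{s,p}(\Gamma)}\leq\|\varphi\|_{L^{\infty}(\Gamma)}\|\psi\|_{W^{s,p}(\Gamma)}+\|\psi\|_{L^{\infty}(\Gamma)}\|\varphi\|_{W^{s,p}(\Gamma)}\,,$$
which is the asserted estimate; in particular the right-hand side is finite by hypothesis, hence $\varphi\,\psi\in W^{s,p}(\Gamma)$, while $\varphi\,\psi\in L^{\infty}(\Gamma)$ is immediate from $\|\varphi\,\psi\|_{L^{\infty}(\Gamma)}\leq\|\varphi\|_{L^{\infty}(\Gamma)}\|\psi\|_{L^{\infty}(\Gamma)}$. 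There is essentially no obstacle here: the only point that deserves care is to apply Minkowski's inequality on the weighted space $\Gamma\times\Gamma$ rather than a crude $(a+b)^{p}\lesssim a^{p}+b^{p}$ bound, since the latter would introduce a $p$-dependent constant and destroy the clean form of the inequality; one should also confirm that the stated $W^{s,p}(\Gamma)$-norm convention is the additive one, under which the computation above is exactly the claimed estimate.
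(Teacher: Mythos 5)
Your argument is correct and is precisely what the paper intends: the authors state that the lemma "immediately follows from the definition of the seminorm," and your Leibniz splitting combined with Minkowski's inequality in the weighted $L^p$ space on $\Gamma\times\Gamma$ is the standard realisation of that remark. No issues.
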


We also need to strengthen assumption \textbf{(H4)} by replacing it with
\begin{description}
\item[(H8)] $F\in C^{3}(-1,1)$ and $\lambda :=mF^{\prime \,\prime }\in
C^{1}([-1,1])$.
\end{description}
Note that this assumption is certainly satisfied in the case \eqref{degmob} and \eqref{mixentr}.

The main result of this section is

\begin{thm}\label{strong-sols}
Suppose that $d=2,3$, that assumptions \textbf{(H1)}-\textbf{(H3)} and \textbf{(H5)}-\textbf{(H8)}
are satisfied, and that %
$J\in W_{loc}^{2,1}(\mathbb{R}^{d})$ or that $J$ is
admissible. Let $\varphi _{0}\in V\cap L^{\infty }(\Omega )$ with 
$M(\varphi _{0})\in L^{1}(\Omega )$. Then, for every $T>0$, problem \eqref{Sy01}-\eqref{Sy04}, \eqref{sy5}-\eqref{sy6}
admits a strong solution $[\uvec,\pi,\varphi]$ on $[0,T]$ such that
\begin{align}
& \uvec\in L^{4(1-\theta)}(0,T;V_{div})
\,\cap\, L^{4(1-\theta)/\theta}(0,T;L^4(\Omega)^d) \,\cap\, L^\infty(0,T;G_{div})\,,\label{str-reg-1} \\
& \pi\in L^{4(1-\theta)}(0,T;H^2(\Omega))
 \,\cap\, L^{4(1-\theta)/\theta}(0,T;W^{1,4}(\Omega))
 \,\cap\, L^\infty(0,T;V_0)\,,\label{str-reg-2}\\
&\varphi \in L^{\infty }(0,T;V)\cap L^{2}(0,T;H^{2}(\Omega ))\cap H^{1}(0,T;H)\,,\label{str-reg-3}
\end{align}
\sergiotwo{for some $\theta\in (0,1/2)$}. In addition $\pi\in L^\infty(0,T;C^\alpha(\overline{\Omega})))$ for some $\alpha\in (0,1)$.
\end{thm}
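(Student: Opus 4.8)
The plan is to prove Theorem \ref{strong-sols} by a two-step strategy, first establishing the parabolic regularity of $\varphi$ (which feeds into the Darcy equation via the right-hand side $(\nabla J\ast\varphi)\varphi$), and then bootstrapping the regularity of $\pi$ and $\uvec$ from equation \eqref{e26}. The cornerstone observation is that, owing to \textbf{(H7)} and the rewriting \eqref{e25}, the concentration equation is a genuinely non-degenerate quasilinear parabolic equation with diffusion coefficient $\lambda(\varphi)=m(\varphi)F''(\varphi)\geq\alpha_0>0$. First I would revisit the convective nonlocal Cahn--Hilliard problem of Section \ref{cnCH} and apply the refined existence-and-regularity result announced there: testing \eqref{e174} by $-\Delta\varphi$ (after regularizing and passing to the limit), using $\nabla B(\varphi)=\lambda(\varphi)\nabla\varphi$, and controlling the transport term $(\uvec\varphi,\nabla\Delta\varphi)$-type contributions by the $L^2(0,T;G_{div})$ bound \eqref{bdd7} together with Gagliardo--Nirenberg interpolation (the appendix, Section \ref{sec:GNineq}), one obtains $\varphi\in L^\infty(0,T;V)\cap L^2(0,T;H^2(\Omega))\cap H^1(0,T;H)$, i.e. \eqref{str-reg-3}. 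The admissibility of $J$ (or $J\in W^{2,1}_{loc}$), via Lemma \ref{admiss}, is what lets us bound $\nabla(\nabla J\ast\varphi)$ in $L^p$ without extra hypotheses, and Lemma \ref{trace-product} handles the Wentzell-type boundary condition \eqref{BCbis} at the level of fractional Sobolev traces; hypothesis \textbf{(H8)} ensures $\lambda\in C^1$ so that $B(\varphi)$ has the needed second-order regularity.

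Next I would turn to the elliptic system \eqref{e26}--\eqref{Sy02}. Taking the divergence of \eqref{e26} and using $\mathrm{div}\,\uvec=0$ gives a Neumann problem for $\pi$:
\begin{equation*}
\mathrm{div}\big(\eta(\varphi)^{-1}\nabla\pi\big)=\mathrm{div}\big(\eta(\varphi)^{-1}(\nabla J\ast\varphi)\varphi\big)\quad\text{in }\Omega,\qquad \frac{\partial\pi}{\partial\mathbf n}\text{-type condition on }\Gamma,
\end{equation*}
with the homogeneous flux condition coming from $\uvec\cdot\mathbf n=0$. Actually it is cleaner to keep $\pi$ and the ``flux'' $\mathbf f:=(\nabla J\ast\varphi)\varphi$ together and apply Proposition \ref{DiBenedetto_0} and then Proposition \ref{DiBenedetto}: writing the equation in the divergence form \eqref{Neu1}--\eqref{Neu2} with $a_{ij}=\eta(\varphi)\delta_{ij}$ (which satisfies \eqref{ell-cond} by \textbf{(H1)} and $\eta\le\max\nu_i$), $\mathbf f=-(\nabla J\ast\varphi)\varphi$, $f=0$, $\psi=0$. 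Since $\varphi\in L^\infty(Q_T)$ and, from \eqref{str-reg-3}, $\varphi\in L^\infty(0,T;V)$, we have $\nabla J\ast\varphi\in L^\infty(0,T;L^\infty(\Omega)^d)$ (Young's inequality and \textbf{(H2)}--or Lemma \ref{admiss}), hence $\mathbf f\in L^\infty(0,T;L^q(\Omega)^d)$ for every finite $q$, in particular for $q=d+\varepsilon$. Proposition \ref{DiBenedetto} then yields $\pi\in L^\infty(0,T;C^\alpha(\overline\Omega))$ for some $\alpha\in(0,1)$, which is the final assertion, and simultaneously $\uvec=\eta(\varphi)^{-1}\big(-\nabla\pi+\mathbf f\big)$ is measurable with the stated integrability once we know $\nabla\pi$. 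For the higher integrability \eqref{str-reg-1}--\eqref{str-reg-2} I would invoke the $W^{s,p}$ elliptic estimate Proposition \ref{preg} (or standard Calderón--Zygmund for the constant-coefficient Laplacian after freezing, treating the variation of $\eta(\varphi)$ perturbatively using $\eta\in C^{0,1}$ and $\nabla\varphi\in L^\infty(0,T;H)$): this gives $\pi\in H^2(\Omega)$ with a bound by $\|\mathbf f\|_{H^1}+\|\nabla\eta(\varphi)\cdot\nabla\pi\|$, and since $\|\mathbf f\|_{L^2(0,T;H^1)}$ is controlled by $\|\varphi\|_{L^2(0,T;H^2)}$ and $\|\varphi\|_{L^\infty(0,T;V)}$ while the lower-order term is absorbed by interpolation, one arrives at the time-integrability exponents $4(1-\theta)$ and $4(1-\theta)/\theta$ through a Gagliardo--Nirenberg interpolation of $\uvec$ between $L^\infty(0,T;G_{div})$ and $L^2(0,T;V_{div})$ (the parameter $\theta\in(0,1/2)$ records that interpolation). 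The $L^\infty(0,T;G_{div})$ bound on $\uvec$ follows directly from \eqref{e26} with $\mathbf f\in L^\infty(0,T;H^d)$ and the $\pi$ estimate at fixed time.

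The main obstacle, as the authors flag in the Introduction (``an unexpected ingredient''), is precisely obtaining the $H^2$-in-space regularity of $\varphi$ in the presence of the transport term $\uvec\cdot\nabla\varphi$ with only $\uvec\in L^2(0,T;G_{div})$ a priori, because at the start of the argument $\uvec$ itself depends on $\pi$, whose regularity in turn needs $\varphi\in L^\infty(0,T;V)$. This circularity is broken exactly by the De Giorgi--Nash--Moser Hölder estimate of Proposition \ref{DiBenedetto}, which upgrades $\pi$ to $C^\alpha$ using only the $L^\infty(Q_T)$-boundedness of $\varphi$ (already available from Theorem \ref{existweak}) and the $L^\infty(0,T;L^q)$-bound on $\mathbf f$, \emph{before} any $H^2$-information on $\varphi$; once $\pi\in C^\alpha$ and hence $\uvec\in L^\infty(0,T;G_{div})$ at least in a weak sense, the Cahn--Hilliard energy-type estimate testing by $-\Delta\varphi$ closes, because the critical term $(\uvec\cdot\nabla\varphi,-\Delta\varphi)$ is then handled by Hölder's inequality with $\uvec$ in an $L^\infty$-in-time, $L^r$-in-space space and $\nabla\varphi$, $\Delta\varphi$ interpolated. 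I would carry out the two estimates in a \emph{coupled} approximation scheme (the same $\nu,\e$ scheme as in Section \ref{sec:weakex}, or a further Galerkin truncation) so that all quantities are smooth and the formal testing is legitimate, then pass to the limit using the uniform bounds; lower semicontinuity delivers the regularity classes \eqref{str-reg-1}--\eqref{str-reg-3}, and the final line $\pi\in L^\infty(0,T;C^\alpha(\overline\Omega))$ is read off from the uniform Hölder estimate \eqref{DiBen2} applied at (a.e.) fixed time.
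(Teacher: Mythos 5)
Your proposal correctly identifies the key ingredients (the non-degenerate parabolic structure coming from \textbf{(H7)}, Lemma \ref{admiss}, Lemma \ref{trace-product}, and the De Giorgi H\"older estimate of Proposition \ref{DiBenedetto} for $\pi$), but the mechanism you describe for breaking the $\varphi$--$\pi$--$\uvec$ circularity does not work as stated, and this is the heart of the proof. You claim that once $\pi\in L^\infty(0,T;C^\alpha(\overline{\Omega}))$ one gets $\uvec$ in an ``$L^\infty$-in-time, $L^r$-in-space'' class sufficient to close the test-by-$-\Delta\varphi$ estimate. This is false: H\"older continuity of $\pi$ gives no control whatsoever on $\nabla\pi$ in $L^r(\Omega)$ for $r>2$, so Darcy's law only yields $\uvec\in L^\infty(0,T;G_{div})$, and the term $(\uvec\cdot\nabla\varphi,-\Delta\varphi)$ cannot be closed with $\uvec$ merely in $L^2$ in space. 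The actual mechanism in the paper is the interpolation $\Vert\pi\Vert_{W^{1,4}(\Omega)}\leq C\Vert\pi\Vert_{H^2(\Omega)}^{\theta}\Vert\pi\Vert_{C^\alpha(\overline{\Omega})}^{1-\theta}$ with $\theta=\theta_\rho<1/2$ (see \eqref{e9}--\eqref{e11}): the dependence of $\Vert\uvec\Vert_{L^4(\Omega)^d}$ on $\Vert\varphi\Vert_{H^2(\Omega)}$ is \emph{not} removed, it is only reduced to the subcritical power $\theta/(2(1-\theta))<1/2$ (cf. \eqref{e27}), which is exactly what allows the ball $B_{Y_T}(R)$ to map into itself for small $T$ in a Schauder fixed-point scheme, and what allows the global a priori estimate \eqref{e23} to close in Step 4. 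This also explains why the exponents $4(1-\theta)$ and $4(1-\theta)/\theta$ in \eqref{str-reg-1}--\eqref{str-reg-2} do not arise from interpolating $\uvec$ between $L^\infty(0,T;G_{div})$ and $L^2(0,T;V_{div})$, as you suggest, but from combining $\Vert\uvec\Vert_{L^4(\Omega)^d}\lesssim\Vert\varphi\Vert_{H^2(\Omega)}^{\theta/(2(1-\theta))}$ with $\varphi\in L^2(0,T;H^2(\Omega))$.

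Two further points would need repair even if the estimate were fixed. First, your suggestion to run everything on ``the same $\nu,\e$ scheme'' of Section \ref{sec:weakex} fails for the pressure: with the Brinkman term $-\nu\Delta\uvec$ present, $\pi$ no longer solves the scalar divergence-form equation \eqref{pb1-eq}--\eqref{pb2-eq}, so Proposition \ref{DiBenedetto} cannot be applied uniformly in $\nu$; the paper therefore builds the strong solution directly on the Darcy system via a fixed-point argument, regularizing only the velocity in the convective term (Leray-$\alpha$) in the case $d=3$ with $\lambda$ non-constant, where uniqueness for the convective nonlocal Cahn--Hilliard equation with $\uvec\in L^4(0,T;L^4)$ is not available to define the fixed-point map. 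Second, the sequential structure ``first $\varphi$, then $\pi,\uvec$'' cannot be literal: the elliptic estimate \eqref{e3} for $\Vert\pi\Vert_{H^2(\Omega)}$ itself contains $\Vert\varphi\Vert_{H^2(\Omega)}^{1/2}\Vert\pi\Vert_{H^2(\Omega)}^{\theta}$, and absorbing it again uses $\theta<1$; the two regularities must be obtained simultaneously, which is precisely what the fixed-point formulation (Steps 1--3) and the coupled Gronwall argument (Step 4) accomplish.
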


\begin{oss}
\label{continuity}
We also have $\varphi \in C([0,T];V)$ because of \eqref{str-reg-3} (see, e.g., \cite[Section 5.9, Theorem 4]{EV}).
\end{oss}

In two dimensions a stronger regularity result can be proven, namely,
\begin{thm}\label{strong-sols-2d}
Suppose that $d=2$ and let the assumptions of Theorem \ref{strong-sols} hold.
If, in addition, $\varphi _{0}\in H^2(\Omega)$ and
the following compatibility condition is satisfied
\begin{equation}
\nabla B(\varphi _{0})\cdot \nvec=m(\varphi _{0})(\nabla J\ast
\varphi _{0})\cdot \nvec\,,\quad \mbox{ a.e. on }\Gamma \,,
\label{comp}
\end{equation}
then, for every $T>0$, problem \eqref{Sy01}-\eqref{Sy04}, \eqref{sy5}-\eqref{sy6}
admits a strong solution $[\uvec,\pi,\varphi]$ on $[0,T]$ satisfying, besides \eqref{str-reg-1} and \eqref{str-reg-2},
the further regularity properties
\begin{align}
&\uvec_t\in L^2(0,T;G_{div})\,,\label{str-reg-1-2d}\\
&\varphi\in L^\infty(0,T;H^2(\Omega))\cap H^1(0,T;V)\cap W^{1,\infty}(0,T;H). \label{str-reg-2-2d}
\end{align}
\end{thm}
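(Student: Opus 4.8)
The plan is to improve the regularity of the strong solution obtained in Theorem \ref{strong-sols} when $d=2$, exploiting the well-known gain of integrability that the planar case affords (Ladyzhenskaya-type inequalities, $H^1(\Omega)\hookrightarrow L^p(\Omega)$ for all $p<\infty$, and the endpoint $L^\infty$-type Brezis--Gallouet logarithmic estimates). The starting point is the formal differential identity obtained by testing the (rewritten) Cahn--Hilliard equation \eqref{e174} with $\varphi_t$, after first differentiating it formally, and by testing the Darcy law \eqref{e26} appropriately. Since $\varphi_0\in H^2(\Omega)$ satisfies the compatibility condition \eqref{comp}, the natural candidate for an a priori estimate is to test \eqref{e25} (written in divergence form with the flux vanishing on $\Gamma$ thanks to \eqref{BCbis}) with $-\Delta\varphi_t$ or, equivalently, to differentiate in time and test with $\varphi_t$. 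First I would regularize via the Galerkin/approximation scheme already set up in Section \ref{proofexwea} (with $\nu>0$ fixed, then $\nu\to0$), so that all the following manipulations are legitimate, and only pass to the limit at the end using the uniform bounds. In two dimensions the bound $\varphi\in L^\infty(0,T;V)$ from Theorem \ref{strong-sols} combined with $H^1\hookrightarrow L^p$ for every finite $p$ already makes the convective term $\mathbf u\cdot\nabla\varphi$ and the Korteweg term $(\nabla J\ast\varphi)\varphi$ much easier to control than in three dimensions.

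The key steps, in order, would be: (i) Differentiate \eqref{e174} in time and test with $\varphi_t$; the principal term $(\nabla(\partial_t B(\varphi)),\nabla\varphi_t)=(\lambda(\varphi)|\nabla\varphi_t|^2,1)+(\lambda'(\varphi)\varphi_t\nabla\varphi,\nabla\varphi_t)$ gives coercivity in $\|\nabla\varphi_t\|^2$ after absorbing the lower-order piece (here \textbf{(H8)}, i.e. $\lambda\in C^1$, and \textbf{(H7)}, i.e. $\lambda\ge\alpha_0>0$, are essential); the remaining terms $\partial_t(m(\varphi)\nabla J\ast\varphi)$ and $\partial_t(\mathbf u\varphi)$ are estimated using $\varphi\in L^\infty(0,T;V)$, Lemma \ref{admiss} (or $J\in W^{2,1}_{loc}$), and the already-known bounds on $\mathbf u_t$ obtained from differentiating \eqref{e26} in time and using \eqref{str-reg-1}. (ii) Differentiate the Darcy law \eqref{e26} in time: $\eta(\varphi)\mathbf u_t+\eta'(\varphi)\varphi_t\mathbf u+\nabla\pi_t=\partial_t((\nabla J\ast\varphi)\varphi)$; test with $\mathbf u_t$, use $\eta\ge\eta_1$ and $\mathbf u\cdot\mathbf n=0$ so the pressure term drops, and control $\eta'(\varphi)\varphi_t\mathbf u$ by $\|\varphi_t\|_{L^4}\|\mathbf u\|_{L^4}\|\mathbf u_t\|$, invoking $\varphi_t\in L^2(0,T;V)$ (which is exactly what step (i) is producing) and $\mathbf u\in L^\infty(0,T;G_{div})\cap L^{4(1-\theta)/\theta}(0,T;L^4)$ from \eqref{str-reg-1}. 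Steps (i) and (ii) are coupled, so I would run them simultaneously in a single Gronwall argument on the quantity $\frac12\|\nabla\varphi_t\|^2$-type energy (or, more robustly, set up the estimate on a smooth Galerkin level and use a Gronwall lemma with an $L^1_t$ coefficient coming from $\|\varphi\|_{L^\infty(0,T;V)}^2$ and the known time-integrability of $\mathbf u$ in $L^4$). (iii) Once $\varphi_t\in L^\infty(0,T;H)\cap L^2(0,T;V)$ and $\mathbf u_t\in L^2(0,T;G_{div})$ are established, reinsert these into the elliptic problem for $\varphi$: from \eqref{e25}, $\Delta B(\varphi)=\varphi_t+\mathbf u\cdot\nabla\varphi+\mathrm{div}(m(\varphi)\nabla J\ast\varphi)\in L^\infty(0,T;H)$ with the Neumann-type datum \eqref{BCbis} controlled in the right trace space via Lemma \ref{trace-product}; elliptic regularity (Proposition \ref{preg}) then upgrades $B(\varphi)$, hence $\varphi$ (using $\lambda\in C^1$, bounded below), to $L^\infty(0,T;H^2(\Omega))$. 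This closes \eqref{str-reg-2-2d}, and \eqref{str-reg-1-2d} is step (ii).

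The main obstacle I anticipate is controlling the time derivative of the Korteweg/convective coupling in a way that stays genuinely two-dimensional, i.e. getting the borderline products $\|\varphi_t\nabla\varphi\|$, $\|\varphi_t\mathbf u\|$ and $\partial_t(\mathbf u\varphi)$ into the energy with coefficients that are integrable in time. The delicate point is that $\mathbf u$ is only in $L^{4(1-\theta)}(0,T;V_{div})$ with $\theta\in(0,1/2)$ arbitrary, not in $L^\infty(0,T;V_{div})$, so when I estimate $(\eta'(\varphi)\varphi_t\mathbf u,\mathbf u_t)$ I must spend the $L^4(\Omega)$-norm of $\mathbf u$ (available with good time-integrability from \eqref{str-reg-1}) rather than its $H^1$-norm, and correspondingly spend $\|\varphi_t\|_{L^4(\Omega)}\lesssim\|\varphi_t\|^{1/2}\|\varphi_t\|_V^{1/2}$ via Ladyzhenskaya, which forces a Young-inequality split that is only closeable because step (i) is simultaneously furnishing the $\|\nabla\varphi_t\|^2$ dissipation. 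Making this simultaneous absorption rigorous — choosing the exponents so that the ``bad'' terms are bounded by (dissipation) plus (an $L^1_t$ function) times (the energy) — is the technical heart of the argument; everything else (the elliptic bootstrap in step (iii), recovering the pointwise formulation, passing to the limit in $\nu$) is by now routine given the machinery already developed in Sections \ref{sec:weakex}--\ref{sec:strongex}.
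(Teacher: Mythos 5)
Your proposal is correct and follows essentially the same route as the paper: differentiate the Darcy law in time and test with $\uvec_t$ (spending $\Vert\varphi_t\Vert_{L^4}\lesssim\Vert\varphi_t\Vert^{1/2}\Vert\nabla\varphi_t\Vert^{1/2}$ and the $L^4$-in-space, high-integrability-in-time bound on $\uvec$ from \eqref{str-reg-1}), differentiate \eqref{e25} in time and test with $\varphi_t$, absorb the $\delta'\Vert\nabla\varphi_t\Vert^2$ term into the $\alpha_0$-dissipation, close via Gronwall, and then recover $\varphi\in L^\infty(0,T;H^2(\Omega))$ by the elliptic estimate for $B(\varphi)$ together with the identity expressing $\partial^2_{ij}\varphi$ in terms of $B(\varphi)$. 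The coupling/absorption mechanism you single out as the technical heart is exactly the one the paper implements in \eqref{e39}--\eqref{e41}.
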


\begin{oss}
The strong solution given by Theorem \ref{strong-sols} can be viewed as a strong solution to the equations \eqref{Sy01}, \eqref{Sy03}-\eqref{Sy04}
and boundary condition \eqref{sy5}$_2$ if, for instance, $\varphi$ satisfies the so-called strong separation property,
namely $\varphi$ is uniformly away from the pure states $\pm 1$ (see \cite[Rem.4.3]{FGGS}, see also \cite{FGG2} and references therein).
\end{oss}

\subsection{Proof of Theorem \ref{strong-sols}}


The proof is carried out by first providing existence of a strong solution on
a sufficiently small time interval. This is achieved by means of a fixed point argument based
on the Schauder's theorem. Then, by relying on suitable higher order estimates,
the local in time solution will be extended to an arbitrary time interval $[0,T]$, $T>0$.
A key tool for this proof is a regularity result for the convective nonlocal Cahn-Hilliard
equation with a given divergence-free velocity field (see Theorem \ref{reg-thm-bis}  in Section \ref{cnCH}).

Let us outline our Schauder's fixed point argument.
We first introduce the functional spaces $X_T$ and $Y_T$ given by
\begin{align}
&X_T:=L^\infty(0,T;H)\cap L^2(0,T;V)\cap H^1(0,T; V^\prime)\,,\nonumber\\
&Y_T:=L^\infty(0,T;V)\cap L^2(0,T;H^2(\Omega))\cap H^1(0,T;H)\,,
\end{align}
where $T>0$ will be fixed later on.

For every given $\varphi\in Y_T$, with $|\varphi|\leq 1$, we consider the following (formal) problem
\begin{align}
& \eta(\varphi)\mathbf{u} + \nabla\pi=(\nabla J\ast\varphi)\varphi, \qquad  \mbox{ in } Q_{T},\label{pb1}\\
& \mbox{div}(\mathbf{u})=0\label{pb2}, \qquad  \mbox{ in } Q_{T},\\
& \widetilde{\varphi}_t+\uvec\cdot\nabla\widetilde{\varphi}=\Delta B(\widetilde{\varphi})
- \mbox{div}(m(\widetilde{\varphi})(\nabla J\ast\widetilde{\varphi})), \qquad  \mbox{ in } Q_{T},\label{pb3}\\
&\mathbf{u}\cdot\mathbf{n} = 0\,, \quad
\big[\nabla B(\widetilde{\varphi})-m(\widetilde{\varphi})(\nabla J\ast\widetilde{\varphi})\big]\cdot\mathbf{n} = 0
\qquad\mbox{on }\Gamma\times (0,T),\label{pb4}\\
& \widetilde{\varphi}(0)=\varphi_0, \qquad\mbox{in }\Omega\,.\label{pb5}
\end{align}
We then divide the argument into four steps. These steps are carried out for $d=2$ or $\lambda$ constant. In the case
$d=3$ and non-constant $\lambda$ we shall also need to regularize \eqref{pb1}-\eqref{pb5} (see the end of the proof).

In Step 1 we study problem \eqref{pb1}, \eqref{pb2}, \eqref{pb4}$_1$, proving that, for every $\varphi\in Y_T$,
with $|\varphi|\leq 1$, it admits a unique solution $[\pi,\uvec]$. We also establish some crucial higher order estimates
for $\pi$ and $\uvec$ in terms of $\varphi$.
The estimates in Step 1 are purely elliptic and time is tacitly omitted.

In Step 2 we address the nonlocal convective Cahn-Hilliard
system \eqref{pb3}, \eqref{pb4}$_2$, \eqref{pb5}, with the velocity $\uvec$
given by the solution to \eqref{pb1}, \eqref{pb2}, \eqref{pb4}$_1$. We exploit Theorem \ref{reg-thm-bis}
to get a unique strong solution $\wph$ to this problem.
By virtue of the estimates derived in Step 1, we shall then conclude that, for
every given $\varphi\in Y_T$, with $|\varphi|\leq 1$, \eqref{pb1}-\eqref{pb5}
admits a unique solution $[\uvec,\widetilde{\varphi}]\in \big(L^\infty(0,T;G_{div})\cap L^2(0,T;V_{div})\big)
\times Y_T$,
with $|\widetilde\varphi|\leq 1$. This allows us to introduce the map $\mathcal{F}:\varphi\mapsto\wph$, which
 is well defined from the set
$\{\varphi\in Y_T\,:\,|\varphi|\leq 1\}$ into itself. The goal of Step 2 is to identify a suitable convex set of $Y_T$,
which is compact in $X_T$,
such that $\mathcal{F}$ is also a map from this set into itself. However, $\mathcal{F}$ cannot be defined
if $d=3$ and $\lambda$ non-constant. In this case we need to regularize $\uvec$ in \eqref{pb3}
and then pass to the limit in the regularization parameter to conclude (see below).

Step 3 will be devoted to prove that
$\mathcal{F}$ is continuous on $X_T$. The existence of a local in time strong solution will
then follow from Schauder's theorem.

In the final Step 4, we shall show that the local in time
solution constructed in the previous steps is indeed global.

We point out that all the estimates in the first three steps will be derived for both cases $d=2,3$.
We also remind once more that $\mathcal{F}$ cannot be defined if $d=3$ and $\lambda$ is not constant.
In this case we shall use a regularization argument.

In the sequel of this section we will indicate by $C$ a generic positive constant which only depends
on main constants of the problem (see (\textbf{H1})-(\textbf{H8})) and on $\Omega$ at most. This constant may vary also
within the same line. Any other dependency will be explicitly pointed out.

\bigskip

\textbf{Step 1.}
We first study the elliptic system \eqref{pb1}, \eqref{pb2}, \eqref{pb4}$_1$, with $\varphi$ given in $H^2(\Omega)$ (or in $V$)
such that $|\varphi|\leq 1$. First, we observe that problem \eqref{pb1}, \eqref{pb2}, \eqref{pb4}$_1$ is equivalent to the following
\begin{align}
&\mbox{div}\Big(\frac{1}{\eta(\varphi)}\nabla\pi\Big)=\mbox{div}\Big(\frac{(\nabla J\ast\varphi)\varphi}{\eta(\varphi)}\Big), \qquad  \mbox{ in } Q_{T},
\label{pb1-eq}\\
&\frac{\partial\pi}{\partial\mathbf{n}}=(\nabla J\ast\varphi)\varphi\cdot\mathbf{n}, \qquad  \mbox{ on } \Gamma \times (0,T),\label{pb2-eq}\\
&\uvec=-\frac{1}{\eta(\varphi)}\nabla\pi+\frac{1}{\eta(\varphi)}(\nabla J\ast\varphi)\varphi, \qquad  \mbox{ in } Q_{T}.\label{pb3-eq}
\end{align}
More precisely, for $\varphi\in V$ fixed, with $|\varphi|\leq1$, we can easily check that $[\pi,\mathbf{u}]\in V_0\times G_{div}$ is a solution to
\eqref{pb1}, \eqref{pb2}, \eqref{pb4}$_1$ if and only if $\pi\in V_0$
is a weak solution to \eqref{pb1-eq}, \eqref{pb2-eq}, namely $\pi$ satisfies
\begin{align}\label{weakfor-Darcy}
&\int_\Omega\frac{1}{\eta(\varphi)}\nabla\pi\cdot\nabla\psi=
\int_\Omega\frac{(\nabla J\ast\varphi)\varphi}{\eta(\varphi)}\cdot\nabla\psi\,,\qquad\forall \psi\in V\,,
\end{align}
and $\mathbf{u}\in G_{div}$
is given by \eqref{pb3-eq}. Indeed, let $\pi\in V_0$ satisfy \eqref{weakfor-Darcy} and let $\uvec\in H^d$ be given by
\eqref{pb3-eq}. Then, \eqref{pb1} trivially holds almost everywhere in $Q_T$, and we have that $\int_\Omega\uvec\cdot\nabla\psi=0$,
for all $\psi\in C^\infty_0(\Omega)$. This entails
that \eqref{pb2} holds in the sense of distributions. Hence, recalling that
the trace operator $\gamma_{\nvec}$ (which satisfies $\gamma_{\nvec}(\vvec)=\vvec\cdot\nvec$ on $\Gamma$,
for all $\vvec\in \sergiotwo{C^\infty_0(\mathbb{R}^2)^d}$) is a well defined linear and continuous operator
from the space $\{\vvec\in\sergiotwo{L^2(\Omega)^d}\,:\,\mbox{div}(\vvec)\in L^2(\Omega)\}$ into $H^{-1/2}(\Gamma)$,
by applying the generalized Stokes formula (see, e.g., \cite[Chapter I, Theorem 1.2]{T}) we get
$\langle\gamma_{\nvec}(\uvec),\psi_{|\Gamma}\rangle_{H^{1/2}(\Gamma)}=0$, for all $\psi\in V$,
which means that \eqref{pb4}$_1$ holds (in the generalized sense), and also that $\uvec\in G_{div}$.
Therefore, the equivalence of problem \eqref{pb1}, \eqref{pb2}, \eqref{pb4}$_1$ with problem
\eqref{pb1-eq}-\eqref{pb3-eq} is proven.

A straightforward application of the
Lax-Milgram theorem yields that, for every $\varphi\in V$, with $|\varphi|\leq 1$, problem \eqref{pb1-eq}-\eqref{pb3-eq}
(and hence also problem \eqref{pb1}, \eqref{pb2}, \eqref{pb4}$_1$) admits a unique solution
$[\pi, \uvec]\in V_0\times G_{div}$. Moreover, the following estimates hold
\begin{align}
&\Vert\nabla\pi\Vert\leq\frac{\eta_\infty}{\eta_1}\Vert(\nabla J\ast\varphi)\varphi\Vert
\leq\frac{\eta_\infty}{\eta_1}\,b\,\Vert\varphi\Vert\leq \frac{\eta_\infty}{\eta_1}\,b\,|\Omega|^{1/2}\leq C\,,
\label{e5}\\
&\Vert\uvec\Vert\leq \frac{1}{\eta_1}\Vert\nabla\pi\Vert+\frac{b}{\eta_1}\Vert\varphi\Vert
\leq\frac{b}{\eta_1}\Big(1+\frac{\eta_\infty}{\eta_1}\Big)\Vert\varphi\Vert
\leq \frac{b}{\eta_1}\Big(1+\frac{\eta_\infty}{\eta_1}\Big)|\Omega|^{1/2}\leq C\,,\label{e8}
\end{align}
where $\eta_\infty:=\Vert\eta\Vert_{L^\infty(-1,1)}$ (see (\textbf{H1})).

Assume now that $\varphi\in H^2(\Omega)$, with $|\varphi|\leq 1$.
Then, problem \eqref{pb1-eq}-\eqref{pb2-eq} is equivalent to the elliptic problem given by
\begin{align}\label{ellipt-nondiv}
&\Delta\pi=\frac{\eta^\prime(\varphi)}{\eta(\varphi)}\,\nabla\varphi\cdot\nabla\pi
+\eta(\varphi)\,\mbox{div}\Big(\frac{(\nabla J\ast\varphi)\varphi}{\eta(\varphi)}\Big)\,,
\end{align}
together with the boundary condition \eqref{pb2-eq}. Indeed, it is easy to check that
the weak formulation of \eqref{ellipt-nondiv} subject to \eqref{pb2-eq} is satisfied if and only if
\eqref{weakfor-Darcy} is satisfied. To this aim it is enough to observe that, being $\varphi\in H^2(\Omega)$ with $|\varphi|\leq 1$,
recalling \textbf{(H1)}, we have that $\psi=\eta(\varphi)\chi\in V$ if and only if $\chi\in V$.
Hence, by taking $\psi=\eta(\varphi)\chi$ in \eqref{weakfor-Darcy} 
we can deduce the weak formulation of \eqref{ellipt-nondiv} subject to \eqref{pb2-eq} (with $\chi\in V$ as test function) from \eqref{weakfor-Darcy}, and conversely.

Thus we consider problem \eqref{pb1-eq}-\eqref{pb2-eq}, written as \eqref{ellipt-nondiv} with \eqref{pb2-eq},
and we apply classical elliptic regularity theory, together with a bootstrap argument, to deduce that $\pi\in H^2(\Omega)$.
Indeed, we begin by noting that \sergiotwo{the right hand side of \eqref{pb2-eq} belongs to $H^{1/2}(\Gamma)$,
and the right hand side of \eqref{ellipt-nondiv} belongs to
$L^{2^-}(\Omega)$, if $d=2$, and to $L^{3/2}(\Omega)$, if $d=3$.} 
Hence, \sergiotwo{by a classical elliptic regularity result} (remember that $\Gamma$ is smooth enough), we have that $\pi\in W^{2,2^-}(\Omega)$,
\sergiotwo{if $d=2$, and $\pi\in W^{2,3/2}(\Omega)$, if $d=3$}. Thus
$\nabla\pi\in W^{1,2^-}(\Omega)^2\hookrightarrow L^4(\Omega)^2$, \sergiotwo{if $d=2$, and
$\nabla\pi\in W^{1,3/2}(\Omega)^3\hookrightarrow L^3(\Omega)^3$,  if $d=3$. This entails that} the right hand side of \eqref{ellipt-nondiv}
is in $H$, \sergiotwo{and hence that $\pi\in H^2(\Omega)$, for both cases $d=2,3$. From \eqref{pb3-eq}}
we also get $\uvec\in V_{div}$.

Let us now derive the estimates for the $H^2(\Omega)-$norm of $\pi$ and for the $V^d-$norm of $\uvec$ in terms of the $H^2(\Omega)-$norm
of $\varphi$. \sergiotwo{To this aim we first derive
an estimate that controls the $L^4(\Omega)^d-$norm of $\mathbf{u}$ in terms of the $H^2(\Omega)-$norm of $\varphi$.}
This estimate, which is obtained by relying on the H\"{o}lder continuity property of the pressure $\pi$,
will turn out to be a key ingredient in our fixed point argument.
First, observe that, by applying Proposition \ref{Bre-Mir} \sergiotwo{for $d=2,3$}, the following interpolation inequality holds
\begin{align}
&\Vert\pi\Vert_{W^{1,4}(\Omega)}\leq C\Vert\pi\Vert_{H^2(\Omega)}^\theta\Vert\pi\Vert_{W^{4/\rho,\rho}(\Omega)}^{1-\theta}\,,
\label{e9}
\end{align}
where $4<\rho<\infty$ and $\theta\in(0,1)$ is given by $\theta=\theta_\rho:=\frac{1}{2}\frac{\rho-4}{\rho-2}$. Indeed, by taking
$r=1$, $q=4$, $s_1=4/\rho$, $p_1=\rho$, $s_2=2$, $p_2=2$ in Proposition \ref{Bre-Mir} (and replacing $\theta$ by $1-\theta$), from \eqref{Bre-Mir-cond} we get $\theta=\theta_\rho$, and $s=r=1$. Since \eqref{GN-ex} is not satisfied, then we obtain \eqref{e9}.
We point out that $\theta=\theta_\rho<1/2$, for every $4<\rho<\infty$
\sergiotwo{(notice that
$\theta$ does not depend on $d$)}. Next, we fix $\rho$ such that $4/\rho<\alpha$, where $\alpha\in(0,1)$,
and this ensures the embedding $C^\alpha(\overline{\Omega})\hookrightarrow
W^{4/\rho,\rho}(\Omega)$ \sergiotwo{for both cases $d=2,3$}.
Hence, from \eqref{e9} we deduce the following inequality
\begin{align}
&\Vert\pi\Vert_{W^{1,4}(\Omega)}\leq
C\Vert\pi\Vert_{H^2(\Omega)}^\theta\Vert\pi\Vert_{C^\alpha(\overline{\Omega})}^{1-\theta}\,,\qquad\mbox{with }\,\,
\theta<1/2\,.\label{e10}
\end{align}
With this interpolation inequality at our disposal, we now turn back to
the elliptic problem \eqref{pb1-eq}-\eqref{pb2-eq}, which is a special case of \eqref{Neu1}-\eqref{Neu2}
with
$$a_{ij}=\frac{1}{\eta(\varphi)}\delta_{ij}\,,\qquad\mathbf{f}=-\frac{(\nabla J\ast\varphi)\varphi}{\eta(\varphi)}\,,
\qquad f=0\,,\qquad \psi=0\,.$$
The ellipticity condition \eqref{ell-cond} is satisfied with $\Lambda_\ast=\eta_\infty$
(here we use $|\varphi|\leq 1$), and $\Lambda^\ast=\eta_1$. Moreover, we can immediately
check that condition \eqref{DiBen1} holds (taking, for simplicity, $\varepsilon=1$)
$$
\Vert\mathbf{f}\Vert_{L^{d+1}(\Omega)^d}
=\left\Vert\frac{(\nabla J\ast\varphi)\varphi}{\eta(\varphi)}\right\Vert_{L^{d+1}(\Omega)^d}\leq
\frac{b|\Omega|^{1/(d+1)}}{\eta_1}\,.
$$
Hence, from Proposition \ref{DiBenedetto} we
infer that $\pi$ is H\"{o}lder continuous in $\overline{\Omega}$, and that there exist constants $\Theta$ and
$\alpha\in(0,1)$ depending only on $\eta_1$, $\eta_\infty$, $b$, $|\Omega|$, \sergiotwo{$d$}, and on the $C^1$ structure of $\Gamma$, such that
\begin{align}
&\Vert\pi\Vert_{C^\alpha(\overline{\Omega})}\leq \Theta(\eta_1, \eta_\infty, b, |\Omega|, \sergiotwo{d},  \Gamma)\,.
\label{e53}
\end{align}
By exploiting this estimate, we can now apply \eqref{e10}
(with the same exponent $\alpha$ as given by Proposition \ref{DiBenedetto}) to obtain the bound
\begin{align}
&\Vert\pi\Vert_{W^{1,4}(\Omega)}\leq C\Vert\pi\Vert_{H^2(\Omega)}^\theta\,,\qquad\mbox{with }\,\,
\theta<1/2\,.\label{e11}
\end{align}
 Therefore, from \eqref{pb3-eq}, by means 
of \eqref{e11}, we get the following estimate for the $L^4(\Omega)^d-$norm of $\uvec$
in terms of the $H^2(\Omega)-$norm of $\pi$
\begin{align}
&\Vert\uvec\Vert_{L^4(\Omega)^d}\leq \frac{1}{\eta_1}\Vert\pi\Vert_{W^{1,4}(\Omega)}
+\frac{b}{\eta_1}|\Omega|^{1/4}\leq C(\Vert\pi\Vert_{H^2(\Omega)}^\theta+1)\,.
\label{e12}
\end{align}
In order to get an estimate for the $L^4(\Omega)^d-$norm of $\uvec$
in terms of the $H^2(\Omega)-$norm of $\varphi$, we employ a classical elliptic regularity estimate, the following well-known
Gagliardo-Nirenberg inequality
(see, e.g., \eqref{GN-spe} for $p=4$)
$$\Vert\nabla\varphi\Vert_{L^4(\Omega)^d}\leq C\Vert\varphi\Vert_{L^\infty(\Omega)}^{1/2}
\Vert\varphi\Vert_{H^2(\Omega)}^{1/2}\,,$$
\sergiotwo{the control  \eqref{e11}}, and Lemma \ref{admiss}, \sergiotwo{to get,} from \eqref{ellipt-nondiv} with \eqref{pb2-eq},
\begin{align}
\Vert\pi\Vert_{H^2(\Omega)}&\leq  C\Big(\Vert\frac{\eta^\prime(\varphi)}{\eta(\varphi)}\,\nabla\varphi\cdot\nabla\pi\Vert+
\Vert\mbox{div}(\nabla J\ast\varphi)\Vert
+\Vert\eta(\varphi)(\nabla J\ast\varphi)\cdot\nabla\Big(\frac{\varphi}{\eta(\varphi)}\Big)\Vert\nonumber\\[1mm]
&+\Vert(\nabla J\ast\varphi)\varphi\cdot\nvec\Vert_{H^{1/2}(\Gamma)}\Big)\nonumber\\[1mm]
&\leq C\big(\sergiotwo{\Vert\varphi\Vert_{H^2(\Omega)}^{1/2}\Vert\pi\Vert_{H^2(\Omega)}^{\theta}}
+\Vert\varphi\Vert_V+\Vert(\nabla J\ast\varphi)\varphi\cdot\nvec\Vert_{H^{1/2}(\Gamma)}\big)\,.\label{e3}
\end{align}
As far as the boundary term in \eqref{e3} is concerned, invoking Lemma \ref{trace-product}, we have that
\begin{align}
&\Vert(\nabla J\ast\varphi)\varphi\cdot\nvec\Vert_{H^{1/2}(\Gamma)}=
\Vert \varphi\frac{\partial}{\partial\nvec}(J\ast\varphi)\Vert_{H^{1/2}(\Gamma)}
\leq \Vert\varphi\Vert_{L^\infty(\Gamma)}\Vert\frac{\partial}{\partial\nvec}(J\ast\varphi)\Vert_{H^{1/2}(\Gamma)}
\nonumber\\[1mm]
&+\Vert\frac{\partial}{\partial\nvec}(J\ast\varphi)\Vert_{L^\infty(\Gamma)}\Vert\varphi\Vert_{H^{1/2}(\Gamma)}
\leq C\Vert J\ast\varphi\Vert_{H^2(\Omega)}+C\Vert\nabla J\ast\varphi\Vert_{\sergiotwo{W^{1,4}(\Omega)^d}}\Vert\varphi\Vert_V\nonumber\\
&\leq C(\Vert\varphi\Vert+\Vert\varphi\Vert_{\sergiotwo{L^4(\Omega)}}\Vert\varphi\Vert_V)\leq C(1+\Vert\varphi\Vert_V)\,,\label{e4}
\end{align}
where Lemma \ref{admiss} has been used again, as well as the embedding
\sergiotwo{$W^{1,4}(\Omega)\hookrightarrow C(\overline{\Omega})$,
for $d=2,3$.}
Therefore, collecting 
\sergiotwo{\eqref{e3} and \eqref{e4},} we get
\begin{align}
&\Vert\pi\Vert_{H^2(\Omega)}
\leq  C\,\big(\sergiotwo{\Vert\varphi\Vert_{H^2(\Omega)}^{\frac{1}{2(1-\theta)}}+\Vert\varphi\Vert_V}+1\big)\,.
\label{e6}
\end{align}
\sergiotwo{The desired estimate of the $L^4(\Omega)^d-$norm of $\uvec$ in terms of the $H^2(\Omega)-$norm of $\varphi$ then follows from
\eqref{e12} and \eqref{e6}, namely
\begin{align}\label{e27}
&\Vert\uvec\Vert_{L^4(\Omega)^d}\leq C\,\big(\Vert\varphi\Vert_{H^2(\Omega)}^{\frac{\theta}{2(1-\theta)}}
+\Vert\varphi\Vert_V^\theta+1\big)\,.
\end{align}
}
\sergiotwo{We can also deduce an estimate for the $V^d-$norm of $\uvec$. Indeed, }
from \eqref{pb3-eq}, \sergiotwo{and again using \eqref{e11},} we have that
\begin{align}
\Vert\uvec\Vert_{V^d}&\leq C(\Vert\nabla\pi\Vert+\Vert\pi\Vert_{H^2(\Omega)}+\Vert\nabla\varphi\Vert_{\sergiotwo{L^4(\Omega)^d}}
\Vert\nabla\pi\Vert_{\sergiotwo{L^4(\Omega)^d}}+\Vert(\nabla J\ast\varphi)\varphi\Vert_{V^d})\nonumber\\[1mm]
&\leq C(\Vert\pi\Vert_{H^2(\Omega)}+\sergiotwo{\Vert\varphi\Vert_{H^2(\Omega)}^{1/2}
\Vert\pi\Vert_{H^2(\Omega)}^{\theta}}+\Vert\varphi\Vert_V+1)\,,\nonumber
\end{align}
and hence, on account of \eqref{e6}, we obtain
\begin{align}
&\Vert\uvec\Vert_{V^d}
\leq C\,\big(\sergiotwo{\Vert\varphi\Vert_{H^2(\Omega)}^{\frac{1}{2(1-\theta)}}
+\Vert\varphi\Vert_V}+1\big)\,.\label{e7}
\end{align}
Summing up, from the analysis of the problem \eqref{pb1-eq}-\eqref{pb3-eq} we
know that, for every $\varphi\in H^2(\Omega)$, with $|\varphi|\leq 1$,
system \eqref{pb1}, \eqref{pb2}, \eqref{pb4}$_1$ admits
a unique solution $[\pi,\uvec]\in \big(H^2(\Omega)\cap V_0\big)\times V_{div}$
such that estimates \sergiotwo{\eqref{e6}-\eqref{e7}} hold.

\bigskip

\textbf{Step 2.} We now consider problem \eqref{pb3}, \eqref{pb4}$_2$, \eqref{pb5}, where
$\uvec$ is the second component of the unique solution to \eqref{pb1}, \eqref{pb2}, \eqref{pb4}$_1$
with $\varphi$ given in $Y_T$ satisfying $|\varphi|\leq 1$.
Thanks to Theorem \ref{reg-thm-bis} we know that if $d=2$ or $\lambda$ is constant then \eqref{pb3}, \eqref{pb4}$_2$, \eqref{pb5}
admits a unique strong solution $\wph\in Y_T$ with $|\wph|\leq 1$ (see  \eqref{regNCH2} and \eqref{regNCH1}).
Indeed, from \eqref{e12} (or also from \eqref{e7}) and from $\varphi\in Y_T$, it is immediate to check
that condition \eqref{regvel-bis} holds with $r=4$.
Therefore, the map $\mathcal{F}:\varphi\mapsto\wph$, that associates to every $\varphi$, given in \eqref{pb1},
the unique solution $\wph$ to \eqref{pb3}, \eqref{pb4}$_2$, \eqref{pb5}, is well defined from the set $\{\psi\in Y_T:|\psi|\leq 1\}$ into itself.

Our goal is now to show that, provided that $T>0$ and $R>0$ are suitably chosen,
the map $\mathcal{F}$ satisfies $\mathcal{F}:B_{Y_T}(R)\to B_{Y_T}(R)$, namely it is also a map from $B_{Y_T}(R)$ into itself,
where $B_{Y_T}(R)$ is the closed convex set given by
\begin{align}
&B_{Y_T}(R):=\{\psi\in Y_T\,:\,
\Vert\psi\Vert_{Y_T}\leq R\,,\quad |\psi|\leq 1\}\,.\nonumber
\end{align}

Fix $\varphi\in  B_{Y_T}(R)$. From \sergiotwo{\eqref{e27}} we first obtain an estimate for $\uvec$
in $L^4(0,T;L^4_{div}(\Omega)^{\sergiotwo{d}})$.
More precisely, we find
\begin{align}
\Vert\uvec\Vert_{L^4(0,T;L^4(\Omega)^{\sergiotwo{d}})}&\leq
\sergiotwo{
C\big(\Vert\varphi\Vert_{L^2(0,T;H^2(\Omega))}^{\frac{\theta}{2(1-\theta)}} T^{\frac{1-2\theta}{4(1-\theta)}}
+\Vert\varphi\Vert_{L^{\infty}(0,T;V)}^\theta T^{1/4}+T^{1/4}\big)}\nonumber\\[1mm]
&\sergiotwo{
\leq C\big( R^{\frac{\theta}{2(1-\theta)}}\,T^{\frac{1-2\theta}{4(1-\theta)}}+R^\theta\, T^{1/4}+T^{1/4}\big)\,.}
\label{e16}
\end{align}
On the other hand, we know that $\wph$ satisfies the differential identity \eqref{diffid}. Therefore,
on account of \textbf{(H7)}, we get
\begin{align}
\frac{1}{2}\frac{d\Phi}{dt}+\alpha_0\Vert\wph_t\Vert^2&\leq\Vert\uvec\Vert_{L^4(\Omega)^{\sergiotwo{d}}}
\Vert\nabla B(\wph)\Vert_{L^4(\Omega)^d}\Vert\wph_t\Vert+(m_\infty+m_\infty^\prime)\,b\,\Vert\wph_t\Vert\Vert\nabla B(\wph)\Vert\,
\nonumber\\[1mm]
&\leq\frac{\alpha_0}{4}\Vert\wph_t\Vert^2+C\Vert\uvec\Vert_{L^4(\Omega)^{\sergiotwo{d}}}^2\Vert B(\wph)\Vert_{H^2(\Omega)}
+C\Vert\nabla\wph\Vert^2\,,\label{e13}
\end{align}
where $\Phi :=\Vert \nabla B(\wph)\Vert ^{2}-2\big(m(\wph )(\nabla J\ast
\wph ),\lambda (\wph )\nabla \wph \big)$ satisfies, for all $t\in [0,T]$,
\begin{align}
&K_1(\Vert\nabla\wph(t)\Vert^2-1)\leq\Phi(t)\leq K_2(\Vert\nabla\wph(t)\Vert^2+1)\,,\label{e15}
\end{align}
with two positive constants $K_1,K_2$ depending on $m$, $\lambda$, and $J$. Let us
estimate the $H^2(\Omega)-$norm of $B(\wph)$ in terms of the $H-$norm of $\wph_t$ and of the $L^4(\Omega)^d-$norm of $\uvec$.
To this aim, by relying on the elliptic estimate
\begin{equation*}
\Vert B(\wph )\Vert _{H^{2}(\Omega )}\leq C\left( \Vert \Delta B(\wph
)\Vert +\Vert B(\wph )\Vert _{V}+\left\Vert \nabla B(\wph )\cdot
\nvec\right\Vert _{H^{1/2}(\Gamma )}\right) \,,
\end{equation*}
and on \eqref{pb3}, 
we have that
\begin{align}
\Vert B(\wph)\Vert _{H^2(\Omega)}&\leq C \big(\Vert \wph_t\Vert
+\Vert\uvec\Vert_{L^4(\Omega)^{\sergiotwo{d}}}\Vert\nabla B(\wph)\Vert_{L^4(\Omega)^{\sergiotwo{d}}}
+\Vert \nabla B(\wph)\Vert+1\big)\nonumber\\[1mm]
&\leq C\big(\Vert\wph_t\Vert+\Vert\uvec\Vert_{L^4(\Omega)^{\sergiotwo{d}}}\Vert B(\wph)\Vert_{H^2(\Omega)}^{1/2}
+\Vert\nabla B(\wph)\Vert+1\big)\,. \nonumber
\end{align}
Hence we find that
\begin{align}
\Vert B(\wph)\Vert _{H^2(\Omega)}&\leq C \big(\Vert \wph_t\Vert
+\Vert\uvec\Vert_{L^4(\Omega)^{\sergiotwo{d}}}^2+\Vert \nabla B(\wph)\Vert+1\big)\,.\label{e14}
\end{align}
By inserting \eqref{e14} into \eqref{e13} and taking Young's inequality and \eqref{e15} into account, we easily get
\begin{align}
\frac{d\Phi}{dt}+\alpha_0\Vert\wph_t\Vert^2&\leq C\big(1+\Vert\uvec\Vert_{L^4(\Omega)^{\sergiotwo{d}}}^2\big)\Vert\nabla B(\wph)\Vert
+C\big(1+\Vert\uvec\Vert_{L^4(\Omega)^{\sergiotwo{d}}}^4\big)\nonumber\\[1mm]
&\leq C_1\big(1+\Vert\uvec\Vert_{L^4(\Omega)^{\sergiotwo{d}}}^4\big)\Phi
+C_2\big(1+\Vert\uvec\Vert_{L^4(\Omega)^{\sergiotwo{d}}}^4\big)\,.\label{e17}
\end{align}
We shall henceforth denote by $C_i$, $i\in\mathbb{N}$, some positive constants that depend on
the structural parameters of the problem, namely on $J,m,\lambda,\eta,\Omega,\Gamma$,
but are independent of $T$, $R$, and $\varphi_0$.
By applying Gronwall's lemma to \eqref{e17} and using \eqref{e16} and \eqref{e15}, we obtain
\begin{align}
\Vert\wph(t)\Vert_V^2&\leq C_3\, e^{C_1(T+\Vert\uvec\Vert_{L^4(0,T;L^4(\Omega)^{\sergiotwo{d}})}^4)}
\big(1+\Vert\nabla\varphi_0\Vert^2+C_1(T+\Vert\uvec\Vert_{L^4(0,T;L^4(\Omega)^{\sergiotwo{d}})}^4)\big)+C_3\nonumber\\[1mm]
&\leq C_3\, e^{\Lambda(R,T)}\big(1+\Vert\nabla\varphi_0\Vert^2+\Lambda(R,T)\big)+ C_3\,,\nonumber
\end{align}
where we have set
\begin{equation}
\label{Lambda}
\Lambda(R,T):=C_4\,( \sergiotwo{R^{\frac{2\theta}{1-\theta}}\,T^{\frac{1-2\theta}{1-\theta}}+
R^{4\theta}T+T} )\,.
\end{equation}
Therefore, we get
\begin{align}
&\Vert\wph\Vert_{L^\infty(0,T;V)}\leq \Gamma_1\big(\Lambda(R,T),\Vert\nabla\varphi_0\Vert\big)\,,
\label{e18}
\end{align}
where
$$\Gamma_1^2(\Lambda,\xi):=C_3\,e^\Lambda\,(1+\xi^2+\Lambda)+C_3\,.$$
By integrating in time \eqref{e17} and using \eqref{e18}, we deduce
\begin{align}
&\Vert\wph\Vert_{H^1(0,T;H)}^2\leq
C_5\,(1+\Vert\nabla\varphi_0\Vert^2)+C_6\,\Lambda(R,T)\,\Gamma_1^2\big(\Lambda(R,T),\Vert\nabla\varphi_0\Vert\big)\,.
\nonumber
\end{align}
Thus we have
\begin{align}
&\Vert\wph\Vert_{H^1(0,T;H)}\leq \Gamma_2\big(\Lambda(R,T),\Vert\nabla\varphi_0\Vert\big)\,,
\label{e20}
\end{align}
where
$$\Gamma_2^2(\Lambda,\xi):=C_5\,(1+\xi^2)+C_6\,\Lambda\,\Gamma_1^2(\Lambda,\xi)\,.$$
In order to estimate the norm of $\wph$ in $L^2(0,T;H^2(\Omega))$, we first consider
the identity
\begin{equation}\label{e43}
\partial _{ij}^{2}\wph=\frac{1}{\lambda (\wph )}\partial
_{ij}^{2}B(\wph )-\frac{1}{\lambda ^{2}(\wph )}\partial _{i}\lambda
(\wph )\partial _{j}B(\wph )\,,\qquad i,j=1,2\,,
\end{equation}%
from which we deduce
\begin{align}
\Vert\partial _{ij}^{2}\wph\Vert&\leq \frac{1}{\alpha_0}\Vert B(\wph)\Vert_{H^2(\Omega)}+
\frac{\lambda_\infty^\prime}{\alpha_0^2}\Vert\nabla\wph\Vert_{L^4(\Omega)^{\sergiotwo{d}}}\Vert\nabla B(\wph)\Vert_{L^4(\Omega)^{\sergiotwo{d}}}\nonumber\\[1mm]
&\leq \frac{1}{\alpha_0}\Vert B(\wph)\Vert_{H^2(\Omega)}+C\frac{\lambda_\infty^{1/2}\lambda_\infty^\prime}{\alpha_0^2}
\Vert\wph\Vert_{H^2(\Omega)}^{1/2}
\Vert B(\wph)\Vert_{H^2(\Omega)}^{1/2}\nonumber\\[1mm]
&\leq\delta\,\Vert\wph\Vert_{H^2(\Omega)}+C_\delta\,\Big(\frac{1}{\alpha_0}
+\frac{\lambda_\infty\lambda_\infty^{\prime^{\,2}}}{\alpha_0^4}\Big)
\Vert B(\wph)\Vert_{H^2(\Omega)}\,.\nonumber
\end{align}
Hence, taking $\delta>0$ small enough, we find
\begin{align}
\Vert\wph\Vert_{H^2(\Omega)}&\leq C_{\alpha_0,\lambda_\infty,\lambda_\infty^\prime}
\,\Vert B(\wph)\Vert_{H^2(\Omega)}
\nonumber\\[1mm]
&\leq C_{\alpha_0,\lambda_\infty,\lambda_\infty^\prime} \big(\Vert \wph_t\Vert
+\Vert\uvec\Vert_{L^4(\Omega)^{\sergiotwo{d}}}^2+\Vert \nabla B(\wph)\Vert+1\big)\,,\label{e19}
\end{align}
where, in the last inequality, \eqref{e14} has been used.
Therefore, on account of \eqref{e20}, \eqref{e18}, from \eqref{e19}  we infer
\begin{align}
\Vert\wph\Vert_{L^2(0,T;H^2(\Omega))}^2&\leq C_{\alpha_0,\lambda_\infty,\lambda_\infty^\prime}\,
\big(\Gamma_2^2+\Vert\uvec\Vert_{L^4(0,T;L^4(\Omega)^{\sergiotwo{d}})}^4+\Gamma_1^2+T\big)\nonumber\\[1mm]
&\leq C_7\,
\big(\Gamma_2^2+\Gamma_1^2+\Lambda(R,T)\big)\,.\nonumber
\end{align}
Thus we find
\begin{align}
&\Vert\wph\Vert_{L^2(0,T;H^2(\Omega))}\leq \Gamma_3\big(\Lambda(R,T),\Vert\nabla\varphi_0\Vert\big)\,,
\label{e21}
\end{align}
where we have set
$$\Gamma_3^2(\Lambda,\xi):=C_7
\,\big(\Gamma_2^2(\Lambda,\xi)+\Gamma_1^2(\Lambda,\xi)+\Lambda\big)\,.$$
Let us now choose $R$ in the following way
$$R:=3\,\max_{1\leq i\leq 3} \Gamma_i\big(1,\Vert\nabla\varphi_0\Vert\big)\,,$$
and observe that $R$ only depends on the $V-$norm of $\varphi_0$.
With this choice of $R$, we fix $T>0$ such that $\Lambda(R,T)\leq 1$. This is possible
thanks to the fact that $\theta<1/2$ (cf. \eqref{Lambda}). Therefore, \eqref{e18}, \eqref{e21}, and \eqref{e20}
yield
\begin{align}
\Vert\wph\Vert_{Y_T}&=\Vert\wph\Vert_{L^\infty(0,T;V)}+\Vert\wph\Vert_{L^2(0,T;H^2(\Omega))}+
\Vert\wph\Vert_{H^1(0,T;H)}\nonumber\\[1mm]
&\leq \sum_{i=1}^{3}\Gamma_i\big(\Lambda(R,T),\Vert\nabla\varphi_0\Vert\big)
\leq\sum_{i=1}^{3}\Gamma_i\big(1,\Vert\nabla\varphi_0\Vert\big)\leq R\,.\nonumber
\end{align}
Therefore $\mathcal{F}$ takes $B_{Y_T}(R)$ into itself.

\bigskip

\textbf{Step 3.} In this step we shall prove that $\mathcal{F}:B_{Y_T}(R)\to B_{Y_T}(R)$
is continuous with respect to the strong topology of $X_T$.
Take a sequence $\{\varphi_n\}\subset B_{Y_T}(R)$ such that $\varphi_n\to\overline{\varphi}$ in $X_T$.
We have (up to a subsequence) that $\varphi_n\rightharpoonup\overline{\varphi}$ weakly star in $Y_T$ and
$\overline{\varphi}\in B_{Y_T}(R) $.

Let us denote by $\mathcal{Q}_1$ and $\mathcal{Q}_2$ the maps defined by
$\pi=\mathcal{Q}_1(\varphi)$ and $\uvec=\mathcal{Q}_2(\varphi)$, respectively, where
$[\pi,\uvec]\in (H^2(\Omega)\cap V_0)\times V_{div}$ is the unique weak solution to \eqref{pb1-eq},
\eqref{pb2-eq}, \eqref{pb3-eq}. Set then $\pi_n:=\mathcal{Q}_1(\varphi_n)$ and $\uvec_n:=\mathcal{Q}_2(\varphi_n)$.
Thanks to \eqref{e5}, \eqref{e6} and to \eqref{e8}. \eqref{e7},  we have that (up to a subsequence)
\begin{align}
&\pi_n\rightharpoonup\pi^\ast\,,\qquad\mbox{ weakly star in }L^\infty(0,T;V_0)\cap L^2(0,T;H^2(\Omega))\,,
\nonumber\\[1mm]
&\uvec_n \rightharpoonup\uvec^\ast\,,\qquad\mbox{ weakly star in }L^\infty(0,T;G_{div})\cap L^2(0,T;V_{div})\,.
\label{e29}
\end{align}
Writing the weak formulation \eqref{weakfor-Darcy} with $\pi_n$ and $\varphi_n$, multiplying it
by a test function $\omega\in C^\infty_0(0,T)$, and passing to the limit as $n\to\infty$,
we can easily deduce that $\pi^\ast$ again satisfies \eqref{weakfor-Darcy}, and hence (thanks to uniqueness) that
$\pi^\ast=\overline{\pi}:=\mathcal{Q}_1(\overline{\varphi})$. Moreover, by passing to the limit in \eqref{pb3-eq},
written for $[\uvec_n$, $\pi_n$, $\varphi_n]$, we get also $\uvec^\ast=\overline{\uvec}:=\mathcal{Q}_2(\overline{\varphi})$.

Let us now denote by $\mathcal{G}$ the map that to each $\uvec\in L^4(0,T;L^4_{div}(\Omega)^d)$ associates
$\widetilde{\varphi}=\mathcal{G}(\uvec)$, where $\widetilde{\varphi}\in Y_T$ is the unique strong solution to
\eqref{pb3}, \eqref{pb4}$_2$, \eqref{pb5} given by Lemma \ref{reg-thm-bis}. Then set $\widetilde{\varphi}_n:=\mathcal{G}(\uvec_n)
=(\mathcal{G}\circ\mathcal{Q}_2)(\varphi_n)=\mathcal{F}(\varphi_n)$. From Step 2 we know that
$\{\widetilde{\varphi}_n\}\subset B_{Y_T}(R)$. Hence we have that (up to a subsequence)
$\widetilde{\varphi}_n\rightharpoonup \varphi^\ast$ weakly star in $Y_T$.
Writing the weak formulation of \eqref{pb3}, \eqref{pb4}$_2$, \eqref{pb5} for $\widetilde{\varphi}_n$,
with $\uvec_n$ given in the convective term, we obtain
$$
\langle\widetilde{\varphi}_{n,t},\psi\rangle_V+(\nabla B(\wph_n),\nabla\psi)=
(\uvec_n\wph_n,\nabla\psi)+(m(\wph_n)(\nabla J\ast\wph_n),\nabla\psi)\,,\qquad\forall\psi\in V\,.
$$
Multiplying the above identity
by a test function $\omega\in C^\infty_0(0,T)$ and passing to the limit, on account of the weak
and strong convergences for $\{\widetilde{\varphi}_n\}$ and for $\{\uvec_n\}$, it is not difficult
to see that the same weak formulation is satisfied also for $\varphi^\ast$, with $\overline{\uvec}$
in the convective term. Therefore, thanks to the uniqueness of the strong solution to problem
\eqref{pb3}, \eqref{pb4}$_2$, \eqref{pb5} (with $\uvec=\overline{\uvec}$ given), we have that
$\varphi^\ast=\mathcal{G}(\overline{\uvec})=(\mathcal{G}\circ\mathcal{Q}_2)(\overline\varphi)=\mathcal{F}(\overline\varphi)$.

We thus conclude that, up to a subsequence, $\mathcal{F}(\varphi_n)\rightharpoonup\mathcal{F}(\overline\varphi)$,
weakly star in $Y_T$ and strongly in $X_T$, due to the compact injection $Y_T\hookrightarrow\hookrightarrow X_T$.
The uniqueness of the limit $\mathcal{F}(\overline\varphi)$ entails the strong convergence
for the whole sequence $\{\mathcal{F}(\varphi_n)\}$. This concludes the proof of the continuity of $\mathcal{F}$.

Using the fact that the closed convex set $B_{Y_T}(R)$ is compact in $X_T$,
we can now apply Schauder's fixed point theorem to the map $\mathcal{F}:B_{Y_T}(R)\to B_{Y_T}(R)$ and obtain a fixed point $\varphi \in B_{Y_T}(R)$.
Thus, recalling also estimates \eqref{e6} and \eqref{e7}, we deduce that there exists a strong solution $[\uvec,\pi,\varphi]$ on $[0,T]$, for some $T>0$
small enough such that \eqref{str-reg-1}-\eqref{str-reg-3} hold.

\bigskip

\textbf{Step 4.} Our goal is now to prove that the local in time solution
can be extended to an arbitrary time interval $[0,T]$, for any $T>0$.
Let $T_m\in(0,\infty]$ be the maximal time of existence and let $[\uvec,\pi,\varphi]$ be
a maximal strong solution to \eqref{Sy01}-\eqref{Sy04}, \eqref{sy5}, \eqref{sy6}
on $[0,T_m)$. By maximal strong solution we mean, by definition, that:
\begin{itemize}
\item  $[\uvec,\pi,\varphi]$ is a local in time
strong solution on $[0,T_m)$,
namely
\begin{description}
\item[(i)] $[\uvec,\pi,\varphi]$ satisfies
\begin{align*}
& \uvec\in L_{loc}^2([0,T_m);V_{div})\,,\\
& \pi\in L_{loc}^2([0,T_m);H^2(\Omega)\cap V_0)\,,\\
&\varphi \in L_{loc}^{\infty }([0,T_m);V)\cap L_{loc}^{2}([0,T_m);H^{2}(\Omega ))\cap H_{loc}^{1}([0,T_m);H)\,,
\end{align*}
\item[(ii)] $[\uvec,\pi,\varphi]$ is a strong solution to \eqref{Sy01}-\eqref{Sy04}, \eqref{sy5}, \eqref{sy6}
on $[0,t]$, for all $t\in(0,T_m)$\,;
\end{description}
\item  there is no strict extension $[\hat{\uvec},\hat{\pi},\hat{\varphi}]:[0,T_m^\prime]\to
V_{div}\times (H^2(\Omega)\cap V_0)\times H^2(\Omega)$, with $T_m^\prime>T_m$,
such that $[\hat{\uvec},\hat{\pi},\hat{\varphi}]$ is a local in time strong solution on $[0,T_m^\prime)$, i.e., such that
$[\hat{\uvec},\hat{\pi},\hat{\varphi}]$ satisfies (i)-(ii) with $T_m^\prime$ in place of $T_m$.
\end{itemize}

We recall that we are in the case $d=2$ or $\lambda$ constant (so that the mapping $\mathcal{F}$ is well defined).

We shall prove that $T_m=\infty$. By exploiting Step 1 and Step 2, we need to
derive some estimates for the norm of the maximal strong solution (similar to \eqref{e18}, \eqref{e20},  \eqref{e21})
containing constants on the right hand side which depend only on $t\in(0,T_m)$ (and on $\Vert\varphi_0\Vert_V$), and
which are bounded for $t\in(0,T_m)$. Notice that \eqref{e18}, \eqref{e20}, \eqref{e21},
cannot be used since the constants on the right hand sides depend on $R$, i.e., they depend on the norm of the solution itself.

Let us first consider estimate \sergiotwo{\eqref{e27}}, which it can be also written as
\begin{align}
&\Vert\uvec\Vert_{L^4(\Omega)^{\sergiotwo{d}}}\leq
C\big(\sergiotwo{\Vert B(\varphi)\Vert_{H^2(\Omega)}^{\frac{\theta}{2(1-\theta)}}+\Vert B(\varphi)\Vert_V^\theta+1}\big)\,.
\label{e12-bis}
\end{align}
To get \eqref{e12-bis} it is enough to write $\nabla\varphi=\nabla B(\varphi)/\lambda(\varphi)$
in the first term on the right hand side of the first inequality of \eqref{e3}, and then proceed as for
\sergiotwo{\eqref{e4}-\eqref{e27}.}
By combining \eqref{e12-bis} with \eqref{e14}, and exploiting the fact that now $\wph=\varphi$,
we get
$$\Vert B(\varphi)\Vert_{H^2(\Omega)}\leq C\,\big(\Vert\varphi_t\Vert
+\Vert B(\varphi)\Vert_{H^2(\Omega)}^{\sergiotwo{\frac{\theta}{1-\theta}}}+\sergiotwo{\Vert B(\varphi)\Vert_V^{2\theta}}
+\Vert \nabla B(\varphi)\Vert+1\big)\,,$$
which yields (recall that $\theta<1/2$)
\begin{align}
&\Vert B(\varphi)\Vert_{H^2(\Omega)}\leq C\,\big(\Vert\varphi_t\Vert+\Vert\nabla B(\varphi)\Vert+1\big)\,.
\label{e22}
\end{align}
From \eqref{e12-bis} and \eqref{e22} we then get
\begin{align*}
\Vert\uvec\Vert_{L^4(\Omega)^{\sergiotwo{d}}}&\leq
\sergiotwo{C\,\big(\Vert\varphi_t\Vert^{\frac{\theta}{2(1-\theta)}}
+\Vert\nabla B(\varphi)\Vert^{\frac{\theta}{2(1-\theta)}}+
\Vert B(\varphi)\Vert_V^\theta+1\big)}\\[1mm]
&\leq C\,\big(\Vert\varphi_t\Vert^\theta+\Vert\nabla B(\varphi)\Vert^\theta+1\big)\,,
\end{align*}
and inserting the above estimate into \eqref{e17} (where $\wph=\varphi$) we obtain
\begin{align}
&\frac{d\Phi}{dt}+\alpha_0\Vert\varphi_t\Vert^2\leq C\,\big(\Vert\varphi_t\Vert^{2\theta}
+\Vert\nabla B(\varphi)\Vert^{2\theta}+1\big)\Vert\nabla B(\varphi)\Vert
+C\,\big(\Vert\varphi_t\Vert^{4\theta}
+\Vert\nabla B(\varphi)\Vert^{4\theta}+1\big)\nonumber\\[1mm]
&\qquad\leq\frac{\alpha_0}{2}\Vert\varphi_t\Vert^2+ C\,\big(\Vert\nabla B(\varphi)\Vert^{\frac{1}{1-\theta}}
+\Vert\nabla B(\varphi)\Vert^{2\theta+1}+\Vert\nabla B(\varphi)\Vert^{4\theta}+\Vert\nabla B(\varphi)\Vert
+1\big)\nonumber\\[1mm]
&\qquad\leq\frac{\alpha_0}{2}\Vert\varphi_t\Vert^2+ C\,\big(\Vert\nabla B(\varphi)\Vert^2+1\big)\,.
\nonumber
\end{align}
Hence, in view also of \eqref{e15}, we deduce that
\begin{align}\label{e23}
&\frac{d\Phi}{dt}+\frac{\alpha_0}{2}\Vert\varphi_t\Vert^2\leq C\,(1+\Phi)\,.
\end{align}
From this differential inequality, by Gronwall's lemma, and arguing in the same fashion as for \eqref{e18}-\eqref{e21},
we can obtain the desired estimate for the ($\varphi$ component of the) maximal strong solution, namely
\begin{align}
&\Vert\varphi\Vert_{L^\infty(0,t;V)}+\Vert\varphi\Vert_{L^2(0,t;H^2(\Omega))}+
\Vert\varphi\Vert_{H^1(0,t;H)}\leq C\big(t,\Vert\varphi_0\Vert_V\big)\,.
\label{e24}
\end{align}
This inequality holds for all $0<t<T_m$, with a constant $C$ on the right hand side which depends only on $t$ and on $\Vert\varphi_0\Vert_V$,
and which is locally bounded with respect to $t$ on $[0,\infty)$.

Let us suppose now that $T_m<\infty$ and consider, for simplicity, just the
$\varphi$ component of the maximal strong solution.
Observe that the constant $C$ on the right hand side of \eqref{e24} can be bounded
by $C\big(T_m,\Vert\varphi_0\Vert_V)$, and \eqref{e24} holds for all $0<t<T_m$. Thus we
deduce that $\varphi\in C([0,T_m];V)$ (cf. Remark \ref{continuity}).
This allows us to restart the system by taking $\varphi(T_m)$ as new initial datum, in place of $\varphi_0$ in
\eqref{sy6}. By applying again the Schauder's fixed point argument (see Steps 1, 2, and 3), we can then construct
a new local in time strong solution which is defined on an interval of the form $(T_m,T_m+\delta)$, for some $\delta>0$.
By means of the local in time solution on $(T_m,T_m+\delta)$ we can then define a strict extension of $\varphi$
on $[0,T_m+\delta)$, which is still a strong solution to \eqref{Sy01}-\eqref{Sy04}, \eqref{sy5}, \eqref{sy6}.
This contradicts the maximality of $\varphi$, and concludes the proof
for the case $d=2$ or $\lambda$ constant.

We are left to prove the theorem in the case $d=3$ and $\lambda$ non-constant.

We know that, in this case, uniqueness of the strong solution $\wph\in Y_T$ (with
$|\wph|\leq 1$) to problem \eqref{pb3}, \eqref{pb4}$_2$, \eqref{pb5}, with
$\uvec$ given in $L^4(0,T;L^4(\Omega)^3)$ is not known (see Theorem \ref{reg-thm-bis}).
However, Theorem \ref{reg-thm-bis} entails uniqueness of $\wph$
provided that the velocity field in the convective term of the nonlocal Cahn-Hilliard is divergence-free
and has an $L^2(0,T;L^\infty(\Omega)^3)-$regularity . We thus replace $\uvec$ in \eqref{pb3} by a suitable
regularization $\vvec$. A convenient choice turns out to be a Leray-$\alpha$ type regularization (see, for instance, \cite{CHOT}).
More precisely, in place of problem \eqref{pb1}-\eqref{pb5} we now address the following system
\begin{align}
& \eta(\varphi)\mathbf{u} + \nabla\pi=(\nabla J\ast\varphi)\varphi, \qquad  \mbox{ in } Q_{T}, \label{pb1-3d}\\
& \mbox{div}(\mathbf{u})=0, \qquad  \mbox{ in } Q_{T},\label{pb2-3d}\\
& \widetilde{\varphi}_t+\vvec\cdot\nabla\widetilde{\varphi}=\Delta B(\widetilde{\varphi})
- \mbox{div}(m(\widetilde{\varphi})(\nabla J\ast\widetilde{\varphi})), \qquad  \mbox{ in } Q_{T},\label{pb3-3d}\\
&(I+\alpha\, S)\,\vvec=\uvec, \qquad  \mbox{ in } Q_{T}\label{pb4-3d}\\
&\mathbf{u}\cdot\mathbf{n} = 0\,, \quad
\big[\nabla B(\widetilde{\varphi})-m(\widetilde{\varphi})(\nabla J\ast\widetilde{\varphi})\big]\cdot\mathbf{n} = 0
\qquad\mbox{on }\Gamma\times (0,T),\label{pb5-3d}\\
& \widetilde{\varphi}(0)=\varphi_0 \qquad\mbox{in }\Omega\,,\label{pb6-3d}
\end{align}
where $S$ is the Stokes operator with no-slip boundary condition (see, for instance, \cite[Chap.5]{BF}) and
$\alpha>0$ is a fixed regularization parameter.
 In order to reproduce the Schauder fixed point argument also for system
\eqref{pb1-3d}-\eqref{pb6-3d}, we need to control the $L^4(\Omega)^3-$norm of $\vvec$ by the $L^4(\Omega)^3-$norm of
$\uvec$, uniformly with respect to $\alpha$. This crucial control can be achieved by applying
a well-known result on the resolvent estimates in $L^p$ for the Stokes operator (with no-slip boundary condition)
in sufficiently smooth domains (e.g., of class $C^2$) which, for the reader's convenience, we report here below in the form suitable for our purposes (see \cite[Theorem 1]{Giga1,Giga2}).
\begin{lem}\label{Giga}
Let $\Omega$ be a bounded and smooth domain in $\mathbb{R}^d$, $d\geq 2$. 
If $\hvec\in L^p_{div}(\Omega)^d$ is given for some $p > 2$ then there exists a constant $C_p>0$
such that   the unique solution $\vvec \in D(S)=V^p_{0,div}(\Omega)\cap W^{2,p}(\Omega)^d$  to
$(I+\alpha\, S)\,\vvec=\hvec $
satisfies the estimate
\begin{align}
&\Vert\vvec\Vert_{L^p(\Omega)^d}\leq C_p\,\Vert\hvec\Vert_{L^p(\Omega)^d}\label{e177}
\end{align}
and $C_p$ is independent of $\hvec$ and $\alpha>0$.
\end{lem}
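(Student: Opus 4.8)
The plan is to deduce \eqref{e177} from the classical fact, due to Giga \cite{Giga1,Giga2}, that $-S$ generates a bounded analytic semigroup $\{e^{-tS}\}_{t\ge0}$ on $L^p_{div}(\Omega)^d$ for every $p\in(1,\infty)$ (the case $p>2$ needed here being included). The same circle of results provides the identification $D(S)=V^p_{0,div}(\Omega)\cap W^{2,p}(\Omega)^d$ (maximal $W^{2,p}$-regularity for the stationary Stokes system in a $C^2$ domain) and the fact that $\sigma(S)$ is a discrete subset of $[\lambda_1,\infty)$ with $\lambda_1>0$. I would first use the latter to settle well-posedness: for every $\alpha>0$ one has $-\alpha^{-1}\notin\sigma(S)$, hence $I+\alpha S=\alpha\bigl(S+\alpha^{-1}I\bigr)$ is boundedly invertible on $L^p_{div}(\Omega)^d$, so the equation $(I+\alpha S)\vvec=\hvec$ has a unique solution $\vvec=(I+\alpha S)^{-1}\hvec\in D(S)$.

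For the quantitative bound I would represent the resolvent through the semigroup. Since $\{e^{-tS}\}$ is bounded (indeed exponentially stable), there is $M_p=M_p(p,\Omega)$ with $\Vert e^{-tS}\Vert_{\mathcal{L}(L^p_{div})}\le M_p$ for all $t\ge0$, and consequently
\[
\vvec=(I+\alpha S)^{-1}\hvec=\int_0^{\infty}e^{-s}\,e^{-\alpha s S}\hvec\,ds ,
\]
the integral converging absolutely in $L^p_{div}(\Omega)^d$ (this is the standard identity $\int_0^\infty e^{-sB}\,ds=B^{-1}$ applied to $B=I+\alpha S$, after the substitution $t=\alpha s$ in $(\alpha^{-1}+S)^{-1}=\int_0^\infty e^{-t/\alpha}e^{-tS}\,dt$). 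Estimating under the integral sign,
\[
\Vert\vvec\Vert_{L^p(\Omega)^d}\le\int_0^{\infty}e^{-s}\,\Vert e^{-\alpha s S}\hvec\Vert_{L^p(\Omega)^d}\,ds\le M_p\,\Vert\hvec\Vert_{L^p(\Omega)^d}\int_0^{\infty}e^{-s}\,ds=M_p\,\Vert\hvec\Vert_{L^p(\Omega)^d},
\]
so \eqref{e177} holds with $C_p:=M_p$, which depends only on $p$ and $\Omega$ and is thus independent of $\hvec$ and of $\alpha$. Equivalently, setting $\lambda=1/\alpha$ one writes $\vvec=\lambda(\lambda+S)^{-1}\hvec$ and invokes the resolvent bound $\Vert(\lambda+S)^{-1}\Vert_{\mathcal{L}(L^p_{div})}\le M_p/\lambda$ for $\lambda>0$, which is the Laplace transform of the semigroup bound.

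The only deep ingredient is the boundedness (indeed analyticity) of the Stokes semigroup on $L^p_{div}(\Omega)^d$, equivalently the sectorial resolvent estimate $\Vert(\lambda+S)^{-1}\Vert_{\mathcal{L}(L^p_{div})}\le C_p/|\lambda|$ on a sector $\{\lambda\neq0:|\arg\lambda|<\tfrac{\pi}{2}+\delta\}$. A self-contained derivation of this would go through the Helmholtz decomposition $L^p(\Omega)^d=L^p_{div}(\Omega)^d\oplus\nabla W^{1,p}(\Omega)$, reduction of the Stokes resolvent problem $\lambda\vvec-\Delta\vvec+\nabla q=\mathbf f$, $\mbox{div}(\vvec)=0$, $\vvec|_\Gamma=\mathbf 0$, to $(\lambda+S)\vvec=\mathbb{P}\mathbf f$ with $\mathbb{P}$ the Helmholtz projector, and a localization / partition-of-unity argument reducing to the explicit whole-space and half-space Stokes resolvent estimates, with Cattabriga–Agmon–Douglis–Nirenberg elliptic estimates used to absorb the lower order commutator terms and close the perturbation. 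I would simply quote \cite{Giga1,Giga2} for this; the remaining steps above are then immediate. This cited input is the main obstacle, everything else being soft operator theory.
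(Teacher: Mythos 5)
Your argument is correct and ultimately rests on the same external input as the paper, which states this lemma as a direct quotation of Giga's Theorem 1 (\cite{Giga1,Giga2}) without giving any proof of its own. The only remark worth making is that Giga's Theorem 1 is itself the sectorial resolvent estimate $\Vert(\lambda+S)^{-1}\Vert_{\mathcal{L}(L^p_{div})}\leq C_p/|\lambda|$, so your closing ``equivalently'' sentence (set $\lambda=1/\alpha$ and write $\mathbf{v}=\lambda(\lambda+S)^{-1}\mathbf{h}$) is the intended one-line derivation; the preliminary detour through the bounded analytic semigroup and its Laplace transform is sound but re-derives the resolvent bound from one of its consequences.
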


We are now ready to adapt the argument developed in Steps 1-4 to
\eqref{pb1-3d}-\eqref{pb6-3d} in order to establish existence of a strong solution to this system
for every fixed $\alpha>0$.
First, we point out that all estimates deduced in the previous steps also hold in the present case.
We thus consider problem \eqref{pb3-3d}, \eqref{pb4-3d}, \eqref{pb5-3d}$_2$, \eqref{pb6-3d} where
$\uvec$ is the (second component of the) unique solution $\varphi\in Y_T$ to \eqref{pb1-3d}, \eqref{pb2-3d}, \eqref{pb5-3d}$_1$
such that $|\varphi|\leq 1$.
Thanks to the fact that $\Vert\vvec\Vert_{D(S)}\leq C_\alpha \Vert\uvec\Vert$, which yields that
$\vvec\in L^2(0,T;L^\infty(\Omega)^3\cap V_{0,div})$ (actually, we have also a better
time regularity for $\vvec$, e.g., $\vvec\in L^{4(1-\theta)/\theta}(0,T;L^\infty(\Omega)^3\cap V_{0,div})$,
cf. \eqref{e27}),
and thanks to Theorem \ref{reg-thm-bis}, we know that there exists a unique solution $\wph\in Y_T$ to
\eqref{pb3-3d}, \eqref{pb4-3d}, \eqref{pb5-3d}$_2$, \eqref{pb6-3d} such that $|\wph|\leq 1$.
Therefore, the map $\mathcal{F}_\alpha:\varphi\mapsto\wph$ is still well defined from the set $\{\psi\in Y_T:|\psi|\leq 1\}$ into itself. \\

To proceed as in Step 2, we need a uniform (with respect to $\alpha$) control for
the $L^4(0,T;L^4(\Omega)^{3})-$ norm of $\vvec$ in terms of the same norm of $\uvec$.
This control can be achieved by applying Lemma \ref{Giga} to \eqref{pb4-3d} with $p=4$, namely
\begin{align*}
&\Vert\vvec\Vert_{L^4(0,T;L^4(\Omega)^{3})}\leq
C_4\Vert\uvec\Vert_{L^4(0,T;L^4(\Omega)^{3})}\,.
\end{align*}
By combining this estimate with \eqref{e27}, we get
\begin{align}\label{e28}
&\Vert\vvec\Vert_{L^4(0,T;L^4(\Omega)^{3})}
\leq C\big( R^{\frac{\theta}{2(1-\theta)}}\,T^{\frac{1-2\theta}{4(1-\theta)}}+R^\theta\, T^{1/4}+T^{1/4}\big)\,,
\end{align}
where $C>0$ is independent of $\alpha$. \\
Observe now that the differential inequality \eqref{e13},
with $\vvec$ in place of $\uvec$, still holds true and, by employing \eqref{e28} into this inequality, we can argue
exactly in the same fashion as in Step 2. Hence, we can still conclude that the map $\mathcal{F}_\alpha$
takes $B_{Y_T}(R)$ into itself, with $R>0$ and $T>0$ chosen as in Step 2 (independently of $\alpha$).

Concerning  the continuity of the map $\mathcal{F}_\alpha$ in the strong topology of $X_T$ (cf. Step 3),
the only modification is related to the map $\mathcal{G}=\mathcal{G}_\alpha$, which is now defined as
the map that to each $\uvec\in L^4(0,T;L^4_{div}(\Omega)^3)$ associates
$\widetilde{\varphi}=\mathcal{G}_\alpha(\uvec)$, where $\widetilde{\varphi}\in Y_T$ is the unique strong solution to
\eqref{pb3-3d}, \eqref{pb4-3d}, \eqref{pb5-3d}$_2$, \eqref{pb6-3d} given by Theorem \ref{reg-thm-bis}. Keeping the same
notation used in Step 3 and setting $\widetilde{\varphi}_n:=\mathcal{G}_\alpha(\uvec_n)
=(\mathcal{G}_\alpha\circ\mathcal{Q}_2)(\varphi_n)=\mathcal{F}(\varphi_n)$, from Step 2 we deduce again that
$\{\widetilde{\varphi}_n\}\subset B_{Y_T}(R)$. Hence, we have (up to a subsequence)
that $\widetilde{\varphi}_n\rightharpoonup \varphi^\ast$, weakly star in $Y_T$.
Let us now write the weak formulation of \eqref{pb3-3d}, \eqref{pb4-3d}, \eqref{pb5-3d}$_2$, \eqref{pb6-3d}
for $\widetilde{\varphi}_n$,
with $\wph_n$, $\vvec_n$, $\uvec_n$ in place of $\wph$, $\vvec$, $\uvec$, respectively. We have
\begin{align}
&\langle\widetilde{\varphi}_{n,t},\psi\rangle_V+(\nabla B(\wph_n),\nabla\psi)=
(\vvec_n\wph_n,\nabla\psi)+(m(\wph_n)(\nabla J\ast\wph_n),\nabla\psi)\,,\quad\forall\psi\in V\,,
\label{e30}\\
&\vvec_n=(I+\alpha\,S)^{-1}\uvec_n\,.\label{e31}
\end{align}Lemma \ref{Giga} yields that $(I+\alpha\,S)^{-1}\in\mathcal{L}(L^{10/3}(0,T;L^{10/3}(\Omega)^3)),L^{10/3}(0,T;L^{10/3}(\Omega)^3))$. Hence, from \eqref{e29} we deduce that
\begin{align*}
&\vvec_n \rightharpoonup\overline{\vvec}\,,\qquad\mbox{ weakly in }L^{10/3}(0,T;L^{10/3}(\Omega)^3)\,,
\end{align*}
where $\overline{\vvec}=(I+\alpha\,S)^{-1}\overline{\uvec}$. By means of this weak convergence and
on account of the weak/strong convergences for $\wph_n$ (see Step 3), we can then pass to the limit
in \eqref{e30}, \eqref{e31} and deduce that
the weak formulation of \eqref{pb3-3d}, \eqref{pb4-3d}, \eqref{pb5-3d}$_2$, \eqref{pb6-3d}
is satisfied also for $\varphi^\ast$, with $\overline{\vvec}$ and $\overline{\uvec}$ in place of $\vvec$ and $\uvec$, respectively.
By again invoking the uniqueness
of the strong solution to problem
\eqref{pb3-3d}, \eqref{pb4-3d}, \eqref{pb5-3d}$_2$, \eqref{pb6-3d} (with $\uvec=\overline{\uvec}$ given),
ensured by Theorem \ref{reg-thm-bis}, we have that
$\varphi^\ast=\mathcal{G}_\alpha(\overline{\uvec})=(\mathcal{G}_\alpha\circ\mathcal{Q}_2)(\overline\varphi)=\mathcal{F}(\overline\varphi)$.
The continuity of $\mathcal{F}_\alpha$ in the strong topology of $X_T$ then follows as in Step 3.

Schauder's fixed point theorem can be again applied to the map $\mathcal{F}_\alpha: B_{Y_T}(R)\to B_{Y_T}(R)$, as well as estimates
\eqref{e6} and \eqref{e7}. This yields the existence of a local in time strong solution $[\uvec,\vvec,\pi,\varphi]$
to \eqref{pb1-3d}-\eqref{pb6-3d} such that \eqref{str-reg-1}-\eqref{str-reg-3} hold.
This local in time strong solution can then be extended to an arbitrary time interval $[0,T]$, for all $T>0$,
by arguing exactly as in Step 4.

We have thus shown that, for every fixed $\alpha>0$, system \eqref{pb1-3d}-\eqref{pb6-3d} admits a global in time strong solution
$[\uvec^\alpha,\vvec^\alpha,\pi^\alpha,\varphi^\alpha]$ satisfying \eqref{str-reg-1}-\eqref{str-reg-3}.

We now need to recover suitable bounds for $[\uvec^\alpha,\vvec^\alpha,\pi^\alpha,\varphi^\alpha]$ which are uniform with respect to $\alpha$
in order to pass to the limit in \eqref{pb1-3d}-\eqref{pb6-3d} as $\alpha\to 0$.
These bounds can be obtained by observing that all constants in the estimates derived in the former Steps 1 to 4
are independent of $\alpha$. In particular, \eqref{e24} is satisfied also for $\varphi^\alpha$
yielding (up to a subsequence)
\begin{align}
&\varphi^\alpha\rightharpoonup\hat{\varphi}\,,\qquad\mbox{weakly star in }\,\,L^\infty(0,T;V)\cap L^2(0,T;H^2(\Omega))\cap H^1(0,T;H)\,,
\label{e35}\\
&\qquad\qquad\qquad\mbox{strongly in }\,\,C([0,T];L^{6^{-}}(\Omega))\,,\,\mbox{ and pointwise a.e. in }\,\,Q_T\,.
\label{e36}
\end{align}
On the other hand, from \eqref{e27}, \eqref{e7}, and \eqref{e24} we also have that
\begin{align}
&\uvec^\alpha\rightharpoonup\hat{\uvec}\,,\qquad\mbox{weakly in}\,\,
L^{4(1-\theta)/\theta}(0,T;L_{div}^4(\Omega)^3)\cap L^{4(1-\theta)}(0,T;V_{div})\,,\label{e32}
\end{align}
and, thanks to Lemma \ref{Giga} applied to $(I+\alpha\,S)^{-1}\uvec^\alpha=\vvec^\alpha$, we get
\begin{align}
&\vvec^\alpha\rightharpoonup\hat{\vvec}\,,\qquad\mbox{weakly in}\,\,
L^{4(1-\theta)/\theta}(0,T;L_{div}^4(\Omega)^3)\,.\label{e34}
\end{align}
Moreover, from \eqref{e6}, and \eqref{e24} there follows that
\begin{align}
&\pi^\alpha\rightharpoonup\hat{\pi}\,,\qquad\mbox{weakly in}\,\,
L^{4(1-\theta)}(0,T;H^2(\Omega)\cap V_0)\,.\label{e37}
\end{align}
It is easy to see that $\hat{\vvec}=\hat{\uvec}$. Indeed, setting $\mathcal{J}_\alpha:=(I+\alpha\,S)^{-1}$,
and observing that $\mathcal{J}_\alpha$ is self-adjoint, then, for every $\wvec\in L^2(Q_T)^3$, we have that
\begin{align}\label{e33}
&\int_{0}^{T}(\vvec^\alpha,\wvec)\,dt=\int_{0}^{T}(\mathcal{J}_\alpha\uvec^\alpha,\wvec)\,dt
=\int_{0}^{T}(\uvec^\alpha,\mathcal{J}_\alpha\wvec)\,dt\to\int_{0}^{T}(\hat{\uvec},\wvec)\,dt\,,
\end{align}
where we have used \eqref{e32} and the fact that $\mathcal{J}_\alpha\wvec\to\wvec$, strongly in $L^2(0,T;L^2(\Omega)^3)$.
This strong convergence follows from the general properties of the resolvent operator $\mathcal{J}_\alpha$
of the maximal monotone (linear) map $S$, namely, $\mathcal{J}_\alpha\wvec(t)\to\wvec(t)$, strongly in $L^2(\Omega)^3$,
for almost any $t\in (0,T)$ and $\Vert\mathcal{J}_\alpha\wvec(t)\Vert\leq\Vert\wvec(t)\Vert$, for all $\alpha>0$
(using also Lebesgue's theorem). Therefore, \eqref{e33} gives $\vvec^\alpha\rightharpoonup\hat{\uvec}$
in $L^2(0,T;L^2(\Omega)^3)$. Thus we deduce (see \eqref{e34})  $\hat{\vvec}=\hat{\uvec}$.

By means of \eqref{e35}-\eqref{e37} (with $\hat{\vvec}=\hat{\uvec}$),
a standard argument allows us to pass to the limit in system
\eqref{pb1-3d}-\eqref{pb6-3d} as $\alpha\to 0$ (up to a subsequence)
and find that $[\hat{\uvec},\hat{\pi},\hat{\varphi}]$ is a strong solution
to \eqref{Sy01}-\eqref{Sy04}, \eqref{sy5}, \eqref{sy6} satisfying \eqref{str-reg-1}-\eqref{str-reg-3}.
Once we have a strong solution, then it is easy to show that $\pi \in L^\infty(0,T;C^\alpha(\overline{\Omega}))$ for
some $\alpha\in (0,1)$ (see \eqref{e53}).
This concludes the proof of Theorem \ref{strong-sols}.

\subsection{Proof of Theorem \ref{strong-sols-2d}}
We proceed formally, for the sake of brevity. The argument below can be made rigorous by means of a Faedo-Galerkin scheme.
Indeed, only the time derivative of $\uvec$ and of $\varphi$ will be used as test functions. Alternatively,
a time discretization procedure can be used (see \cite[Proof of Theorem 3.6]{FGGS}).

To begin with, we take the time derivative of the Darcy's law \eqref{e26} and multiply the resulting identity by
$\uvec_t$. We get
\begin{align}\label{e38}
&\big(\eta(\varphi)\uvec_t,\uvec_t\big)+\big(\eta^\prime(\varphi)\varphi_t\,\uvec,\uvec_t\big)
=\big((\nabla J\ast\varphi_t)\varphi,\uvec_t\big)+
\big((\nabla J\ast\varphi)\varphi_t,\uvec_t\big)\,.
\end{align}
Setting $\eta_\infty^\prime:=\Vert\eta^\prime\Vert_{L^\infty(-1,1)}$,
the second term on the left hand side of \eqref{e38} can be estimated as follows
\begin{align}\label{e39}
\big|\big(\eta^\prime(\varphi)\varphi_t\,\uvec,\uvec_t\big)\big|&
\leq \eta_\infty^\prime\Vert\varphi_t\Vert_{L^4(\Omega)}\Vert\uvec\Vert_{L^4(\Omega)^2}\Vert\uvec_t\Vert\notag\\[1mm]
&\leq C\eta_\infty^\prime\,(\Vert\varphi_t\Vert+\Vert\varphi_t\Vert^{1/2}\Vert\nabla\varphi_t\Vert^{1/2})
\Vert\uvec\Vert_{L^4(\Omega)^2}\Vert\uvec_t\Vert\notag\\[1mm]
&\leq\delta\Vert\uvec_t\Vert^2+C_\delta\,{\eta_\infty^\prime}^{\!\!\!2}\,
(\Vert\varphi_t\Vert^2+\Vert\varphi_t\Vert\Vert\nabla\varphi_t\Vert)\,\Vert\uvec\Vert_{L^4(\Omega)^2}^2\notag\\[1mm]
&\leq \delta\Vert\uvec_t\Vert^2+\delta^\prime\Vert\nabla\varphi_t\Vert^2
+C_{\delta,\delta^\prime}\big({\eta_\infty^\prime}^{\!\!\!2}\Vert\uvec\Vert_{L^4(\Omega)^2}^2
+{\eta_\infty^\prime}^{\!\!\!4}\Vert\uvec\Vert_{L^4(\Omega)^2}^4\big)\Vert\varphi_t\Vert^2\,.
\end{align}
The estimates of the two terms on the right hand side of \eqref{e38} being straightforward, we can
then insert \eqref{e39} into \eqref{e38}, use assumptions
\textbf{(H1)}, \textbf{(H2)}, and take $\delta$ suitably small to obtain
\begin{align}\label{e40}
&\frac{\eta_1}{2}\Vert\uvec_t\Vert^2\leq\delta^\prime\Vert\nabla\varphi_t\Vert^2
+C_\delta \big(1+{\eta_\infty^\prime}^{\!\!\!2}\Vert\uvec\Vert_{L^4(\Omega)^2}^2
+{\eta_\infty^\prime}^{\!\!\!4}\Vert\uvec\Vert_{L^4(\Omega)^2}^4\big)\Vert\varphi_t\Vert^2\,.
\end{align}
Next, we take the time derivative of \eqref{e25} and test the resulting equation by $\varphi_t$ to get
\begin{align}
& \frac{1}{2}\frac{d}{dt}\Vert \varphi _{t}\Vert ^{2}+(\nabla B(\varphi
)_{t},\nabla \varphi _{t})  \notag \\
& =-\left( \uvec_{t}\cdot \nabla \varphi ,\varphi _{t}\right)
+\left( m^{\,\prime }(\varphi )\varphi _{t}\left( \nabla J\ast \varphi
\right) ,\nabla \varphi _{t}\right) +\left( m(\varphi )\left( \nabla J\ast
\varphi _{t}\right) ,\nabla \varphi _{t}\right) \,.  \label{e42}
\end{align}
In order to estimate the second term on the left side and the first one on the right
(the estimate of the last two terms on the right being straightforward), we can argue exactly as in the proof of \cite[Proposition 5.1]{FGGS}.
Indeed, on account of \eqref{e22}, of the
$L^\infty(0,T;V)$ bound for $\varphi$ (cf. \eqref{str-reg-3}), and of the Gagliardo-Nirenberg inequality in two dimensions,
we deduce the following differential inequality
\begin{align}\label{e41}
&\frac{d}{dt}\Vert \varphi _{t}\Vert ^{2}+\frac{\alpha _{0}}{4}\Vert \nabla
\varphi _{t}\Vert ^{2}\leq C\left( \Vert \varphi _{t}\Vert ^{4}+\Vert
\varphi _{t}\Vert ^{2}+\Vert \uvec_{t}\Vert ^{2}+1\right) \,.
\end{align}
By means of \eqref{e40}, taking $\delta^\prime$ small enough (i.e., $\delta^\prime\leq\alpha_0\eta_1/16\,C$),
from \eqref{e41} we infer
\begin{align*}
&\frac{d}{dt}\Vert \varphi _{t}\Vert ^{2}+\frac{\alpha _{0}}{8}\Vert \nabla
\varphi _{t}\Vert ^{2}\leq C\big(\Vert \varphi _{t}\Vert ^{4}+1\big)
+C\big(1+{\eta_\infty^\prime}^{\!\!\!2}\Vert\uvec\Vert_{L^4(\Omega)^2}^2
+{\eta_\infty^\prime}^{\!\!\!4}\Vert\uvec\Vert_{L^4(\Omega)^2}^4\big)\Vert\varphi_t\Vert^2\,.
\end{align*}
From this differential inequality, Gronwall's lemma and \eqref{str-reg-1} entail that $\varphi\in H^1(0,T;V)\cap W^{1,\infty}(0,T;H)$. 
The $L^\infty(0,T;H^2(\Omega))-$regularity for $\varphi$ follows as in the proof of \cite[Proposition 5.1]{FGGS},
by using \eqref{e22}, the fact that $\varphi_t\in L^\infty(0,T;H)$, implying $B(\varphi)\in L^\infty(0,T;H^2(\Omega))$,
and identity \eqref{e43}.

Once \eqref{str-reg-2-2d} is established, \eqref{str-reg-1-2d} follows from \eqref{e40}, by taking
\eqref{str-reg-1} into account. The proof is finished.

\section{Further regularity properties for $\pi$ and $\uvec$}
\setcounter{equation}{0}
\label{sec:regpiu}

The goal of this section is to develop a detailed analysis of the regularity properties
of the pressure and velocity fields of the strong solution derived in Theorem \ref{strong-sols}.
While in Theorem \ref{strong-sols} our main objective was just to rigorously establish existence of a
strong solution in some suitable regularity class, our main focus here is to address
more closely the regularity of $\pi$ and $\uvec$ that stems from the elliptic system
satisfied by the pressure field, as a consequence of the validity of the Darcy's law
(the regularity for $\varphi$ is essentially determined by the nonlocal Cahn-Hilliard structure, and it will always
be taken as given by \eqref{str-reg-3} in all this section).
This goal is achieved by applying elliptic regularity results to
problem \eqref{ellipt-nondiv}, \eqref{pb2-eq} with $\varphi$ satisfying \eqref{str-reg-3},
and making a careful use of suitable Gagliardo-Nirenberg-Sobolev interpolation inequalities (cf. Proposition \ref{Bre-Mir})
to gain, in particular, a $W^{2,p}(\Omega)-$regularity for $\pi$, for all $1<p<\infty$.
Concerning the time regularity, the delicate point and our main effort are to obtain ``optimal'' time integrability
exponent for $\pi$ with values, e.g., in $W^{2,p}(\Omega)$. Indeed, to the best of our knowledge,
there are no results in the literature that allow us to obtain such an optimal value for this exponent,
once the space-time regularity \eqref{str-reg-3} for $\varphi$ is assumed in equation \eqref{ellipt-nondiv}.

By comparing with other arguments and
with other ways to estimate in $L^p(\Omega)$ the principal term in the elliptic equation
for $\pi$ (see $\mathcal{F}_1$ below) by means of H\"{o}lder and Gagliardo-Nirenberg inequalities,
it turns out that the best exponents seem to be reached by suitably exploiting the H\"{o}lder continuity
property for $\pi$ (the question whether the exponents thus obtained are optimal or not
is, however, still open). Therefore, the H\"{o}lder continuity of $\pi$, which
revealed itself to be helpful to prove existence of a strong solution in Theorem \ref{strong-sols},
here plays a major role, meaning that, differently from the proof of Theorem \ref{strong-sols},
the value of the H\"{o}lder continuity exponent $\alpha\in(0,1)$ of $\pi$
is now crucial. Indeed, the time integrability exponents for $\pi$ will be expressed in terms
of $\alpha$. In the sequel the time dependence will be generally omitted for the sake of simplicity.

We point out that, recalling that $\pi$ satisfies the elliptic system
\begin{align}
&\mbox{div}\Big(\frac{1}{\eta(\varphi)}\nabla\pi\Big)=\mbox{div}\Big(\frac{(\nabla J\ast\varphi)\varphi}{\eta(\varphi)}\Big)\,,
\label{pb1-eq-bis}\\[1mm]
&\frac{\partial\pi}{\partial\mathbf{n}}=(\nabla J\ast\varphi)\varphi\cdot\mathbf{n}\,,\label{pb2-eq-bis}
\end{align}
with $\varphi$ satisfying \eqref{str-reg-3}, the well known De Giorgi's result (see, e.g., \cite{DiB})
ensures that $\pi\in C^\alpha(\overline{\Omega})$, with some $\alpha\in(0,1)$ and some
$\Vert\pi\Vert_{C^\alpha(\overline{\Omega})}$, which only depend on $\eta_1,\eta_\infty, b, d,\Omega$, and on the
geometrical properties of $\Gamma$ (cf. \eqref{e53}). Therefore, the exponent $\alpha$ and the norm
 $\Vert\pi\Vert_{C^\alpha(\overline{\Omega})}$ depend (or can be bounded by constants that depend)
 on structural parameters only (which are a priori known), and may be considered independent of the form of
 the $\varphi$-component of the strong solution (which is not a priori known).

We also observe that, in addition to providing a rather complete
picture of the regularity properties of $\pi$ and $\uvec$ for the strong solution of Theorem \ref{strong-sols},
the analysis of this section (especially in the case $d=3$)
and the effort in achieving the best time integrability exponents have another important
motivation. Indeed, these
 properties will be used in Section \ref{sec:unique} to prove weak-strong uniqueness results
 for the case of non-constant viscosity $\eta$. In particular, for $d=3$
 a conditional type result will be proven and the condition will depend on $\alpha$
 (hence, on an essentially structural constant, see the discussion above). This condition will allow us to
 guarantee a required regularity for the velocity field of one of the two solutions.
 Therefore, the higher the time integrability exponent for $\pi$ (with $\alpha$ given), the weaker the assumption
 on $\alpha$, namely, the smaller the lower bound for $\alpha$ ensuring weak-strong uniqueness
 with non-constant $\eta$ in dimension three will be.

\bigskip

We can now state the main result of this section.

\begin{thm} \label{str-sols-fur-reg}
Let all assumptions of Theorem \ref{strong-sols} be satisfied.
Then, for every $T>0$, the $\pi$ and $\uvec$ components of
the strong solution $[\uvec,\pi,\varphi]$ to problem \eqref{Sy01}-\eqref{Sy04}, \eqref{sy5}, \eqref{sy6},
in addition to \eqref{str-reg-1}-\eqref{str-reg-3}, satisfy the following regularity properties,
where $\alpha\in(0,1)$ is the H\"{o}lder continuity exponent of $\pi$.
\begin{itemize}
  \item If $d=2$ then we have that
  \begin{align}
       &\pi\in L^{\sigma_p}(0,T;W^{2,p}(\Omega))\cap L^{\hat{\sigma}_p}(0,T;W^{1,q}(\Omega))\,,\label{e175}\\[1mm]
       &\uvec\in L^{\sigma_p}(0,T;W^{1,p}(\Omega)^2)\cap L^{\hat{\sigma}_p}(0,T;L^{q}(\Omega)^2)\,,\label{e179}
  \end{align}
  with $\sigma_p$, $\hat{\sigma}_p$ and $q$ given according with the following cases
    \begin{description}
      \item[(i)] if $\,\,2\leq p<\infty$ then
       \begin{align}
       &\sigma_p:=\Big(\frac{2p}{(2-\alpha)p-2}\Big)^-\,,\quad
       \hat{\sigma}_p:=\Big(\frac{2q}{(1-\alpha)q-2}\Big)^-\,,\quad p\,\frac{2-\alpha}{1-\alpha}\leq q<\infty\,;
       \label{e176}
       \end{align}
      \item[(ii)] if $\,\,1<p<2$ then
      \begin{align}
      &\sigma_p=\left\{\begin{array}{ll}
      \left(\frac{p^\prime}{(1-\alpha)^2}\right)^-\,,\quad 1<p\leq p_\alpha:=\frac{2(2-\alpha)}{3-2\alpha}\,,\\[3mm]
      \left(\frac{2p}{(2-\alpha)p-2}\right)^-\,,\quad p_\alpha\leq p<2\,,
      \end{array}\right.
      \end{align}
      \begin{align}
      &\hat{\sigma}_p:=\frac{1}{(1-\alpha)^2}\,\Big(\frac{2q}{q-2}\Big)^-\,,
      \quad 2< q\leq 2\,\frac{2-\alpha}{1-\alpha}\,.
      \end{align}
   \end{description}
    If, in addition, $J\in W^{3,1}_{loc}(\mathbb{R}^2)$ then
\begin{align}
&\pi\in L^{\sigma_\infty}(0,T;H^3(\Omega))\,,\quad
\uvec\in L^{\sigma_\infty}(0,T;H^2(\Omega)^2)\,,\qquad\sigma_\infty:=\Big(\frac{2}{2-\alpha}\Big)^-\,.
\end{align}

  \item If $d=3$ then we have that
  \begin{align}
       &\pi\in L^{\mu_p}(0,T;W^{2,p}(\Omega))\cap L^{\hat{\mu}_p}(0,T;W^{1,q}(\Omega))\,,\label{e180}\\[1mm]
       &\uvec\in L^{\mu_p}(0,T;W^{1,p}(\Omega)^3)\cap L^{\hat{\mu}_p}(0,T;L^{q}(\Omega)^3)\,,\label{e165}
  \end{align}
  with $\mu_p$, $\hat{\mu}_p$ and $q$ given according with the following cases
  \begin{description}
    \item[(i)] if $\,\,2\leq p<3$ then
    \begin{align}
      &\mu_p=\left\{\begin{array}{ll}
      \Big(\frac{p}{(2-\alpha)p-3}\Big)^-\,,\quad 0< \alpha\leq\frac{2p-4}{p}\,,\\[2mm]
      \Big(\frac{2p}{(2-\alpha)p-2}\Big)^-\,,\quad\frac{2p-4}{p}\leq \alpha\leq\frac{2p-3}{p}\,,\\[2mm]
      \Big(\frac{6}{2-\alpha}\Big)^-\,,\quad\frac{2p-3}{p}<\alpha<1\,,
       \end{array}\right.
    \end{align}
    \begin{align}\label{e166}
      &\hat{\mu}_p=\left\{\begin{array}{ll}
      \Big(\frac{q}{(1-\alpha)q-3}\Big)^-\,,\quad p\,\frac{2-\alpha}{1-\alpha}\leq q\leq\frac{4p}{4-p}\,,
      \quad 0< \alpha\leq\frac{2p-4}{p}\,,\\[2mm]
      \Big(\frac{(2-\alpha)p-3}{(2-\alpha)p-2}\Big)^-\,\frac{2q}{(1-\alpha)q-3}\,,
      \quad p\,\frac{2-\alpha}{1-\alpha}\leq q\leq\frac{3p}{3-p}\,,\quad\frac{2p-4}{p}\leq \alpha\leq\frac{2p-3}{p}\,,\\[2mm]
      \Big(\frac{6}{1-\alpha}\Big)^-\,,\quad q=\frac{3p}{3-p}\,,\quad \frac{2p-3}{p} < \alpha<1\,;
      \end{array}\right.
    \end{align}
    \item[(ii)] if $\,\,3\leq p<4$ then
    \begin{align}
      &\mu_p=\left\{\begin{array}{ll}
      \Big(\frac{p}{(2-\alpha)p-3}\Big)^-\,,\quad \frac{p-3}{p}< \alpha\leq\frac{2p-4}{p}\,,\\[2mm]
      \Big(\frac{2p}{(2-\alpha)p-2}\Big)^-\,,\quad\frac{2p-4}{p}\leq \alpha<1\,,
       \end{array}\right.
    \end{align}
    \begin{align}
      &\hat{\mu}_p=\left\{\begin{array}{ll}
      \Big(\frac{q}{(1-\alpha)q-3}\Big)^-\,,\quad p\,\frac{2-\alpha}{1-\alpha}\leq q<\infty\,,
      \quad \frac{p-3}{p}< \alpha\leq\frac{2p-4}{p}\,,\\[2mm]
      \Big(\frac{(2-\alpha)p-3}{(2-\alpha)p-2}\Big)^-\,\frac{2q}{(1-\alpha)q-3}\,,
      \quad p\,\frac{2-\alpha}{1-\alpha}\leq q<\infty\,,\quad\frac{2p-4}{p}\leq \alpha<1\,;
       \end{array}\right.
    \end{align}
    \item[(iii)] if $\,\,4\leq p<6$, then
     \begin{align}
     &\mu_p=\Big(\frac{p}{(2-\alpha)p-3}\Big)^-\,,\quad\frac{p-3}{p}<\alpha<1\,,
     \end{align}
     \begin{align}
      &\hat{\mu}_p=\left\{\begin{array}{ll}
      \frac{q}{(1-\alpha)q-3}\,,\quad\frac{6p}{6-p}\leq q<\infty\,,\quad\frac{p-3}{p}<\alpha\leq\frac{2(p-3)}{p}\\[2mm]
      \frac{q}{(1-\alpha)q-3}\,,\quad p\,\frac{2-\alpha}{1-\alpha}\leq q<\infty\,,\quad\frac{2(p-3)}{p}\leq \alpha<1\,;
       \end{array}\right.
    \end{align}
    \item[(iv)] if $\,\,p=6$, then
    \begin{align}
    &\mu_6=\Big(\frac{2}{3-2\alpha}\Big)^-\,,\quad\hat{\mu}_6=\Big(\frac{1}{1-\alpha}\Big)^-\,,\quad
    q=\infty\,,\quad\frac{1}{2}<\alpha<1\,.
     \end{align}
    \end{description}
\end{itemize}

Finally, if $\eta$ is a positive constant and $J\in W_{loc}^{2,1}(\mathbb{R}^{d})$ or $J$ is
admissible, we have that \eqref{e175}, \eqref{e179}, \eqref{e180},
\eqref{e165} hold with $\sigma_p$, $\hat{\sigma}_p$, $\mu_p$, $\hat{\mu}_p$, and $q$ given according
with the following cases.
\begin{itemize}
  \item If $d=2$ then
  \begin{align}
  &\sigma_p=\frac{2p}{p-2}\,,\qquad 2\leq p<\infty\,,\label{e181}\\[1mm]
  &\hat{\sigma}_p=\infty\,,\quad 2\leq q<\infty\,,\quad\mbox{ if }\,\,p=2\,;\quad
  2\leq\hat{\sigma}_p<\infty\,,\quad  q=\infty\,,\quad\mbox{ if }\,\,3\leq p<\infty\,.
  \label{e182}
  \end{align}
\item If $d=3$ then
\begin{align}
      &\mu_p=\hat{\mu}_p=\left\{\begin{array}{lll}
      \frac{2p}{p-2}\,,\qquad 2\leq p\leq 4\,,\\[2mm]
      \frac{p}{p-3}\,,\qquad 4\leq p\leq 6
       \end{array}\right.\,,\quad
       q\,\left\{\begin{array}{lll}
      =\frac{3p}{3-p}\,,\quad 2\leq p<3\,,\\[2mm]
      \in[2,\infty)\,,\quad p=3\,,\\[2mm]
      =\infty\,,\quad 2<p\leq 6\,.
       \end{array}\right.
       \label{e183}
\end{align}
\end{itemize}
\end{thm}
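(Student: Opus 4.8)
The plan is to regard $\varphi$ as given data satisfying \eqref{str-reg-3} and to extract the regularity of $\pi$ from the non-divergence form \eqref{ellipt-nondiv} of the pressure equation, subject to the Neumann condition \eqref{pb2-eq}, by combining the $W^{s,p}$-elliptic estimate of Proposition \ref{preg} (used with integer order $s=2$, and $s=3$ in the case $J\in W^{3,1}_{loc}(\mathbb R^2)$) with the Gagliardo-Nirenberg-Sobolev interpolation inequalities of Proposition \ref{Bre-Mir}, and by exploiting in an essential way the De Giorgi H\"{o}lder bound $\Vert\pi\Vert_{C^\alpha(\overline\Omega)}\le C$ recorded in \eqref{e53}, whose exponent $\alpha$ and norm depend only on the structural constants. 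First I would split the right-hand side of \eqref{ellipt-nondiv} as $\mathcal F_1+\mathcal F_2$, where $\mathcal F_1:=\frac{\eta^\prime(\varphi)}{\eta(\varphi)}\nabla\varphi\cdot\nabla\pi$ is the principal term and $\mathcal F_2$ collects the convolution contributions. Using Lemma \ref{admiss} and $|\varphi|\le 1$ one gets $\Vert\mathcal F_2\Vert_{L^p}\le C(1+\Vert\nabla\varphi\Vert_{L^p})$, while Lemmas \ref{admiss} and \ref{trace-product}, together with a trace and interpolation estimate, bound the boundary datum $\Vert(\nabla J\ast\varphi)\varphi\cdot\mathbf n\Vert_{W^{1-1/p,p}(\Gamma)}$ by $C(1+\Vert\varphi\Vert_{H^2(\Omega)}^{\vartheta_p})$ with a suitable $\vartheta_p=\vartheta_p(p,d)<1$, obtained by interpolating the trace of $\varphi$ between $L^\infty(\Gamma)$ and $H^{3/2}(\Gamma)$.

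The heart of the matter is the estimate of $\mathcal F_1$. Writing $\Vert\mathcal F_1\Vert_{L^p}\le C\Vert\nabla\varphi\Vert_{L^{r_1}}\Vert\nabla\pi\Vert_{L^{r_2}}$ with $\frac1{r_1}+\frac1{r_2}=\frac1p$ (in dimension three the choice of $r_1$ is constrained by $\nabla\varphi\in H^1\hookrightarrow L^6$, which is one source of the $p$-ranges $[2,3),[3,4),[4,6),\{6\}$), I would control $\Vert\nabla\varphi\Vert_{L^{r_1}}$ by Gagliardo-Nirenberg interpolation between $H^2(\Omega)$ and $V$ (or $L^\infty(\Omega)$), and --- the decisive step --- control $\Vert\nabla\pi\Vert_{L^{r_2}}$ through Proposition \ref{Bre-Mir} in the form $\Vert\nabla\pi\Vert_{L^{r_2}}\le C\Vert\pi\Vert_{W^{2,p}(\Omega)}^{\lambda}\Vert\pi\Vert_{C^\alpha(\overline\Omega)}^{1-\lambda}\le C\Vert\pi\Vert_{W^{2,p}(\Omega)}^{\lambda}$, valid for the admissible ranges of $p,r_2,d$ with an interpolation exponent $\lambda=\lambda(p,r_2,\alpha,d)$. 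The whole point of inserting the H\"{o}lder norm, rather than a crude Sobolev embedding of $\nabla\pi$, is precisely that one can then arrange $\lambda<1$; a Young inequality absorbs $\Vert\pi\Vert_{W^{2,p}}^{\lambda}$ into the left-hand side of the Proposition \ref{preg} estimate and leaves, after optimizing over the available interpolations, $\Vert\pi\Vert_{W^{2,p}(\Omega)}\le C\big(1+\Vert\varphi\Vert_{H^2(\Omega)}^{\beta_p}\big)$, where $\beta_p$ is the optimal (largest) exponent among the contributions of $\mathcal F_1$, $\mathcal F_2$ and the boundary term. Raising to the power $\sigma_p$ (resp. $\mu_p$ when $d=3$) defined by $\beta_p\sigma_p=2$ and integrating over $(0,T)$ using $\varphi\in L^2(0,T;H^2(\Omega))\cap L^\infty(0,T;V)$ yields $\pi\in L^{\sigma_p}(0,T;W^{2,p}(\Omega))$; the ``$-$'' superscripts record the arbitrarily small loss one incurs because the relevant interpolations sit at an endpoint (and because Proposition \ref{preg} excludes the integer value of $s-1/p$). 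The $W^{1,q}$-estimates follow either from the embedding $W^{2,p}\hookrightarrow W^{1,q}$, or --- in order to reach the larger exponents $\hat\sigma_p$, $\hat\mu_p$ --- from a further application of Proposition \ref{Bre-Mir} to $\Vert\nabla\pi\Vert_{L^q}$ followed by a direct time integration; this accounts for the several sub-cases in the statement. Finally the bounds for $\uvec$ are read off from $\uvec=-\eta(\varphi)^{-1}\nabla\pi+\eta(\varphi)^{-1}(\nabla J\ast\varphi)\varphi$, upon noting that $\nabla\big(\eta(\varphi)^{-1}\big)=-\eta^\prime(\varphi)\eta(\varphi)^{-2}\nabla\varphi$, so that $\nabla\uvec$ contains a term of exactly the $\mathcal F_1$-type, already estimated, while the remaining terms are controlled by Lemma \ref{admiss}.

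For the special cases I would argue as follows. If $J\in W^{3,1}_{loc}(\mathbb R^2)$ the convolution term gains one derivative, so a bootstrap with Proposition \ref{preg} at order $s=3$ upgrades $\pi$ to $W^{3,p}(\Omega)$ (in particular to $H^3(\Omega)$ when $p=2$), with the stated time exponent. If $\eta$ is a positive constant then $\eta^\prime\equiv 0$ and $\mathcal F_1$ vanishes identically, so the equation reduces to $\Delta\pi=\mathrm{div}\big((\nabla J\ast\varphi)\varphi\big)$ with Neumann datum $(\nabla J\ast\varphi)\varphi\cdot\mathbf n$; the same scheme without the absorption step gives $\Vert\pi\Vert_{W^{2,p}(\Omega)}\le C\big(1+\Vert\nabla\varphi\Vert_{L^p}+\Vert\varphi\Vert_{H^2(\Omega)}^{\vartheta_p}\big)$, and interpolating $\nabla\varphi$ between $L^\infty(0,T;L^2(\Omega))$ and $L^2(0,T;L^q(\Omega))$ (with $q<\infty$ arbitrary if $d=2$, and $q=6$ if $d=3$) produces the cleaner exponents $\frac{2p}{p-2}$, $\frac{p}{p-3}$ and the companion values of $\hat\sigma_p$, $\hat\mu_p$, $q$ displayed in \eqref{e181}--\eqref{e183}.

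The main obstacle is the optimisation and bookkeeping underlying all of this: for each $d\in\{2,3\}$ and each range of $p$ one has to determine which Gagliardo-Nirenberg interpolation in Proposition \ref{Bre-Mir} is admissible, verify that the exponent $\lambda$ multiplying $\Vert\pi\Vert_{W^{2,p}}$ is strictly less than $1$ so that the absorption is legitimate, and identify the exponent $\beta_p$ that is actually optimal among the competing contributions. This is exactly what generates the case distinctions and the thresholds on $\alpha$ --- $\frac{2p-4}{p}$, $\frac{2p-3}{p}$, $\frac{p-3}{p}$, $p_\alpha=\frac{2(2-\alpha)}{3-2\alpha}$, and so on --- because the feasibility of each interpolation, hence the best attainable time integrability, depends on how $\alpha$ compares with these quantities. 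Everything else (the elliptic estimates, the convolution bounds via Lemma \ref{admiss}, the trace products via Lemma \ref{trace-product}, and the final time integration) is routine once this bookkeeping is carried out.
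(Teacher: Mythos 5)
Your strategy coincides with the paper's own proof: the same splitting $\mathcal{F}_1+\mathcal{F}_2$ of the right-hand side of \eqref{ellipt-nondiv}, the same elliptic estimate \eqref{e52} from Proposition \ref{preg}, the same decisive interpolation of $\Vert\nabla\pi\Vert_{L^q(\Omega)^d}$ between $W^{2,p}(\Omega)$ and the De Giorgi H\"{o}lder bound \eqref{e53} (realized via the embedding $C^\alpha(\overline{\Omega})\hookrightarrow W^{k/\rho,\rho}(\Omega)$ for $\rho>k/\alpha$) followed by absorption through Young's inequality, the same treatment of $\mathcal{F}_2$ and $\mathcal{G}$ via Lemmas \ref{admiss} and \ref{trace-product}, the same recovery of $\uvec$ from \eqref{e64}, and the same simplifications for $J\in W^{3,1}_{loc}(\mathbb{R}^2)$ and for constant $\eta$. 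Be aware, however, that what you dismiss as ``bookkeeping'' is the actual substance of the argument: the exponents $\sigma_p,\hat{\sigma}_p,\mu_p,\hat{\mu}_p$ and every $\alpha$-threshold in the statement are produced only by explicitly computing the infimum $\beta_\ast$ of the interpolation exponent $\beta(k,\rho)$ over the admissible parameter region $\mathcal{R}$ (cf. \eqref{e141}, \eqref{e112}, \eqref{e117}) and by optimizing the auxiliary splitting parameter $\e$ in the H\"{o}lder estimate of $\mathcal{F}_1$, so a complete proof cannot stop where yours does.
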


\begin{oss}
For the sake of simplicity we have not reported the cases $1<p<2$ and $J\in W^{3,1}_{loc}(\mathbb{R}^3)$ when $d=3$.
\end{oss}

\begin{proof}
It is convenient to rewrite \eqref{ellipt-nondiv} and \eqref{pb2-eq} in the following form
\begin{align}
\Delta\pi&=\mathcal{F}(\varphi,\nabla\varphi,\nabla\pi), \qquad\textrm{ a.e. in } Q_T,\label{ell-1}\\[1mm]
\frac{\partial\pi}{\partial\mathbf{n}}&=\mathcal{G}(\varphi), \qquad\textrm{ a.e. in } \Gamma\times (0,T),\label{ell-2}
\end{align}
where
\begin{align}
&\mathcal{F}(\varphi,\nabla\varphi,\nabla\pi):=\mathcal{F}_1(\varphi,\nabla\varphi,\nabla\pi)+\mathcal{F}_2(\varphi,\nabla\varphi)\,,\label{SOURCE0}\\[1mm]
&\mathcal{F}_1:=\zeta(\varphi)\nabla\varphi\cdot\nabla\pi\,,
\quad\mathcal{F}_2:=\varphi\,\mbox{div}(\nabla J\ast\varphi)+(1-\varphi\,\zeta(\varphi))
(\nabla\varphi\cdot(\nabla J\ast\varphi))\,,  \label{SOURCE} \\[1mm]
&\mathcal{G}:=(\nabla J\ast\varphi)\varphi\cdot\mathbf{n}\,, \label{BDRY}
\end{align}
with $\zeta(\varphi):=\eta^\prime(\varphi)/\eta(\varphi)$. We recall that, as above, the explicit time dependence is omitted.
We start with dimension two.

\medskip

$\blacktriangleright\, d=2,\, J\, admissible$.

\medskip

  From \eqref{str-reg-2}-\eqref{str-reg-3}, we have that $\nabla\pi,\nabla\varphi\in L^r(\Omega)^2$,
  for all $r\in(1,\infty)$. Thus $\mathcal{F}\in L^p(\Omega)$, for all $p\in(1,\infty)$.
As far as the boundary term $\mathcal{G}$ is concerned, we deduce that $\varphi\in W^{1,p}(\Omega)$ for all $1<p<\infty$, as a
consequence of $\varphi\in H^2(\Omega)$. Thus we have $\varphi\in W^{1-1/p,p}(\Gamma)$. Moreover, by relying only on the condition that $J$ is admissible and
by applying Lemma \ref{admiss}, we get $J\ast\varphi\in W^{2,p}(\Omega)$, for all $1<p<\infty$,
and this implies that $(\nabla J\ast\varphi)\cdot\mathbf{n}=\partial_\mathbf{n}(J\ast\varphi)\in W^{1-1/p,p}(\Gamma)$,
for all $1<p<\infty$. 
Hence, we have that
 $\varphi, \partial_\mathbf{n}(J\ast\varphi)\in W^{1-1/p,p}(\Gamma)\cap L^\infty(\Gamma)$, and this also entails
 that $\mathcal{G}\in W^{1-1/p,p}(\Gamma)\cap L^\infty(\Gamma)$, for $1< p<\infty$.
 Using now Proposition \ref{preg} with $r=0$, $t=1-1/p$, so that we have $s=2$, and $s-1/p=2-1/p$ is not an integer, we find $\pi\in W^{2,p}(\Omega)$ and the following estimate holds (see also \eqref{SOURCE0})
 \begin{align}
 &\Vert\pi\Vert_{W^{2,p}(\Omega)}\leq C(\Vert\mathcal{F}_1\Vert_{L^p(\Omega)}+\Vert\mathcal{F}_2\Vert_{L^p(\Omega)}
 +\Vert\mathcal{G}\Vert_{W^{1-1/p,p}(\Gamma)})\,,\quad 1< p<\infty\,.\label{e52}
 \end{align}
As above, in the sequel of this proof we will indicate by $C$ a generic positive constant which only depends
on the main constants of the problem (see (\textbf{H1})-(\textbf{H8})) and on $\Omega$ at most. This constant may also vary within the same line.
Any other dependency will be explicitly pointed out.
\medskip

We now proceed to estimate the three norms on the right hand side of \eqref{e52}. To this aim it is convenient to
distinguish the two cases
$2\leq p<\infty$, and $1<p<2$.

(i) {\it Case $2\leq p<\infty$.} 
We have (see \ref{SOURCE})
\begin{align}
&\Vert\mathcal{F}_1\Vert_{L^p(\Omega)}\leq \zeta_\infty\Vert\nabla\varphi\Vert_{L^{p+\e}(\Omega)^2}\Vert\nabla\pi\Vert_{L^{q}(\Omega)^2}\,,
\quad q:=p\Big(1+\frac{p}{\e}\Big)\,,
\label{e133}
\end{align}
where $\e>0$ will be conveniently chosen later. Moreover, we take advantage
of the $\alpha-$H\"{o}lder continuity property of $\pi$ and of Proposition
\ref{Bre-Mir} to estimate the $L^{q}(\Omega)^2$-norm of $\nabla\pi$, namely,
\begin{align}
&\Vert\nabla\pi\Vert_{L^{q}(\Omega)^2}\leq C\Vert\pi\Vert_{W^{\frac{k}{\rho},\rho}(\Omega)}^{1-\beta}\Vert\pi\Vert_{W^{2,p}(\Omega)}^\beta
\leq C\Vert\pi\Vert_{C^\alpha(\overline{\Omega})}^{1-\beta}\Vert\pi\Vert_{W^{2,p}(\Omega)}^\beta\,,
\label{e134}
\end{align}
for some $\beta\in(0,1)$, $k\geq 0$ and $\rho\geq1$, with $\rho>k/\alpha$, so that the injection $C^\alpha(\overline{\Omega})\hookrightarrow
W^{\frac{k}{\rho},\rho}(\Omega)$ holds true and allows us to control the $W^{\frac{k}{\rho},\rho}(\Omega)-$norm of $\pi$ by a constant (see \eqref{e53})
which only depends on structural parameters.
By combining \eqref{e133} with \eqref{e134}, and by employing the classical two-dimensional Gagliardo-Nirenberg inequality
to estimate the $L^{p+\e}(\Omega)^2$-norm of $\nabla\varphi$,
we obtain
\begin{align}
\Vert\mathcal{F}_1\Vert_{L^p(\Omega)}&\leq  C\Vert\nabla\varphi\Vert_{L^{p+\e}(\Omega)^2}\Vert\pi\Vert_{W^{2,p}(\Omega)}^\beta
\leq \delta\Vert\pi\Vert_{W^{2,p}(\Omega)}+C_\delta\Vert\nabla\varphi\Vert_{L^{p+\e}(\Omega)^2}^{\frac{1}{1-\beta}}\notag\\[2mm]
&\leq \delta\Vert\pi\Vert_{W^{2,p}(\Omega)}
+C_\delta\Vert\varphi\Vert_{H^2(\Omega)}^{\frac{1}{1-\beta}(1-\frac{2}{p+\e})}\,,
\label{e143}
\end{align}
where we have also used the $L^\infty(0,T;V)-$regularity of $\varphi$.
Using now Proposition \ref{Bre-Mir}, the interpolation inequality \eqref{e134} holds, provided that
$\beta\in(0,1)$ is given by
\begin{align}
&\frac{1}{q}=\Big(\frac{1-\beta}{\rho}+\frac{\beta}{p}\Big)-\frac{s-1}{2}\,,\quad s:=(1-\beta)\frac{k}{\rho}+2\beta\,,
\label{e135}
\end{align}
with $k\geq 0$ and $\rho\geq 1$, and with $\rho>k/\alpha$ satisfying the following condition
\begin{align}
&(1-\beta)\frac{k}{\rho}+2\beta\geq 1\,.\label{e136}
\end{align}
Noting that $q>p\geq 2$, that $q/(q-2)>p/2(p-1)=p^\prime/2$, and assuming in addition
that $k>2$ (we can easily see that we can restrict to
$k>2$ in all our analysis)\footnote{Indeed, if $0\leq k\leq 2$, on account of \eqref{e138},
then we have $\beta\geq p(q-2)/2q(p-1)>\beta_\ast$, for $\beta_\ast$ given by \eqref{e141} below.
Moreover, we have also that $p(q-2)/2q(p-1)>1/2>\beta_\ast$, for the $\beta_\ast$ given by \eqref{e178}, since $\e<p$
implies $q>2p$.},
we can then check that \eqref{e135} admits a solution $\beta\in (0,1)$ if and only if
\begin{align}
&\rho>\frac{q}{q-2}\,(k-2)\,,\label{e137}
\end{align}
with $\beta$ given by
\begin{align}
&\beta=\beta(k,\rho):=\frac{p}{q}\,\frac{(q-2)\,\rho-q\,(k-2)}{2\,(p-1)\,\rho-p\,(k-2)}\,.\label{e138}
\end{align}
Moreover, by taking in addition $\e$ such that $0<\e<p$ (which ensures that $q>2p$), and
since we are assuming \eqref{e138}, we can check that condition
\eqref{e136} is satisfied if and only if
\begin{align}
&\rho\geq \frac{q-p}{q-2p}\,(k-2)\,-\,\frac{pq-2(q-p)}{q-2p}
=\frac{p}{p-\e}\,(k-2)-p\,\frac{p+\e-2}{p-\e}\,.\label{e139}
\end{align}
By comparing the slopes of the affine in $k$ functions on the right hand sides of
 \eqref{e137} and of \eqref{e139}, we see that $(q-p)/(q-2p)= p/(p-\e)\geq q/(q-2)$,
since we are taking $0<\e<p$.
We now compare the slope $p/(p-\e)$ with $1/\alpha$ (always for $0<\e<p$), namely with the slope
of $k \mapsto k/\alpha$.
Let us choose $\e$ such that $0<\e\leq p(1-\alpha)$, which
ensures that $p/(p-\e)\leq 1/\alpha$.
Hence, the admissible region $\mathcal{R}$ for $k, \rho$ (namely, the set
of all $[k,\rho]$ such that $\eqref{e137}$ and \eqref{e139}, together with
conditions $\rho>k/\alpha$ and $k>2$, are satisfied) turns out to be
\begin{align}
&\mathcal{R}=\Big\{[k,\rho]\in [0,\infty)\times [1,\infty)\,:\,k>2\,,\,\,\rho>\frac{k}{\alpha}\,\Big\}\,.
\label{e140}
\end{align}
Computing the infimum of $\beta$ over $\mathcal{R}$, it is not difficult to find
that\footnote{This infimum can be computed by observing that
$$\inf_{\mathcal{R}}\beta=\lim_{R\to\infty}\,\min_{\overline{\mathcal{R}}\cap\{\rho\leq R\}} \beta\,.$$
The minimum of $\beta$ on the compact set $\overline{\mathcal{R}}\cap\{\rho\leq R\}$, with $R> 2/\alpha$,
is attained in only one point at the boundary, namely in $[\alpha R,R]$, which does not belong to $\mathcal{R}$.}
\begin{align}
&\beta_\ast=\beta_\ast(\e):=\inf_{[k,\rho]\in\mathcal{R}}\,\beta(k,\rho)=\frac{p}{q}\,
\frac{(1-\alpha)q-2}{(2-\alpha)p-2}=\frac{1}{p+\e}\,\frac{(1-\alpha)\,p\,(p+\e)-2\e}{(2-\alpha)\,p-2}\,,\label{e141}
\end{align}
for $0<\e\leq p(1-\alpha)$ (this infimum is not attained). Now, owing to \eqref{str-reg-3}, and writing $\beta=\beta_\ast^+$, we infer that
the time integrability exponent of the second term on the right hand side of \eqref{e143} is
given by
\begin{align}
&\sigma_p:=2(1-\beta)\,\frac{p+\e}{p+\e-2}=\Big(\frac{2p}{(2-\alpha)p-2}\Big)^-\,.\label{e145}
\end{align}
Note that $\sigma_p$ does not depend on $\e$, if $0<\e\leq p(1-\alpha)$.
Let us consider also the case $p(1-\alpha)<\e<p$. For this case, the admissible region $\mathcal{R}$
for $k,\rho$ becomes
\begin{align}
&\mathcal{R}=\Big\{[k,\rho]\in [0,\infty)\times [1,\infty)\,:\,k>2\,,\,\,\rho>\frac{k}{\alpha}\,,\,\,
\rho\geq\frac{p}{p-\e}\,(k-2)-p\,\frac{p+\e-2}{p-\e}\,\Big\}\,.\label{e144}
\end{align}
Let us now compute the infimum of $\beta$ over this new region $\mathcal{R}$. Denoting the affine function
on the right hand side of \eqref{e139}, for simplicity, by $g(k)$, we have that $g(k)=k/\alpha$ for
$$k=k^\ast:=\frac{\alpha\, p\,(p+\e)}{\e-(1-\alpha)p}\,,$$
and we can check that
$$\beta(k,g(k))=\frac{p}{q}\,\frac{k-q}{k-2p}\,,\qquad\forall k\geq k^\ast\,,$$
with $k\mapsto \beta(k,g(k))$ (strictly) increasing on $[k^\ast,\infty)$.
By taking the geometry of $\mathcal{R}$ into account we can thus see that
\begin{align}
&\beta_\ast:=\inf_\mathcal{R}\beta=\beta(k^\ast,g(k^\ast))=\beta(k^\ast,k^\ast/\alpha)=\frac{1-\alpha}{2-\alpha}
\label{e178}
\end{align}
and this infimum is not attained; notice that, in this case, $\beta_\ast$ does not depend on $\e\in(p(1-\alpha),p)$.
Therefore, still writing $\beta=\beta_\ast^+$, we infer that
the time integrability exponent of the second term on the right hand side of \eqref{e143} is now
given by
\begin{align}
&2(1-\beta)\,\frac{p+\e}{p+\e-2}=\Big(\frac{2}{2-\alpha}\Big)^-\,\frac{p+\e}{p-2+\e}\,,
\end{align}
and since the right hand side is decreasing with respect to $\e$, we choose $\e=(p(1-\alpha))^+$ to get the best
time integrability exponent. In doing so we obtain the same $\sigma_p$ as in \eqref{e145}.
We thus conclude that the analysis of the case $p(1-\alpha)<\e<p$ does not improve $\sigma_p$.

Let us now estimate the second term on the right hand side of \eqref{e52}, still assuming that
$2\leq p<\infty$. By relying on Lemma \ref{admiss},
we obtain (see \eqref{SOURCE})
\begin{align}
&\Vert\mathcal{F}_2\Vert_{L^p(\Omega)}\leq C_p+(1+\zeta_\infty)\,b\,\Vert\nabla\varphi\Vert_{L^p(\Omega)^2}
\leq C+C\Vert\nabla\varphi\Vert^{\frac{2}{p}}\Vert\varphi\Vert_{H^2(\Omega)}^{1-\frac{2}{p}}\,,\label{e146}
\end{align}
where $\zeta_\infty:=\Vert\zeta\Vert_{L^\infty(-1,1)}$.
By taking the $L^\infty(0,T;V)$ regularity of $\varphi$ into account (cf. \eqref{str-reg-3}), this leads to
\begin{align}
&\mathcal{F}_2\in L^{\hat{p}}(0,T;L^p(\Omega))\,,
\label{e147}
\end{align}
where $\hat{p}:=2p/(p-2)$, if $2<p<\infty$, and $\hat{p}:=\infty$, if $p=2$. Notice that $\hat{p}>\sigma_p$, with $\sigma_p$ given by \eqref{e145}.
As far as the third term on the right hand side of \eqref{e52} is concerned,
by means of Lemma \eqref{trace-product}
we have that (see \eqref{BDRY})
\begin{align}
\Vert\mathcal{G}\Vert_{W^{1-1/p,p}(\Gamma)}&\leq\Vert\varphi\Vert_{L^\infty(\Gamma)}
\Big\Vert\frac{\partial}{\partial\nvec}(J\ast\varphi)\Big\Vert_{W^{1-1/p,p}(\Gamma)}
+\Big\Vert\frac{\partial}{\partial\nvec}(J\ast\varphi)\Big\Vert_{L^\infty(\Gamma)}\Vert\varphi\Vert_{W^{1-1/p,p}(\Gamma)}
\notag\\[1mm]
&\leq C\Vert J\ast\varphi\Vert_{W^{2,p}(\Omega)}
+C\Vert\nabla J\ast\varphi\Vert_{W^{1,3}(\Omega)^2}\Vert\varphi\Vert_{W^{1,p}(\Omega)}\notag\\[1mm]
&\leq C_p(1+\Vert\nabla\varphi\Vert_{L^p(\Omega)^2})\,,\label{e148}
\end{align}
where also Lemma \ref{admiss} has been employed. Therefore, arguing in the same fashion as in \eqref{e146}, we get
\begin{align}
&\mathcal{G}\in L^{\hat{p}}(0,T;W^{1-1/p,p}(\Gamma))\,.
\label{e149}
\end{align}
By collecting \eqref{e143}, \eqref{e147}, \eqref{e148},
from \eqref{e52} it follows that
\begin{align}
&\pi\in L^{\sigma_p}(0,T;W^{2,p}(\Omega))\,,\qquad\sigma_p:=\Big(\frac{2p}{(2-\alpha)p-2}\Big)^-\,.
\label{e156}
\end{align}
Moreover, from \eqref{e134}, setting $\hat{\sigma}_p:=\sigma_p/\beta_\ast$, with $\beta_\ast$
given by \eqref{e141} and depending on $q$, with $p(2-\alpha)/(1-\alpha)\leq q<\infty$
(recall that $0<\e\leq p(1-\alpha)$ in \eqref{e141}), we also deduce
the following regularity
\begin{align}
&\pi\in L^{\hat{\sigma}_p}(0,T;W^{1,q}(\Omega))\,,\quad
\hat{\sigma}_p:=\Big(\frac{2q}{(1-\alpha)q-2}\Big)^-\,,\quad p\,\frac{2-\alpha}{1-\alpha}\leq q<\infty\,.
\label{e150}
\end{align}

(ii) {\it Case $1<p<2$.} We handle this case by exploiting the regularity
\eqref{e150} obtained above. Namely, we employ \eqref{e150} with $p=2$ and with $q=2(2-\alpha)/(1-\alpha)$, together with
the following interpolation inequality
$$\Vert\nabla\pi\Vert_{L^q(\Omega)^2}\leq \Vert\nabla\pi\Vert^{\frac{2(2-\alpha)}{q}-(1-\alpha)}
\Vert\nabla\pi\Vert_{L^{\frac{2(2-\alpha)}{1-\alpha}}(\Omega)^2}^{(2-\alpha)\frac{q-2}{q}}\,,
\quad 2\leq q\leq 2\,\frac{2-\alpha}{1-\alpha}\,,$$
and with the $L^\infty(V_0)-$regularity for $\pi$ (cf. \eqref{str-reg-2}), to get
\begin{align}
&\pi\in L^{\hat{\sigma}_p}(0,T;W^{1,q}(\Omega))\,,\quad \hat{\sigma}_p:=
\frac{1}{(1-\alpha)^2}\,\Big(\frac{2q}{q-2}\Big)^-\,,
\quad 2\leq q\leq 2\,\frac{2-\alpha}{1-\alpha}\,.\label{e151}
\end{align}
This completes the regularity \eqref{e150}. Next, we go back to \eqref{e133}, which we can write equivalently as (see \eqref{SOURCE})
\begin{align}
&\Vert\mathcal{F}_1\Vert_{L^p(\Omega)}\leq \zeta_\infty\Vert\nabla\varphi\Vert_{L^{\frac{pq}{q-p}}(\Omega)^2}\Vert\nabla\pi\Vert_{L^{q}(\Omega)^2}\,,
\label{e152}
\quad q>p\,.
\end{align}
Observe that $pq/(q-p)\leq 2$ if and only if $q\geq 2p/(2-p)$. Since $q$ is now taken
in the interval $[2,2(2-\alpha)/(1-\alpha)]$ (see \eqref{e151}), we can then distinguish two cases.
Assume first that $2p/(2-p)\leq 2(2-\alpha)/(1-\alpha)$, namely that $p\leq p_\alpha$, where
$p_\alpha:=2(2-\alpha)/(3-2\alpha)$. Then, supposing $2p/(2-p)\leq q\leq 2(2-\alpha)/(1-\alpha)$,
from \eqref{e151}, \eqref{e152}, accounting for the $L^\infty(0,T;V_0)-$regularity for $\pi$, we get
\begin{align}
&\mathcal{F}_1\in L^{\hat{\sigma}_p}(0,T;L^p(\Omega))\,,\label{e153}
\end{align}
with $\hat{\sigma}_p$ the same as in \eqref{e151}. Since $\hat{\sigma}_p$ is decreasing with respect to $q$, we take
$q=2p/(2-p)$ in \eqref{e153} (i.e. the left endpoint of the admissible interval for $q$) to get the best time integrability exponent.
This yields
\begin{align}
&\mathcal{F}_1\in L^{\sigma_p}(0,T;L^p(\Omega))\,,\quad \sigma_p:=\Big(\frac{p^\prime}{(1-\alpha)^2}\Big)^-\,,
\quad 1<p\leq p_\alpha:=\frac{2(2-\alpha)}{3-2\alpha}\,.\label{e154}
\end{align}
On the other hand, if $p_\alpha<p<2$ (which means that $2p/(2-p)\geq 2(2-\alpha)/(1-\alpha)$),
then we have $q\leq 2p/(2-p)$ for all $2\leq q\leq 2(2-\alpha)/(1-\alpha)$, and hence $pq/(q-p)\geq 2$.
The norm in $\nabla\varphi$ on the right hand side of \eqref{e152} will then
be estimated through the Gagliardo-Nirenberg inequality and this gives
\begin{align}
&\Vert\mathcal{F}_1\Vert_{L^p(\Omega)}\leq C\,\Vert\varphi\Vert_{H^2(\Omega)}^{\frac{pq-2(q-p)}{pq}}
\,\Vert\nabla\pi\Vert_{L^{q}(\Omega)^2}\,.\label{e157}
\end{align}
Using \eqref{e151} once more, we can easily see that the best time integrability exponent
for $\mathcal{F}_1$ with values in $L^p$ is reached by taking $q=2(2-\alpha)/(1-\alpha)$
in the admissible interval for $q$. Therefore we find
\begin{align}
&\mathcal{F}_1\in L^{\sigma_p}(0,T;L^p(\Omega))\,,\quad\sigma_p:=\Big(\frac{2p}{(2-\alpha)p-2}\Big)^-\,,
\quad p_\alpha\leq p<2\,.\label{e155}
\end{align}

As far as the second and the third norm on the right hand side of \eqref{e52} are concerned, from \eqref{e146} and \eqref{e148},
on account of \eqref{str-reg-3}, we get
\begin{align*}
&\mathcal{F}_2\in L^\infty(0,T;L^p(\Omega))\,,\quad\mathcal{G}\in L^\infty(0,T;W^{1-1/p,p}(\Gamma))\,.
\end{align*}
From \eqref{e52} we then deduce that
\begin{align}
&\pi\in L^{\sigma_p}(0,T;W^{2,p}(\Omega))\,,
\end{align}
with $\sigma_p$ given by \eqref{e154}, or by \eqref{e155}, according with the value of $p$
in the interval $(1,2)$.

Let us now analyze the regularity of $\uvec$ for both cases (i) and (ii).

\noindent
By taking the spatial derivatives
$\partial_j$ of \eqref{pb3-eq} , we get
\begin{align}
\partial_j u_k&=-\frac{1}{\eta(\varphi)}\,\partial^2_{jk}\pi+\frac{\zeta(\varphi)}{\eta(\varphi)}\,\partial_j\varphi\,\partial_k\pi
+\frac{1-\varphi\,\zeta(\varphi)}{\eta(\varphi)}\,(\partial_k J\ast\varphi)\,\partial_j\varphi+\frac{1}{\eta(\varphi)}\partial_j(\partial_k J\ast\varphi)\varphi\,,
\label{e64}
\end{align}
and the term to be estimated in $L^p(\Omega)$ in a less straightforward way is the second one on the right hand side of \eqref{e64}.

Let us consider the case (i), namely, $2\leq p<\infty$. It is immediate to see that the term $(\zeta(\varphi)/\eta(\varphi))\,\partial_j\varphi\,\partial_k\pi$ can be estimated
as in \eqref{e157}, for all $q$ such that $p(2-\alpha)/(1-\alpha)\leq q<\infty$.
Therefore, by means of \eqref{str-reg-3} and \eqref{e150}, we can easily check that
the time integrability exponent of the right hand side of \eqref{e157}
(and hence of the second term in \eqref{e64}) is  $\sigma_p$ given by \eqref{e156}.
Consider now the last two terms on the right hand side of \eqref{e64}. It is easy to realize that
\begin{align}
&\frac{1}{\eta(\varphi)}\partial_j(\partial_k J\ast\varphi)\varphi\in L^\infty(0,T;L^p(\Omega))\,,\quad
\frac{1-\varphi\,\zeta(\varphi)}{\eta(\varphi)}\,(\partial_k J\ast\varphi)\,\partial_j\varphi\in L^{\hat{p}}(0,T;L^p(\Omega))\,,
\label{e158}
\end{align}
where $\hat{p}=2p/(p-2)$, if $2<p<\infty$, and $\hat{p}=\infty$, if $p=2$.
Therefore, from \eqref{e64} we deduce that (note that $\hat{p}>\sigma_p$)
\begin{align}
&\uvec\in L^{\sigma_p}(0,T;W^{1,p}(\Omega)^2)\cap L^{\hat{\sigma}_p}(0,T;L^{q}(\Omega)^2)\,.\label{e159}
\end{align}
The case $1<p<2$ can be handled similarly. Therefore, for both cases (i) and (ii) we find that
\eqref{e159} holds with $\sigma_p$, $\hat{\sigma}_p$ and $q$ given by \eqref{e156} and by \eqref{e150}, respectively,
if $2\leq p<\infty$, or by \eqref{e154}-\eqref{e155} and \eqref{e151}, respectively, if $1<p<2$.

\medskip

$\blacktriangleright\, d=2\,,\; J\in W^{3,1}_{loc}(\mathbb{R}^2)$.

\medskip

This stronger assumption on the kernel $J$ allows to deduce an $H^1(\Omega)-$regularity for
$\mathcal{F}(\varphi,\nabla\varphi,\nabla\pi)$ (see \eqref{ell-1}).
Indeed we have
\begin{align}
&\partial_i\mathcal{F}_1(\varphi,\nabla\varphi,\nabla\pi)=\partial_i(\zeta(\varphi)\,\partial_k\varphi\,\partial_k\pi)
\nonumber\\
&=
\zeta(\varphi)\,\partial^2_{ik}\,\varphi\,\partial_k\pi+\zeta(\varphi)\,\partial_k\varphi\,\partial_{ik}^2\pi+
\zeta^\prime(\varphi)\,\partial_i\varphi\,\partial_k\varphi\,\partial_k\pi\,.\label{e69}
\end{align}
Recall first the Gagliardo-Nirenberg inequality (see Proposition \ref{Bre-Mir})
\begin{align}
&\Vert\pi\Vert_{W^{1,\infty}(\Omega)}\leq
C_q\Vert\pi\Vert_{W^{1,q}(\Omega)}^{\frac{q}{q+2}}\Vert\pi\Vert_{H^3(\Omega)}^{\frac{2}{q+2}}\,,\quad 2<q<\infty\,.
\label{e81}
\end{align}
Then we get
\begin{align}
\Vert\zeta(\varphi)\,\partial^2_{ik}\,\varphi\,\partial_k\pi\Vert&\leq\zeta_\infty\Vert\nabla^2\varphi\Vert
\Vert\nabla\pi\Vert_{L^\infty(\Omega)^2}
\leq C_q\Vert\varphi\Vert_{H^2(\Omega)}\Vert\pi\Vert_{W^{1,q}(\Omega)}^{\frac{q}{q+2}}
\Vert\pi\Vert_{H^3(\Omega)}^{\frac{2}{q+2}}\notag\\[1mm]
&\leq\delta\Vert\pi\Vert_{H^3(\Omega)}+C_{q,\delta}\,\Vert\varphi\Vert_{H^2(\Omega)}^{\frac{q+2}{q}}
\Vert\pi\Vert_{W^{1,q}(\Omega)}\,,\label{e70}
\end{align}
with $2<q<\infty$ arbitrarily large, and $\delta>0$ to be fixed later.
Using \eqref{str-reg-3}, \eqref{e175} with $p=2$ and with $2(2-\alpha)/(1-\alpha)\leq q<\infty$ (cf. \eqref{e176})
so that $q$ can be chosen arbitrarily large, we can easily see that the time integrability
exponent of the second term in the right hand side of the last inequality of \eqref{e70} is
given by $\sigma_\infty:=(2/(2-\alpha))^-$.
Moreover, the $L^2$-norm of the second term on the right hand side of \eqref{e69} can be estimated as follows
\begin{align*}
&\Vert\zeta(\varphi)\,\partial_k\varphi\,\partial_{ik}^2\pi\Vert\leq
\zeta_\infty\Vert\nabla\varphi\Vert_{L^{\frac{2p}{p-2}}(\Omega)^2}
\Vert\nabla^2\pi\Vert_{L^{p}(\Omega)^{2\times 2}}\leq C \Vert\varphi\Vert_{H^2(\Omega)}^{\frac{2}{p}}
\Vert\pi\Vert_{W^{2,p}(\Omega)}\,,
\end{align*}
with $2<p<\infty$. Thus, using \eqref{e175} and \eqref{e176}, we can easily get
\begin{align}
&\zeta(\varphi)\,\partial_k\varphi\,\partial_{ik}^2\pi\in L^{\sigma_\infty}(0,T;H)\,.
\label{e73}
\end{align}
As far as the third term on the right hand side of \eqref{e69} is concerned,
we have that
\begin{align}
&\Vert\zeta^\prime(\varphi)\,\partial_i\varphi\,\partial_k\varphi\,\partial_k\pi\Vert\leq
\zeta^\prime_\infty\,\Vert\nabla\varphi\Vert_{L^4(\Omega)^2}^2\Vert\nabla\pi\Vert_{L^\infty(\Omega)^2}
\leq C\Vert\varphi\Vert_{H^2(\Omega)}\Vert\nabla\pi\Vert_{L^\infty(\Omega)^2}\,.\label{e78}
\end{align}
Hence this term can be handled as in \eqref{e70}. 
We also need an estimate for the
$L^2$-norm of $\mathcal{F}_1$. To this aim, we observe that
\begin{align}
&\Vert\zeta(\varphi)\,\partial_k\varphi\,\partial_k\pi\Vert\leq\zeta_\infty\Vert\nabla\varphi\Vert\Vert\nabla\pi\Vert_{L^\infty(\Omega)^2}
\leq C\Vert\nabla\varphi\Vert\Vert\pi\Vert_{W^{2,p}(\Omega)}\,,\label{e79}
\end{align}
with $p=2^+$. Then, on account of \eqref{e175}, \eqref{e176}, this yields  $\zeta(\varphi)\,\partial_k\varphi\,\partial_k\pi\in L^{\sigma_2}(0,T;H)$ (note that
$\sigma_2=(2/(1-\alpha))^->\sigma_\infty$). 

Let us now consider the term $\mathcal{F}_2(\varphi,\nabla\varphi)$. We have that (see \eqref{SOURCE})
\begin{align}
\partial_i\mathcal{F}_2(\varphi,\nabla\varphi)&=\partial_i\varphi\,\partial_k(\partial_k J\ast\varphi)
+\varphi\,\partial_i(\partial_k(\partial_k J\ast\varphi))
+(1-\varphi\,\zeta(\varphi))\,\partial_{ik}^2\varphi\,(\partial_k J\ast\varphi)\notag\\[1mm]
&+(1-\varphi\,\zeta(\varphi))\,\partial_k\varphi\,\partial_i(\partial_k J\ast\varphi)
-(\zeta(\varphi)+\varphi\,\zeta^\prime(\varphi))\,\partial_i\varphi\,(\partial_k J\ast\varphi)\,\partial_k\varphi\,.
\end{align}
By estimating the $L^2$-norms of the terms on the right hand side one by one, we then get
\begin{align}
\Vert\nabla\mathcal{F}_2(\varphi,\nabla\varphi)\Vert &\leq\Vert\mbox{div}(\nabla J\ast\varphi)\Vert_{L^\infty(\Omega)}
\Vert\nabla\varphi\Vert+\Vert\nabla\mbox{div}(\nabla J\ast\varphi)\Vert\notag\\[1mm]
&+b\,(1+\zeta_\infty)\,\Vert\nabla^2\varphi\Vert+(1+\zeta_\infty)\,\Vert\nabla(\nabla J\ast\varphi)\Vert_{L^\infty(\Omega)^{2\times 2}}
\Vert\nabla\varphi\Vert\notag\\[1mm]
&\leq C(1+\Vert\varphi\Vert_{H^2(\Omega)})\,.\label{e74}
\end{align}
As far as the first, second and fourth terms on the right hand side of the first
inequality in \eqref{e74} are concerned, these
have been estimated by relying on the assumption that
$J\in W^{3,1}_{loc}(\mathbb{R}^2)$.
Moreover, $b$ is the constant appearing in \textbf{(H2)} and \eqref{GN-spe} with $p=4$ has been used.
We thus immediately deduce
\begin{align}
&\mathcal{F}_2(\varphi,\nabla\varphi)\in L^2(0,T;V)\,.\label{e76}
\end{align}

There now remains to address the boundary term $\mathcal{G}(\varphi)$ (see \eqref{ell-2}). Notice first that $J\ast\varphi\in H^3(\Omega)$. Thus we have that
$\partial_{\nvec}(J\ast\varphi)=(\nabla J\ast\varphi)\cdot\nvec\in H^{3/2}(\Gamma)$ so that
$\mathcal{G}(\varphi)\in H^{3/2}(\Gamma)$. Invoking now Lemma \ref{trace-product}
(which can be easily generalized to the case $s\geq 1$), the $H^{3/2}(\Gamma)$-norm of $\mathcal{G}$
can be estimated in the following way (see also \eqref{e4})
\begin{align*}
\Vert\mathcal{G}(\varphi)\Vert_{H^{3/2}(\Gamma)}&=\Vert(\nabla J\ast\varphi)\varphi\cdot\nvec\Vert_{H^{3/2}(\Gamma)}\notag\\[1mm]
&\leq\Vert\varphi\Vert_{L^\infty(\Gamma)}\Big\Vert\frac{\partial}{\partial\nvec}(J\ast\varphi)\Big\Vert_{H^{3/2}(\Gamma)}
+\Vert\varphi\Vert_{H^{3/2}(\Gamma)}\Big\Vert\frac{\partial}{\partial\nvec}(J\ast\varphi)\Big\Vert_{L^\infty(\Gamma)}\notag\\[1mm]
&\leq C\Vert J\ast\varphi\Vert_{H^3(\Omega)}
+C\Vert\varphi\Vert_{H^2(\Omega)}\Vert\nabla J\ast\varphi\Vert_{W^{1,4}(\Omega)^2}\notag\\[1mm]
&\leq C\,\Vert\varphi\Vert+C\Vert\varphi\Vert_{H^2(\Omega)}\Vert\varphi\Vert_{L^4(\Omega)}
\leq C\big(1+\Vert\varphi\Vert_{H^2(\Omega)}\big)\,.
\end{align*}
Hence we infer that
\begin{align}
&\mathcal{G}(\varphi)\in L^2(0,T;H^{3/2}(\Gamma))\,.\label{e77}
\end{align}
We now recall the well-known elliptic estimate (see \eqref{ell-1}-\eqref{ell-2})
\begin{align}
&\Vert\pi\Vert_{H^3(\Omega)}\leq C\big(\Vert\mathcal{F}_1\Vert_V+\Vert\mathcal{F}_2\Vert_V
+\Vert\mathcal{G}\Vert_{H^{3/2}(\Gamma)}\big)\,,
\end{align}
and, by collecting \eqref{e70}-\eqref{e79}, \eqref{e76}, \eqref{e77}
we can conclude that
\begin{align}\label{e82}
&\pi\in L^{\sigma_\infty}(0,T;H^3(\Omega))\,,\quad\sigma_\infty:=\Big(\frac{2}{2-\alpha}\Big)^-\,.
\end{align}

Let us complete this case by analyzing the corresponding regularity of $\uvec$.
We first take the spatial derivative $\partial_i$ of \eqref{e64} and get
\begin{align}
\partial_{ij}^2 u_k&=-\frac{1}{\eta(\varphi)}\,\partial_{ijk}^3\pi+
\frac{\zeta(\varphi)}{\eta(\varphi)}\,\big(\partial_i\varphi\,\partial_{jk}^2\pi
+\partial_j\varphi\,\partial_{ik}^2\pi\big)
+\frac{\zeta(\varphi)}{\eta(\varphi)}\,\partial_{ij}^2\varphi\,\partial_k\pi\notag\\[1mm]
&+\Big(\frac{\zeta}{\eta}\Big)^\prime(\varphi)\,\partial_i\varphi\,\partial_j\varphi\,\partial_k\pi
+\frac{1-\varphi\,\zeta(\varphi)}{\eta(\varphi)}\,(\partial_k J\ast\varphi)\,\partial_{ij}^2\varphi
+\frac{1-\varphi\,\zeta(\varphi)}{\eta(\varphi)}\,\partial_i(\partial_k J\ast\varphi)\,\partial_j\varphi\notag\\[1mm]
&-2\,\frac{\zeta(\varphi)}{\eta(\varphi)}\,\partial_i\varphi\,(\partial_k J\ast\varphi)\,\partial_j\varphi
+\frac{1-\varphi\,\zeta(\varphi)}{\eta(\varphi)}\,\partial_j(\partial_k J\ast\varphi)\,\partial_i\varphi
+\frac{1}{\eta(\varphi)}\,\partial_i\partial_j(\partial_k J\ast\varphi)\,\varphi\notag\\[1mm]
&-\Big(\frac{\zeta}{\eta}\Big)^\prime(\varphi)\,\partial_i\varphi\,\partial_j\varphi\,(\partial_k J\ast\varphi)\,\varphi\,.
\label{e80}
\end{align}
We now proceed to estimate the $H$-norms of the ten terms on the right hand side of \eqref{e80}.
For the sake of simplicity, we denote these norms by $\mathcal{I}_1,\dots\mathcal{I}_{10}$
(preserving the same order as in \eqref{e80}). We have that
\begin{align*}
&\mathcal{I}_2\leq 2\,\frac{\zeta_\infty}{\eta_1}\,\Vert\nabla\varphi\Vert_{L^{\frac{2p}{p-2}}(\Omega)^2}
\Vert\nabla^2\pi\Vert_{L^{p}(\Omega)^{2\times 2}}\leq C_q\,\Vert\varphi\Vert_{H^2(\Omega)}^{\frac{2}{p}}
\Vert\pi\Vert_{W^{2,p}(\Omega)}\,,
\end{align*}
with $2<p<\infty$. 
By means of \eqref{e175} and \eqref{e176} (see also \eqref{str-reg-3}), we
infer that the time integrability exponent for $\mathcal{I}_2$ is still given by $\sigma_\infty$,
namely we get $\mathcal{I}_2\in L^{\sigma_\infty}(0,T)$.
Next, on account of \eqref{e81}, we have that
\begin{align*}
&\mathcal{I}_3\leq \frac{\zeta_\infty}{\eta_1}\,\Vert\nabla^2\varphi\Vert\Vert\nabla\pi\Vert_{L^\infty(\Omega)^2}
\leq C_q\,\Vert\varphi\Vert_{H^2(\Omega)}\Vert\pi\Vert_{W^{1,q}(\Omega)}^{\frac{q}{q+2}}\Vert\pi\Vert_{H^3(\Omega)}^{\frac{2}{q+2}}\,,
\end{align*}
with $2<q<\infty$. From \eqref{e175} and \eqref{e176} we then get $\sigma_\infty$ as
 time integrability exponent for $\mathcal{I}_3$. 
Next, observe that
\begin{align*}
&\mathcal{I}_4\leq\Big(\frac{\zeta}{\eta}\Big)^\prime_\infty\,\Vert\nabla\varphi\Vert_{L^4(\Omega)^2}^2
\Vert\nabla\pi\Vert_{L^\infty(\Omega)^2}\leq C\,\Vert\varphi\Vert_{H^2(\Omega)}\Vert\nabla\pi\Vert_{L^\infty(\Omega)^2}\,,
\end{align*}
where $(\zeta/\eta)^\prime_\infty:=\Vert(\zeta/\eta)^\prime\Vert_{L^\infty(-1,1)}$. Then, arguing as for $\mathcal{I}_3$, we again get
$\mathcal{I}_4\in L^{\sigma_\infty}(0,T)$.
The estimates of the terms from $\mathcal{I}_5$ to $\mathcal{I}_{10}$ are straightforward recalling that
$J\in W^{3,1}_{loc}(\mathbb{R}^2)$. The details are left to the reader. In particular, we can easily find that
$\sum_{l=5}^{10}\mathcal{I}_l\in L^2(0,T)$. Summing up, employing \eqref{e82} to estimate $\mathcal{I}_1$,
we conclude that
\begin{align}
&\uvec\in L^{\sigma_\infty}(0,T;H^2(\Omega)^2)\,.
\end{align}

\medskip

$\blacktriangleright\, d=3,\,\; J\, admissible$.

\medskip

From \eqref{str-reg-2}-\eqref{str-reg-3}, we have that $\nabla\pi,\nabla\varphi\in L^{r}(\Omega)^3$,
  for all $2< r\leq 6$
which entails that $\mathcal{F}\in L^p(\Omega)$, for all $1<p\leq 3$.
As far as the boundary term $\mathcal{G}$ is concerned, by arguing as at the beginning of the discussion
of the case $d=2$, we can deduce that
$\mathcal{G}\in W^{1-1/p,p}(\Gamma)\cap L^\infty(\Gamma)$, for $1< p\leq 6$.
Thanks to elliptic regularity, a two step bootstrap argument allows us to deduce that
$\nabla\pi\in L^\infty(\Omega)^3$ (while $\nabla\varphi$ has a spatial integrability exponent
at most equal to $6$). As a consequence, for $1< p\leq 6$, we have that $\pi\in W^{2,p}(\Omega)$, and that estimate \eqref{e52} holds true.

Before addressing the terms in estimate \eqref{e52}, let us first point out how to control
the $L^p(\Omega)^3$-norm of $\nabla\varphi$, for $2\leq p\leq 6$, by the $H^2(\Omega)$-norm of $\varphi$
in a convenient way, i.e., keeping the exponent in the $H^2(\Omega)$-norm as low as possible.
If $4\leq p\leq 6$, we can use \eqref{GN-spe} by relying on the boundedness of $\varphi$, and find
\begin{align}
&\Vert\nabla\varphi\Vert_{L^p(\Omega)^3}\leq C\Vert\varphi\Vert_{H^2(\Omega)}^{2(1-\frac{3}{p})}\,,\quad 4\leq p\leq 6\,.
\label{e101}
\end{align}
If $2\leq p<4$, the interpolation inequality \eqref{GN-spe} cannot be directly applied. However, we can first proceed
by means of an elementary interpolation inequality and then apply \eqref{GN-spe}, namely,
\begin{align}
&\Vert\nabla\varphi\Vert_{L^p(\Omega)^3}\leq\Vert\nabla\varphi\Vert^{\frac{4}{p}-1}
\Vert\nabla\varphi\Vert_{L^4(\Omega)^3}^{2-\frac{4}{p}}\leq C\Vert\nabla\varphi\Vert^{\frac{4}{p}-1}
\Vert\varphi\Vert_{L^\infty(\Omega)}^{1-\frac{2}{p}}\Vert\varphi\Vert_{H^2(\Omega)}^{1-\frac{2}{p}}\,,
\label{e170}
\end{align}
which, on account of the boundedness of $\varphi$ and of its $L^\infty(0,T;V)-$regularity, gives
\begin{align}
&\Vert\nabla\varphi\Vert_{L^p(\Omega)^3}\leq C\Vert\varphi\Vert_{H^2(\Omega)}^{1-\frac{2}{p}}\,,\quad 2\leq p\leq 4\,.
\label{e102}
\end{align}

We can now proceed to estimate the three norms on the right hand side of \eqref{e52}.
As far as the first norm is concerned, we have (see \eqref{SOURCE})
\begin{align}
&\Vert\mathcal{F}_1\Vert_{L^p(\Omega)}\leq \zeta_\infty\,\Vert\nabla\varphi\Vert_{L^{p+\e}(\Omega)^3}\Vert\nabla\pi\Vert_{L^{q}(\Omega)^3}\,,\quad
q:=p\Big(1+\frac{p}{\e}\Big)\,,\label{e100}
\end{align}
where $\e>0$ is such that $p+\e\leq 6$ and will be conveniently chosen later.
We then take advantage of the $\alpha-$H\"{o}lder continuity property of $\pi$ and of Proposition
\ref{Bre-Mir} to estimate the $L^{q}$-norm of $\nabla\pi$ as follows
\begin{align}
&\Vert\nabla\pi\Vert_{L^{q}(\Omega)^3}\leq C\Vert\pi\Vert_{W^{\frac{k}{\rho},\rho}(\Omega)}^{1-\beta}\Vert\pi\Vert_{W^{2,p}(\Omega)}^\beta\,,
\label{e95}
\end{align}
for some $\beta\in(0,1)$, $k>0$ and $\rho\geq 1$, with $\rho>k/\alpha$, so that the injection $C^\alpha(\overline{\Omega})\hookrightarrow
W^{\frac{k}{\rho},\rho}(\Omega)$ holds true and allows to control the $W^{\frac{k}{\rho},\rho}(\Omega)-$norm of $\pi$ by a constant
which only depends on structural parameters (cf. \eqref{e53}).

By combining \eqref{e100} with \eqref{e95}, and by taking \eqref{e101}, \eqref{e102} into account, we have that
\begin{align}
&\Vert\mathcal{F}_1\Vert_{L^p(\Omega)}\leq  C\Vert\nabla\varphi\Vert_{L^{p+\e}(\Omega)^3}\Vert\pi\Vert_{W^{2,p}(\Omega)}^\beta
\leq \delta\Vert\pi\Vert_{W^{2,p}(\Omega)}+C_\delta\Vert\nabla\varphi\Vert_{L^{p+\e}(\Omega)^3}^{\frac{1}{1-\beta}}\notag\\[1mm]
&\leq \left\{\begin{array}{ll}
\delta\Vert\pi\Vert_{W^{2,p}(\Omega)}
+C_\delta\Vert\varphi\Vert_{H^2(\Omega)}^{\frac{1}{1-\beta}(1-\frac{2}{p+\e})},\qquad\mbox{if }\,\,2\leq p+\e\leq 4,\\[2mm]
\delta\Vert\pi\Vert_{W^{2,p}(\Omega)}
+C_\delta\Vert\varphi\Vert_{H^2(\Omega)}^{\frac{2}{1-\beta}(1-\frac{3}{p+\e})},\qquad\mbox{if }\,\,4\leq p+\e\leq 6.
\end{array}\right.\label{e103}
\end{align}

On account of \eqref{Bre-Mir-cond}, the interpolation inequality \eqref{e95} holds, provided that
$\beta\in(0,1)$ is given by
\begin{align}
&\frac{1}{q}=\Big(\frac{1-\beta}{\rho}+\frac{\beta}{p}\Big)-\frac{s-1}{3}\,,\quad s:=(1-\beta)\frac{k}{\rho}+2\beta\,,
\label{e96}
\end{align}
where $k>0$ and $\rho\geq 1$, with $\rho>k/\alpha$ satisfying the following condition
\begin{align}
&(1-\beta)\frac{k}{\rho}+2\beta\geq 1\,.\label{e97}
\end{align}

It is now convenient to distinguish the following cases in the analysis, according to the values of $p\in[2,6]$.

\medskip

(i) {\itshape Case $2\leq p<3$.}

\medskip

Let us take $3-p\leq \e< p^2/(3-p)$. This ensures that $3<q\leq 3p/(3-p)$ so that
$q/(q-3)\geq p/(2p-3)$. Assuming then $k>3$ (this is not restrictive)\footnote{Indeed, if $0\leq k\leq 3$, we have that
$\beta\geq p(q-3)/q(2p-3)$ (cf. \eqref{e113}), and we can see that $p(q-3)/q(2p-3)>\beta_\ast$,
for $\beta_\ast$ given by \eqref{e112} below (and for the admissible $\e$ and $\alpha$ considered in
\eqref{e112}, namely $\e\in [3-p,p(1-\alpha)]$, and $\alpha\in (0,(2p-3)/p]$).
Moreover, we have also that $p(q-3)/q(2p-3)>1/2>\beta_\ast$,
for $\beta_\ast$ given by \eqref{e117} below, since the condition $\e<p$ implies $q>2p$.
We argue similarly also for the cases $3\leq p<4$, and $4\leq p\leq 6$.},
we can then see that \eqref{e96} admits a solution $\beta\in (0,1)$ if and only if
\begin{align}
&\rho>\frac{q}{q-3}\,(k-3)\,,\label{e98}
\end{align}
with $\beta$ given by
\begin{align}
&\beta=\beta(k,\rho):=\frac{p}{q}\,\frac{(q-3)\rho-q(k-3)}{(2p-3)\rho-p(k-3)}\,.\label{e113}
\end{align}
Moreover, by taking in addition
$\e$ such that $3-p\leq \e<p$ (note that $\e<p$ ensures that $q>2p$),
since we are assuming that \eqref{e98} is satisfied, we can check that condition
\eqref{e97} is satisfied if and only if
\begin{align}
&\rho\geq \frac{q-p}{q-2p}\,(k-3)\,-\,\frac{pq-3(q-p)}{q-2p}
=\frac{p}{p-\e}\,(k-3)-p\,\frac{p+\e-3}{p-\e}\,.\label{e99}
\end{align}
By comparing the slopes of the affine functions on the right hand sides of
 \eqref{e98} and \eqref{e99} we see that $q/(q-3)\leq p/(p-\e)$ since $3-p\leq \e<p$.
The slope $p/(p-\e)$ has now to be compared with $1/\alpha$, namely with the slope
of $k \mapsto \rho>k/\alpha$.
Let us assume that $0<\alpha\leq (2p-3)/p$, that is, $3-p\leq p(1-\alpha)$.
If $\alpha$ satisfies this condition, then we can choose $\e$ such that
$3-p\leq \e\leq p(1-\alpha)$, and this ensures that $p/(p-\e)\leq 1/\alpha$.
Hence, the admissible region turns out to be
\begin{align}
&\mathcal{R}=\Big\{[k,\rho]\in [0,\infty)\times [1,\infty)\,: \,k>3\,,\,\,\rho>\frac{k}{\alpha}\,\Big\}\,.
\label{e114}
\end{align}
Computing the infimum of $\beta$ over $\mathcal{R}$, it is not difficult to find that
\begin{align}
&\beta_\ast=\beta_\ast(\e) :=\inf_{[k,\rho]\in\mathcal{R}}\,\beta(k,\rho)=\frac{p}{q}\,
\frac{(1-\alpha)q-3}{(2-\alpha)p-3}=\frac{1}{p+\e}\,\frac{(1-\alpha)\,p\,(p+\e)-3\e}{(2-\alpha)\,p-3}\,.\label{e112}
\end{align}
Also this infimum is not attained. If $p+\e\leq 4$, namely if
$3-p\leq \e \leq \min(4-p,p(1-\alpha))$, owing to \eqref{str-reg-3}, and writing $\beta=\beta_\ast^+$, we infer that
the time integrability exponent of the second term on the right hand side of \eqref{e103} is
given by
\begin{align}
&\mu_p:=2(1-\beta)\,\frac{p+\e}{p+\e-2}=\Big(\frac{2p}{(2-\alpha)p-3}\Big)^-\,\frac{\e+p-3}{\e+p-2}\,.\label{e104}
\end{align}
Observe that the right hand side in \eqref{e104} is (strictly) increasing in $\e$. Then it is convenient
to choose the greatest admissible value for $\e$ to get the best time integrability exponent $\mu_p$.
Hence, if $\alpha \geq (2p-4)/p$ (i.e., $4-p\geq p(1-\alpha)$), we take $\e=p(1-\alpha)$
getting
\begin{align}
&\mu_p=\Big(\frac{2p}{(2-\alpha)p-2}\Big)^-\,,\quad \frac{2p-4}{p}\leq \alpha\leq\frac{2p-3}{p}\,.
\label{e107}
\end{align}
Moreover, since $3-p\leq\e\leq p(1-\alpha)$, then $q$ satisfies $p(2-\alpha)/(1-\alpha)\leq q\leq 3p/(3-p)$,
and, setting $\hat{\mu}_p:=\mu_p/\beta_\ast$, from \eqref{e112} we have that
\begin{align}
&\hat{\mu}_p=\Big(\frac{(2-\alpha)p-3}{(2-\alpha)p-2}\Big)^-\,\frac{2q}{(1-\alpha)q-3}\,,\quad
p\,\frac{2-\alpha}{1-\alpha}\leq q\leq\frac{3p}{3-p}\,,\quad\frac{2p-4}{p}\leq \alpha\leq\frac{2p-3}{p}\,.
\label{e122}
\end{align}
If $p+\e\geq 4$, still with $\e\leq p(1-\alpha)$, namely if $4-p\leq\e\leq p(1-\alpha)$ (so that
$0<\alpha<(2p-4)/p$), then, still invoking \eqref{str-reg-3} we deduce that
the time integrability exponent of the second term on the right hand side of \eqref{e103} is now
given by
\begin{align}
&\mu_p:=(1-\beta)\,\frac{p+\e}{p+\e-3}=\Big(\frac{p}{(2-\alpha)p-3}\Big)^-\,,\qquad
 0< \alpha\leq\frac{2p-4}{p}\,.
 \label{e108}
\end{align}
Thus we find that $\mu_p$ does not depend on the choice of $\e$, if $4-p\leq\e\leq p(1-\alpha)$.
With $4-p\leq\e\leq p(1-\alpha)$ we have that $q$ satisfies $p(2-\alpha)/(1-\alpha)\leq q\leq 4p/(4-p)$,
and, from \eqref{e112}, for $\hat{\mu}_p:=\mu_p/\beta_\ast$ we obtain
\begin{align}
&\hat{\mu}_p=\Big(\frac{q}{(1-\alpha)q-3}\Big)^-\,,\qquad
p\,\frac{2-\alpha}{1-\alpha}\leq q\leq\frac{4p}{4-p}\,,\qquad 0< \alpha\leq\frac{2p-4}{p}\,.\label{e123}
\end{align}




We are left to discuss the case $(2p-3)/p<\alpha<1$, that is, $p(1-\alpha)< 3-p$.
We have that $p/(p-\e)>1/\alpha$ (namely, $\e>p(1-\alpha)$) for all $\e$ such that
$3-p\leq\e<p$. Then it is not difficult to see that the admissible region becomes
\begin{align}
&\mathcal{R}=\Big\{[k,\rho]\in [0,\infty)\times [1,\infty)\,:\, k>3\,,\,\,\rho>\frac{k}{\alpha}\,,\,\,
\rho\geq\frac{p}{p-\e}\,(k-3)-p\,\frac{p+\e-3}{p-\e}\,\Big\}\,.
\label{e119}
\end{align}
Let us compute the infimum of $\beta$ on $\mathcal{R}$. Denoting the affine function
on the right hand side of \eqref{e99} by $g(k)$, we have that $g(k)=k/\alpha$ for
$$k=k^\ast:=\frac{\alpha\, p\,(p+\e)}{\e-(1-\alpha)p}\,,$$
and we can check that
$$\beta(k,g(k))=\frac{p}{q}\,\frac{k-q}{k-2p}\,,\qquad\forall k\geq k^\ast\,,$$
with $k\mapsto \beta(k,g(k))$ (strictly) increasing on $[k^\ast,\infty)$.
By carefully addressing the geometry of $\mathcal{R}$
(notice, in particular, that $\beta(3,\rho)>1/2$, since $\e<p$ implies $q>2p$)
we find
\begin{align}
&\beta_\ast:=\inf_\mathcal{R}\beta=\beta(k^\ast,g(k^\ast))=\frac{1-\alpha}{2-\alpha}\,,\qquad\frac{2p-3}{p}<\alpha<1\,,
\label{e117}
\end{align}
and this infimum is not attained. Notice that, in this case, $\beta_\ast$ does not depend on $\e\in[3-p,p)$.
Moreover, since the exponent $q$ is decreasing with respect to $\e$, we can take $\e=3-p$ to get the best $q$, i.e., $q=3p/(3-p)$.
Now, if $3-p\leq\e\leq 4-p$, owing to \eqref{str-reg-3} and setting $\beta=\beta_\ast^+$, we infer that
the time integrability exponent of the second term on the right hand side of \eqref{e103} is
given by
\begin{align}
&2(1-\beta)\,\frac{p+\e}{p+\e-2}=\Big(\frac{2}{2-\alpha}\Big)^-\,\frac{p+\e}{p-2+\e}\,,\label{e105}
\end{align}
while, if $4-p\leq\e\leq p$, then the time integrability exponent of the second term on the right hand side of \eqref{e103} is
\begin{align}
&(1-\beta)\,\frac{p+\e}{p+\e-3}=\Big(\frac{1}{2-\alpha}\Big)^-\,\frac{p+\e}{p-3+\e}\,.\label{e106}
\end{align}
Observe that the right hand sides of both \eqref{e105} and \eqref{e106} are decreasing in $\e$ on
the intervals $[3-p,4-p]$ and $[4-p,p]$, respectively. Hence, in order to get the best
time integrability exponent for the second term on the right hand side of \eqref{e103} in both cases, it is convenient to take
$\e=3-p$ in \eqref{e105} and $\e=4-p$ in \eqref{e106}. By comparing the two values thus obtained, we get
\begin{align}
&\mu_p=\Big(\frac{6}{2-\alpha}\Big)^-\,,\qquad\frac{2p-3}{p}<\alpha<1\,,
\label{e109}
\end{align}
while, for $\hat{\mu}_p:=\mu_p/\beta_\ast$, and $q$ we have
\begin{align}
&\hat{\mu}_p=\Big(\frac{6}{1-\alpha}\Big)^-\,,\qquad
q=\frac{3p}{3-p}\,,\qquad \frac{2p-3}{p} < \alpha<1\,.\label{e124}
\end{align}

Regarding the second term on the right hand side of \eqref{e52}, on account of Lemma \ref{admiss},
and taking \eqref{e102} into account,
we have that (see \eqref{SOURCE})
\begin{align*}
&\Vert\mathcal{F}_2\Vert_{L^p(\Omega)}\leq C_p+(1+\zeta_\infty)\,b\,\Vert\nabla\varphi\Vert_{L^p(\Omega)^3}
\leq C+C\Vert\varphi\Vert_{H^2(\Omega)}^{1-\frac{2}{p}}\,.
\end{align*}
Invoking \eqref{str-reg-3}, this yields
\begin{align}
&\mathcal{F}_2\in L^{\frac{2p}{p-2}}(0,T;L^p(\Omega))\,,
\label{e110}
\end{align}
and we can check that $2p/(p-2)>\mu_p$, in all the three cases where $\mu_p$ is defined
(see \eqref{e107}, \eqref{e108}, and \eqref{e109}), according with
the value of $\alpha$.

The boundary term \eqref{BDRY} can be handled similarly as for the case $d=2$,
by again obtaining \eqref{e148}, whence we have now that
\begin{align}
&\mathcal{G}\in L^{\frac{2p}{p-2}}(0,T;W^{1-1/p,p}(\Gamma))\,.
\label{e111}
\end{align}
By means of \eqref{e103}, \eqref{e110}, \eqref{e111}, and by fixing $\delta>0$ small enough, estimate \eqref{e52} then yields
\begin{align}
&\pi\in L^{\mu_p}(0,T;W^{2,p}(\Omega))\,,
\label{e115}
\end{align}
with $\mu_p$
given by \eqref{e107} (or \eqref{e108} or \eqref{e109}), according with
the value of $\alpha$.
Moreover, from \eqref{e95} we deduce that
\begin{align}
&\pi\in L^{\hat{\mu}_p}(0,T;W^{1,q}(\Omega))\,,\label{e116}
\end{align}
where $\hat{\mu}_p:=\mu_p/\beta$, and $q$ are given by \eqref{e122} (or \eqref{e123} or \eqref{e124})
according with the value of $\alpha\in(0,1)$.

\medskip

(ii) {\itshape Case $3\leq p<4$.}

\medskip

We argue as at the beginning of the case $2\leq p<3$, taking now $0<\e< 6-p$. Notice that $q>3$, and $q/(q-3)>p/(2p-3)$,
since $p\geq 3$, and $\e>0$. We can thus again see that \eqref{e96} admits a solution $\beta\in (0,1)$ if and only if \eqref{e98}
is satisfied with $\beta$ given by \eqref{e113} (we can again assume that $k>3$). Since $0<\e<6-p\leq p$ (so that $2q>p$, being $\e<p$),
we obtain once more that \eqref{e99} ensures \eqref{e97}. Thus we observe that the slopes
of the affine functions on the right hand sides of \eqref{e98} and of \eqref{e99}
still satisfy $q/(q-3)< p/(p-\e)$. Let us now take $\e$ satisfying, in addition, the condition
$0<\e\leq p(1-\alpha)$ (hence, $p/(p-\e)\leq 1/\alpha$). Then the admissible region
is still given by \eqref{e114}, with the (not attained) infimum of $\beta$ over $\mathcal{R}$
still given by \eqref{e112}. We now distinguish two cases. If $(2p-4)/p\leq \alpha<1$,
then $p(1-\alpha)\leq 4-p$, and, on account of $0<\e\leq p(1-\alpha)\leq 4-p$, we get
that the time integrability exponent of the second term on the right hand side of \eqref{e103}
is given by \eqref{e104}. We again choose $\e=p(1-\alpha)$ to get the best $\mu_p$,
which is given by
\begin{align}
&\mu_p=\Big(\frac{2p}{(2-\alpha)p-2}\Big)^-\,,\quad\frac{2p-4}{p}\leq \alpha<1\,.
\label{e125}
\end{align}
Moreover, since $0<\e\leq p(1-\alpha)$, then $p(2-\alpha)/(1-\alpha)\leq q<\infty$, and $\hat{\mu}_p=\mu_p/\beta_\ast$
is given by
\begin{align}
&\hat{\mu}_p=\Big(\frac{(2-\alpha)p-3}{(2-\alpha)p-2}\Big)^-\,\frac{2q}{(1-\alpha)q-3}\,,\qquad
p\,\frac{2-\alpha}{1-\alpha}\leq q<\infty\,,\qquad\frac{2p-4}{p}\leq \alpha<1\,.\label{e126}
\end{align}
If, on the other hand, $0<\alpha\leq(2p-4)/p$ (i.e. $p(1-\alpha)\geq 4-p$),
 then the time integrability exponent of the second term on the right hand side of \eqref{e103}
is given by \eqref{e104}, if $0<\e\leq 4-p$, or by \eqref{e108}, if $4-p\leq\e\leq p(1-\alpha)$.
We hence infer that the best $\mu_p$ we get for this case is given by \eqref{e108}.
{However, differently from the case $2\leq p<3$, we now need an additional
condition which guarantees that $\mu_p>1$, namely that
$p/((2-\alpha)p-3)>1$ or $\alpha>(p-3)/p$. Therefore, we have
\begin{align}
&\mu_p=\Big(\frac{p}{(2-\alpha)p-3}\Big)^-\,,\qquad
 \frac{p-3}{p}< \alpha\leq\frac{2p-4}{p}\,.
 \label{e127}
\end{align}
Moreover, for $\hat{\mu}_p=\mu_p/\beta_\ast$ and $q$, we get
\begin{align}
&\hat{\mu}_p=\Big(\frac{q}{(1-\alpha)q-3}\Big)^-\,,\qquad
p\,\frac{2-\alpha}{1-\alpha}\leq q<\infty\,,\qquad \frac{p-3}{p}< \alpha\leq\frac{2p-4}{p}\,.
\label{e128}
\end{align}
In conclusion, for $3\leq p<4$, the interval $(0,1)$ is not entirely admissible for $\alpha$
(unless $p=3$), and
 we distinguish two cases instead of three, namely
\eqref{e115} and \eqref{e116} hold with $\mu_p$ given by\footnote{We can check that
$\mu_p < 2p/(p-2)>$, for both cases of $\mu_p$ given by \eqref{e125}, \eqref{e127} (see \eqref{e110}-\eqref{e111}).}
 \eqref{e125} or by \eqref{e127}, and $\hat{\mu}_p=\mu_p/\beta_\ast$,
 $q$ given by \eqref{e126} or by \eqref{e128}, according with $\alpha\in((p-3)/p,1)$.

\medskip

(iii) {\itshape Case $4\leq p<6$.}

\medskip

We again argue as at the beginning of the previous cases $2\leq p<3$ and $3\leq p<4$, taking now $0<\e\leq 6-p<p$.
Notice that, since $p\geq 4$, then $p+\e>4$ and hence only the second line on the right hand side of
\eqref{e103} can be employed to estimate the $L^p$- norm of $\mathcal{F}_1$ to get
the time integrability exponent $\mu_p$ in \eqref{e115}. Let us begin to take also $0<\e\leq p(1-\alpha)$,
namely $0<\e\leq \min(6-p,p(1-\alpha))$. As we saw in the discussion for the case $2\leq p<3$,
with this choice of $\e$ we have that the admissible region $\mathcal{R}$ is given by
\eqref{e114}, with the (not attained) infimum $\beta_\ast$ of $\beta$ over $\mathcal{R}$
again given by \eqref{e112}. By combining \eqref{e112} with the exponent in the second term in
the second line on the right hand side of \eqref{e103}, we thus get (see \eqref{e108})
\begin{align}
&\mu_p:=(1-\beta)\,\frac{p+\e}{p+\e-3}=\Big(\frac{p}{(2-\alpha)p-3}\Big)^-\,.\label{e118}
\end{align}
Then $\mu_p$ is independent of $\e$. We have that $\mu_p>1$ for $(p-3)/p<\alpha<1$.
We now distinguish the following cases. If $0<\alpha\leq 2(p-3)/p$, then $6-p\leq p(1-\alpha)$. So that
$0<\e\leq 6-p$ implies $6p/(6-p)\leq q<\infty$, with $\beta_\ast$ given by \eqref{e112} as a function of $q$.
We then obtain that $\hat{\mu}_p:=\mu_p/\beta_\ast$ is given by
\begin{align}
&\hat{\mu}_p=\frac{q}{(1-\alpha)q-3}\,,\quad\frac{6p}{6-p}\leq q<\infty\,,\quad\frac{p-3}{p}<\alpha\leq\frac{2(p-3)}{p}\,.
\label{e121}
\end{align}
If, on the other hand, $2(p-3)/p\leq \alpha<1$ then $p(1-\alpha)\leq 6-p$. In this case, if $0<\e\leq p(1-\alpha)$ then
$\beta_\ast$ is still given by \eqref{e112} yielding $\mu_p$ as given by \eqref{e118}. Moreover,
we have $p(2-\alpha)/(1-\alpha)\leq q<\infty$ and $\hat{\mu}_p:=\mu_p/\beta_\ast$ again given
by\footnote{Still under the condition $2(p-3)/p\leq \alpha<1$, if we also consider the case $p(1-\alpha)<\e\leq 6-p$, then, recalling the discussion
carried out for the case $2\leq p<3$, the admissible region $\mathcal{R}$ now becomes \eqref{e119},
with the (not attained) infimum $\beta_\ast$ of $\beta$ over $\mathcal{R}$ given by $\beta_\ast=(1-\alpha)/(2-\alpha)$
(cf. \eqref{e117}). Hence, for $\mu_p$ we get the same as in \eqref{e106} (which is decreasing in $\e$),
and we choose $\e=(p(1-\alpha))^+$ to get the best $\mu_p$, getting the same $\mu_p$ as in \eqref{e118}.
Moreover, for $\hat{\mu}_p:=\mu_p/\beta_\ast$ we get
\begin{align*}
&\hat{\mu}_p=\frac{2-\alpha}{1-\alpha}\,\Big(\frac{p}{(2-\alpha)p-3}\Big)^-\,,
\end{align*}
and for $q$ we can take the best exponent for $p(1-\alpha)<\e\leq 6-p$, namely $q=(p(2-\alpha)/(1-\alpha))^-$.
Comparing with \eqref{e120} (take $q=p(2-\alpha)/(1-\alpha)$), we thus conclude that addressing the case
$p(1-\alpha)<\e\leq 6-p$ does not improve $\mu_p$.}
\begin{align}
&\hat{\mu}_p=\frac{q}{(1-\alpha)q-3}\,,\quad p\,\frac{2-\alpha}{1-\alpha}\leq q<\infty\,,\quad\frac{2(p-3)}{p}\leq \alpha<1\,.
\label{e120}
\end{align}

Summing up, in the case $4\leq p<6$, for $\mu_p$, $\hat{\mu}_p$, $q$ in \eqref{e115}, \eqref{e116}
 we have obtained the corresponding values\footnote{We can check that $\mu_p<2p/(p-2)$,(see \eqref{e110}-\eqref{e111})}
 \begin{align}
&\mu_p=\Big(\frac{p}{(2-\alpha)p-3}\Big)^-,\qquad\mbox{if }\,\,\frac{p-3}{p}<\alpha<1\,,\label{e131}
\end{align}
and $\hat{\mu}_p$, $q$ given by \eqref{e121}, \eqref{e120}, according with the value of
$\alpha\in((p-3)/p,1)$. As for the case $3\leq p<4$, we observe that the interval $(0,1)$ is again not entirely admissible for $\alpha$.

\medskip

(iv) {\itshape Case $p=6$.}

\medskip

In this case we can only take $\e=0$ and $q=\infty$ in estimates \eqref{e100} and \eqref{e95}, since the maximum spatial integrability exponent for $\nabla\varphi$ is $6$. Doing so (and arguing as for the case (i)) we get
$\beta(k,\rho)=(2\rho-2(k-3))/(3\rho-2(k-3))$, with conditions \eqref{e98} and \eqref{e99} that are now equivalent
to $\rho>k-3$ (still taking $k>3$). The admissible region $\mathcal{R}$ is still \eqref{e114}
and the infimum $\beta_\ast$ of $\beta$ over $\mathcal{R}$ is now
\begin{align*}
&\beta_\ast=\frac{2(1-\alpha)}{3-2\alpha}\,.
\end{align*}
The time integrability exponent of the second term on the right hand side of \eqref{e103} (written for $p=6$ and $\e=0$)
is then given by\footnote{We can check that $\mu_6 <3$ (see \eqref{e110}-\eqref{e111} for $p=6$
so that $2p/(p-2)=3$).}
\begin{align}
&\mu_6=\Big(\frac{2}{3-2\alpha}\Big)^-\,,\label{e129}
\end{align}
and $\mu_6>1$ provided that $1/2<\alpha<1$. For $\hat{\mu}_6:=\mu_6/\beta_\ast$ we have
\begin{align}
&\hat{\mu}_6=\Big(\frac{1}{1-\alpha}\Big)^-\,,\qquad \frac{1}{2}<\alpha<1\,.\label{e130}
\end{align}
Hence, \eqref{e115} and \eqref{e116} hold with $p=6$, with $\mu_6$, $\hat{\mu}_6$ given by
\eqref{e129} and \eqref{e130}, respectively, and with $q=\infty$.
Notice that the case $p=6$ can be considered as the limit case of (iii) for $p\to 6^-$ and $q\to\infty$.

\medskip

Regarding the regularity of $\uvec$ in all cases (i)-(iv) considered above, we use \eqref{e64} and we focus
on the second term on the right hand side which is the less obvious.
This term can be estimated similarly as $\mathcal{F}_1$ (cf. \eqref{e103}), namely
\begin{align}
&\Big\Vert\frac{\zeta(\varphi)}{\eta(\varphi)}\partial_j\varphi\,\partial_k\pi\Big\Vert_{L^p(\Omega)}
\leq \frac{\zeta_\infty}{\eta_1}\Vert\nabla\varphi\Vert_{L^{p+\e}(\Omega)^3}\Vert\nabla\pi\Vert_{L^q(\Omega)^3}
\leq C\Vert\nabla\varphi\Vert_{L^{p+\e}(\Omega)^3}\Vert\pi\Vert_{W^{2.p}(\Omega)}^\beta\,.\label{e132}
\end{align}
Invoking \eqref{e103}, where $\delta$ is supposed to be fixed small enough, we see that the time integrability exponent of the right hand side of \eqref{e132} coincides with the time integrability exponent of the second term on the right hand side
of \eqref{e103}, which is $\mu_p$ for all choices of $p\in[2,6]$ and $\alpha$ considered in the above cases (i)-(iv).
The estimates in $L^p$
of the last two terms on the right hand side of \eqref{e64} is straightforward
(see \eqref{e158})
Therefore, noting that we always have $\hat{p}>\mu_p$, from \eqref{e64} we deduce that}
\begin{align}
&\uvec\in L^{\mu_p}(0,T;W^{1,p}(\Omega)^3)\cap L^{\hat{\mu}_p}(0,T;L^{q}(\Omega)^3)\,,\qquad 2\leq p\leq 6\,,
\end{align}
where $\mu_p$, $\hat{\mu}_p$, and $q$ are given in terms of $p$ and $\alpha$ by the relations
and constraints deduced in the discussion carried out in the above cases (i)-(iv).

Finally, if $\eta$ is a positive constant, for both cases $d=2,3$,
the regularity analysis of the elliptic system \eqref{ell-1}-\eqref{ell-2}, as well as of equation \eqref{e64} for $\nabla\uvec$,
gets much simpler.
Indeed, we have that $\mathcal{F}_1=0$ (see \eqref{SOURCE0} and \eqref{SOURCE}) so the only terms
which survive in the elliptic estimate \eqref{e52} are the norms of $\mathcal{F}_2$ and of
$\mathcal{G}$. If $d=2$, \eqref{e146}-\eqref{e149}, and \eqref{e64} immediately yield that
\begin{align}
&\pi\in L^{\hat{p}}(0,T;W^{2,p}(\Omega))\,,\qquad\uvec\in L^{\hat{p}}(0,T;W^{1,p}(\Omega)^2)\,,\qquad
2\leq p<\infty\,,\label{e184}
\end{align}
where $\hat{p}:=2p/(p-2)$, if $2<p<\infty$, and $\hat{p}=\infty$, if $p=2$.
Assume now that
$d=3$. Since the $L^p(\Omega)$-norm of $\mathcal{F}_2$ and the $W^{1-1/p,p}(\Gamma)$-norm
of $\mathcal{G}$ can be both  controlled by $\Vert\nabla\varphi\Vert_{L^p(\Omega)^3}$
(cf. \eqref{e146} and \eqref{e148}), then, by employing \eqref{e101} and \eqref{e102}, we obtain
\begin{align}
&\pi\in L^{\hat{p}}(0,T;W^{2,p}(\Omega))\,,\quad\uvec\in L^{\hat{p}}(0,T;W^{1,p}(\Omega)^3)\,,
\label{e185}
\end{align}
if $2\leq p\leq 4$, and
\begin{align}
&\pi\in L^{\frac{p}{p-3}}(0,T;W^{2,p}(\Omega))\,,\quad\uvec\in L^{\frac{p}{p-3}}(0,T;W^{1,p}(\Omega)^3)\,,
\label{e186}
\end{align}
if $4\leq p\leq 6$. Observe that, for both $d=2,3$, we have $\pi\in L^\infty(0,T;H^2(\Omega))$.
Therefore \eqref{e181}-\eqref{e183} hold. The values of $\hat{\sigma}_p$, $\hat{\mu}_p$ and $q$ are obtained
in a straightforward fashion from \eqref{e184}-\eqref{e186} by using classical Sobolev embeddings.
The proof is finished.
\end{proof}
\begin{oss}\label{eta-cost-reg} If $\eta$ is a positive constant, the regularity properties for $\pi$ and $\uvec$ derived in
Theorem \ref{str-sols-fur-reg} hold true also for weak solutions. This is a direct consequence of Darcy's law and
of the properties of the Helmholtz projector operator $P_r$ from $L^r(\Omega)^d$ to $L^r_{div}(\Omega)^d$ associated with
the decomposition $L^r(\Omega)^d=L^r_{div}(\Omega)^d\oplus G_r$,
where $G_r:=\{\wvec\in L^r(\Omega)^d:\,\wvec=\nabla\pi\,\,\mbox{for some }\pi\in W^{1,r}(\Omega)\}$.
We recall that this decomposition is valid for $\Omega$ with locally Lipschitz boundary, if $r\neq 2$, and for all domains $\Omega$,
if $r=2$ (see \cite{FM}, see also \cite[Theorem III 1.2]{Galdi}). If $\uvec\in W^{m,r}(\Omega)^d$ ($m\geq 0$), then
$P_r\uvec\in W^{m,r}(\Omega)^d\cap L^r_{div}(\Omega)^d$, and
\begin{align}
&\Vert P_r\uvec\Vert_{W^{m,r}(\Omega)^d}\leq C_{m,r}\Vert\uvec\Vert_{W^{m,r}(\Omega)^d}\,,\label{e192}
\end{align}
with $C_{m,r}>0$ independent of $\uvec$ (cf. \cite[Lemma 3.3]{GMi}). Indeed, by applying Helmholtz projector operator $P_r$
to Darcy's law
\eqref{e26} with $\eta$ constant, and by taking \eqref{e192} into account, we get
\begin{align}
\Vert\uvec\Vert_{W^{m,r}(\Omega)^d}\leq C \Vert(\nabla J\ast\varphi)\varphi\Vert_{W^{m,r}(\Omega)^d}\,,\quad
m\geq 0\,,\,\,\,\,1<r<\infty\,.\label{e193}
\end{align}
Hence, \eqref{e175}, \eqref{e179}, \eqref{e180}, \eqref{e165}, together with \eqref{e181}-\eqref{e183} follow
from \eqref{e193} also for a weak solution.
\end{oss}

\section{Weak-strong uniqueness}
\setcounter{equation}{0}
\label{sec:unique}

In two dimensions we can prove a continuous weak-strong dependence estimate which entails weak-strong uniqueness.

\begin{thm}
\label{weak-strong-2D}
Let $d=2$. Suppose that \textbf{(H1)}-\textbf{(H8)}
are satisfied and that $J\in W_{loc}^{2,1}(\mathbb{R}^{2})$ or $J$ is
admissible. Let $\varphi _{01}\in L^{\infty }(\Omega )$ and $\varphi _{02}\in V\cap L^{\infty }(\Omega )$,
with $M(\varphi _{01}), M(\varphi_{02})\in L^{1}(\Omega )$, where $M$ is defined as in Theorem \ref{existweak}.
For any given $T>0$, denote by $[\uvec_1,\pi_1,\varphi_1]$ be a weak solution and by $[\uvec_2,\pi_2,\varphi_2]$ be a strong solution
to problem \eqref{Sy01}-\eqref{Sy04}, \eqref{sy5}, \eqref{sy6} on $[0,T]$, corresponding to $\varphi_{01}$ and to $\varphi_{02}$,
and given by Theorem \ref{existweak} and by Theorem \ref{strong-sols},
respectively. %
Then, the following estimate holds
\begin{align}
&\Vert\uvec_2-\uvec_1\Vert_{L^2(0,t;G_{div})}+\Vert\varphi_2-\varphi_1\Vert_{L^{\infty}(0,t;H)\cap L^2(0,t;V)}
+\Vert\pi_2-\pi_1\Vert_{L^2(0,t;V_0)} \nonumber\\
&\leq \tilde\Lambda(t)\Vert\varphi_{02}-\varphi_{01}\Vert\,, \label{stab-est}
\end{align}
for all $t\in[0,T]$, where $\tilde\Lambda$ is a continuous function which depends on some norms of the strong solution.
\end{thm}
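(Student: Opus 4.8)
The plan is to work with the differences $\uvec:=\uvec_2-\uvec_1$, $\pi:=\pi_2-\pi_1$, $\varphi:=\varphi_2-\varphi_1$, and $\varphi_0:=\varphi_{02}-\varphi_{01}$, and to derive a single differential inequality of the form $\frac{d}{dt}\Vert\varphi\Vert^2 + c\Vert\nabla\varphi\Vert^2 \le \Xi(t)\Vert\varphi\Vert^2$, where $\Xi\in L^1(0,T)$ depends only on norms of the strong solution $[\uvec_2,\pi_2,\varphi_2]$, and then to close with Gronwall's lemma. First I would subtract the Darcy laws \eqref{e26} for the two solutions, obtaining $\eta(\varphi_2)\uvec + (\eta(\varphi_2)-\eta(\varphi_1))\uvec_1 + \nabla\pi = (\nabla J\ast\varphi_2)\varphi_2 - (\nabla J\ast\varphi_1)\varphi_1$. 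Testing this with $\uvec$ (which is divergence-free and tangent to the boundary, so the pressure term drops out), using \textbf{(H1)}, the Lipschitz continuity of $\eta$ (\textbf{H1}), assumption \textbf{(H2)} to control the convolution terms, and the bound $|\varphi_i|\le 1$, one gets a pointwise-in-time estimate
\begin{align}
\eta_1\Vert\uvec\Vert^2 \le C\Vert\varphi\Vert\,\Vert\uvec_1\Vert_{L^4(\Omega)^2}\Vert\uvec\Vert_{L^4(\Omega)^2} + C\Vert\varphi\Vert\Vert\uvec\Vert + C\Vert\nabla\varphi\Vert\Vert\uvec\Vert\,. \nonumber
\end{align}
Here the only subtle term is $(\eta(\varphi_2)-\eta(\varphi_1))\uvec_1$: I would estimate $\Vert\varphi\Vert_{L^4}\le C\Vert\varphi\Vert^{1/2}\Vert\varphi\Vert_V^{1/2}$ by the two-dimensional Gagliardo--Nirenberg inequality, absorb $\uvec_1$ in $L^4$ (which is controlled for the \emph{weak} solution only by $\Vert\uvec_1\Vert\le C$, not by $L^4$ — so in fact I would put $\uvec_1$ in $L^2$ and $\uvec$ in $L^\infty$? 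No — instead use that $\uvec\in L^4$ is the difference and estimate $\Vert\uvec\Vert_{L^4}\le C\Vert\uvec\Vert^{1/2}\Vert\uvec\Vert_{V^d}^{1/2}$ is unavailable for a weak solution either). This is the crux of the difficulty; see below.

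**Handling the velocity difference.** Since neither $\uvec_1$ nor $\uvec$ enjoys $H^1$-regularity (both are merely $L^2(0,T;G_{div})$ for the weak solution), the term $(\eta(\varphi_2)-\eta(\varphi_1))\uvec_1$ cannot be treated by interpolation of $\uvec$. Instead I would exploit the elliptic structure: recalling Step 1 of the proof of Theorem \ref{strong-sols}, both $\pi_i$ solve the Neumann problem \eqref{pb1-eq}--\eqref{pb2-eq} with coefficient $1/\eta(\varphi_i)$, and subtracting gives an elliptic equation for $\pi$ whose right-hand side is controlled, via the H\"older continuity of $\pi_2$ (De Giorgi, cf.\ \eqref{e53}) and the $W^{1,4}$-bound \eqref{e11}, in terms of $\Vert\varphi\Vert$ and lower-order quantities. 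Concretely, subtracting \eqref{pb1-eq} for the two solutions yields $\mathrm{div}\big(\tfrac{1}{\eta(\varphi_2)}\nabla\pi\big) = -\mathrm{div}\big((\tfrac{1}{\eta(\varphi_2)}-\tfrac{1}{\eta(\varphi_1)})\nabla\pi_1\big) + \mathrm{div}(\cdots)$ with a Neumann datum of the same type; testing with $\pi$ and using $|\varphi_i|\le 1$, $\eta$ Lipschitz, and the already-established higher integrability of $\nabla\pi_1$ (namely \eqref{e175}: $\pi_1\in L^{\sigma_2}(0,T;W^{2,2}(\Omega))\hookrightarrow L^{\sigma_2}(0,T;W^{1,q})$ for $q<\infty$, or the weak-solution version in Remark \ref{eta-cost-reg} — but $\eta$ is not constant here, so I must use that $\pi_1$ is actually the pressure of a \emph{weak} solution, which has only $\pi_1\in L^2(0,T;V_0)$). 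This forces me to use $\uvec_1$, $\pi_1$ in $L^2$ only, and therefore the Gagliardo--Nirenberg splitting of $\Vert\varphi\Vert_{L^4}$, Young's inequality, and the absorption of $\delta\Vert\nabla\varphi\Vert^2$ into the good term coming from the Cahn--Hilliard part.

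**The Cahn--Hilliard part and closing the estimate.** Next I would subtract the weak formulations \eqref{e174} for the two solutions and test with $\varphi$. The leading dissipative term is $(\nabla B(\varphi_2)-\nabla B(\varphi_1),\nabla\varphi)$; writing $\nabla B(\varphi_i)=\lambda(\varphi_i)\nabla\varphi_i$ and using $\lambda=mF''\ge\alpha_0>0$ (\textbf{H7}) together with the Lipschitz continuity of $\lambda$ (from \textbf{(H8)}) and the $L^\infty(0,T;V)$-bound on $\varphi_2$ (from \eqref{str-reg-3}), one extracts $\alpha_0\Vert\nabla\varphi\Vert^2$ minus terms of the form $C\Vert\varphi\Vert_{L^4}\Vert\nabla\varphi_2\Vert_{L^4}\Vert\nabla\varphi\Vert$, handled by 2D Gagliardo--Nirenberg plus $\varphi_2\in L^2(0,T;H^2)$. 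The mobility--convolution difference $(m(\varphi_2)(\nabla J\ast\varphi_2)-m(\varphi_1)(\nabla J\ast\varphi_1),\nabla\varphi)$ is controlled by \textbf{(H2)}, \textbf{(H3)}, $|\varphi_i|\le1$; and the transport terms $(\uvec_2\varphi_2-\uvec_1\varphi_1,\nabla\varphi) = (\uvec\varphi_2,\nabla\varphi) + (\uvec_1\varphi,\nabla\varphi)$ — here the first is bounded using the already-derived control of $\Vert\uvec\Vert$ in terms of $\Vert\varphi\Vert$, $\Vert\nabla\varphi\Vert$ and $\uvec_1$, with $\Vert\varphi_2\Vert_{L^\infty}\le1$, while the second needs $\uvec_1\in L^4$ — \emph{again the same obstacle}. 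I expect this to be the main technical difficulty: since $\uvec_1$ (weak solution) lies only in $L^2(0,T;G_{div})$, the term $(\uvec_1\varphi,\nabla\varphi)$ must be estimated as $\Vert\uvec_1\Vert\Vert\varphi\Vert_{L^4}\Vert\nabla\varphi\Vert_{L^4}$ and then, via 2D Gagliardo--Nirenberg, $\le C\Vert\uvec_1\Vert\Vert\varphi\Vert^{1/2}\Vert\varphi\Vert_V^{1/2}\Vert\nabla\varphi\Vert^{1/2}\Vert\varphi\Vert_{H^2}^{1/2}$, which involves $\Vert\varphi\Vert_{H^2}$ that is not available for a weak solution. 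The resolution is to integrate by parts the other way, $(\uvec_1\varphi,\nabla\varphi)=-\tfrac12(\mathrm{div}\,\uvec_1,\varphi^2) - \tfrac12\int_\Gamma(\uvec_1\cdot\nvec)\varphi^2 = 0$ since $\mathrm{div}\,\uvec_1=0$ and $\uvec_1\cdot\nvec=0$; this kills the dangerous term entirely. The remaining terms all carry at most one factor of $\Vert\nabla\varphi\Vert$ (absorbable by $\tfrac{\alpha_0}{2}\Vert\nabla\varphi\Vert^2$ via Young) and coefficients in $L^1(0,T)$ built from $\Vert\uvec_2\Vert_{L^4(\Omega)^2}$, $\Vert\nabla\varphi_2\Vert_{L^4(\Omega)^2}$, $\Vert\varphi_2\Vert_{H^2(\Omega)}$ — all finite for the strong solution of Theorem \ref{strong-sols}. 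Collecting everything gives $\frac{d}{dt}\Vert\varphi\Vert^2 + \tfrac{\alpha_0}{2}\Vert\nabla\varphi\Vert^2 \le \Xi(t)\Vert\varphi\Vert^2$ with $\Xi\in L^1(0,T)$; Gronwall yields $\Vert\varphi(t)\Vert^2 \le \Vert\varphi_0\Vert^2\exp\!\big(\int_0^t\Xi\big)$, integrating back gives the $L^2(0,t;V)$-bound on $\varphi$, then \eqref{e5}-type elliptic estimates for $\pi$ and $\uvec$ return the bounds on $\Vert\uvec\Vert_{L^2(0,t;G_{div})}$ and $\Vert\pi\Vert_{L^2(0,t;V_0)}$, all with the common continuous multiplier $\tilde\Lambda(t):=C\exp\!\big(\tfrac12\int_0^t\Xi\big)$, and in particular weak-strong uniqueness when $\varphi_{01}=\varphi_{02}$. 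The main obstacle, as indicated, is the non-constant-viscosity coupling term $(\eta(\varphi_2)-\eta(\varphi_1))\uvec_1$ together with the low regularity of the weak velocity $\uvec_1$; both are circumvented by using $|\varphi_i|\le1$, Lipschitz continuity of $\eta$ and $\lambda$, the two-dimensional Gagliardo--Nirenberg inequality, and the divergence-free/no-flux structure to eliminate the worst transport contribution.
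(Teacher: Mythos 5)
Your overall architecture (Darcy difference tested with $\uvec$, Cahn--Hilliard difference tested with $\varphi$, Gronwall, then recover $\pi$ from Darcy's law) is the paper's, and your treatment of the Cahn--Hilliard part is essentially correct: the coercivity from \textbf{(H7)}, the pairing of $\lambda(\varphi_2)-\lambda(\varphi_1)$ with $\nabla\varphi_2$ controlled via Gagliardo--Nirenberg and $\varphi_2\in L^2(0,T;H^2(\Omega))$, and the observation that $(\uvec_1\varphi,\nabla\varphi)=0$ by incompressibility all match. However, there is a genuine gap in your Darcy step, and you never close it. You chose the splitting $\eta(\varphi_2)\uvec_2-\eta(\varphi_1)\uvec_1=\eta(\varphi_2)\uvec+(\eta(\varphi_2)-\eta(\varphi_1))\uvec_1$, which pairs the viscosity difference with the \emph{weak} velocity $\uvec_1\in L^2(0,T;G_{div})$; as you correctly observe, $\int(\eta(\varphi_2)-\eta(\varphi_1))\uvec_1\cdot\uvec$ then cannot be bounded by $\Vert\varphi\Vert$ times controllable quantities (every H\"older split requires either $\uvec_1$ or $\uvec$ in $L^4(\Omega)^2$, neither of which a weak solution provides). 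Your proposed detour through an elliptic equation for the pressure difference does not resolve this --- you concede yourself that $\pi_1$ is only in $L^2(0,T;V_0)$ --- and your closing summary attributes the rescue to the divergence-free structure, which only kills the transport term $(\uvec_1\varphi,\nabla\varphi)$, not the viscosity-difference term.

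The missing idea is simply the \emph{other} decomposition, which the paper uses in \eqref{e160}: write $\eta(\varphi_2)\uvec_2-\eta(\varphi_1)\uvec_1=(\eta(\varphi_2)-\eta(\varphi_1))\uvec_2+\eta(\varphi_1)\uvec$, so that the Lipschitz difference of $\eta$ multiplies the \emph{strong} velocity, and the quadratic term carries the full coercivity $\eta(\varphi_1)|\uvec|^2\ge\eta_1|\uvec|^2$. Since $\theta<1/2$ in \eqref{str-reg-1} gives $\uvec_2\in L^{4(1-\theta)/\theta}(0,T;L^4(\Omega)^2)\subset L^4(0,T;L^4(\Omega)^2)$, one estimates $|((\eta(\varphi_2)-\eta(\varphi_1))\uvec_2,\uvec)|\le C\Vert\varphi\Vert_{L^4(\Omega)}\Vert\uvec_2\Vert_{L^4(\Omega)^2}\Vert\uvec\Vert$, applies two-dimensional Gagliardo--Nirenberg to $\Vert\varphi\Vert_{L^4(\Omega)}$ and Young's inequality, and obtains $\eta_1\Vert\uvec\Vert^2\le 2\delta\Vert\nabla\varphi\Vert^2+C_\delta(1+\Vert\uvec_2\Vert_{L^4(\Omega)^2}^4)\Vert\varphi\Vert^2$ with $\Vert\uvec_2\Vert_{L^4(\Omega)^2}^4\in L^1(0,T)$. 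The same pairing (difference against the strong-solution field) must be used for the convolution terms, $(\nabla J\ast\varphi)\varphi_2+(\nabla J\ast\varphi_1)\varphi$, and again in the final pressure estimate, where $\nabla\pi$ contains $(\eta(\varphi_2)-\eta(\varphi_1))\uvec_2$ and is bounded in $L^2(0,t;L^2(\Omega)^2)$ using $\uvec_2\in L^4(0,t;L^4(\Omega)^2)$ and $\varphi\in L^4(0,t;L^4(\Omega))$ by interpolation. With this correction the rest of your argument goes through as in the paper; without it the estimate does not close.
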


\begin{proof}
Let us first take the difference between the two identities \eqref{e26} written for the weak and the strong solutions,
multiply it by $\uvec:=\uvec_2-\uvec_1$ and integrate over $\Omega$. Then, setting $\varphi:=\varphi_2-\varphi_1$, we get
\begin{align}
&\big((\eta(\varphi_2)-\eta(\varphi_1))\uvec_2,\uvec\big)
+(\eta(\varphi_1)\uvec,\uvec)
=\big((\nabla J\ast\varphi)\varphi_2,\uvec\big)
+\big((\nabla J\ast\varphi_1)\varphi,\uvec\big)\,.\label{e160}
\end{align}
From this identity, on account of \textbf{(H1)}, we have that
\begin{align}
\eta_1\Vert\uvec\Vert^2&\leq C\Vert\varphi\Vert_{L^4(\Omega)}\Vert\uvec_2\Vert_{L^4(\Omega)^2}
\Vert\uvec\Vert+C\Vert\varphi\Vert\Vert\uvec\Vert\nonumber\\[1mm]
&\leq C(\Vert\varphi\Vert+\Vert\varphi\Vert^{1/2}\Vert\nabla\varphi\Vert^{1/2})
\Vert\uvec_2\Vert_{L^4(\Omega)^2}
\Vert\uvec\Vert+C\Vert\varphi\Vert\Vert\uvec\Vert\nonumber\\[1mm]
&\leq \frac{\eta_1}{2}\Vert\uvec\Vert^2+\delta\Vert\nabla\varphi\Vert^2
+C_{\delta}(1+\Vert\uvec_2\Vert_{L^4(\Omega)^2}^4)\Vert\varphi\Vert^2\,,\label{e161}
\end{align}
which gives
\begin{align}
&\eta_1\Vert\uvec\Vert^2\leq 2\delta\Vert\nabla\varphi\Vert^2
+C_{\delta}(1+\Vert\uvec_2\Vert_{L^4(\Omega)^2}^4)\Vert\varphi\Vert^2\,,
\label{e46}
\end{align}
with $\delta>0$ to be fixed later. We recall that, here and in the sequel of this section,
$C$ stands for a generic positive constant which only depends
on main constants of the problem (see (\textbf{H1})-(\textbf{H8})) and on $\Omega$ at most. Any other dependency
will be explicitly pointed out.

We now take the difference of \eqref{E2} written for the weak and the strong solutions (see also \eqref{primitive}).
Taking then $\varphi$ as test function, we obtain
\begin{align}
& \frac{1}{2}\frac{d}{dt}\Vert \varphi \Vert ^{2}+\big(\nabla (B(\varphi
_{2})-B(\varphi _{1})),\nabla \varphi \big)+(\uvec\cdot\nabla\varphi_2,\varphi)\notag \\[1mm]
& =\big((m(\varphi _{2})-m(\varphi _{1}))(\nabla J\ast \varphi _{2}),\nabla
\varphi \big)+\big(m(\varphi _{1})(\nabla J\ast \varphi ),\nabla \varphi 
\big)\,.  \label{e45}
\end{align}%
Thanks to \textbf{(H7)}, we have that
\begin{equation}
\big(\nabla (B(\varphi _{2})-B(\varphi _{1})),\nabla \varphi \big)\geq
\alpha _{0}\Vert \nabla \varphi \Vert ^{2}+\big((\lambda (\varphi
_{2})-\lambda (\varphi _{1}))\nabla \varphi _{2},\nabla \varphi \big)\,,
\label{e44}
\end{equation}%
and, in view of the regularity \eqref{str-reg-3} for $\varphi _{2}$, the second term on the right hand side of \eqref{e44} can be estimated as in
\cite[Proof of Theorem 6.1, Part (c)]{FGGS} by means of the Gagliardo-Nirenberg inequality, namely as
\begin{align}
\big|\big((\lambda (\varphi _{2})-\lambda (\varphi _{1}))\nabla \varphi
_{2},\nabla \varphi \big)\big|&\leq C\Vert\varphi\Vert_{L^4(\Omega)}\Vert\nabla\varphi_2\Vert_{L^4(\Omega)^2}
\Vert\nabla\varphi\Vert\notag \\[1mm]
& \leq C(\Vert \varphi \Vert +\Vert \varphi \Vert ^{1/2}\Vert \nabla \varphi
\Vert ^{1/2})\Vert \varphi _{2}\Vert
_{H^{2}(\Omega )}^{1/2}\Vert \nabla \varphi \Vert\notag \\[1mm]
& \leq \delta^\prime\Vert \nabla \varphi \Vert
^{2}+C_{\delta^\prime}(1+\Vert \varphi _{2}\Vert _{H^{2}(\Omega )}^{2})\Vert \varphi \Vert
^{2}\,,\label{e47}
\end{align}%
with $\delta^\prime>0$ to be fixed later.

As far as the third term on the left hand side of \eqref{e45} is concerned, we have
\begin{align}
 |(\uvec\cdot\nabla\varphi_2,\varphi)|&\leq
\Vert\uvec\Vert\Vert\nabla\varphi_2\Vert_{L^4(\Omega)^2}\Vert\varphi\Vert_{L^4(\Omega)}
\leq  (\Vert\varphi\Vert+\Vert\varphi\Vert^{1/2}\Vert\nabla\varphi\Vert^{1/2})\Vert\varphi_2\Vert_{H^2(\Omega)}^{1/2}
\Vert\uvec\Vert\nonumber\\[1mm]
&\leq \delta\Vert\uvec\Vert^2+\delta^\prime\Vert\nabla\varphi\Vert^2
+C_{\delta,\delta^\prime}(1+\Vert\varphi_2\Vert_{H^2(\Omega)}^2)\Vert\varphi\Vert^2\,.\label{e48}
\end{align}
On the other hands, the two terms on the right hand side of \eqref{e45} can be controlled as follows
\begin{align}
&\big|\big((m(\varphi _{2})-m(\varphi _{1}))(\nabla J\ast \varphi _{2}),\nabla
\varphi \big)\big|+\big|\big(m(\varphi _{1})(\nabla J\ast \varphi ),\nabla \varphi\big)\big|
\leq \delta^\prime\Vert\nabla\varphi\Vert^2+C_{\delta^\prime}\Vert\varphi\Vert^2\,.\label{e49}
\end{align}
Hence, adding together \eqref{e46} with \eqref{e45}, taking \eqref{e44}-
\eqref{e49} into account, and choosing $\delta,\delta^\prime$ suitably small, we find
\begin{align*}
&\frac{d}{dt}\Vert\varphi\Vert^2+\eta_1\Vert\uvec\Vert^2+\alpha_0\Vert\nabla\varphi\Vert^2
\leq C\big(1+\Vert\uvec_2\Vert_{L^4(\Omega)^2}^4+\Vert\varphi_2\Vert_{H^2(\Omega)}^2\big)\Vert\varphi\Vert^2\,.
\end{align*}
Thus an application of the Gronwall lemma and an integration in time yield
\begin{align}
&\Vert\varphi(t)\Vert^2+\eta_1\int_{0}^{t}\Vert\uvec(\tau)\Vert^2\,d\tau+%
\alpha_0\int_{0}^{t}\Vert\nabla\varphi(\tau)\Vert^2\,d\tau
\leq \Lambda(t)\Vert\varphi_{02}-\varphi_{01}\Vert^2\,,\label{e50}
\end{align}
where the continuous function $\Lambda$ depends on norms of the strong solution. More precisely, we can take $\Lambda(t)=1+\int_{0}^{t}\alpha(\tau)e^{\int_{0}^{\tau}\alpha(s)ds}\,d\tau$,
with $\alpha(t):=C\big(1+\Vert\uvec_2(t)\Vert_{L^4(\Omega)^2}^4+\Vert\varphi_2(t)\Vert_{H^2(\Omega)}^2\big)$.

Concerning the pressure, setting $\pi:=\pi_2-\pi_1$, from \eqref{e26} we have that
\begin{align}
&\nabla\pi=-\big(\eta(\varphi_2)-\eta(\varphi_1)\big)\uvec_2-\eta(\varphi_1)\uvec+(\nabla J\ast\varphi)\varphi_2+%
(\nabla J\ast\varphi_1)\varphi\,.\label{e169}
\end{align}
Therefore we get
\begin{align}
\Vert\nabla\pi\Vert_{L^2(0,t;L^2(\Omega)^2)}
&\leq C\Vert\uvec_2\Vert_{L^4(0,t;L^4(\Omega)^2)}\Vert\varphi\Vert_{L^4(0,t;L^4(\Omega))}
+C\Vert\uvec\Vert_{L^2(0,t;G_{div})}+C\Vert\varphi\Vert_{L^2(0,t;H)}\notag\\[1mm]
&\leq C\left(\Vert\uvec_2\Vert_{L^4(0,t;L^4(\Omega)^2)}\Vert\varphi\Vert_{L^\infty(0,t;H)\cap L^2(0,t;V)}%
+\Vert\uvec\Vert_{L^2(0,t;G_{div})}+\Vert\varphi\Vert_{L^2(0,t;H)}\right)\,.\label{e51}
\end{align}
Estimate \eqref{stab-est} follows from \eqref{e50} and \eqref{e51}.
\end{proof}

The above result can be extended to the case $d=3$ provided that $\lambda$ is constant which is nonetheless the reference case (see Remark \ref{lambdaconst}).
This extension is conditional since we need to require that the pressure of the strong solution has a spatial H\"{o}lder continuity exponent $\alpha\in(1/5,1)$. Recall that $\pi_2$ satisfies the
elliptic problem \eqref{pb1-eq-bis}-\eqref{pb2-eq-bis} (with $\varphi_2$ in place of $\varphi$).
Notice that, since $|\varphi_2|\leq 1$, and $\eta(\varphi_2)$ is bounded from below and above
by positive constants, Proposition \ref{DiBenedetto} only
ensures that $\alpha$ depends on $\eta_1,\eta_\infty, b, d,\Omega$, and on the
geometrical properties of $\Gamma$, but it does not depend on the (unknown) form of $\varphi_2$
(this motivates the notation $\alpha$, instead of $\alpha_2$).
Therefore, although the result we are going to prove is conditional, $\alpha$
depends on structural constants of the problem only. In this case the key tool for the proof is Theorem \ref{str-sols-fur-reg}.
However, if $\eta$ is constant then uniqueness of weak solutions holds.

We have

\begin{thm}\label{weak-strong-3D}
Let $d=3$.  Suppose that $[\uvec_1,\pi_1,\varphi_1]$ and $[\uvec_2,\pi_2,\varphi_2]$ are solutions corresponding, respectively,
to initial data $\varphi_{01}$ and $\varphi_{02}$ as in Theorem \ref{weak-strong-2D}.
In addition assume that $\lambda$ is constant and that
the spatial H\"{o}lder continuity exponent $\alpha$
of $\pi_2$ is such that $\alpha\in (1/5,1)$.
Then \eqref{stab-est} still holds.

Suppose now that $\lambda$ satisfies (\textbf{H4}) and $\eta$ is constant. If $[\uvec_1,\pi_1,\varphi_1]$ and $[\uvec_2,\pi_2,\varphi_2]$ are
weak solutions corresponding, respectively, to initial data $\varphi_{01}$ and $\varphi_{02}$ as in Theorem \ref{existweak},
then the following stability estimate holds
\begin{align}
&\Vert\uvec_2-\uvec_1\Vert_{L^2(0,t;G_{div})}+\Vert\varphi_2-\varphi_1\Vert_{L^{\infty}(0,t;V^\prime)\cap L^2(0,t;H)}
+\Vert\pi_2-\pi_1\Vert_{L^2(0,t;V_0)} \nonumber\\
&\leq \hat{\Lambda}(t)\Vert\varphi_{02}-\varphi_{01}\Vert_{V^\prime}\,, \label{stab-est-2}
\end{align}
for all $t\in[0,T]$, where $\hat{\Lambda}$ is a continuous function which depends on the norms of one of the weak solutions.

\end{thm}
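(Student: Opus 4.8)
The plan is to run, for both statements, the scheme already used for Theorem~\ref{weak-strong-2D}: subtract the two Darcy laws \eqref{e26} and the two Cahn--Hilliard equations \eqref{E2} (equivalently \eqref{e174}), test the first difference with $\uvec:=\uvec_2-\uvec_1$ and the second with a suitable multiplier built from $\varphi:=\varphi_2-\varphi_1$, close a Gronwall inequality, and finally recover the pressure difference from the algebraic identity obtained by subtracting the two Darcy laws (as in \eqref{e169}--\eqref{e51}).

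For the first statement ($d=3$, $\lambda$ constant) I would test the second difference with $\varphi$ itself, as in two dimensions. Since $\lambda$ is constant the term $\big((\lambda(\varphi_2)-\lambda(\varphi_1))\nabla\varphi_2,\nabla\varphi\big)$ in \eqref{e44} vanishes, so that $\big(\nabla(B(\varphi_2)-B(\varphi_1)),\nabla\varphi\big)=\lambda\|\nabla\varphi\|^{2}\ge\alpha_0\|\nabla\varphi\|^{2}$; the convective term is treated exactly as in two dimensions by writing $(\uvec\cdot\nabla\varphi_2,\varphi)=-(\varphi_2\,\uvec,\nabla\varphi)$ and using $|\varphi_2|\le 1$, so that $|(\uvec\cdot\nabla\varphi_2,\varphi)|\le\delta\|\nabla\varphi\|^{2}+C_\delta\|\uvec\|^{2}$, and the nonlocal terms as in \eqref{e49}. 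The only genuinely three-dimensional point is the $L^{2}$-estimate of $\uvec$ from the Darcy difference \eqref{e160}: one has to bound $\int_\Omega|\varphi|\,|\uvec_2|\,|\uvec|$ by $\tfrac{\eta_1}{4}\|\uvec\|^{2}+\delta\|\nabla\varphi\|^{2}+g(t)\|\varphi\|^{2}$ with $g\in L^{1}(0,T)$. Estimating $\int_\Omega|\varphi|\,|\uvec_2|\,|\uvec|\le\|\varphi\|_{L^{a}}\|\uvec_2\|_{L^{b}}\|\uvec\|$ with $1/a+1/b=1/2$, $b>3$, using the three-dimensional Gagliardo--Nirenberg inequality $\|\varphi\|_{L^{a}}\le C\|\varphi\|^{1-3/b}\|\varphi\|_V^{3/b}$ and Young's inequality, one reduces the whole argument to the requirement $\uvec_2\in L^{2b/(b-3)}(0,T;L^{b}(\Omega)^{3})$ for some $b>3$. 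This is precisely the kind of mixed regularity that Theorem~\ref{str-sols-fur-reg} supplies for the strong solution, and a careful accounting of the exponents $\mu_p,\hat\mu_p,q$ there --- using in particular the case $2\le p<3$ together with $W^{1,p}\hookrightarrow L^{3p/(3-p)}$ and interpolation with $\uvec_2\in L^{\infty}(0,T;G_{div})$ --- shows that such a $b$ exists exactly when the Hölder exponent $\alpha$ of $\pi_2$ satisfies $\alpha>1/5$. Adding the two differential inequalities, inserting the Darcy bound for $\|\uvec\|^{2}$, choosing the $\delta$'s small and applying Gronwall's lemma then yields \eqref{e50}, hence \eqref{stab-est}. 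I expect this bookkeeping --- matching the requirement $\uvec_2\in L^{2b/(b-3)}(0,T;L^{b})$ with the time-integrability exponents produced by Theorem~\ref{str-sols-fur-reg} --- to be the main obstacle.

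For the second statement ($d=3$, $\eta$ constant, weak--weak) the situation is different because Darcy's law with constant $\eta$ determines $\uvec_i$ as $\eta^{-1}$ times the Leray projection of $(\nabla J\ast\varphi_i)\varphi_i$; hence, subtracting, $\|\uvec_2-\uvec_1\|\le C\|\varphi\|$ (and $\|\nabla(\pi_2-\pi_1)\|\le C\|\varphi\|$) without any gradient of $\varphi$, and, by Remark~\ref{eta-cost-reg}, both weak-solution velocities enjoy the extra regularity $\uvec_i\in L^{\infty}(0,T;L^{r}(\Omega)^{3})$ for every $r<\infty$ and $\uvec_i\in L^{2}(0,T;V_{div})$. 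Since now $\overline\varphi(t)\equiv\overline\varphi_{02}-\overline\varphi_{01}$ need not vanish, I would test the difference of \eqref{E2} with $\psi=\mathcal N(\varphi-\overline\varphi)$: the time derivative gives $\tfrac12\tfrac{d}{dt}\|\varphi-\overline\varphi\|_{V'}^{2}$, the $B$-term gives $\langle\varphi-\overline\varphi,B(\varphi_2)-B(\varphi_1)\rangle\ge\alpha_0\|\varphi\|^{2}-C|\overline\varphi|\,\|\varphi\|$ (using $B'=\lambda\ge\alpha_0$ from \textbf{(H7)}), so the dissipation controls $\|\varphi\|^{2}$ --- hence also $\|\varphi-\overline\varphi\|_{V'}^{2}$ --- while $|\overline\varphi|\le C\|\varphi_{02}-\varphi_{01}\|_{V'}$ contributes only a data term. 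The nonlocal terms and the term $(\uvec_2-\uvec_1)\varphi_2$ are bounded by $\delta\|\varphi\|^{2}+C\|\varphi-\overline\varphi\|_{V'}^{2}$ via $|\varphi_i|\le 1$ and $\|\uvec_2-\uvec_1\|\le C\|\varphi\|$. The delicate term is $(\uvec_1\varphi,\nabla\mathcal N(\varphi-\overline\varphi))$: writing $\varphi=-\Delta\mathcal N(\varphi-\overline\varphi)+\overline\varphi$ and integrating by parts (using $\partial_\nvec\mathcal N(\varphi-\overline\varphi)=0$, $\mbox{div}\,\uvec_1=0$ and $\uvec_1\cdot\nvec=0$) turns it into $\int_\Omega(\partial_i u_{1,j})(\partial_j w)(\partial_i w)$ with $w=\mathcal N(\varphi-\overline\varphi)$, hence into $\le\|\nabla\uvec_1\|\,\|\nabla w\|_{L^{4}}^{2}$, which I would control --- via three-dimensional Gagliardo--Nirenberg, elliptic regularity for $\mathcal N$, the interpolation $H=[V',V]_{1/2}$ and the a priori bound $\varphi\in L^{2}(0,T;V)$ --- by a quantity of the form $\delta\|\varphi\|^{2}+g(t)\|\varphi-\overline\varphi\|_{V'}^{2}$ with $g$ built out of $\|\nabla\uvec_1\|\in L^{2}(0,T)$, so that $g\in L^{1}(0,T)$. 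Gronwall's lemma in the $V'$-norm then gives \eqref{stab-est-2}. I expect the treatment of this last convective term, which forces one to use the full strength of the (only weak) regularity of $\uvec_1$, to be the delicate point; it is also the reason the result is limited to constant $\eta$.
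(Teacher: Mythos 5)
Your treatment of the first statement ($\lambda$ constant, $\eta$ variable) is essentially the paper's own argument: the Darcy difference is tested with $\uvec$, the H\"{o}lder--Gagliardo--Nirenberg--Young chain reduces everything to $\uvec_2\in L^{2b/(b-3)}(0,T;L^{b}(\Omega)^3)$ for some $b>3$ (the paper writes $b=2r$, $3/2<r\le 3$), the Cahn--Hilliard difference is tested with $\varphi$, and Theorem \ref{str-sols-fur-reg} with $b=5$, $p=20/9$ delivers exactly the threshold $\alpha>1/5$. The weighted combination of the two inequalities (your ``choosing the $\delta$'s small'') is the same bookkeeping the paper performs with the parameter $\gamma$.

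For the second statement there is a gap in your treatment of the convective term. After integrating by parts you arrive at $\left|\int_\Omega(\partial_i u_{1,j})(\partial_j w)(\partial_i w)\right|\le\Vert\nabla\uvec_1\Vert\,\Vert\nabla w\Vert_{L^4(\Omega)^3}^2$ with $w=\mathcal{N}(\varphi-\overline{\varphi})$, and you claim this can be absorbed as $\delta\Vert\varphi\Vert^2+g(t)\Vert\nabla w\Vert^2$ with $g\in L^1(0,T)$ built from $\Vert\nabla\uvec_1\Vert\in L^2(0,T)$. It cannot: in three dimensions $\Vert\nabla w\Vert_{L^4}^2\le C\Vert\nabla w\Vert^{1/2}\Vert\varphi-\overline{\varphi}\Vert^{3/2}$ is sharp at the level of exponents, and Young's inequality then forces $g(t)\sim\Vert\nabla\uvec_1\Vert^4$, i.e. $\uvec_1\in L^4(0,T;V_{div})$. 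A weak solution only provides $\uvec_1\in L^2(0,T;H^1(\Omega)^3)$, since by Remark \ref{eta-cost-reg} one has $\Vert\uvec_1\Vert_{H^1(\Omega)^3}\le C(1+\Vert\nabla\varphi_1\Vert)$ and $\varphi_1\in L^2(0,T;V)$ only; the a priori bound $\varphi\in L^2(0,T;V)$ that you invoke concerns the individual solutions, not the Gronwalled difference, and cannot repair the exponent count. The fix is to drop the integration by parts entirely and estimate directly $|(\uvec_1\varphi,\nabla w)|\le\Vert\uvec_1\Vert_{L^6(\Omega)^3}\Vert\varphi\Vert\,\Vert\nabla w\Vert_{L^3(\Omega)^3}\le C\Vert\uvec_1\Vert_{L^6(\Omega)^3}\Vert\varphi\Vert^{3/2}\Vert\nabla w\Vert^{1/2}\le\delta\Vert\varphi\Vert^2+C_\delta\Vert\uvec_1\Vert_{L^6(\Omega)^3}^4\Vert\nabla w\Vert^2$, which closes because Darcy's law with constant $\eta$ and $|\varphi_1|\le1$ give $\uvec_1\in L^\infty(0,T;L^6(\Omega)^3)$ -- a regularity you had already recorded. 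This is precisely how the paper handles the corresponding term (it uses the other splitting $\uvec_2\varphi+\uvec\,\varphi_1$ of the convective difference and estimates $(\uvec_2\varphi,\nabla\mathcal{N}\varphi)$ in this way, see \eqref{e190}); either splitting works once the ``full velocity times difference'' term is bounded by this $L^6$--$L^2$--$L^3$ argument.
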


\begin{proof}
Suppose $\eta$ not constant first.
Consider \eqref{e160} and observe that $\eta(\varphi_2)-\eta(\varphi_1)$ will be estimated differently. Namely, instead of \eqref{e161}, now, by employing
\eqref{GN-3D}, we have that
\begin{align}
\eta_1\Vert\uvec\Vert^2 &\leq
C\Vert\varphi\Vert_{L^{2r^\prime}(\Omega)}\Vert\uvec_2\Vert_{L^{2r}(\Omega)^3}\Vert\uvec\Vert
+C\Vert\varphi\Vert\Vert\uvec\Vert\nonumber\\[1mm]
&\leq C\big(\Vert\varphi\Vert+\Vert\varphi\Vert^{\frac{2r-3}{2r}}\Vert\nabla\varphi\Vert^{\frac{3}{2r}}\big)
\Vert\uvec_2\Vert_{L^{2r}(\Omega)^3}\Vert\uvec\Vert
+C\Vert\varphi\Vert\Vert\uvec\Vert\nonumber\\[1mm]
&\leq \frac{\eta_1}{2}\Vert\uvec\Vert^2+ C(1+\Vert\uvec_2\Vert_{L^{2r}(\Omega)^3}^2)\Vert\varphi\Vert^2
+C\Vert\uvec_2\Vert_{L^{2r}(\Omega)^3}^2\Vert\varphi\Vert^{\frac{2r-3}{r}}\Vert\nabla\varphi\Vert^{\frac{3}{r}}\nonumber\\[1mm]
&\leq \frac{\eta_1}{2}\Vert\uvec\Vert^2+\delta\Vert\nabla\varphi\Vert^2
+C_\delta(1+\Vert\uvec_2\Vert_{L^{2r}(\Omega)^3}^\frac{4r}{2r-3})\Vert\varphi\Vert^2\,,
\label{e163}
\end{align}
where $3/2<r\leq 3$, and $\delta>0$ to be fixed later.

Consider now \eqref{e45}. On account of the fact that $\lambda$ is now constant, using \eqref{e49} and noting that
$(\uvec\cdot\nabla\varphi_2,\varphi)=-(\varphi_2\uvec,\nabla\varphi)$, we get
\begin{align}
\frac{1}{2}\frac{d}{dt}\Vert \varphi \Vert ^{2}+\frac{\alpha_0}{2}\Vert\nabla\varphi\Vert^2
&\leq |(\varphi_2\uvec,\nabla\varphi)|+C\Vert\varphi\Vert^2\,.
\label{e162}
\end{align}%
Let us multiply \eqref{e162} by a positive coefficient $\gamma$ to be fixed later,
and sum the resulting inequality with \eqref{e163}, where $\delta=\gamma^2/\eta_1$.
This gives
\begin{align}
\frac{\gamma}{2}\frac{d}{dt}\Vert \varphi \Vert ^{2}&+
\frac{\eta_1}{2}\Vert\uvec\Vert^2+\frac{\alpha_0\gamma}{2}\Vert\nabla\varphi\Vert^2\nonumber\\
&\leq
\gamma|(\varphi_2\uvec,\nabla\varphi)|
+\gamma C\Vert\varphi\Vert^2
+\delta\Vert\nabla\varphi\Vert^2 +C_\delta(1+\Vert\uvec_2\Vert_{L^{2r}(\Omega)^3}^\frac{4r}{2r-3})\Vert\varphi\Vert^2 \nonumber\\
&\leq \frac{\eta_1}{4}\Vert\uvec\Vert^2+\frac{2\gamma^2}{\eta_1}\Vert\nabla\varphi\Vert^2+
C_{\gamma}\,(1+\Vert\uvec_2\Vert_{L^{2r}(\Omega)^3}^\frac{4r}{2r-3})\Vert\varphi\Vert^2\,.
\end{align}
Fixing now $\gamma>0$ such that $\gamma<\alpha_0\eta_1/4$ (e.g., choosing $\gamma=\alpha_0\eta_1/8$), we then find
\begin{align}
&\frac{d}{dt}\Vert \varphi \Vert ^{2}+
\frac{8}{\alpha_0}\Vert\uvec\Vert^2+\frac{\alpha_0}{2}\Vert\nabla\varphi\Vert^2\leq
C\,(1+\Vert\uvec_2\Vert_{L^{2r}(\Omega)^3}^\frac{4r}{2r-3})\Vert\varphi\Vert^2\,.\label{e168}
\end{align}
Therefore, in order to apply the Gronwall lemma we need
\begin{align}
&\uvec_2\in L^{\frac{4r}{2r-3}}(0,T;L^{2r}(\Omega)^3)\,,\label{e164}
\end{align}
for some $r\in(3/2,3]$. We now exploit the regularity properties for
$\uvec$ established in Theorem \ref{str-sols-fur-reg}.
If $\eta$ is a positive constant then condition \eqref{e164}
is immediately satisfied. Indeed, take, e.g., $p=2$ in \eqref{e183} and get
$\uvec_2\in L^\infty(0,T;L^6(\Omega)^3)$, which fulfills \eqref{e164} with $r=3$.
If $\eta$ is not constant, then we employ the regularity properties for $\uvec_2$
expressed by Theorem \ref{str-sols-fur-reg} in terms
of the H\"{o}lder continuity exponent $\alpha\in(0,1)$ of $\pi_2$. Namely, we
aim to find a condition on $\alpha$ ensuring that \eqref{e164} holds for some $r\in(3/2,3]$.
Let us take $r$ such that $4r/(2r-3)=2r$, which means $r=5/2$, and let us 
look for a lower bound on $\alpha$ ensuring that $\uvec_2\in L^5(0,T;L^5(\Omega)^3)$. 
To this purpose, we
consider the case $2\leq p<3$ in Theorem \ref{str-sols-fur-reg} and look for $p\in [2,3)$ and
$\alpha$ such that (see \eqref{e165})
\begin{align}
&\hat{\mu}_p=q\geq 5\,,\qquad \frac{2p-4}{p}\leq\alpha\leq\frac{2p-3}{p}\,.\label{e167}
\end{align}
By means of the second line in \eqref{e166}, taking $q=p(2-\alpha)/(1-\alpha)$, we have that
$\hat{\mu}_p=q$ if and only if
$$\Big(\frac{2}{(2-\alpha)p-2}\Big)^-=1\,,$$
which holds if and only if $(2-\alpha)p=4^-$, that is, if and only if $\alpha=((2p-4)/p)^+$, which
is acceptable (see \eqref{e167}).
For this value of $\alpha$ we find that $q=(4p/(4-p))^+$. Thus the first condition
in \eqref{e167} is satisfied by taking $p=20/9$. This gives $\alpha=(1/5)^+$.
Therefore, we conclude that\footnote{Addressing the other intervals
for $p$ considered in Theorem \ref{str-sols-fur-reg}, 
to require that $\uvec_2\in L^5(0,T;L^5(\Omega)^3)$,
does not improve the lower bound $1/5$. The details are left to the reader.}
if $\alpha>1/5$ then $\uvec_2\in L^5(0,T;L^5(\Omega)^3)$. We can now apply the Gronwall lemma to \eqref{e168}
and we find \eqref{e50} with $\Lambda$ suitably modified.

We are left to estimate $\pi$. Arguing as for the case $d=2$ and writing
\eqref{e169} for $\nabla\pi$, the only term which is handled differently
is the first one on the right hand side, which is now estimated in $L^2$ as follows
\begin{align}
\big\Vert\big(\eta(\varphi_2)-\eta(\varphi_1)\big)\uvec_2\big\Vert_{L^2(0,t;H^3)}
&\leq C\Vert\varphi\Vert_{L^{10/3}(0,t;L^{10/3}(\Omega))}
\Vert\uvec_2\Vert_{L^5(0,t;L^5(\Omega)^3)}\notag\\[1mm]
&\leq C \Vert\uvec_2\Vert_{L^5(0,t;L^5(\Omega)^3)}
\Vert\varphi\Vert_{L^\infty(0,t;H)\cap L^2(0,t;V)}\,,
\end{align}
where we have used the embedding $L^\infty(0,t;H)\cap L^2(0,t;V)\hookrightarrow L^{10/3}(0,t;L^{10/3}(\Omega))$,
which is a consequence of Gagliardo-Nirenberg inequality.
By means of this estimate, recalling that $\uvec_2\in L^5(0,T;L^5(\Omega)^3)$, we recover the $L^2(0,t;V_0)-$control of
$\pi$ (similarly to \eqref{e51}).
Hence we again get \eqref{stab-est}. 

If $\eta$ is a positive constant, we can argue in a simpler fashion. Indeed, we first observe that \eqref{e160} immediately
yields that
\begin{align}
&\eta_1\Vert\uvec\Vert^2\leq C\Vert\varphi\Vert^2\,.\label{e188}
\end{align}
On the other hand, the difference of \eqref{E2} is now tested by $\mathcal{N}\varphi$ (rather than by $\varphi$) to give (cf. also
\cite[Proof of Thm.4]{FGG1})
\begin{align}
&\frac{1}{2}\frac{d}{dt}\Vert\mathcal{N}^{1/2}\varphi\Vert^2+\big(B(\varphi_2)-B(\varphi_1),\varphi\big)
+(\uvec_2\cdot\nabla\varphi,\mathcal{N}\varphi)+(\uvec\cdot\nabla\varphi_1,\mathcal{N}\varphi)\notag\\[1mm]
&=\big((m(\varphi _{2})-m(\varphi _{1}))(\nabla J\ast \varphi _{2}),\nabla
\mathcal{N}\varphi \big)+\big(m(\varphi _{1})(\nabla J\ast \varphi ),\nabla\mathcal{N} \varphi 
\big)\,. \label{e187}
\end{align}
Thanks to \textbf{(H7)} and to the Gagliardo-Nirenberg inequality \eqref{GN-3D}, we have that
\begin{align}
\big(B(\varphi_2)-B(\varphi_1),\varphi\big)&\geq \alpha_0\Vert\varphi\Vert^2\,,\label{e189}\\[1mm]
|(\uvec\cdot\nabla\varphi_1,\mathcal{N}\varphi)|&=|(\uvec\,\varphi_1,\nabla\mathcal{N}\varphi)|\leq \Vert\uvec\Vert
\Vert\nabla\mathcal{N}\varphi\Vert\leq\delta\Vert\uvec\Vert^2
+C_\delta\Vert\nabla\mathcal{N}\varphi\Vert^2\,,\\[1mm]
|(\uvec_2\cdot\nabla\varphi,\mathcal{N}\varphi)|&=|(\uvec_2\,\varphi,\nabla\mathcal{N}\varphi)|\leq\Vert\uvec_2\Vert_{L^6(\Omega)^3}
\Vert\varphi\Vert\Vert\nabla\mathcal{N}\varphi\Vert_{L^3(\Omega)^3}\notag\\[1mm]
&\leq C\Vert\uvec_2\Vert_{L^6(\Omega)^3}
\Vert\varphi\Vert^{3/2}\Vert\nabla\mathcal{N}\varphi\Vert^{1/2}\notag\\[1mm]
&\leq \delta\Vert\varphi\Vert^2+C_\delta\Vert\uvec_2\Vert_{L^6(\Omega)^3}^4\Vert\nabla\mathcal{N}\varphi\Vert^2\,.
\label{e190}
\end{align}
The estimates for the two terms on the right hand side of \eqref{e187} are
straightforward. Adding now \eqref{e188}, multiplied by some $\delta^\prime>0$,
together with \eqref{e187} and taking \eqref{e189}-\eqref{e190} into account, we get,
for $\delta,\delta^\prime>0$ small enough,
\begin{align}
&\frac{d}{dt}\Vert\mathcal{N}^{1/2}\varphi\Vert^2+\eta_1\Vert\uvec\Vert^2+\alpha_0\Vert\varphi\Vert^2
\leq C(1+\Vert\uvec_2\Vert_{L^6(\Omega)^3}^4)\Vert\mathcal{N}^{1/2}\varphi\Vert^2\,.\label{e191}
\end{align}
We now observe that $\uvec_2\in L^4(0,T;L^6(\Omega)^3)$ holds, when $\eta$ is constant,
also for weak solutions (cf. Remark \ref{eta-cost-reg}). Therefore, from \eqref{e191}, by means of Gronwall lemma,
we immediately get \eqref{stab-est-2} (the estimate for $\pi$ follows
directly from Darcy's law). The proof is now complete.
\end{proof}

\begin{oss}
\label{holdexp}
A further relaxation of the lower threshold for the H\"{o}lder exponent of the pressure appears to be a major task. One idea could be to start from \eqref{e175} and \eqref{e180}, then use the classical embeddings of
$W^{2,p}(\Omega)$ into H\"{o}lder spaces (e.g., $H^2(\Omega)\hookrightarrow C^\gamma(\overline{\Omega})$,
for all $\gamma\in(0,1)$, if $d=2$) in order to improve the spatial H\"{o}lder exponent of $\pi$ (e.g., from some fixed $\alpha\in(0,1)$
to some $\gamma$ arbitrarily close to $1$). Then one can argue as in the proof of Theorem
\ref{str-sols-fur-reg} with the goal of obtaining
the same exponents $\sigma_p$ and $\mu_p$ of the case $\eta$ constant.
However, it seems hard to increase the time integrability exponent of the pressure at each step of this bootstrap procedure.
Recall indeed that at the beginning of the proof of Theorem \ref{str-sols-fur-reg},
the regularity $\pi\in L^\infty(0,T;C^\alpha(\overline{\Omega}))$ is taken into account.
\end{oss}

\section{The convective nonlocal Cahn-Hilliard equation}
\setcounter{equation}{0}
\label{cnCH}




\noindent
Here we report some improvements of former results contained in \cite{FGGS,FGR}.
These results are concerned with the existence of weak/strong solutions to the convective nonlocal Cahn-Hilliard equation
with a prescribed divergence-free velocity field and their uniqueness. These results are used in Section
\ref{sec:strongex}.
\begin{thm}
\label{reg-thm-bis}
Suppose that $d=2$ or $d=3$. Let assumptions \textbf{(H2)}-\textbf{(H6)} be satisfied
and suppose $\varphi _{0}\in L^{\infty }(\Omega )$ such that 
$M(\varphi _{0})\in L^{1}(\Omega )$, where $M$
is defined as in Theorem \ref{existweak}. If $\uvec\in L^2(0,T; G_{div})$,
for a given $T>0$, then there exists a (weak) solution $\varphi$ to \eqref{E2}-\eqref{I1} such that
\begin{align}
& \varphi \in L^{\infty }(0,T;L^p(\Omega))\cap H^1(0,T;V^\prime)\,,
\quad \varphi\in L^{2}(0,T;V)\,,\quad \forall\,p\in [2,\infty)\,, \\
& \varphi \in L^{\infty }(Q_{T})\,,\quad |\varphi (x,t)|\leq 1\quad \mbox{ for a.e. }(x,t)\in Q_{T}\,.
\label{regNCH2}
\end{align}
In addition to \textbf{(H2)}-\textbf{(H6)}, assume that \textbf{(H7)}-\textbf{(H8)} hold
and suppose that  $J\in W_{loc}^{2,1}(\mathbb{R}^{d})$ or that $J$ is admissible.
Let $\varphi _{0}\in V\cap L^{\infty }(\Omega )$ with 
$M(\varphi _{0})\in L^{1}(\Omega )$. If $\uvec$ satisfies
\begin{align}
&\uvec\in L^{\beta_r}(0,T;L_{div}^{r}(\Omega )^{d})\,,\quad\mbox{where}\,
\beta_r=\left\{\begin{array}{lll}
\frac{2r}{r-2}\,,\quad\mbox{with }\,\,2<r\leq \infty\,,\quad \mbox{if }\,\, d=2\,,\\[1mm]
\frac{r}{r-3}\,,\quad\mbox{with }\,\,3<r\leq 4\,,\quad \mbox{if }\,\, d=3\,,\\[1mm]
\frac{2r}{r-2}\,,\quad\mbox{with }\,\,4<r\leq \infty\,,\quad \mbox{if }\,\, d=3\,,
\end{array}\right.
\label{regvel-bis}
\end{align}%
for some given $T>0$, then there exists a  strong solution $\varphi$ to \eqref{e25}, \eqref{BCbis}, \eqref{I1} which fulfils \eqref{regNCH2} and
\begin{align}
& \varphi \in L^{\infty }(0,T;V)\cap H^{1}(0,T;H)\,,\qquad
\varphi \in L^{2}(0,T;H^{2}(\Omega ))\,.\label{regNCH1}
\end{align}%

Let \textbf{(H2)}-\textbf{(H4)}, and \textbf{(H7)} hold.
If $\lambda$ is a positive constant or if $\uvec$ satisfies
  \begin{align}
   &\uvec\in L^{\gamma_r}(0,T;L_{div}^{r}(\Omega )^{d})\,,\quad\mbox{ where }\quad
   \gamma_r=\frac{2r}{r-d}\,,\quad d<r\leq \infty\,,
  \label{regvel-tris}
  \end{align}
then weak solutions are unique.
Moreover, if $\varphi$ is a strong solution then the following differential identity holds
\begin{align}
& \frac{1}{2}\frac{d\Phi }{dt}+\Vert\sqrt{\lambda (\varphi )}\varphi_{t}\Vert^{2}
+\big(\uvec\cdot \nabla \varphi ,\lambda (\varphi )\varphi
_{t}\big) \notag  \\
&\qquad =-\big(m^{\prime }(\varphi )\varphi _{t}(\nabla J\ast \varphi ),\lambda
(\varphi )\nabla \varphi \big)-\big(m(\varphi )(\nabla J\ast \varphi
_{t}),\lambda (\varphi )\nabla \varphi \big)\,,  \label{diffid}
\end{align}%
where
\begin{equation*}
\Phi :=\Vert \nabla B(\varphi )\Vert ^{2}-2\big(m(\varphi )(\nabla J\ast
\varphi ),\lambda (\varphi )\nabla \varphi \big)\,.
\end{equation*}%
\end{thm}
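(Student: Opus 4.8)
The plan is to prove Theorem~\ref{reg-thm-bis} in four separate blocks, mirroring the four claims in the statement: existence of a weak solution, existence of a strong solution under the improved velocity regularity, uniqueness of weak solutions, and the differential identity for strong solutions. For the \textbf{weak existence} part, I would recall that the conditions \textbf{(H2)}--\textbf{(H6)} are exactly those treated in \cite{FGR}, so the argument is essentially a citation combined with the approximation scheme already used in the proof of Theorem~\ref{existweakapp}: regularize $m$, $F$ by \textbf{(A2)}--\textbf{(A3)}, solve via Faedo--Galerkin, derive the energy estimate \eqref{eappr2} (with the Brinkman term absent and $\uvec$ given), derive the entropy estimate by testing with $M_\e'(\varphi_\e)$ exactly as in Subsection~\ref{proofexwea} (equations \eqref{entrop} and below), obtain the uniform bounds \eqref{bdd4}--\eqref{bdd6}, pass to the limit $\e\to 0$ using the compactness \eqref{conve1}--\eqref{conve4} and the convergences \eqref{conve5}--\eqref{conve7}. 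The bound $\varphi\in L^\infty(0,T;L^p(\Omega))$ for all $p$ then follows from $|\varphi|\le 1$; the only genuinely new (but minor) point relative to \cite{FGR} is to carry along the $L^\infty(Q_T)$ and $|\varphi|\le 1$ information, which is done as in \cite[Sec.~7]{FGR}.

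For the \textbf{strong existence} part, the key is that now the velocity enjoys the space--time integrability \eqref{regvel-bis}, which is exactly the scaling-critical exponent making the convective term controllable. I would test the regularized version of \eqref{e25} by $\varphi_{\e,t}$ (or, rigorously, work at the Galerkin level with $-\Delta$ applied), producing the differential identity \eqref{diffid} at the approximate level; then estimate the troublesome term $(\uvec\cdot\nabla\varphi_\e,\lambda_\e(\varphi_\e)\varphi_{\e,t})$ by H\"older, $\|\uvec\|_{L^r}\|\nabla\varphi_\e\|_{L^{2r/(r-2)}}\|\varphi_{\e,t}\|$ (with the obvious modification for $d=3$, $r\le 4$, using $\|\nabla\varphi_\e\|_{L^{6}}$ type bounds), then Gagliardo--Nirenberg to interpolate $\|\nabla\varphi_\e\|$ between $\|\varphi_\e\|_V$ and $\|\varphi_\e\|_{H^2}$, then the elliptic estimate \eqref{e14}-type bound relating $\|B(\varphi_\e)\|_{H^2}$ to $\|\varphi_{\e,t}\|$, and finally Young's inequality to absorb. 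The choice $\beta_r=2r/(r-2)$ (resp.\ $r/(r-3)$) is precisely what makes the resulting Gronwall coefficient $\|\uvec(t)\|_{L^r}^{\beta_r}$ integrable in time. One then passes to the limit $\e\to 0$ and recovers \eqref{regNCH1}, plus the boundary condition \eqref{BCbis} from the elliptic regularity; here Lemma~\ref{admiss} (or $J\in W^{2,1}_{loc}$) is used to control the convolution term in $H^2$, and Lemma~\ref{trace-product} to handle the boundary trace, exactly as in the proof of \eqref{e3}--\eqref{e4}.

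For \textbf{uniqueness} of weak solutions, I would take two solutions $\varphi_1,\varphi_2$ with the same $\uvec$ and data, set $\varphi=\varphi_2-\varphi_1$, and test the difference of \eqref{E2}. When $\lambda$ is constant the leading term $(\nabla B(\varphi_2)-\nabla B(\varphi_1),\nabla\varphi)=\lambda\|\nabla\varphi\|^2$ is coercive and everything is straightforward. When $\lambda$ is not constant but $\uvec$ satisfies \eqref{regvel-tris}, one instead tests with $\mathcal{N}\varphi$ (the inverse Neumann Laplacian), which is the standard device for nonlocal Cahn--Hilliard with non-constant mobility (cf.\ \cite[Proof of Thm.~4]{FGG1} and \cite{FGGS}): this converts $\lambda(\varphi_2)\partial_t\varphi_2-\lambda(\varphi_1)\partial_t\varphi_1$ into a form $\big(B(\varphi_2)-B(\varphi_1),\varphi\big)\ge\alpha_0\|\varphi\|^2$ via \textbf{(H7)}, and the convective term becomes $(\uvec\varphi,\nabla\mathcal{N}\varphi)$ which is estimated using $\|\uvec\|_{L^r}$ with $r>d$ and $\|\nabla\mathcal{N}\varphi\|_{L^{2r/(r-2)}}\le C\|\varphi\|^{\theta}\|\mathcal{N}^{1/2}\varphi\|^{1-\theta}$ by elliptic regularity and Gagliardo--Nirenberg; the exponent $\gamma_r=2r/(r-d)$ is exactly the one making the Gronwall coefficient integrable. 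Then Gronwall in the $\|\mathcal{N}^{1/2}\varphi\|^2$ quantity closes the argument.

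Finally the \textbf{differential identity} \eqref{diffid} is obtained, for a strong solution, by testing \eqref{e25} with $\lambda(\varphi)\varphi_t$ (legitimate since $\varphi_t\in L^2(0,T;H)$, $\varphi\in L^\infty(0,T;V)\cap L^2(0,T;H^2(\Omega))$, $\lambda\in C^1$), integrating by parts the term $(\Delta B(\varphi),\lambda(\varphi)\varphi_t)=-\big(\nabla B(\varphi),\nabla(\lambda(\varphi)\varphi_t)\big)$ and noting $\nabla B(\varphi)=\lambda(\varphi)\nabla\varphi$ so that this produces $-\tfrac12\tfrac{d}{dt}\|\nabla B(\varphi)\|^2$ plus lower-order pieces, together with a symmetric treatment of the convolution term that generates the extra $-2\big(m(\varphi)(\nabla J\ast\varphi),\lambda(\varphi)\nabla\varphi\big)$ contribution defining $\Phi$, and using the boundary condition \eqref{BCbis} to discard the boundary integral. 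I expect the \textbf{main obstacle} to be the strong-existence step, specifically the delicate bookkeeping of the Gagliardo--Nirenberg exponents in $d=3$ for the range $3<r\le 4$ (where $\beta_r=r/(r-3)$ rather than $2r/(r-2)$), since there the interplay between $\|\uvec\|_{L^r}$, the elliptic bound on $\|B(\varphi)\|_{H^2}$, and the absorption into $\|\varphi_t\|^2$ is tight; everything else is either a direct citation to \cite{FGR,FGGS,FGG1} or a routine (if lengthy) energy estimate.
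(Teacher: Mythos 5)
Your overall architecture matches the paper's: weak existence by adapting \cite{FGR} with the regularization/Galerkin scheme of Section \ref{sec:weakex}, strong existence by refining the convective-term estimate of \cite[Theorem 6.1]{FGGS}, uniqueness via the two testing strategies ($\varphi$ when $\lambda$ is constant, $\mathcal{N}\varphi$ otherwise, with exactly the estimate \eqref{e173} producing $\gamma_r=2r/(r-d)$), and \eqref{diffid} by testing with $B(\varphi)_t=\lambda(\varphi)\varphi_t$. Two organizational differences are worth noting but harmless: the paper carries out the strong-existence estimate at the level of the time-discretization scheme of \cite{FGGS} rather than a Galerkin scheme, and for the weak part it handles the general hypothesis $\uvec\in L^2(0,T;G_{div})$ by a divergence-free regularization of $\uvec$ followed by a passage to the limit (the point where \cite[Theorem 4]{FGR} is improved), which your sketch omits. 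Also, the paper is explicit that $\psi=B(\varphi)_t$ is not an admissible test function in \eqref{e174} and justifies \eqref{diffid} by a time-convolution regularization; your claim that the test is directly ``legitimate'' glosses over the fact that $\nabla(\lambda(\varphi)\varphi_t)$ is not known to be in $L^2$.

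The genuine gap is in the one step that is actually new in this theorem: obtaining $\beta_r=r/(r-3)$ for $d=3$, $3<r\le 4$. You correctly identify this as the main obstacle but do not resolve it, and the hint you give (``using $\Vert\nabla\varphi_\e\Vert_{L^6}$ type bounds'') would fail: estimating $\Vert\nabla B(\varphi)\Vert_{L^{2r/(r-2)}}$ by $\Vert B(\varphi)\Vert_{H^2}$ via Sobolev embedding produces $\Vert\uvec\Vert_{L^r}^2\Vert B(\varphi)\Vert_{H^2}^2$, which cannot be absorbed by $\delta\Vert B(\varphi)\Vert_{H^2}^2$. The missing idea is to exploit the uniform bound $|\varphi|\le 1$ (hence $\Vert B(\varphi)\Vert_{L^\infty}\le C$) inside the Gagliardo--Nirenberg inequality \eqref{GN-spe}: for $p=2r/(r-2)\in[4,6)$ one gets
\begin{align*}
\Vert\nabla B(\varphi)\Vert_{L^{2r/(r-2)}(\Omega)^3}^2
\leq C\,\Vert B(\varphi)\Vert_{L^\infty(\Omega)}^{\frac{4(r-3)}{r}}\,
\Vert B(\varphi)\Vert_{H^2(\Omega)}^{\frac{2(6-r)}{r}}\,,
\end{align*}
so the $H^2$-power drops strictly below $2$ precisely when $r>3$, and Young's inequality then yields the coefficient $\Vert\uvec\Vert_{L^r(\Omega)^3}^{r/(r-3)}$ in the Gronwall argument (this is estimate \eqref{e171}; the complementary range $r>4$ uses \eqref{e170} and gives \eqref{e172} with exponent $2r/(r-2)$). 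Without this use of the $L^\infty$ bound, your scheme only recovers the stronger hypothesis of \cite[(6.1)]{FGGS} and not \eqref{regvel-bis}.
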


\begin{proof}
We use the arguments of \cite{FGGS,FGR}. Therefore we will focus on the points where the results of \cite{FGGS,FGR} are improved.
To prove existence of weak solutions,
the approximation scheme follows the lines of the proofs of \cite[Thms.1, 2, 4]{FGR},
using a regularization of the degenerate mobility and singular potential combined with a Galerkin scheme (see also Section \ref{sec:weakex}).
The assumption on $\uvec$ is more general than in \cite[Theorem 4]{FGR} and can be handled
by means of a suitable divergence-free regularization of $\uvec$ then passing to the limit
with respect to the regularization parameter.

The existence of a strong solution can be proven in the same fashion as
in the proof of \cite[Theorem 6.1]{FGGS}. However, the assumption on $\uvec$ (see \cite[(6.1)]{FGGS})
can be relaxed for the case $d=3$ by performing estimate \cite[(6.6)]{FGGS} in a slightly different way.
The difference is the handling of the contribution coming from the convective term in the time-discretization scheme. Indeed, using the same notation
as in \cite{FGGS}, instead of the Gagliardo-Nirenberg inequality, inequality \eqref{GN-spe} can be used to estimate
the norm of $\nabla B(\varphi_{k+1})$. We distinguish two cases. If $4\leq 2r/(r-2)<6$, namely, if $3<r\leq 4$,
we can write (use \eqref{e102} with $p=2r/(r-2)$)
\begin{align}
& \tau \sum_{k=0}^{n}\Vert \boldsymbol{U}_{k}\cdot \nabla B(\varphi_{k+1})\Vert ^{2}\leq \tau \sum_{k=0}^{n}
\Vert \boldsymbol{U}_{k}\Vert_{L^{r}(\Omega )^{3}}^{2}\Vert \nabla B(\varphi _{k+1})\Vert_{L^{2r/(r-2)}(\Omega )^{3}}^{2}
\notag\\[1mm]
&\leq \tau \sum_{k=0}^{n}
\Vert \boldsymbol{U}_{k}\Vert_{L^{r}(\Omega )^{3}}^{2}\Vert B(\varphi _{k+1})\Vert_{L^{\infty}(\Omega )}^{\frac{4(r-3)}{r}}
\Vert B(\varphi _{k+1})\Vert_{H^2(\Omega)}^{\frac{2(6-r)}{r}}\notag\\[1mm]
&\leq \delta \tau \sum_{k=0}^{n}\Vert B(\varphi _{k+1})\Vert _{H^{2}(\Omega)}^{2}
+C_{\delta }\,\tau \sum_{k=0}^{n}\Vert \boldsymbol{U}_{k}\Vert_{L^{r}(\Omega )^{3}}^{\frac{r}{r-3}}\,,
\label{e171}
\end{align}
while, if $2\leq 2r/(r-2)<4$, namely, if $4< r\leq\infty$, we can write (use \eqref{e170} with $p=2r/(r-2)$)
\begin{align}
& \tau \sum_{k=0}^{n}\Vert \boldsymbol{U}_{k}\cdot \nabla B(\varphi_{k+1})\Vert ^{2}\leq \tau \sum_{k=0}^{n}
\Vert \boldsymbol{U}_{k}\Vert_{L^{r}(\Omega )^{3}}^{2}\Vert \nabla B(\varphi _{k+1})\Vert_{L^{2r/(r-2)}(\Omega )^{3}}^{2}
\notag\\[1mm]
&\leq \tau \sum_{k=0}^{n}
\Vert \boldsymbol{U}_{k}\Vert_{L^{r}(\Omega )^{3}}^{2}\Vert\nabla B(\varphi_{k+1})\Vert^{2(1-\frac{4}{r})}
\Vert B(\varphi _{k+1})\Vert_{L^{\infty}(\Omega )}^{\frac{4}{r}}
\Vert B(\varphi _{k+1})\Vert_{H^2(\Omega)}^{\frac{4}{r}}\notag\\[1mm]
&\leq \delta \tau \sum_{k=0}^{n}\Vert B(\varphi _{k+1})\Vert _{H^{2}(\Omega)}^{2}
+C_{\delta }\,\tau \sum_{k=0}^{n}\Vert \boldsymbol{U}_{k}\Vert_{L^{r}(\Omega )^{3}}^{\frac{2r}{r-2}}
\Vert\nabla B(\varphi_{k+1})\Vert^{2\frac{r-4}{r-2}}\notag\\
&\leq \delta \tau \sum_{k=0}^{n}\Vert B(\varphi _{k+1})\Vert _{H^{2}(\Omega)}^{2}
+C_{\delta }\,\tau \sum_{k=0}^{n}\Vert \boldsymbol{U}_{k}\Vert_{L^{r}(\Omega )^{3}}^{\frac{2r}{r-2}}
\,\big(\Vert\nabla B(\varphi_{k+1})\Vert^{2}+1\big)\,.
\label{e172}
\end{align}
Note that this last estimate also holds for $d=2$.
In both cases we have taken advantage of the uniform bound in $L^\infty(\Omega)$ for the time discrete solutions
$\varphi_{k+1}$ (see the proof of \cite[Theorem 6.1]{FGGS}).
Then we employ the estimate
\begin{equation}
\tau \sum_{k=0}^{n}\Vert \boldsymbol{U}_{k}\Vert _{L^{r}(\Omega )^{3}}^{\beta_r}
\leq \Vert \boldsymbol{u}\Vert _{L^{\beta_r}(0,T;L^{r}(\Omega )^{3})}^{\beta_r}\,,
\end{equation}
where $\beta_r=r/(r-3)$, or $\beta_r=2r/(r-2)$,
in \eqref{e171} or \eqref{e172}, respectively.
Thus we can conclude as in the proof of \cite[Theorem 6.1]{FGGS} by means of the discrete Gronwall lemma.

The uniqueness argument follows the lines of the proof
of \cite[Proposition 4]{FGR}, for weak solutions, and of Part (c) of the proof of \cite[Theorem 6.1]{FGGS},
for strong solutions in two dimensions. We point out that, in order to prove uniqueness, the available techniques are
essentially two. The first one consists in testing the identity resulting from the difference
of the convective nonlocal Cahn-Hilliard equation (written for each solution $\varphi_1,\varphi_2$)
 by $\varphi:=\varphi_1-\varphi_2$. Alternatively, we can test by $\mathcal{N}\varphi$.
The former choice has the advantage that we get rid
of the contribution of the convective term since $\uvec$ is divergence-free, but it leads us to deal with
the term $\lambda(\varphi_1)-\lambda(\varphi_2)$ (unless $\lambda$ is constant). For this reason, we need to
work with strong solutions and we can expect to prove only a weak-strong uniqueness result in dimension two.
On the other hand, testing by $\mathcal{N}\varphi$ has the advantage that we do not have to deal
with the above term. Therefore the argument also works for weak solutions as well as for non-constant $\lambda$. The drawback
is the convective term, namely $(\uvec\cdot\nabla\varphi,\mathcal{N}\varphi)$,
has to be handled. This forces us to make some stronger integrability assumption on the given velocity field $\uvec$.
In particular, we can suppose $\uvec\in L^2(0,T;L^\infty(\Omega)^d)$, with
$\mbox{div}(\uvec)=0$ (see \cite[Theorem 4]{FGR}). This condition can be relaxed
by estimating the term $(\uvec\cdot\nabla\varphi,\mathcal{N}\varphi)$ in a different fashion
(compare with \cite[(6.9)]{FGR}), namely,
\begin{align}
|(\uvec\cdot\nabla\varphi,\mathcal{N}\varphi)|&\leq |(\uvec\varphi,\nabla\mathcal{N}\varphi)|\leq
\Vert\uvec\Vert_{L^r(\Omega)^3}\Vert\varphi\Vert\Vert\nabla\mathcal{N}\varphi\Vert_{L^{2r/(r-2)}(\Omega)^3}\notag\\[1mm]
&\leq C \Vert\uvec\Vert_{L^r(\Omega)^3}\Vert\varphi\Vert\Vert\nabla\mathcal{N}\varphi\Vert^{\frac{r-3}{r}}
\Vert\nabla\mathcal{N}\varphi\Vert_{V^3}^{\frac{3}{r}}
\leq C \Vert\uvec\Vert_{L^r(\Omega)^3}\Vert\varphi\Vert^{\frac{r+3}{r}}\Vert\nabla\mathcal{N}\varphi\Vert^{\frac{r-3}{r}}
\notag\\[1mm]
&\leq \delta\Vert\varphi\Vert^2+C_\delta\Vert\uvec\Vert_{L^r(\Omega)^3}^{\frac{2r}{r-3}}\Vert\nabla\mathcal{N}\varphi\Vert^2\,,
\label{e173}
\end{align}
where $3<r\leq \infty$. Here the Gagliardo-Nirenberg inequality in dimension three has been used (in dimension two one can argue in a similar way).
On account of \eqref{e173}, we can proceed as in the proof of \cite[Proposition 4]{FGR} and deduce that uniqueness
of weak solutions holds under the assumption \eqref{regvel-tris}.
Observe that, if $d=2$ then we have that $\gamma_r=\beta_r$ (for all $2<r\leq\infty$). Thus the condition ensuring
existence of a strong solution also guarantees its uniqueness. Instead, if $d=3$, we have that $\gamma_r>\beta_r$
(unless $r=\infty$). Therefore, in order to ensure uniqueness of the strong solution we need a stronger assumption
on $\uvec$ than the one which only guarantees its existence. We recall that $\uvec\in L^2(0,T;L^\infty(\Omega)^3)$
is the only assumption which ensures both existence and uniqueness of the strong solution.

If $\lambda$ is a positive constant, we can test the difference of the nonlocal Cahn-Hilliard equation
by $\varphi$. Hence we do not have to consider the contribution of the convective term so that assumption \eqref{regvel-tris} is no longer needed. For this
reason \textbf{(H2)}-\textbf{(H4)}, \textbf{(H7)} are enough for establishing uniqueness of weak solutions.

Finally, the differential identity \eqref{diffid} for strong solutions
 can be formally deduced by taking $\psi=B(\varphi)_t$
in the variational formulation \eqref{e174}. This choice of test function is just formal but
it can be made rigorous, for instance, by means of a regularization
procedure which employs time convolutions and by passing to the limit (using strong convergences) with respect to
the convolution regularization parameter (see \cite[Chap.II, Lemma 4.1]{Temam}.
\end{proof}

\section{Concluding remarks}
\setcounter{equation}{0}
\label{sec:conrem}
It would be nice to remove (or improve) the condition $\alpha>1/5$ on the H\"{o}lder exponent of the pressure in the weak-strong uniqueness in dimension three with $\eta$ variable, but this does not seem easy (see Remark \ref{holdexp}).

Our results suggest that optimal control problems like
the one studied in \cite{FGS} can also be analyzed in three dimensions if $\eta$ is constant and in two dimensions if $\eta$ is variable. In this spirit, one can try to extend the present analysis to a system with sources (see \cite{GaLa,JWZ} and their references) and to formulate and study appropriate optimal control problems also in this case (see \cite{SW}).

In the context of tumor growth models, another challenging issue could be the analysis of multi-species non-local systems (see, for instance, \cite{DFRSS} and references therein for the local Cahn-Hilliard-Darcy system). More precisely, the goal is to formulate and study multi-component nonlocal Cahn-Hilliard equations with sources governed by suitable reaction-diffusion equations. We believe that, on account of the results obtained in this paper, we could go beyond the mere existence of a weak solution. It is worth observing that nonlocal models for tumor growth have been recently considered in \cite{FLOW,FLNOW} from a theoretical and numerical viewpoint.

\section{Appendix: Gagliardo-Nirenberg inequalities}
\setcounter{equation}{0}
\label{sec:GNineq}
For the reader's convenience, we report here below a generalization of the Gagliardo-Nirenberg inequality for fractional Sobolev spaces
given by \cite[Theorem 1]{BM1} and by \cite[Theorem 1]{BM2} which is used in the previous sections.
\begin{prop}\label{Bre-Mir}
Let $\Omega\subset\mathbb{R}^d$ be a Lipschitz bounded domain. Let $s_1,p_1,s_2,p_2,r,q,\theta$ and $d$ satisfy
\begin{equation}\label{Bre-Mir-cond}
\begin{aligned}
&0\leq s_1\leq s_2\,,\,\,\,r\geq 0\,,\,\,\,1\leq p_1,p_2,q\leq\infty\,,\,\,\,(s_1,p_1)\neq (s_2,p_2)\,,\,\,\,\theta\in(0,1)\,,\\[1mm]
&\frac{1}{q}=\Big(\frac{\theta}{p_1}+\frac{1-\theta}{p_2}\Big)-\frac{s-r}{d}\,,\,\,\,s:=\theta s_1+(1-\theta) s_2\,,\,\,\,r<s\,.
\end{aligned}
\end{equation}
Then the following Gagliardo-Nirenberg-Sobolev inequality holds
\begin{align}\label{GNS}
&\Vert u\Vert_{W^{r,q}(\Omega)}\leq C\Vert u\Vert_{W^{s_1,p_1}(\Omega)}^\theta\Vert u\Vert_{W^{s_2,p_2}(\Omega)}^{1-\theta}\,,\quad
\forall u\in W^{s_1,p_1}(\Omega)\cap W^{s_2,p_2}(\Omega)\,,
\end{align}
with the following exceptions, when it fails,
\begin{enumerate}
  \item $d=1$, $s_2$ is an integer $\geq 1$, $1<p_1\leq\infty$, $p_2=1$, $s_1=s_2-1+\frac{1}{p_1}$,\\
  $[1<p_1<\infty\,,r=s_2-1]$ or $\Big[s_2+\frac{\theta}{p_1}-1<r<s_2+\frac{\theta}{p_1}-\theta\Big]$\,;
  \item $d\geq 1$, $s_1<s_2$, $s_1-\frac{d}{p_1}=s_2-\frac{d}{p_2}=r$ is an integer, $q=\infty$, $(p_1,p_2)\neq (\infty,1)$
  (for every $\theta\in(0,1)$).
\end{enumerate}
Moreover, if in \eqref{Bre-Mir-cond} we have $r=s$, then \eqref{GNS} still holds if and only if the following condition fails
\begin{align}
&s_2\mbox{ is an integer }\geq 1,\,\,p_2=1\mbox{ and }0<s_2-s_1\leq 1-\frac{1}{p_1}.
\label{GN-ex}
\end{align}

\end{prop}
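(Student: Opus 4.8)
The plan is to follow the approach of Brezis and Mironescu, reducing the fractional Gagliardo-Nirenberg inequality on a Lipschitz domain to its homogeneous counterpart on $\mathbb{R}^d$ and then exploiting a Littlewood-Paley (dyadic) analysis. First I would invoke a universal extension operator: since $\Omega$ is bounded and Lipschitz, there is a linear operator $E$ mapping $W^{\sigma,p}(\Omega)$ boundedly into $W^{\sigma,p}(\mathbb{R}^d)$ \emph{simultaneously} for all $\sigma\geq 0$ and all $1\leq p\leq\infty$ (a Stein-type or Rychkov-type extension). Applying $E$ to $u$, it then suffices to prove \eqref{GNS} on $\mathbb{R}^d$. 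On the whole space one first establishes the homogeneous estimate $\|u\|_{\dot{W}^{r,q}}\leq C\,\|u\|_{\dot{W}^{s_1,p_1}}^{\theta}\|u\|_{\dot{W}^{s_2,p_2}}^{1-\theta}$ under the scaling relation of \eqref{Bre-Mir-cond}, and the inhomogeneous inequality on $\Omega$ is recovered by combining it with the trivial lower-order control provided by the full norms on the right-hand side.

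The core is a dyadic estimate. Decompose $u=\sum_{j\in\mathbb{Z}}\Delta_j u$ into Littlewood-Paley blocks. Using Bernstein's inequalities one controls $\|\Delta_j u\|_{L^q(\mathbb{R}^d)}$ in two competing ways: one bound built from $\|u\|_{\dot{B}^{s_1}_{p_1,\infty}}$ times an increasing power of $2^{j}$, the other from $\|u\|_{\dot{B}^{s_2}_{p_2,\infty}}$ times a decreasing power of $2^{j}$, where the data are handled through the embeddings $\dot{W}^{s_i,p_i}\hookrightarrow\dot{B}^{s_i}_{p_i,\infty}$. Summing $\sum_j 2^{jr}\|\Delta_j u\|_{L^q}$ and splitting the sum at the frequency where the two bounds balance produces a geometric series whose ratio is governed by the gap $s-r>0$, with $s=\theta s_1+(1-\theta)s_2$; this is precisely why $r<s$ is assumed. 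One thereby obtains $u\in\dot{B}^{r}_{q,1}$ with the Gagliardo-Nirenberg bound, and concludes by the embedding $\dot{B}^{r}_{q,1}\hookrightarrow\dot{W}^{r,q}$, valid in the relevant ranges of $r$ and $q$. The endpoint case $r=s$ in the last assertion of the statement is treated by the same splitting, except that the summation is now only borderline convergent, so the output lands in $\dot{B}^{r}_{q,\infty}$ rather than $\dot{B}^{r}_{q,1}$; the inequality then survives precisely when the target still embeds suitably, which fails exactly under condition \eqref{GN-ex}.

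The genuinely delicate part is not the inequality itself but the identification of the exceptional cases (1) and (2) where \eqref{GNS} fails. These correspond to critical configurations in which the required Besov/Sobolev embeddings break down: case (2) is the classical failure of the critical Sobolev embedding, since when $s_i-d/p_i=r\in\mathbb{Z}$ and $q=\infty$ each $W^{s_i,p_i}$ sits exactly at the borderline for the embedding into $W^{r,\infty}$, which is false for $1<p_i<\infty$ and barely valid only in the excluded subcase $(p_1,p_2)=(\infty,1)$; case (1) is a one-dimensional endpoint phenomenon linked to the failure of $W^{1,1}(\mathbb{R})\hookrightarrow L^\infty(\mathbb{R})$ and to the fine behaviour of interpolation at the boundary of the admissible region. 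Proving failure requires producing explicit counterexamples (lacunary Fourier series, or functions with a logarithmic modulus of smoothness) and checking that no rearrangement of the dyadic argument can rescue the estimate there. I expect this sharp delimitation of the exceptional set to be the main obstacle, whereas the positive direction is a careful but essentially routine Littlewood-Paley computation; for the present paper one may of course simply quote \cite{BM1,BM2}, where the complete argument is given.
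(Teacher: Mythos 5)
The paper does not prove this proposition at all: it is quoted verbatim from Brezis--Mironescu (\cite[Theorem 1]{BM1} and \cite[Theorem 1]{BM2}), so there is no in-paper argument to compare yours against. Your outline --- universal Stein/Rychkov extension to reduce to $\mathbb{R}^d$, Littlewood--Paley decomposition, two competing Bernstein bounds on $\Vert\Delta_j u\Vert_{L^q}$ balanced at a critical frequency, geometric summation driven by the gap $s-r>0$, and passage through $\dot{B}^{r}_{q,1}\hookrightarrow \dot{W}^{r,q}$ --- is a standard and essentially correct route to the positive part of \eqref{GNS}, and you correctly locate the real difficulty in the exceptional set.

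That said, as a proof of the full statement your text has genuine gaps rather than mere omissions of routine detail. First, the sharp characterization of failure is the actual content of the theorem: cases (1), (2) and the ``if and only if'' at $r=s$ each require an explicit counterexample (and, for the $r=s$ case, also a proof of validity whenever \eqref{GN-ex} fails), and none of these is constructed; asserting that they ``correspond to critical configurations where embeddings break down'' does not establish failure, since the inequality could in principle survive by a different mechanism even when one particular chain of embeddings does not. Second, the dyadic scheme itself is exactly where the endpoints bite: for integer $s$ with $p=1$ or $p=\infty$ the spaces $W^{s,p}$ are not Besov spaces, the inclusions $W^{s,p}\hookrightarrow B^{s}_{p,\infty}$ and $B^{r}_{q,1}\hookrightarrow W^{r,q}$ must be checked case by case, and the one-dimensional anomaly in case (1) (tied to $s_1=s_2-1+1/p_1$ with $p_2=1$) is invisible to the balancing argument as you have written it. Brezis and Mironescu in fact need substantially more than the Littlewood--Paley computation to settle these borderline configurations. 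For the purposes of this paper, simply citing \cite{BM1,BM2} --- as the authors do and as you note at the end --- is the appropriate course; if you intend your sketch to stand as a proof, the exceptional-case analysis must be carried out in full.
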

\begin{oss}\label{Sobolev-emb}
If $(s_1,p_1)=(s_2,p_2)$ and $0\leq r<s=s_1=s_2$ in \eqref{Bre-Mir-cond}, then estimate \eqref{GNS} is equivalent to the embedding
(see \cite[Theorem B]{BM2})
\begin{align}\label{Sob-emb}
&W^{s,p}(\Omega)\hookrightarrow W^{r,q}(\Omega)\,,
\end{align}
which holds provided that $1\leq p< q\leq\infty$ and
$$r-\frac{d}{q}=s-\frac{d}{p}\,,$$
with the following exceptions, when \eqref{Sob-emb} fails,
\begin{enumerate}
  \item $d=1$, $s$ is an integer $\geq 1$, $p=1$, $1<q<\infty$ and $r=s-1+\frac{1}{q}$\,;
  \item $d\geq 1$, $1<p<\infty$, $q=\infty$ and $s-\frac{d}{p}=r\geq 0$ is an integer.
\end{enumerate}
\end{oss}
\begin{oss}\label{Sobolev-scases}
The following special case of the Gagliardo-Nirenberg-Sobolev inequality \eqref{GNS}, that holds true
for a bounded smooth domain $\Omega\subset\mathbb{R}^d$, $d=2,3$, is useful as well
\begin{align}\label{GN-spe}
&\Vert \nabla u\Vert_{L^p(\Omega)^d}\leq
C\Vert u\Vert_{L^\infty(\Omega)}^{1-\hat{\alpha}}\Vert u\Vert_{H^2(\Omega)}^{\hat{\alpha}}\,,\qquad\forall u\in H^2(\Omega)\,,
\end{align}
where
$$\hat{\alpha}=\frac{2}{p}\,\frac{p-d}{4-d}\,\quad\mbox{ and }\,\left\{\begin{array}{ll}
4\leq p<\infty\,,\qquad \mbox{if } d=2\,,\\
4\leq p\leq 6\,,\qquad \mbox{if } d=3\,.
\end{array}\right.
$$
Finally, we also recall other special cases of \eqref{GNS}
\begin{align}
&\Vert u\Vert_{L^p(\Omega)}\leq C\Vert u\Vert^{\frac{2}{p}}\Vert u\Vert_V^{1-\frac{2}{p}}\,,\quad\forall u\in V\,,
\quad 2\leq p<\infty\,,\quad d=2\,,\label{GN-2D}\\[1mm]
&\Vert u\Vert_{L^p(\Omega)}\leq C\Vert u\Vert^{\frac{6-p}{2p}}\Vert u\Vert_V^{\frac{3(p-2)}{2p}}\,,\quad\forall u\in V\,,
\quad 2\leq p\leq 6\,,\quad d=3\,.\label{GN-3D}
\end{align}
\end{oss}

\bigskip 
\noindent
{\bf Acknowledgment.} The authors are members of Gruppo Nazionale per l'Analisi Ma\-te\-ma\-ti\-ca, la Probabilit\`{a} e le loro Applicazioni (GNAMPA), Istituto Nazionale di Alta Matematica (INdAM).

\bigskip 
\noindent
{\bf Conflict of interest statement.} On behalf of all authors, the corresponding author states that there is no conflict of interest.

\end{document}